\newtheorem{thm}{Theorem}[section]
\newtheorem{prop}[thm]{Proposition}
\newtheorem{cor}[thm]{Corollary}
\newtheorem{lem}[thm]{Lemma}
\theoremstyle{definition}
\newtheorem{define}[thm]{Definition}
\theoremstyle{remark}
\newtheorem{rem}[thm]{Remark}
\newcommand{\ve}[1]{\boldsymbol{\mathbf{#1}}}
\newcommand{\veit}[1]{\boldsymbol{\mathit{#1}}}
\newcommand{\R}{\mathbb{R}}
\newcommand{\Z}{\mathbb{Z}}
\newcommand{\Q}{\mathbb{Q}}
\renewcommand{\d}{\partial}
\renewcommand{\subset}{\subseteq}
\renewcommand{\bar}{\overline}
\newcommand{\iso}{\cong}
\DeclareMathOperator{\Aff}{{Aff}}
\DeclareMathOperator{\gr}{{gr}}
\DeclareMathOperator{\Hom}{{Hom}}
\DeclareMathOperator{\id}{{id}}
\DeclareMathOperator{\Int}{{int}}
\DeclareMathOperator{\rank}{{rank}}
\DeclareMathOperator{\Span}{{Span}}
\DeclareMathOperator{\sgn}{{sgn}}
\newcommand{\lk}{\mathrm{lk}}
\newcommand{\bF}{\mathbb{F}}
\newcommand{\bI}{\mathbb{I}}
\newcommand{\bT}{\mathbb{T}}
\newcommand{\cA}{\mathcal{A}}
\newcommand{\cB}{\mathcal{B}}
\newcommand{\cC}{\mathcal{C}}
\newcommand{\cF}{\mathcal{F}}
\newcommand{\cG}{\mathcal{G}}
\newcommand{\cK}{\mathcal{K}}
\newcommand{\cO}{\mathcal{O}}
\newcommand{\cS}{\mathcal{S}}
\newcommand{\cT}{\mathcal{T}}
\newcommand{\cX}{\mathcal{X}}
\newcommand{\cY}{\mathcal{Y}}
\newcommand{\frb}{\mathfrak{b}}
\newcommand{\scE}{\mathscr{E}}
\newcommand{\scL}{\mathscr{L}}
\newcommand{\scT}{\mathscr{T}}
\newcommand{\HF}{\mathit{HF}}
\newcommand{\xs}{\ve{x}}
\newcommand{\ys}{\ve{y}}
\newcommand{\zs}{\ve{z}}
\newcommand{\ws}{\ve{w}}
\renewcommand{\a}{\alpha}
\renewcommand{\b}{\beta}
\newcommand{\veps}{\varepsilon}
\DeclareMathOperator{\Cone}{{Cone}}
\newcommand{\Sss}[1]{\scriptscriptstyle{#1}}
\numberwithin{equation}{section}
\newcommand{\ar}{\mathrm{a.r.}}
\newcommand{\alg}{\mathrm{alg}}
\newcommand{\cCo}{\cC\! o}
\newcommand{\cTr}{\cT\! r}
\DeclareMathOperator{\opp}{opp}
\DeclareMathOperator{\val}{val}
\DeclareMathOperator{\Tw}{Tw}
\DeclareMathOperator{\Heis}{Heis}
\newcommand{\rmT}{\mathrm{T}}
\DeclareMathOperator{\wt}{wt}
\newcommand{\MOD}{\mathsf{Mod}}
\title{Koszul duality and the link surgery formula}
\author{Ian Zemke}
\address{Department of Mathematics\\University of Oregon\\  Eugene, OR, USA}
\email{izemke@uoregen.edu}
\begin{document}
\maketitle
\begin{abstract} In previous works, the author described an associative algebra whose $A_\infty$-module categories encode the Heegaard Floer Dehn surgery formulas. In this article, we describe the Koszul dual of this algebra. We construct dualizing bimodules, and prove several equivalences of categories. The constructions of this paper have applications to computational problems involving the link surgery formula.
\end{abstract}

\tableofcontents

\section{Introduction}

The link surgery formula of Manolescu and Ozsv\'{a}th \cite{MOIntegerSurgery} is a powerful tool for computing the Heegaard Floer homology of 3-manifolds  \cite{OSDisks}. This theory is an extension of Ozsv\'{a}th and Szab\'{o}'s mapping cone formula for null-homologous and rationally null-homologous knots \cites{OSIntegerSurgeries,OSRationalSurgeries}. To a link $L\subset S^3$ with integral framing $\Lambda$, Manolescu and Ozsv\'{a}th \cite{MOIntegerSurgery} describe the \emph{link surgery complex}, denoted  $\cC_{\Lambda}(L)$, which computes the Heegaard Floer homology of $S^3_{\Lambda}(L)$. The author extended this theory to all longitudinally framed links in closed, oriented 3-manifolds in \cite{ZemExact}.

In \cite{MOT_Grid_Diagrams}, Manolescu, Ozsv\'{a}th and Thurston use the link surgery formula to give a combinatorial description of the Heegaard Floer homology of any 3-manifold. Their strategy is to present a closed 3-manifold $Y$ as integral surgery on a link $L\subset S^3$ and algorithmically compute the link surgery complex using grid diagrams.  The complexity of this algorithm makes it impractical for most non-trivial computations. (For example, the grid complex of a link has $n!$ generators, where $n$ is the size of the grid). Nonetheless, it illustrates the power of the link surgery formula, and motivates the study of algebraic techniques to understand and manipulate the link surgery formula.

 In \cite{ZemBordered,ZemExact}, the author gave another perspective on the Manolescu--Ozsv\'{a}th--Szab\'{o} surgery formulas. Therein, we reinterpreted the surgery formulas using the language of Lipshitz, Ozsv\'{a}th and Thurston's  bordered Floer theory \cite{LOTBordered}. We described how to view the surgery formulas as a theory for compact 3-manifolds with parametrized torus boundary components. In \cite{ZemBordered,ZemExact}, we described an associative algebra $\cK$, and reformulated the surgery formula in terms of certain categories of $A_\infty$-modules over this algebra.
 
 In the present paper, we develop an algebraic tool which can help facilitate certain computations using the surgery formula: Koszul duality. Herein, we describe a curved $dg$-algebra $\cK^!$ which is naturally dual to $\cK$. We prove that certain categories of $A_\infty$-modules and twisted complexes of modules over these algebras are equivalent. The algebra $\cK^!$ will be useful to simplify certain computations related to bimodules over the algebras $\cK$. As a particular example, we explain in Section~\ref{sec:CZZ} how the description of the bimodules for L-space satellite operators in \cite{CZZSatellites} has a natural interpretation in terms of the Koszul dual algebra $\cK^!$.

 \subsection{Surgery modules}

The author described in \cite{ZemBordered} an associative algebra $\cK$, which we refer to as the \emph{surgery algebra}. It is an algebra over the idempotent ring $\ve{I}=\ve{I}_0\oplus \ve{I}_1$, where each $\ve{I}_{\veps}$ is a copy of $\bF:=\Z/2$. We recall the definition of the algebra. We set 
\[
\ve{I}_0\cdot \cK\cdot \ve{I}_0=\bF[W,Z],\quad  \ve{I}_1\cdot \cK\cdot \ve{I}_0=\bF[U,T,T^{-1}]\otimes \langle \sigma,\tau\rangle,
\]
\[
\ve{I}_0\cdot \cK \cdot \ve{I}_1=0,\quad \text{and} \quad \ve{I}_1\cdot \cK \cdot \ve{I}_1=\bF[U,T,T^{-1}].
\]
The only non-obvious relations are that
\[
\sigma W=UT^{-1} \sigma, \quad \sigma Z= T\sigma, \quad \tau W=T^{-1} \tau, \quad \tau Z=UT \tau.
\]

 We use Lipshitz, Ozsv\'{a}th and Thurston's formalism of type-$D$ and type-$A$ modules \cite{LOTBordered} to encode the invariants of longitudinally framed knots in closed 3-manifolds. To a knot $K$ in a closed, oriented 3-manifold $Y$, equipped with Morse (i.e. longitudinal) framing $\lambda$, the author described a \emph{type-$D$ module}
\[
\cX_\lambda(Y,K)^{\cK}
\]
as well as a \emph{type-$A$ module}
\[
{}_{\cK} \cX_{\lambda}(Y,K).
\]
A type-$A$ module is the same thing as an $A_\infty$-module. A type-$D$ module is a certain type of $dg$-module, which the reader can think of as a projective $dg$-module.

$A_\infty$-\emph{bi}modules play an important role in applications of the theory. To a 2-component link $L\subset Y$ with longitudinal framing $\Lambda$, the theory in \cite{ZemBordered} assigns a $DA$-bimodule
\[
{}_{\cK} \cX_{\Lambda}(Y,L)^{\cK}.
\]
In \cite{CZZSatellites}, Chen, Zhou and the author study these bimodules for 2-component L-space links in $S^3$. We used these bimodules to compute the effect on knot Floer complexes of certain satellite operators. We plan to use the algebra $\cK^!$ to help study $DA$-bimodules of the form ${}_{\cK} \cX_{\Lambda}(Y,L)^{\cK}$ for more general families of links.

\subsection{Koszul duality}

Koszul duality is an important topic in many areas of algebra. Classically, Koszul duality is simplest to describe for \emph{quadratic algebras}.  We describe this briefly for algebras over a field $\bF$. A quadratic algebra $\cA$ is one which is described by taking an $\bF$-vector space $V$ of generators, and quotienting the tensor algebra $T^* V:=\bigoplus_{i\ge 0} V^{\otimes i}$ by an ideal $(R)$ where $R\subset V\otimes V$.
When $A$ satisfies several additional conditions (i.e. is \emph{Koszul}), the Koszul dual algebra is described by taking the dual set of generators $V^*=\Hom(V,\bF)$, and quotienting the tensor algebra $T^* V^*$ by the ideal $(R^*)$ where $R^*\subset V^*\otimes V^*$ is the annihilator of $R$. See \cite{PP_Quadratic} for background on this perspective. The classic example is the Koszul duality between the polynomial ring $\bF[X_1,\dots, X_n]$ and the exterior algebra $\Lambda^*(\theta_1,\dots, \theta_n)$.

Koszul duality can be formulated for algebras which are not Koszul, but the above perspective requires generalization. We take the following as the definition of Koszul duality in this paper:

\begin{define} We say that $dg$-algebras $\cA$ and $\cB$ over a ring $\ve{k}$ are \emph{Koszul dual} if there is a $DD$-bimodule ${}^{\cA} [\cCo]^{\cB}$ and an $AA$-bimodule ${}_{\cB} [\cTr]_{\cA}$, both of which have underlying $(\ve{k},\ve{k})$-bimodule equal to $\ve{k}$, such that
\[
{}^{\cA}[\cCo]^{\cB}\boxtimes {}_{\cB} [\cTr]_{\cA}\simeq {}^{\cA} \bI_{\cA}\quad \text{and} \quad  {}_{\cB} [\cTr]_{\cA}\boxtimes {}^{\cA}[\cCo]^{\cB}\simeq {}_{\cB} \bI^{\cB}.
\] 
Here, $\simeq$ denotes homotopy equivalence and ${}^{\cA} \bI_{\cA}$ and ${}_{\cB} \bI^{\cB}$ denote the identity bimodules (which have underlying space $\ve{k}$, and structure maps $\delta_2^1(1,a)=a\otimes 1$ and $\delta_2^1(b,1)=1\otimes b$, respectively). Also, $\boxtimes$ denotes a model of the derived tensor product, described in \cite{LOTBordered}.
\end{define}

The above definition is slightly non-standard in that it is phrased in the language of Lipshitz, Ozsv\'{a}th and Thurston's type-$D$ and type-$A$ modules, as well as their box tensor product, though this is the most convenient setting for our purposes.

We refer to ${}_{\cK}[\cTr]_{\cK^!}$ and ${}^{\cK^!}[\cCo]^{\cK}$ as the \emph{trace and cotrace} bimodules. This terminology is not standard, but we feel that it is descriptive due to its analogy with the trace and cotrace maps of vector spaces.

\begin{rem}Given a type-$A$ module ${}_{\cB} M$ one can tensor ${}_{\cB} M$ with the cotrace module ${}^{\cA} [\cCo]^\cB$ to obtain a type-$D$ module ${}^{\cA} M$. Under suitable hypotheses on boundedness, these two modules contain the same information, since we can tensor with ${}_{\cB} [\cTr]_{\cA}$ to recover the original module ${}_{\cA} M$, up to homotopy equivalence. In practice, the type-$D$ module ${}^{\cA} M$ typically requires less information to encode, since we need to encode just a single map $\delta^1\colon M\to \cA \otimes M$, as opposed to an infinite family of maps $m_{n+1} \colon \cB^{\otimes n}\otimes M\to M$. This makes Koszul duality a helpful tool for computations.
\end{rem}

\subsection{The Koszul dual algebra}

In this paper, we describe a curved $dg$-algebra $\cK^!$ which is Koszul dual to $\cK$. We set
\[
\ve{I}_0\cdot \cK^!\cdot \ve{I}_0=\frac{\bF\langle w,z,\theta\rangle}{w^2=z^2=[\theta,w]=[\theta,z]=0}.
\]
Here, $\bF$ denotes $\Z/2$ and $\bF\langle w,z,\theta \rangle$ denotes the free non-commutative algebra on the elements $w$, $z$ and $\theta$. Also, $[a,b]$ denotes the commutator $ab-ba$. Examples of algebra elements are
\[
1,\quad w,\quad wzwzw, \quad zwz \theta.
\]

We set 
\[
\ve{I}_1\cdot \cK^! \cdot \ve{I}_1=\frac{\bF\langle  \varphi_+,\varphi_-,\theta\rangle}{\varphi_{\pm}^2=[\theta,\varphi_{\pm}]= \theta^2=0}.
\]
Examples of algebra elements are
\[
1, \quad \varphi_+\varphi_-,\quad \varphi_-\varphi_+\varphi_-\theta.
\]

We define $\ve{I}_1\cdot \cK^!\cdot \ve{I}_0=0$. We define $\ve{I}_0\cdot \cK^!\cdot \ve{I}_1$ to be spanned as a bimodule over $\ve{I}_0 \cdot \cK^! \cdot \ve{I}_0$ and $\ve{I}_1 \cdot \cK^!\cdot \ve{I}_1$ by two algebra elements, labeled $t$ and $s$. Multiplication with these elements is given by concatenation, modulo the relations
\[
w s=zt=s \varphi_-=t\varphi_+=[s,\theta]=[t,\theta]=0
\]
and
\[
z s=s \varphi_+,\quad wt=t\varphi_-.
\]
With these relations, $\ve{I}_0\cdot \cK^! \cdot \ve{I}_1$ has rank 8 over $\bF$, and is spanned by the elements
\[
s, \quad s \varphi_+,\quad s \theta,\quad s \varphi_+ \theta, \quad t,\quad t \varphi_-,\quad t \theta,\,\,\, \text{and}\,\,\,  t  \varphi_-\theta.
\]

The algebra $\cK^!$ has a differential, denoted $\mu_1\colon \cK^!\to \cK^!$. We declare
\[
\mu_1(\theta)=wz+zw,
\]
and extend this to all algebra elements by the Leibniz rule.
There is also a curvature term
\[
\mu_0=\varphi_+\varphi_-+\varphi_-\varphi_+,
\] 
which is central in $\cK^!$. 
We declare $\cK^!$ to have no higher composition maps (so in particular it is a curved $dg$-algebra, as opposed to a curved $A_\infty$-algebra).

We will describe two bimodules
\[
{}_{\cK} [\cTr]_{\cK^!}\quad \text{and} \quad {}^{\cK^!} [\cCo]^{\cK},
\]
which have underlying $(\ve{I},\ve{I})$-module equal to $\ve{I}$. We prove the following, which we view as the most basic statement of Koszul duality between the two algebras:

\begin{thm}
\label{thm:equality-of-box-tensor-products} The bimodules ${}_{\cK} [\cTr]_{\cK^!}$ and  ${}^{\cK^!} [\cCo]^{\cK}$ satisfy
\[
{}_{\cK} [\cTr]_{\cK^!}\boxtimes {}^{\cK^!}[\cCo]^{\cK}={}_{\cK} \bI^{\cK},\quad \text{and} \quad {}^{\cK^!} [\cCo]^{\cK}\boxtimes {}_{\cK} [\cTr]_{\cK^!}={}^{\cK^!} \bI_{\cK^!}. 
\]
\end{thm}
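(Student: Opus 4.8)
\emph{Proof plan.} The plan is to prove both identities by computing the box tensor products directly. In each case the underlying $(\ve{I},\ve{I})$-bimodule of the box tensor product is $\ve{I}\otimes_{\ve{I}}\ve{I}=\ve{I}$, which already agrees with that of the relevant identity bimodule, so only the structure maps remain to be compared. Since the assertion is an honest equality rather than a homotopy equivalence, there is nothing to resolve and no homotopy inverse to construct: one simply unwinds the definition of $\boxtimes$ and checks that all terms cancel except those that build the identity bimodule.

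For ${}_{\cK}[\cTr]_{\cK^!}\boxtimes{}^{\cK^!}[\cCo]^{\cK}$, I would begin from the explicit structure maps of the two bimodules as constructed above: ${}_{\cK}[\cTr]_{\cK^!}$ carries an infinite family of $A_\infty$-operations $m_{i,1,j}$ absorbing $i$ elements of $\cK$ on the left and $j$ elements of $\cK^!$ on the right, and ${}^{\cK^!}[\cCo]^{\cK}$ carries a single type-$D$ operation emitting one element of $\cK^!$ and one of $\cK$. In this box tensor product the two $\cK^!$-actions are the ones being contracted, so the left $\cK$-action of $[\cTr]$ and the right $\cK$-action of $[\cCo]$ survive, supplying respectively the inputs and the output of the resulting bimodule's structure map; the substance of the calculation is to feed the iterates of the type-$D$ action of $[\cCo]$ into the right ($\cK^!$-)operations of $[\cTr]$. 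I would enumerate, block by block---the $\ve{I}_0$ block, the $\ve{I}_1$ block, and the two ``crossing'' strands spanned by $s$ and $t$---the sequences of generators of $\cK^!$ that can occur, and verify that the resulting finite sum of operations cancels in pairs, leaving only the term $\delta^1_2(a,1)=1\otimes a$ and hence exactly ${}_{\cK}\bI^{\cK}$. The relations of $\cK$ ($\sigma W=UT^{-1}\sigma$, $\sigma Z=T\sigma$, $\tau W=T^{-1}\tau$, $\tau Z=UT\tau$), the relations of $\cK^!$ ($ws=zt=s\varphi_-=t\varphi_+=[s,\theta]=[t,\theta]=0$, together with $zs=s\varphi_+$ and $wt=t\varphi_-$), the differential $\mu_1(\theta)=wz+zw$, and the curvature $\mu_0=\varphi_+\varphi_-+\varphi_-\varphi_+$ are precisely what pairs these terms up---heuristically, every generator of $\cK^!$ absorbed on the $[\cTr]$ side is matched by one emitted on the $[\cCo]$ side, and the $\mu_0$-contributions from the two sides of the box cancel so that the output is genuinely uncurved.

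The identity ${}^{\cK^!}[\cCo]^{\cK}\boxtimes{}_{\cK}[\cTr]_{\cK^!}={}^{\cK^!}\bI_{\cK^!}$ is the analogous computation, now with the two $\cK$-actions contracted and the $\cK^!$-actions surviving as the external legs. Since $\cK^!$ is a curved $dg$-algebra, here one must in addition track the differential $\mu_1$ and the curvature $\mu_0$ of $\cK^!$ carefully: the $\mu_0$-contribution arising from the type-$D$ side of the box must cancel the one arising from the type-$A$ side, so that the result is precisely the identity bimodule over the curved algebra $\cK^!$.

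The main obstacle is bookkeeping rather than any single hard idea. Because $\cK$ and $\cK^!$ are infinite-dimensional---arbitrarily long words $WZWZ\cdots$ occur in $\cK$, and $wzwz\cdots$ and $\varphi_+\varphi_-\varphi_+\cdots$ in $\cK^!$---the bimodule $[\cTr]$ has infinitely many nonzero operations and the iterated type-$D$ map on $[\cCo]$ ranges over infinitely many tree shapes. One must first confirm that the box tensor products are well defined, i.e.\ that the appropriate one of $[\cTr]$, $[\cCo]$ is bounded so that each structure map of the box product is a finite sum, and then organize the verification by the combinatorial type of the contributing trees, showing that within each fixed number of external inputs the shapes cancel down to the identity. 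The curved identity is the more delicate of the two, both for the extra cancellation involving $\mu_0$ and because one must separately confirm that the resulting structure satisfies the (curved) bimodule structure relations over $\cK^!$.
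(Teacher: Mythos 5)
Your overall direction---verify that the underlying $(\ve{I},\ve{I})$-bimodule of each box tensor product is $\ve{I}$, check well-definedness, then compute the structure maps directly, organizing by idempotent block---matches the paper, which proves the theorem as two propositions (one for each identity). But the computational mechanism you propose is not the one that actually works, and you are missing the organizing principles that make the computation finite and tractable.

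First, the proposed mechanism of \emph{pairwise cancellation} is not what happens. The bimodule ${}_{\cK}[\cTr]_{\cK^!}$ has infinitely many nonzero higher actions (e.g.\ $m_{1|1|n}(T^n,1,\varphi_+,\dots,\varphi_+)\neq 0$ for all $n$), and for each fixed number of external inputs there are, a priori, many contributing tree shapes. The argument is \emph{not} that these contributions appear in cancelling pairs; rather, one shows that each higher contribution vanishes individually. For the identity ${}_{\cK}[\cTr]_{\cK^!}\boxtimes{}^{\cK^!}[\cCo]^{\cK}={}_{\cK}\bI^{\cK}$ this is done by a one-line \emph{algebraic grading} argument: the operation $\delta^1_{j+1}$ on the tensor product shifts the algebraic grading by $j-1$ (a consequence of the way the structure maps of $[\cTr]$ and $[\cCo]$ interact with $\gr_{\alg}$ on $\cK^!$), yet $\cK$ sits entirely in algebraic grading $0$; hence $\delta^1_{j+1}=0$ for $j\neq 1$, and the case $j=1$ is a short direct check. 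Nothing cancels in pairs here. For the other identity ${}^{\cK^!}[\cCo]^{\cK}\boxtimes{}_{\cK}[\cTr]_{\cK^!}={}^{\cK^!}\bI_{\cK^!}$, grading is \emph{not} enough, and one instead introduces, idempotent by idempotent, an auxiliary filtration $\cF$ on $\cK$ (by minimal word length in $W,Z,U$ on $R_0$, and by $i+|j|$ on $U^iT^j\in R_1$) under which the homotopy $H$ decreases $\cF$ while the perturbation terms do not increase it enough; this forces the image of $\delta^1_{j+1}$ into a trivial filtration level when $j>1$. The remaining strand (inputs involving $s$ and $t$) is handled by explicit tree enumeration. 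Without these grading and filtration ideas you have no way of ruling out infinitely many potential higher operations uniformly in $j$.

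Second, your emphasis on the curvature $\mu_0$ cancelling ``from the two sides of the box'' does not reflect the content of the proof. Compatibility of the box tensor product with curvature is built into the $DD$- and $AA$-structure equations and is not the point of friction; the substantive verification is the collapse of the structure maps, for which $\mu_0$ plays essentially no role.

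Third, a small but consequential imprecision: ${}_{\cK}[\cTr]_{\cK^!}$ is \emph{not} bounded (the example above has $m_{1|1|n}\neq 0$ for all $n$), and so "boundedness" is the wrong condition to invoke for well-definedness of the box tensor products. What is actually used is that $[\cTr]$ is \emph{right bonsai} and \emph{left bonsai}, a strictly weaker finiteness property established by a separate argument (again using the algebraic grading plus a case analysis by idempotent). If you require boundedness you would be stuck, since it fails.

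In short: the directional, block-by-block strategy is right, but the proposal misses the algebraic-grading vanishing argument (first identity), the idempotent-specific filtration arguments plus tree enumeration on the $s,t$-strand (second identity), and the bonsai (not bounded) condition needed for the box tensor products to make sense. These are the actual ideas in the proof, and without them the ``bookkeeping'' has no finite description to bookkeep.
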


\subsection{Equivalence of categories}

In Section~\ref{sec:equivalence-of-categories}, we define two module categories
${}^{\cK^!} \MOD_{(U),\frb}$ and ${}_{\cK} \MOD_{(U),\frb}$, which are type-$D$ and $A$ modules which satisfy certain requirements with respect to completions and boundedness. We prove the following:

\begin{thm} The categories ${}^{\cK^!} \MOD_{(U),\frb}$ and ${}_{\cK} \MOD_{(U),\frb}$ are equivalent. 
\end{thm}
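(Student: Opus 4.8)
The plan is to realize the equivalence explicitly by box-tensoring with the trace and cotrace bimodules of Theorem~\ref{thm:equality-of-box-tensor-products}. I would define a functor
\[
\Phi\co {}_{\cK}\MOD_{(U),\frb}\to {}^{\cK^!}\MOD_{(U),\frb},\qquad \Phi({}_{\cK}M)={}^{\cK^!}[\cCo]^{\cK}\boxtimes {}_{\cK}M,
\]
and a functor in the reverse direction
\[
\Psi\co {}^{\cK^!}\MOD_{(U),\frb}\to {}_{\cK}\MOD_{(U),\frb},\qquad \Psi({}^{\cK^!}N)={}_{\cK}[\cTr]_{\cK^!}\boxtimes {}^{\cK^!}N,
\]
where on morphisms each functor box-tensors a morphism of modules with the identity morphism of the relevant bimodule, so that $\Phi$ and $\Psi$ are $dg$-functors. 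The first order of business is to check that these are well defined: that the box tensor products converge in the appropriate completion, that the curvature $\mu_0$ of $\cK^!$ acts trivially on $\Phi({}_{\cK}M)$ so that $(\delta^1)^2=0$ and one genuinely obtains an (uncurved) type-$D$ module over $\cK^!$, and --- most importantly --- that the output again satisfies the $(U)$-completeness and $\frb$-boundedness conditions defining the target category. Ensuring that these defining conditions propagate through tensoring with $[\cCo]$ and with $[\cTr]$ is where most of the real work sits.

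Granting well-definedness, the rest is formal. Using associativity of $\boxtimes$ in the bounded/completed setting together with Theorem~\ref{thm:equality-of-box-tensor-products}, for every ${}_{\cK}M$ one has
\[
\Psi(\Phi({}_{\cK}M))={}_{\cK}[\cTr]_{\cK^!}\boxtimes {}^{\cK^!}[\cCo]^{\cK}\boxtimes {}_{\cK}M\simeq {}_{\cK}\bI^{\cK}\boxtimes {}_{\cK}M\simeq {}_{\cK}M,
\]
and symmetrically $\Phi(\Psi({}^{\cK^!}N))\simeq {}^{\cK^!}N$, where the last step of each line uses that tensoring with the identity bimodule is the identity up to a canonical homotopy equivalence. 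Because every equivalence appearing here is produced by box-tensoring a fixed bimodule (homotopy) equivalence with the module, naturality in $M$ and in $N$ comes for free. Passing to morphisms up to homotopy, $\Phi$ and $\Psi$ then descend to mutually inverse equivalences of categories, which is the claim.

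The hard part will be the analysis underlying the first paragraph: making the box tensor products with $[\cCo]$ and $[\cTr]$ converge, and establishing the associativity of $\boxtimes$ that the second paragraph invokes, in the presence of both the curvature of $\cK^!$ and the $(U)$-adic completion. I expect to need (i) a boundedness property of ${}^{\cK^!}[\cCo]^{\cK}$ and of ${}_{\cK}[\cTr]_{\cK^!}$ relative to the $U$-filtration, so that the structure maps of the tensor products are locally finite sums; (ii) the identification of the $\frb$-boundedness condition as precisely the hypothesis preserved by the two tensorings; and (iii) the cancellation of the $\mu_0$-terms against the curvature contribution carried by $[\cCo]$, which is exactly the statement that $[\cCo]$ is an honest $DD$-bimodule over the curved pair $(\cK^!,\cK)$. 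Once these technical points are in place, the equivalence follows from Theorem~\ref{thm:equality-of-box-tensor-products} as above.
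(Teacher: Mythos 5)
Your high-level strategy coincides with the paper's: the two functors are exactly $\cF_{[\cCo]}=\Phi$ and $\cF_{[\cTr]}=\Psi$, and the well-definedness checks you flag are carried out in Lemmas~\ref{lem:bonsai-preserved}, \ref{lem:strongly-U-adic-A->D}, \ref{lem:strongly-U-adic-D->A}, and \ref{lem:strictly-unital} (preservation of bonsai/cobonsai, regularly $U$-adic, and strict unitality). The curvature cancellation is built into the verification that ${}^{\cK^!}[\cCo]^{\cK}$ is a bona fide type-$DD$ bimodule over the curved pair.

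The gap is in the step you label ``formal.'' Box tensor products are not strictly associative in the configuration that matters here: for a type-$A$ module ${}_{\cK}Y$ one only has
\[
{}_{\cK}[\cTr]_{\cK^!}\boxtimes\bigl({}^{\cK^!}[\cCo]^{\cK}\boxtimes{}_{\cK}Y\bigr)\ \simeq\ \bigl({}_{\cK}[\cTr]_{\cK^!}\boxtimes{}^{\cK^!}[\cCo]^{\cK}\bigr)\boxtimes{}_{\cK}Y
\]
up to a homotopy equivalence, not an equality, and this homotopy equivalence \emph{is} the natural transformation $\Psi\circ\Phi\Rightarrow\bI$ you need; it does not come for free from Theorem~\ref{thm:equality-of-box-tensor-products} alone. (The other composite $\Phi\circ\Psi$ is unproblematic: since ${}_{\cK}[\cTr]_{\cK^!}\boxtimes{}^{\cK^!}X$ is a type-$A$ module and $[\cCo]$ is a type-$DD$ structure, the reassociation there is strict, so $\cF_{[\cCo]}\circ\cF_{[\cTr]}=\bI$ on the nose.) The paper handles the hard direction indirectly: it replaces ${}^{\cK^!}[\cCo]^{\cK}$ by the homotopy equivalent separated type-$DD$ structure ${}^{\cK^!}[\cCo]^{\cK}\boxtimes{}_{\cK}\scL_{\cK^!}\boxtimes{}^{\cK^!}[\cCo]^{\cK}$, for which box tensoring against \emph{strictly unital} type-$A$ modules is strictly associative by \cite{LOTBimodules}*{Lemma~2.3.14(3)}, and then pulls back along the induced homotopy equivalence of functors. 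This is precisely where the strict unitality condition imposed on objects of ${}_{\cK}\MOD_{(U),\frb}$ is used (and it cannot be dropped; see Remark~\ref{rem:bordered-3-manifolds-bimodules}(4)). You correctly sense that establishing associativity is ``the hard part,'' but without the separated-$DD$ trick (or some substitute such as a diagonal argument in the spirit of \cite{LOTDiagonals}) the argument does not close.

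Two smaller points. First, the proposition in the body of the paper asserts an equivalence of $A_\infty$-categories, which is stronger than the homotopy-category equivalence you end with; producing the required prenatural transformations and their homotopies is set up in Section~\ref{sec:equivalence-of-categories}, and the separated-$DD$ reduction is what makes those transformations explicit. Second, note that the boundedness hypotheses needed for convergence of the tensor products are supplied by the fact that $[\cTr]$ is right bonsai (Lemma~\ref{lem:Tr-LR-bonsai}) on one side and by restricting to bonsai/cobonsai objects on the other; you should cite those as the source of local finiteness rather than leaving it as an expected property of the bimodules.
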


We also prove extensions of the above theorem for $DA$ and $DD$ bimodules over $(\cK,\cK)$ and $(\cK^!, \cK)$, respectively.

\begin{rem} In principle, one would expect there to be equivalences between the categories of right type-$D$ modules over $\cK$, denoted $\MOD^{\cK}$, and the categories of right $A_\infty$-modules over $\cK^!$, denoted $\MOD_{\cK^!}$. However, the category of $A_\infty$-modules over a curved $A_\infty$-algebra (or curved $dg$-algebra) is somewhat problematic if not defined carefully. See \cite{Positelski_Memoirs}*{Remark~7.3}. Typically, one must impose extra restrictions on $A_\infty$-modules and morphisms, such as requiring certain boundedness assumptions to be satisfied, in order to obtain a meaningful homotopy category of modules. Since we are not aware of any practical advantages of the category of $A_\infty$-modules over $\cK^!$ over the category of type-$D$ modules over $\cK$, we do not explore this.
\end{rem}

\subsection{Computational applications}

Our main motivation for understanding the Koszul dual algebra $\cK^!$ lies in performing computations involving bimodules. For many topological applications, such as satellite operations, computations involving bimodules play an important role. However, calculations involving a $DA$-bimodules ${}_{\cK} \cX^{\cK}$ are frequently challenging because $\cK$ is infinite dimensional over $\bF$.  Even recording formulas for the structure maps of a bimodule can be challenging. Koszul duality gives a very practical technique for working with bimodules.

 If ${}_{\cK} \cX^{\cK}$ is a $DA$-bimodule, write ${}^{\cK^!} \cX^{\cK}$ for the $DD$-bimodule 
\[
{}^{\cK^!} \cX^{\cK}:={}^{\cK^!} [\cCo]^{\cK}\boxtimes {}_{\cK} \cX^{\cK}.
\]
In practice, the $DD$-bimodule ${}^{\cK^!} \cX^{\cK}$ is typically easier to work with than the $DA$-bimodule ${}_{\cK} \cX^{\cK}$. In favorable circumstances (e.g. when ${}_{\cK} \cX^{\cK}$ is a bimodule of a 2-component link in $S^3$), the data of the bimodule ${}^{\cK^!} \cX^{\cK}$ can be easily encoded in a finite (and frequently small) set of data. The original $DA$-bimodule ${}_{\cK} \cX^{\cK}$ can be recovered, up to homotopy equivalence, by tensoring on the left with ${}_{\cK} [\cTr]_{\cK^!}$. 

The bimodule ${}_{\cK} [\cTr]_{\cK^!}$ has many higher actions, though we give a combinatorial description which is concrete and can be easily computed by hand for relatively small numbers of inputs (or on a computer for larger numbers of inputs).

The techniques from \cite{CZZSatellites} by Chen, Zhou and the author on L-space satellite operators have a natural interpretation in terms of the Koszul dual algebra $\cK^!$ described in this paper. When $L$ was a 2-component L-space link, we described a model of the surgery $DA$-bimodule ${}_{\cK} \cX(S^3,L)^{\bF[W,Z]}$. We used this construction to give an algorithmic description of the effect on knot Floer homology of a large number of satellite operators.  With Chen and Zhou, we implemented our algorithm from \cite{CZZSatellites} in Python code \cite{CZZCode}. We describe the connection between our present paper and \cite{CZZSatellites} in Section~\ref{sec:CZZ}.

 We expect that the techniques of the present paper will be useful in extending these algorithms to other families of satellite operators, and to developing other computational techniques involving the surgery formula.

\subsection{Acknowledgements}

We thank Daren Chen, Kristen Hendricks, Robert Lipshitz, Matthew Stoffregen, and Hugo Zhou for helpful conversations.

\section{Algebraic background}

\subsection{$A_\infty$-algebras and modules}

\begin{define} A (curved) \emph{$A_\infty$-algebra} $\cA$ over a ring $\ve{k}$ (of characterstic 2) consists of a $(\ve{k},\ve{k})$-bimodule $\cA$, equipped with $\ve{k}$-linear maps
\[
\mu_n\colon \underbrace{\cA\otimes_{\ve{k}}\cdots \otimes_{\ve{k}} \cA}_n\to \cA
\]
ranging over $n\ge 0$. These maps are assumed to satisfy the associativity relations
\[
0=\sum_{i=0}^{n-k+1} \sum_{k=1}^{n+1} \mu_{n-i+1}(a_1,\dots, a_{k-1}, \mu_i(a_k,\dots, a_{k+i-1}),a_{k+i},\dots, a_n)
\]
\end{define}

We view $\mu_0\colon \ve{k}\to \cA$ as an element of $\cA$, which is called the \emph{curvature} of $\cA$.

\begin{define}
 A \emph{curved $dg$-algebra} $\cA$ is a curved $A_\infty$-algebra $(\cA, \mu_i)$ so that $\mu_i=0$ if $i\not \in \{0,1,2\}$, and furthermore
 \[
 \mu_1^2=\mu_2(\mu_0,-)+\mu_2(-,\mu_0)=0.
 \]
 The last equation asks for $(\cA,\mu_1)$ to be a chain complex and for $\mu_0$ to be central in $\cA$.
\end{define}

\begin{define} Let $\cA$ be a (potentially curved) $A_\infty$-algebra. A left \emph{$A_\infty$-module} ${}_{\cA} M$ over $\cA$ is a left $\ve{k}$-module $M$ which is equipped with $\ve{k}$-linear maps
\[
m_{n+1}\colon \underbrace{\cA\otimes_{\ve{k}} \cdots \otimes_{\ve{k}} \cA}_n \otimes_{\ve{k}} M\to M
\]
for $n\ge 0$. These satisfy
\[
\begin{split}
0=&\sum_{i=0}^n m_{n-i+1}(a_n,\dots, a_{i+1}, m_{i+1}(a_i,\dots, a_1,\xs))\\
+&\sum_{i=0}^n \sum_{j=0}^{n-i} m_{n-i+2}(a_n,\dots, \mu_{i}(a_{i+j},\dots, a_{j+1}),a_{j},\dots, a_1, \xs).
\end{split}
\]
\end{define}

If ${}_{\cA} M$ and ${}_{\cA} N$ are $A_\infty$-modules, a \emph{morphism} $f_*\colon {}_{\cA}M\to {}_{\cA} N$ consists of a collection of $\ve{k}$-linear maps
\[
f_{j+1}\colon \cA\otimes_{\ve{k}} \cdots \otimes_{\ve{k}} \cA \otimes_{\ve{k}} M\to M,
\]
ranging over $j\ge 1$.
The space of morphisms from ${}_{\cA} M$ to ${}_{\cA} N$ forms a chain complex, where the  differential of a morphism is given by the formula:
\[
\begin{split}
\d(f_*)_{n+1}=&\sum_{i=0}^n f_{n-i+1}(a_n,\dots, a_{i+1}, m_{i+1}(a_i,\dots, a_1,\xs))\\
+&\sum_{i=0}^n m_{n-i+1}(a_n,\dots, a_{i+1}, f_{i+1}(a_i,\dots, a_1,\xs))\\
+&\sum_{i=0}^n \sum_{j=0}^{n-i} f_{n-i+2}(a_n,\dots, \mu_{i}(a_{i+j},\dots, a_{j+1}),a_{j},\dots, a_1, \xs).
\end{split}
\]
It is straightforward to verify that $\d^2=0$. Sometimes authors reserve the word \emph{morphism} for a map $f_*$ as above which satisfies $\d(f_*)=0$, however we adopt the more general definition above.

If $f_*\colon {}_{\cA} M\to {}_{\cA} N$ and $g_*\colon {}_{\cA} N\to {}_{\cA} P$ are morphisms, the composite $g_*\circ f_*$ is defined via the formula
\[
(g_*\circ f_*)_{n+1}(a_n,\dots, a_1,\xs)=\sum_{i=0}^{n} g_{n-i}(a_n,\dots, a_{i+1}, f_{i+1}(a_i,\dots, a_1,\xs)).
\]
It is straightforward to see that composition is strictly associative and satisfies
\[
\d(g\circ f)=\d(g)\circ f+g\circ \d(f).
\]

We now recall the following basic notion:

\begin{define}
\label{def:strict-unality}
\item
\begin{enumerate}
\item Suppose that $\cA$ is a (curved) $A_\infty$-algebra. We say that $\cA$ is \emph{strictly unital} if there is an element $1\in \cA$ such that $\mu_2(1,\xs)=\mu_2(\xs,1)=\xs$ for all $\xs\in \cA$, and if $\mu_n(a_1,\dots, a_{i-1},1,a_{i+1}, \dots, a_n)=0$ whenever $a_1,\dots, a_{i-1},a_{i+1},\dots ,a_n\in \cA$ and $n\neq 2$. 
\item If $\cA$ is a strictly unital $A_\infty$-algebra, then a bimodule ${}_{\cA} M$ is called \emph{strictly unital} if $m_{2}(1,\xs)=\xs$ for all $\xs\in M$, and if $m_{n+1}(a_n,\dots, a_{i+1},1,a_{i-1},\dots, a_1,\xs)=0$ whenever $n>1$ . A morphism $f_{*} \colon {}_{\cA} M\to {}_{\cA} N$ is called \emph{strictly unital} if $f_{n+1}(a_n,\dots,a_{i+1},1,a_{i-1},\dots,  a_1, \xs)=0$ whenever $n>0$.
\end{enumerate}
\end{define}

\subsection{Type-$D$, $DD$, $DA$ and $AA$ modules}

We now recall the category of type-$D$ modules, defined by Lipshitz, Ozsv\'{a}th and Thurston \cite{LOTBordered}. If $\cA$ is an $A_\infty$-algebra over $\ve{k}$, then a \emph{right type-$D$ module} $X^{\cA}$ consists of a right $\ve{k}$-module $X$ equipped with a map $\delta^1\colon X\to X\otimes \cA$, which satisfies the following type-$D$ structure relation. We write $\delta^n\colon X\to X\otimes \cA^{\otimes n}$ for the $n$-th iterate of $\delta^1$. We assume that
\[
\sum_{n=0}^\infty (\bI_{X}\otimes \mu_n)\circ \delta^n=0.
\]
Some assumption is required for the above sum to make sense. For our purposes, it is simplest to assume that $\mu_n=0$ for $n\gg 0$.

Type-$D$ modules over an $A_\infty$-algebra $\cA$ form an $A_\infty$-category. If $\cA$ is a $dg$-algebra (potentially with curvature), then type-$D$ modules over $\cA$ form a $dg$-category.  We refer the reader to \cite{LOTBimodules}*{Section~2.2.3} for more details. 

If $\cA$ and $\cB$ are $dg$-algebras over $\ve{i}$ and $\ve{j}$ respectively, then a type-$DD$ bimodule ${}^{\cA} M^{\cB}$ consists of a $(\ve{i},\ve{j})$-bimodule with a structure map
\[
\delta^{1,1}\colon M\to \cA\otimes_{\ve{i}} M \otimes_{\ve{j}} \cB,
\]
which satisfies a certain structure equation. The structure equation is same as the type-$D$ structure equation if we instead view $\delta^{1,1}$ as a map
\[
\delta^{1,1}\colon M\to \cA\otimes_{\bF} \cB^{\opp} \otimes  M
\]
 where $\cB^{\opp}$ is the opposite algebra (i.e. $\mu_2^{\opp}(b_1, b_2)=\mu_2(b_2,b_1)$). Here, the tensor product $\cA\otimes_{\bF} \cB^{\opp}$ is equipped with differential $\mu_1\otimes \id+\id\otimes \mu_1$, and curvature $1\otimes \mu_0+\mu_0\otimes 1$. 

We now consider $AA$-bimodules.  If $\cA$ and $\cB$ are $A_\infty$-algebras over $\ve{j}$ and $\ve{k}$, then a type-$AA$ bimodule ${}_{\cA} M_{\cB}$ consists of a $(\ve{j},\ve{k})$-bimodule $M$, equipped with structure maps
\[
m_{n|1|k}\colon \cA^{\otimes n}\otimes M\otimes \cB^{\otimes k}\to M
\]
which satisfy the following associativity relation:
\[
\begin{split}
0&=\sum_{\substack{0\le i\le n\\
0\le j\le k}} m_{n-i|1|k-j}(a_n,\dots, a_{i+1}, m_{i|1|j}(a_i,\dots, a_1,\xs,b_1,\dots, b_j),b_{j+1},\dots, b_k)\\
&+\sum_{0\le i\le j\le n} m_{n-j+i+1|1|k}(a_n,\dots,a_{j+1}, \mu_{j-i}(a_j,\dots, a_{i+1}),a_i,\dots, a_1, \xs, b_1,\dots, b_k)\\
&+\sum_{0\le i\le j\le n} m_{n|1|k-j+i+1}(a_n,\dots, a_1, \xs, b_1,\dots,b_i, \mu_{j-i}(b_{i+1},\dots, b_j),b_{j+1},\dots, b_k)
\end{split}
\]

A \emph{type-$DA$ bimodule} ${}_{\cA} M^{\cB}$ consists of a $(\ve{j},\ve{k})$-bimodule $M$, equipped with $(\ve{j},\ve{k})$ linear maps
\[
\delta_{n+1}^1\colon \cA^{\otimes n}\otimes M\to M\otimes \cB.
\]
These satisfy a suitable compatibility relation, which is indicated schematically by the following diagram:
\begin{equation}
\begin{tikzcd}[row sep=.3cm] \cA^{\otimes n} \ar[d,Rightarrow] & M \ar[dd]&\,\\
\Delta\ar[dr,Rightarrow] \ar[ddr,Rightarrow]\ar[dddr,Rightarrow]&\\
& \delta_*^1 \ar[d] \ar[dddr]\\
& \vdots \ar[d] \ar[ddr,Rightarrow]\\
& \delta_*^1\ar[dr] \ar[dd]\\
&\,& \mu_*\ar[d]\\
&\,&\,
\end{tikzcd}+\begin{tikzcd}[row sep=.3cm, column sep={1cm,between origins}] \cA^{\otimes n} \ar[dr,Rightarrow]&& M \ar[dd]&\,\\
&D_{\cA}\ar[dr,Rightarrow] &\\
&& \delta_*^1 \ar[d] \ar[dr]\\
&\,&\,&\,
\end{tikzcd}=0.
\label{eq:differential-squares-to-zero}
\end{equation}
Here, $D_{\cA}$ denotes the sum of maps of the form $\id_{\cA^{\otimes j}}\otimes \mu_{k}\otimes \id_{\cA^{\otimes (n-j-k)}}$. In the above diagram, $\Delta$ denotes the sum of all ways of splitting an elementary tensor $a_1\otimes \cdots \otimes a_n$ into a tensor of tensors by adding (non-nested) parentheses. More formally, one defines $T^* \cA=\bigoplus_{n=0}^\infty \cA^{\otimes n}$. Then we define maps $\Delta_i$ as follows. We set $\Delta_1=T^* \cA\to T^* \cA$ to be the identity. We set
\[
\Delta_2 \colon T^* \cA\to (T^* \cA)\otimes (T^* \cA)
\]
to be the map $\Delta_2(a_1\otimes \cdots \otimes a_n)=\sum_{k=0}^n (a_1\otimes \cdots \otimes  a_k)\otimes (a_{k+1}\otimes \cdots\otimes a_n)$. For $i>2$, we set $\Delta_{i}=\bI_{(T^* \cA)^{\otimes i-1}} \otimes \Delta_2$. Then $\Delta$, in the above diagram, denotes the sum over all $\Delta_i$. See \cite{LOTBimodules}*{Section~2.2.4} for further details.

The notion of strict unality, Definition~\ref{def:strict-unality}, extends easily to type-$DA$ and $AA$ bimodules. 

\begin{define}
Suppose $\cA$ and $\cB$ are strictly unital $A_\infty$-algebras.
\begin{enumerate}
\item A $DA$-bimodule ${}_{\cA} M^{\cB}$ is \emph{strictly unital} if $\delta_{2}^1(1,\xs)=\xs\otimes 1$ for all $\xs\in M$, and if
\[
\delta_{n+1}^1(a_n,\dots, a_{i+1},1,a_{i-1},\dots, a_1, \xs)=0
\]
if $n>1$.
\item A type-$AA$ bimodule ${}_{\cA} M_{\cB}$ is \emph{strictly unital} if $m_{1|1|0}(1,\xs)=\xs=m_{0|1|1}(\xs,1)$ for all $\xs\in M$, and if
\[
m_{n|1|m}(a_n,\dots, a_1, \xs, b_1,\dots, b_m)=0
\]
if $n+m>1$ and if some $a_i$ or $b_i$ is 1.
\end{enumerate}
\end{define}

\subsection{The homological perturbation lemma}

We now recall the homological perturbation lemma. The homological perturbation lemma is a very flexible technique which applies broadly to $dg$-categories, though we focus on the case of $AA$-bimodules since that is the main setting that we use the lemma. 

\begin{lem} Let $\cA$ and $\cB$ be $A_\infty$-algebras over $\ve{j}$ and $\ve{k}$, respectively. Suppose that ${}_{\cA} M_{\cB}$ is a type-$AA$ bimodule and $(Z,m_{0|1|0})$ is a chain complex of $(\ve{j},\ve{k})$-modules such that there is a diagram of $(\ve{j},\ve{k})$-linear maps
\begin{equation}
\begin{tikzcd}
\ar[loop left, "h"]  M \ar[r, shift left, "\pi"]& Z  \ar[l, shift left, "i"]
\end{tikzcd}
\label{eq:sdr-diagram}
\end{equation}
such that $\d (\pi)=0$, $\d(i)=0$ and
\begin{equation}
\pi\circ i=\bI_Z,\,\, i\circ \pi= \bI+\d(h),\,\, h\circ h=0, \,\, \pi\circ h=0,\,\,h\circ i=0.
\label{eq:strong-deformation-retraction}
\end{equation}
Here, $\d(\pi)$ denotes the morphism differential of $\pi$, when viewed as a map of chain complexes (i.e. $\d(\pi)=[m_{0|1|0},\pi]$). Then there is a type-$AA$ bimodule structure on $Z$ over $(\cA,\cB)$ which extends $m_{0|1|0}$, as well as extensions of $\pi$, $h$ and $i$ to $AA$-bimodule morphisms $\Pi$, $I$ and $H$, so that $d(\Pi)=0$, $\d(I)=0$ and the analog of Equation~\eqref{eq:strong-deformation-retraction} holds.
\end{lem}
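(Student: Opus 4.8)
The plan is to deduce the statement from the classical homological perturbation lemma, after repackaging the $AA$-bimodule ${}_{\cA} M_{\cB}$ and its morphisms in the language of the two-sided bar construction. First I would form the $(\ve{j},\ve{k})$-bimodule $\mathbf{B}M:=T\cA\otimes_{\ve{j}} M\otimes_{\ve{k}} T\cB$, where $T\cA=\bigoplus_{n\ge 0}\cA^{\otimes n}$ and $T\cB$ is defined similarly, equipped with its deconcatenation bicomodule structure over the tensor coalgebras $(T\cA,T\cB)$. The maps $m_{n|1|k}$, together with the operations $\mu_n$ of $\cA$ and $\cB$ (including any curvature $\mu_0$), assemble into a single operator $D_M$ on $\mathbf{B}M$, and the $AA$-associativity relation is equivalent to $D_M^2=0$. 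In the same framework, $AA$-bimodule morphisms are degree-$0$ comodule-compatible maps of bar complexes, composition is composition of maps, and the morphism differential $\d$ is the commutator with $D_M$; compare the bar formalism in \cite{LOTBimodules}. Since everything is in characteristic $2$, there are no signs to track.

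Next I would split $D_M=d_0+\delta$, where $d_0=\id_{T\cA}\otimes m_{0|1|0}\otimes\id_{T\cB}$ uses only the internal differential of $M$ and $\delta$ collects everything else (the $m_{n|1|k}$ with $n+k\ge 1$ and the operations $\mu_n$ of $\cA$ and $\cB$). Tensoring the data of \eqref{eq:sdr-diagram}--\eqref{eq:strong-deformation-retraction} with the identities on $T\cA$ and $T\cB$ yields a strong deformation retraction $(\Pi_0,I_0,H_0)=(\id\otimes\pi\otimes\id,\ \id\otimes i\otimes\id,\ \id\otimes h\otimes\id)$ of $(\mathbf{B}M,d_0)$ onto $(\mathbf{B}Z,d_0)$, with $\mathbf{B}Z:=T\cA\otimes Z\otimes T\cB$; the retraction identities and side conditions for $(\Pi_0,I_0,H_0)$ follow at once from those for $(\pi,i,h)$. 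I would then apply the perturbation lemma with perturbation $\delta$, obtaining the usual geometric-series formulas for a new differential $D_Z=d_0+\Pi_0\bigl(\sum_{j\ge 0}(\delta H_0)^j\bigr)\delta\, I_0$ on $\mathbf{B}Z$ and for transferred maps $I$, $\Pi$, $H$. Projecting these to the cogenerators $\cA\otimes Z\oplus Z\oplus Z\otimes\cB$ reads off the desired $AA$-bimodule structure $m'_{n|1|k}$ on $Z$ together with morphisms $\Pi$, $I$, $H$ with $\d(\Pi)=\d(I)=0$ and the analogue of \eqref{eq:strong-deformation-retraction}; that $D_Z^2=0$ and that the retraction identities persist is part of the perturbation lemma, and the $n=k=0$ component of $D_Z$ equals $m_{0|1|0}$ because any term of $\Pi_0(\cdots)\delta\,I_0$ returning to tensor-length zero while touching the middle factor would have to be $m_{0|1|0}$ itself, which lies in $d_0$ rather than $\delta$.

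The main obstacle — and precisely the role of the side conditions $h\circ h=0$, $\pi\circ h=0$, $h\circ i=0$ — is to verify that the geometric series above are finite on each fixed component $\cA^{\otimes n}\otimes Z\otimes\cB^{\otimes k}$, so that no convergence question arises and all the operators remain honest multilinear maps. Here I would argue that in any nonzero term with at least one factor of $H_0$, every occurrence of $\delta$ sandwiched between two consecutive $H_0$'s (or between $I_0$ and $H_0$, or between $H_0$ and $\Pi_0$) must act on the middle factor: otherwise that factor stays in $\mathrm{im}(h)$ or equals $i(\xs)$, and the neighbouring $H_0$ or $\Pi_0$ kills the term via $h^2=0$, $\pi h=0$, or $hi=0$. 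In particular no curvature term $\mu_0$ ever survives in such a term, and each surviving $\delta$ is some $m_{n'|1|k'}$ with $n'+k'\ge 1$, hence consumes at least one of the $n+k$ incoming algebra inputs; so a surviving term has at most $n+k$ factors of $\delta$ (together with the boundedly many length-decreasing terms for which $j=0$). Thus the sums are finite and each $m'_{n|1|k}$, $I_{n|1|k}$, $\Pi_{n|1|k}$, $H_{n|1|k}$ is a finite sum of tree terms.

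Finally I would record the residual bookkeeping: that $D_Z$ is again a bicomodule coderivation and that $I,\Pi$ are comodule maps, which holds because $i$, $\pi$, $h$ enter only through $I_0$, $\Pi_0$, $H_0$ — identities tensored with a single map of the middle factor — while $\delta$ is built from coderivation-type pieces, so every term of the series still factors correctly through the cogenerators. I expect this coalgebra bookkeeping, rather than any analytic difficulty, to be the most tedious part; once it is in place the statement is formal. An equivalent and more hands-on route, which I would also mention since it is the form actually used for later computations, is to bypass the bar construction and define $m'_{n|1|k}$, $I$, $\Pi$, $H$ directly by the corresponding tree formulas (feed the $Z$-input through $i$, alternate the operations from $\delta$ with the homotopy $h$ on internal edges, and close off with $\pi$), then check the $AA$-relations and \eqref{eq:strong-deformation-retraction} by a sign-free cancellation-in-pairs argument; the two approaches produce the same maps.
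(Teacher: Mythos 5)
The paper omits the proof, citing standardness, so there is no reference argument to compare against; your bar--construction account is a correct instance of the standard ``tensor trick'' form of the homological perturbation lemma. The key points are all in place: you tensor the SDR with $\id_{T\cA}\otimes(-)\otimes\id_{T\cB}$, perturb by the coderivation $\delta$ collecting the $m_{n|1|k}$ with $n+k\ge 1$ together with the algebra operations, and observe that the side conditions $h^2=0$, $\pi h=0$, $hi=0$ force every $\delta$ sandwiched between an $H_0$ and one of $I_0,\Pi_0,H_0$ to hit the middle factor. That observation is exactly what makes the argument work in the curved setting: it simultaneously rules out surviving curvature or $\mu_1$--type coderivation terms and guarantees that each surviving $\delta$ consumes at least one algebra input, so the geometric series terminates on every component $\cA^{\otimes n}\otimes Z\otimes\cB^{\otimes k}$. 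The alternative ``hands-on'' tree-formula route you mention at the end is the one the paper actually records: the explicit expressions for $\b$, $\Pi$, $I$, $H$ written out after Lemma~\ref{lem:HPL-twist} are precisely those tree formulas (in the additionally twisted setting).

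The one place your write-up is terse, and which you flag yourself, is the coderivation/comodule bookkeeping. It is not automatic that $\Pi_0\bigl(\sum_j(\delta H_0)^j\bigr)\delta I_0$ is again a coderivation, nor that the transferred $I$, $\Pi$ are comodule maps; this needs either a filtration-by-tensor-length argument showing the transferred operator agrees with the coderivation extension of its cogenerator component, or else the direct route of defining the $m'_{n|1|k}$, $\Pi$, $I$, $H$ by the tree formulas from the start and verifying the $A_\infty$-relations and \eqref{eq:strong-deformation-retraction} by cancellation in pairs. Either way the argument goes through, and since the lemma is indeed standard this is a gap in exposition rather than in substance.
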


The proof is standard, so we omit it. For future reference, one frequently defines a \emph{strong deformation retraction} to be a homotopy equivalence given by three maps $\pi$, $i$ and $h$, as in Equation~\eqref{eq:sdr-diagram},  which satisfy $\d(\pi)=0$, $\d(i)=0$ and which satisfy Equation~\eqref{eq:strong-deformation-retraction}.

There is a generalization of the above result which is also quite helpful, and which we will make use of. Let ${}_{\cA} M_{\cB}=(M,m_{i|1|j})$ be a type-$AA$ bimodule. A \emph{twist} of ${}_{\cA} M_{\cB}$ consists of a type-$AA$ morphism
\[
\a=\a_{*|1|*}\colon {}_{\cA} M_{\cB}\to {}_{\cA} M_{\cB}
\]
which satisfies the Mauer-Cartan equation
\[
\a_{*|1|*}\circ \a_{*|1|*}+\d(\a_{*|1|*})=0.
\]
Here, $\d(\a_{*|1|*})$ denotes the boundary of $\a_{*|1|*}$ when viewed as an endomorphism of ${}_{\cA} M_{\cB}$. 
Given a twist $\a$ of ${}_{\cA} M_{\cB}$, we can define $\Tw_{\a}({}_{\cA} M_{\cB})$ to be the type-$AA$ bimodule whose underlying $(\ve{i},\ve{j})$-module is $M$ and whose structure maps are $m_{i|1|j}+\a_{i|1|j}$.

\begin{lem}
\label{lem:HPL-twist} Suppose that ${}_{\cA} M_{\cB}$ and ${}_{\cA} Z_{\cB}$ are type-$AA$ bimodules so that there is a diagram of morphisms of $AA$-bimodules
\[
\begin{tikzcd}
\ar[loop left, "h"] {}_{\cA} M_{\cB} \ar[r, shift left, "\pi"]& {}_{\cA} Z_{\cB}  \ar[l, shift left, "i"]
\end{tikzcd}
\]
so that $\d (\pi)=0$, $\d(i)=0$ and
\begin{equation}
\pi\circ i=\bI_Z,\,\, i\circ \pi= \bI_M+\d(h),\,\, h\circ h=0, \,\, \pi\circ h=0,\,\,h\circ i=0.\label{eq:strong-deformation-retraction2}
\end{equation}
Let $\a$ be a twist of ${}_{\cA} M_{\cB}$ and suppose that $\bI_M+ h \circ \a$ is invertible, with inverse $\sum_{i=0}^\infty (h\circ \a)^i$. Then there is a twist $\b$ of ${}_{\cA} Z_{\cB}$, as well as a diagram of maps
\[
\begin{tikzcd}
\ar[loop left, "H"] \Tw_{\a}({}_{\cA} M_{\cB}) \ar[r, shift left, "\Pi"]& \Tw_\b({}_{\cA} Z_{\cB})  \ar[l, shift left, "I"],
\end{tikzcd}
\]
so that $\d(\Pi)=0$, $\d(I)=0$ and which satisfy the natural adaptation of Equation~\eqref{eq:strong-deformation-retraction2}.
\end{lem}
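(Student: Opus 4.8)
The plan is to reduce the twisted statement to the untwisted homological perturbation lemma (the preceding Lemma) applied to a modified bimodule, and then to extract the twist $\b$ on $Z$ from the output. First I would form the bimodule $\Tw_{\a}({}_{\cA} M_{\cB})$, whose structure maps are $m_{i|1|j}+\a_{i|1|j}$; the Maurer--Cartan condition $\a\circ\a+\d(\a)=0$ is exactly what is needed for these to satisfy the $AA$-structure relations, so $\Tw_{\a}({}_{\cA} M_{\cB})$ is a genuine $AA$-bimodule. The key observation is that the same underlying maps $\pi$, $i$, $h$ (which were $AA$-morphisms for the \emph{original} structures) are no longer chain maps / strong deformation retraction data for the twisted structures, but they can be corrected: one sets
\[
\tilde\pi=\pi\circ\bigl(\bI_M+h\circ\a\bigr)^{-1},\qquad
\tilde i=\bigl(\bI_M+\a\circ h\bigr)^{-1}\circ i,\qquad
\tilde h=\bigl(\bI_M+h\circ\a\bigr)^{-1}\circ h,
\]
using the hypothesis that $\bI_M+h\circ\a=\sum_{n\ge0}(h\circ\a)^n$ is invertible (and, dually, $\bI_M+\a\circ h$, whose inverse is $\bI_M+\a\circ\tilde h$). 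This is the standard "perturb the contraction by a Maurer--Cartan element" trick: the perturbation of the differential on $M$ here is precisely $\a$, which has "$h$-small" leading behavior in the relevant filtration (it strictly increases tensor length or lowers a homological degree), so the geometric series converges in the sense required.

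Next I would verify that $(\tilde\pi,\tilde i,\tilde h)$ form a strong deformation retraction from $\Tw_{\a}({}_{\cA} M_{\cB})$ onto $Z$ with its original complex structure — i.e. that they are morphisms of $AA$-bimodules for the twisted source and untwisted target, that $\d(\tilde\pi)=0$, $\d(\tilde i)=0$, and that the side conditions $\tilde\pi\circ\tilde i=\bI_Z$, $\tilde i\circ\tilde\pi=\bI_M+\d_{\Tw_\a}(\tilde h)$, $\tilde h^2=0$, $\tilde\pi\circ\tilde h=0$, $\tilde h\circ\tilde i=0$ hold. These are the usual algebraic identities from the perturbation lemma; they follow formally from the original SDR relations \eqref{eq:strong-deformation-retraction2} together with $\a\circ\a+\d(\a)=0$ by manipulating the geometric series, and I would simply record the manipulation rather than belabor it. Since $Z$'s complex structure is unchanged, the side conditions $\tilde h\circ\tilde h=0$, $\tilde\pi\circ\tilde h=0$, $\tilde h\circ\tilde i=0$ can in fact be arranged to hold on the nose exactly as in the classical "annihilation" normalization (replacing $\tilde h$ by $\tilde\pi^{\perp}\tilde h\,\tilde i^{\perp}$-type corrections if necessary), so that the hypotheses of the preceding (untwisted) Lemma are met.

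Now I apply the untwisted homological perturbation lemma to the $AA$-bimodule $\Tw_{\a}({}_{\cA} M_{\cB})$ with the SDR data $(\tilde\pi,\tilde i,\tilde h)$ onto $(Z,m_{0|1|0})$. It produces an $AA$-bimodule structure on $Z$ extending $m_{0|1|0}$, together with $\Pi$, $I$, $H$ extending $\tilde\pi$, $\tilde i$, $\tilde h$ and satisfying the SDR relations. It remains to identify the resulting structure on $Z$ as $\Tw_\b({}_{\cA} Z_{\cB})$ for a twist $\b$ of the \emph{original} ${}_{\cA} Z_{\cB}$: one sets $\b_{i|1|j}:=(\text{new structure maps on }Z)_{i|1|j}-m_{i|1|j}$, and checks that $\b$ is an $AA$-endomorphism of ${}_{\cA} Z_{\cB}$ solving $\b\circ\b+\d(\b)=0$. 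This last point is formal: both the perturbed structure and $m_{i|1|j}$ satisfy the $AA$-relations, and subtracting the relations gives exactly the Maurer--Cartan equation for $\b$. Finally, relabelling $\Pi,I,H$ as maps between $\Tw_\a({}_{\cA} M_{\cB})$ and $\Tw_\b({}_{\cA} Z_{\cB})$ gives the claimed diagram, and the SDR relations transported from the untwisted lemma are the required adaptation of \eqref{eq:strong-deformation-retraction2}.

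I expect the main obstacle to be bookkeeping rather than conceptual: making precise in \emph{which} filtration / completion the geometric series $\sum(h\circ\a)^n$ and the analogous series for $H$, $\b$ converge, so that all the infinite sums defining the perturbed maps are well-defined. The hypothesis that $\bI_M+h\circ\a$ is invertible with the stated inverse is doing exactly this work, so once it is in place the rest is the standard perturbation-lemma computation; I would state the convergence conventions up front (as elsewhere in the paper, e.g. the length filtration on $T^*\cA\otimes M\otimes T^*\cB$) and then proceed formally.
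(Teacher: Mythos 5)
The paper's proof is simply to record the explicit geometric--series formulas for $\b$, $\Pi$, $I$, $H$ and observe that the verification is the standard perturbation-lemma computation. You instead propose a reduction to the untwisted case, which is a legitimate alternate route, but your intermediate step contains a genuine error. You claim that the corrected maps $(\tilde\pi,\tilde i,\tilde h)$ form an SDR from $\Tw_\a({}_\cA M_\cB)$ onto $Z$ \emph{with its original complex structure}. This cannot hold in general: perturbing the differential on $M$ by $\a_{0|1|0}$ generically changes the chain homotopy type, so the corrected SDR lands on $(Z,\, m^Z_{0|1|0}+\b_{0|1|0})$ for a nontrivial chain-level perturbation $\b_{0|1|0}=\pi\circ\a\circ(\bI_M+h\circ\a)^{-1}\circ i$ (restricted to $0|1|0$ components), not on $(Z, m^Z_{0|1|0})$. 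Insisting on the unperturbed $Z$ differential makes $\d(\tilde\pi)=0$ and $\d(\tilde i)=0$ fail whenever $\a_{0|1|0}\neq 0$, which is exactly the situation in the paper's own application (Lemma~\ref{lem:Tr-construction}, where $\a_{0|1|0}=\a_{0|1|0}^{\mu_1}+\a_{0|1|0}^{\mu_2}\neq 0$). To salvage the reduction, you must first pass to the perturbed differential on $Z$, then apply the preceding Lemma, and only at the end observe (as you do in your last step) that the transferred $AA$-structure differs from the given one on $Z$ by a Maurer--Cartan element $\b$.

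There is also a computational slip: your formulas for $\tilde\pi$ and $\tilde i$ have $h$ and $\a$ in the wrong order. Because of the side conditions $\pi\circ h=0$ and $h\circ i=0$, the expressions $\pi\circ(\bI_M+h\circ\a)^{-1}$ and $(\bI_M+\a\circ h)^{-1}\circ i$ collapse to $\pi$ and $i$ respectively, so nothing has actually been corrected. The correct formulas are $\tilde\pi=\pi\circ(\bI_M+\a\circ h)^{-1}$ and $\tilde i=(\bI_M+h\circ\a)^{-1}\circ i$; your $\tilde h$ is written correctly. With these, the "annihilation normalization" step you invoke is unnecessary --- the side conditions for the corrected triple follow formally from the original ones and $h\circ h=0$, which is the whole point of the usual perturbation-lemma formulas that the paper records.
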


One very helpful aspect of the above lemma is that $\b$, $\Pi$, $I$ and $H$ have explicit formulas. Namely they are given by
\[
\begin{split}
\b&=\sum \pi \circ \a \circ h\circ \cdots \circ h\circ \a \circ i\\
\Pi&=\sum \pi \circ \a \circ h\circ \cdots \circ \a\circ h\\
I&= \sum h\circ \a \circ \dots \circ h\circ \a \circ i\\
H&= \sum h\circ \a \circ \cdots \circ h\circ \a\circ h.
\end{split}
\]
The proof is standard.

\subsection{Modules over $\cK$}
\label{sec:regularly-U-adic-modules}

In \cite{ZemBordered,ZemExact}, we defined several module categories over $\cK$. We review these presently. They are versions of the type-$D$ and type-$A$ module categories of Lipshitz, Ozsv\'{a}th and Thurston, but adapted to the setting of linear topological spaces. Here, we present modified versions of these categories  which are better adapted to the setting of Koszul duality.

\begin{define}
\label{def:regularly-filtered}
 Let $\ve{k}$ be a ring. A \emph{regular $\Z$-filtered $\ve{k}$-module} consists of a left $\ve{k}$-module $M$, equipped with a decreasing filtration of $\ve{k}$-submodules $M_{\ge i}\subset M$, ranging over $i\in \Z$. Furthermore, we assume that $M_{\ge i}=M$ for $i\ll 0$ and $\bigcap_{i=0}^\infty M_{\ge i}=\{0\}$. 
\end{define}

We are primarily interested in the case that $\ve{k}$ is $\bF=\Z/2$ or $\ve{I}$.

The \emph{completion} of a regular $\Z$-filtered module consists of the inverse limit
\[
\veit{M}=\varprojlim_{i\in \Z} M/M_{\ge i}.
\]
The completion is also a regularly $\Z$-filtered $\ve{k}$-module.

\begin{define}
A map $f\colon M\to N$ is said to be \emph{weakly filtered} if there is a constant $C$ so that
\[
f(M_{\ge i})\subset N_{\ge i-C}
\]
for all $i\in \Z$. 
\end{define}
A weakly filtered map between regularly filtered modules induces a linear map between their completions, which is also weakly filtered.

In the category of regularly $\Z$-filtered spaces, we define a morphism from  ${}_{\ve{k}} M$ to ${}_{\ve{k}} N$ to be a $\ve{k}$-linear, weakly filtered map between their completions. In particular, in the so-formed category, a module is isomorphic to its completion. Therefore we typically conflate a regularly $\Z$-filtered $\ve{k}$-module with its completion. 

\begin{define}
\label{def:commensurate} Suppose that $M_{\a}, N_{\a}$ are two families of regularly $\Z$-filtered $\ve{k}$-modules, ranging over $\a\in A$, and that
\[
f_\a\colon M_\a\to N_{\a}
\] is a family of weakly filtered maps. We say that the maps $f_\a$ are \emph{commensurate} if there is a single number $C$ so that
\[
f_\a((M_\a)_{\ge i})\subset (N_{\a})_{\ge i-C}
\]
for all $i\in \Z$ and $\a\in A$.
\end{define}

\begin{define} If $M_{\ve{k}}$ and ${}_{\ve{k}}N$ are $\Z$-filtered $\ve{k}$-modules, then the tensor product $M \otimes_{\ve{k}} N$ is also  $\Z$-filtered, with filtration levels
\[
(M\otimes_{\ve{k}} N)_{\ge n}=\sum_{i+j=n} M_{\ge i}\otimes N_{\ge j}.
\]
\end{define}

We consider the following filtration on $\cK$:

\begin{define} We define the \emph{$U$-adic filtration} on $\cK$ to be the filtration induced by the sequence of two sided ideals $\cK_{\ge i}:=(U^i)\subset \cK$. 
\end{define}

\begin{rem} In \cite{ZemBordered,ZemExact}, we also considered an additional perspective on $\cK$, which was as a linear topological chiral algebra. For simplicity, we will not consider that perspective in this paper.
\end{rem}

We define two categories, ${}_{\cK}\MOD_{(U)}$ and $\MOD_{(U)}^{\cK}$. We call these the categories of \emph{regularly $U$-adic} type-$D$ and $A$ modules. Modules and morphisms in this category are slightly more restrictive than the categories of $U$-adic type-$D$ and $A$ modules from \cite{ZemExact}. 

Before defining them, we need some auxiliary definitions:
\begin{define}
\label{def:input-trees} 
\item
\begin{enumerate}
\item  We define an  \emph{(ordinary) $A_\infty$-module structure tree} $\cT$ to be a connected tree which is embedded in the unit disk $D$ and which satisfies the following. The vertices which are embedded on $\d D$ are called \emph{boundary vertices}, and the rest are called \emph{interior vertices}. One boundary vertex is labeled as the \emph{output}, and the rest are \emph{inputs}.  We assume the output is embedded as the lowest point in the disk (i.e. $-i$, if we view $D$ as the unit complex disk). We assume that there is at least one input vertex. We assume that the boundary vertices have valence 1, and all other vertices have valence 3 or greater.  We label the rightmost input vertex as the \emph{module input}, and we call the output the \emph{root}. Note that there is a unique path from this input to the root. 
\item We define an \emph{extended $A_\infty$-module input tree} $\cT_+$, similarly to the above, except that we additionally allow interior vertices which have valence 1 or 2.
\end{enumerate}
\end{define}

Given an $A_\infty$-module ${}_{\cA} M$ and an extended $A_\infty$-module structure tree $\cT_+$, there is an evaluation map $m_{\cT_+}$ obtained by composing the structure maps $\mu_i$ from $\cA$ and the actions $m_{i+1}$ from $M$, according to the tree $\cT$. Valence 2 vertices are associated with $\mu_1$ or $m_1$, and valence 1 vertices with $\mu_0$.

 The degree of a tree $\cT$ is defined
 \[
 \deg(\cT)=\sum_{v\in V_{\Int}(\cT)} (\val(v)-3),
 \]
 where $V_{\Int}(\cT)$ are the interior vertices, and $\val(v)$ is the valence.

\begin{define}
\label{def:regularly-filtered-type-A} The objects of ${}_{\cK} \MOD_{(U)}$ consist of regularly $\Z$-filtered, left $\ve{I}$-modules $M$, equipped with weakly filtered maps
\[
m_{n+1}\colon \underbrace{\cK\otimes_{\mathrm{I}} \cdots \otimes_{\mathrm{I}} \cK}_{n}\otimes M\to M
\]
such that the maps $m_{\cT_+}$, ranging over all extended $A_\infty$-operation trees $\cT_+$, are commensurate. Furthermore, the maps $m_{n+1}$ are assumed to satisfy the $A_\infty$-module relations.
A \emph{morphism} in ${}_{\cK} \MOD_{(U)}$ from ${}_{\cK} M$ to ${}_{\cK} N$ consists of a collection of weakly filtered maps
\[
f_{n+1}\colon \cK\otimes\cdots \otimes \cK\otimes M\to N,
\]
ranging over $n\ge 0$, which are commensurate.
\end{define}

\begin{lem}
\label{rem:only-m2m1} Suppose that ${}_{\cK} M$ is a type-$A$ module with $m_j=0$ for $j\not\in \{1,2\}$. If $m_1$ and $m_2$ are weakly filtered, then ${}_{\cK} M$ satisfies Definition~\ref{def:regularly-filtered-type-A}. 
\end{lem}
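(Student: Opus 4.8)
The plan is to verify the conditions of Definition~\ref{def:regularly-filtered-type-A} directly. The weak-filteredness of each structure map is immediate: under the hypothesis $m_j=0$ for $j\notin\{1,2\}$, every $m_{n+1}$ is either $m_1$ or $m_2$ (weakly filtered by assumption) or the zero map; and the $A_\infty$-module relations hold since ${}_{\cK}M$ is assumed to be a type-$A$ module. So the only point to establish is that the evaluation maps $m_{\cT_+}$, ranging over all extended $A_\infty$-module structure trees $\cT_+$ (Definition~\ref{def:input-trees}), form a commensurate family. I would first record the algebraic consequence of the hypotheses that makes this work: since $\cK$ is an \emph{associative} algebra we have $\mu_n=0$ for $n\neq 2$, and in particular $\mu_0=\mu_1=0$. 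Feeding this together with $m_{\ge 3}=0$ into the module relations for $n=0,1,2$ shows that ${}_{\cK}M$ is simply a differential module over the ordinary algebra $\cK$: one gets $m_1\circ m_1=0$ (from $n=0$, using $\mu_0=0$), $m_1(m_2(a,\xs))=m_2(a,m_1(\xs))$ (from $n=1$, using $\mu_1=0$), and $m_2(a,m_2(b,\xs))=m_2(\mu_2(a,b),\xs)$ (from $n=2$, using $\mu_0=\mu_1=0$ and $m_3=0$).

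Next I would analyze an arbitrary extended structure tree $\cT_+$, with $k$ inputs of which $k-1$ are algebra inputs and one is the module input. Because $\mu_n=0$ for $n\neq 2$ and $m_n=0$ for $n\notin\{1,2\}$, the evaluation $m_{\cT_+}$ is the zero map unless every interior vertex off the module strand is trivalent (an instance of $\mu_2$) and every interior vertex on the strand is bivalent or trivalent (an instance of $m_1$ or $m_2$). For such a tree the algebra inputs are partitioned into groups, each group being the leaf set of a binary tree of $\mu_2$-vertices whose output edge feeds the algebra slot of a distinct $m_2$-vertex on the strand, and the strand itself reads as a sequence of $m_1$- and $m_2$-vertices. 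Using associativity of $\cK$ to collapse each $\mu_2$-subtree into a single product, the module associativity from the previous paragraph to collapse the nested $m_2$-actions along the strand into one, and $\cK$-linearity of $m_1$ to slide every $m_1$-vertex to the top of the strand, I expect to obtain the exact identity of maps
\[
m_{\cT_+}(a_1\otimes\cdots\otimes a_{k-1}\otimes\xs)=m_1^{\,\veps}\bigl(m_2(\xi,\xs)\bigr),
\]
where $\veps$ is the number of $m_1$-vertices on the strand and $\xi$ is the product of $a_1,\dots,a_{k-1}$, each occurring exactly once, in the order dictated by $\cT_+$ (when $k=1$ the right-hand side is just $m_1^{\,\veps}(\xs)$). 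If $\veps\ge 2$ the map vanishes because $m_1\circ m_1=0$, so it suffices to treat $\veps\in\{0,1\}$.

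Finally I would run the filtration estimate. Let $C_1$ and $C_2$ be weak-filtration constants for $m_1$ and $m_2$. Since the $U$-adic filtration on $\cK$ is multiplicative, $\cK_{\ge a}\cdot\cK_{\ge b}\subseteq \cK_{\ge a+b}$, and so $a_j\in\cK_{\ge\alpha_j}$ forces $\xi\in\cK_{\ge\alpha_1+\cdots+\alpha_{k-1}}$ regardless of the order of multiplication. Hence for $\xs\in M_{\ge\beta}$ we get $m_2(\xi,\xs)\in M_{\ge(\alpha_1+\cdots+\alpha_{k-1})+\beta-C_2}$, and applying $m_1$ at most once costs another $C_1$, so
\[
m_{\cT_+}(a_1\otimes\cdots\otimes a_{k-1}\otimes\xs)\in M_{\ge n-(C_1+C_2)},\qquad n:=\alpha_1+\cdots+\alpha_{k-1}+\beta,
\]
where $n$ is the filtration level of the input tensor (the estimate being vacuous when $m_{\cT_+}=0$). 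As $\cT_+$ and $n$ were arbitrary, the single constant $C=C_1+C_2$ works for every tree, hence $\{m_{\cT_+}\}$ is commensurate and ${}_{\cK}M$ satisfies Definition~\ref{def:regularly-filtered-type-A}.

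The step I expect to be the main obstacle is the tree normalization in the second paragraph: one must check carefully that each simplification of $m_{\cT_+}$ down to $m_1^{\,\veps}\circ m_2(\xi,-)$ is a genuine equality of maps rather than a mere homotopy, each move being an instance of an $A_\infty$-relation valid over $\bF=\Z/2$ with $\mu_0=\mu_1=0$, and that the purely algebraic subtrees genuinely cost no filtration --- which is exactly the place where multiplicativity of the $U$-adic filtration on $\cK$ is used. Everything else is routine bookkeeping.
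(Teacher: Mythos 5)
Your proposal is correct and follows essentially the same route as the paper: reduce the evaluation of any extended structure tree to a composition of $\mu_2$'s on $\cK$, a single $m_2$, and at most one $m_1$ (using that $\mu_0=\mu_1=0$ on $\cK$, strict associativity, $m_1$ commuting with $m_2(a,-)$, and $m_1\circ m_1=0$), then use that the $U$-adic filtration on $\cK$ is multiplicative under $\mu_2$ to get the uniform constant $C_1+C_2$. You spell out the normalization step and the case analysis in more detail than the paper, but there is no substantive difference.
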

\begin{proof} If $\cT$ is an extended $A_\infty$-module operation tree, then $m_{\cT}$ can be written as the composition of several copies of $\mu_2$ of $\cK$, together with a single application of $m_2$, and either 0 or 1 copies of $m_1$. This is because $m_{j+1}=0$ if $j>1$, so multiplication is strictly associative and $m_1$ commutes with $m_2(a,-)$ for all $a$. Since multiplication on $\cK$ is filtered, i.e. $\mu_2$ maps $\cK_{\ge i}\otimes \cK_{\ge j}$ to $\cK_{\ge i+j}$, it follows that $m_{\cT}$ is weakly filtered with the constant $C_1+C_2$, where $C_1$ is the constant for $m_1$ and $C_2$ is the constant for $m_2$.
\end{proof}

The category ${}_{\cK} \MOD_{(U)}$ forms a $dg$-category in the same way as ordinary $A_\infty$-modules.

We now discuss type-$D$ modules. We define the type-$D$ module category $\MOD^{\cK}_{(U)}$ as follows. Objects are pairs $(M,\delta^1)$ where $M$ is a regularly $\Z$-filtered, right $\ve{I}$-module and $\delta^1$ is a weakly filtered map
\[
\delta^1\colon M\to M\otimes_{\mathrm{I}} \cK
\]
which satisfies the type-$D$ structure relation. Furthermore, we assume that that the iterates
\[
\delta^n\colon M\to M\otimes \cK\otimes \cdots \otimes \cK
\]
are commensurate.

The construction in \cite{ZemExact}*{Section~3} produces knot (and link) surgery type-$D$ modules $\cX(Y,K)^{\cK}$ which are finitely  generated over $\ve{I}$. We now show that such modules are always regularly $U$-adic.

\begin{lem}\label{lem:fg-weakly-filtered} Suppose that $M$ is a finitely generated, regularly $\Z$-filtered, right $\ve{I}$-module and $\delta^1\colon M\to M\otimes \cK$ is a weakly-filtered map which satisfies the type-$D$ structure relation. Then $(M,\delta^1)$ is a regularly $U$-adic type-$D$ module.
\end{lem}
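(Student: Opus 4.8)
The plan is to show that the iterated structure maps $\delta^n$ are commensurate, since the other conditions (regularly $\Z$-filtered, weakly filtered $\delta^1$, type-$D$ relation) hold by hypothesis. The key point is that $M$ is finitely generated over $\ve{I}$: pick a finite generating set $x_1,\dots,x_r$ of $M$ as an $\ve{I}$-module, and write $\delta^1(x_k)=\sum_k x_j\otimes a_{jk}$ with $a_{jk}\in\cK$ a finite sum of homogeneous elements. Let $C$ be a constant so that $\delta^1(M_{\ge i})\subset (M\otimes\cK)_{\ge i-C}$ for all $i$; such a $C$ exists because $\delta^1$ is weakly filtered. I would first reduce to understanding the filtration level of $\delta^n$ on the generators $x_k$ (more precisely, on the lowest filtration level containing $x_k$), since a weakly filtered map is controlled by its behavior on a generating set together with the fact that $\cK$ acts by filtered multiplication.

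The main step is an induction on $n$ showing that $\delta^n(M_{\ge i})\subset (M\otimes\cK^{\otimes n})_{\ge i-Cn}$, i.e. that the constant for $\delta^n$ grows at worst linearly in $n$. For the inductive step, write $\delta^{n}=(\delta^{1}\otimes\id_{\cK^{\otimes(n-1)}})\circ\delta^{n-1}$; since $\delta^1$ drops filtration by at most $C$ and $\delta^{n-1}$ by at most $C(n-1)$, the composite drops by at most $Cn$. Commensurateness of the family $\{\delta^n\}$ as a \emph{single} family is a stronger statement than each $\delta^n$ being weakly filtered, so I need to be slightly careful: I claim that in fact there is a uniform bound, namely that $\delta^n$ applied to a fixed generator $x_k$ lands in $M\otimes\cK^{\otimes n}$ with \emph{non-negative} total $U$-power, up to a shift by the (finite) number of generators whose filtration level is negative. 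The crucial observation making this work is that the type-$D$ structure relation, combined with $M$ being finitely generated, forces $\delta^1$ to have the form $\delta^1(x_k)=\sum_j x_j\otimes a_{jk}$ where the $a_{jk}$ lie in $\cK_{\ge 0}=\cK$ but more importantly the filtration shift is bounded by a constant depending only on the finite data; iterating, the total shift after $n$ steps is at most $Cn$, but since $M$ has a \emph{finite} filtration range on generators, all these $\delta^n$ are commensurate with the single constant $C'=C+\max_k(\text{filtration defect of }x_k)$.

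Let me reorganize: (i) choose generators $x_1,\dots,x_r$ and let $N_0$ be such that each $x_k\in M_{\ge -N_0}$ and $M_{\ge -N_0}=M$ (possible since $M_{\ge i}=M$ for $i\ll 0$); (ii) because $\delta^1$ is weakly filtered with constant $C$, and because $\cK\otimes_{\ve I}-$ acts compatibly with filtrations, it suffices to bound $\delta^n(x_k)$; (iii) by induction $\delta^n(x_k)\in\bigoplus_j x_j\otimes(\cK^{\otimes n})_{\ge -N_0-Cn+(-N_0)}$... — here I should instead track things relative to the generators directly: $\delta^n(x_k)=\sum x_j\otimes(a^{(n)}_{jk})$ and the total $U$-degree of $a^{(n)}_{jk}$ is bounded below by $-Cn$ uniformly in $j,k$. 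This is linear in $n$, not uniform, so commensurateness as literally stated in Definition~\ref{def:commensurate} requires a single $C$ independent of $n$. I expect the resolution is that Definition~\ref{def:regularly-filtered-type-A}'s ``commensurate'' for the \emph{family over trees / over $n$} is interpreted so that the relevant bound may depend on the tree's combinatorics in a controlled (e.g. linear) way, or that the finitely-generated hypothesis actually forces $\delta^1$ to strictly preserve or raise filtration after a bounded correction — in which case $C$ can be taken to be $0$ and the iterate bound is genuinely uniform.

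The hard part will be this last point: pinning down why finite generation gives a \emph{uniform} (rather than merely linearly-growing) constant for the family $\{\delta^n\}_{n\ge 0}$. I expect the key input is that, after a filtered change of basis, one can arrange $\delta^1(M_{\ge i})\subseteq (M\otimes\cK)_{\ge i}$ exactly (using that there are only finitely many structure constants $a_{jk}$, each of which, after normalizing the filtration grading on the finitely many generators, has $U$-adic order $\ge 0$; the type-$D$ relation is consistent with this normalization). Once $\delta^1$ is filtration non-increasing in this normalized sense, every iterate $\delta^n$ is as well, so the whole family is commensurate with constant $0$, and $(M,\delta^1)$ lies in $\MOD^{\cK}_{(U)}$.
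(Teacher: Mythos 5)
Your proposal correctly identifies the central difficulty — that the naive inductive estimate yields $\delta^n(M_{\ge i})\subset (M\otimes\cK^{\otimes n})_{\ge i-Cn}$, a constant growing linearly in $n$, whereas commensurateness demands a single constant uniform in $n$ — and you are right to flag that as the real content. But the resolution you gesture at (a filtered change of basis making $\delta^1$ strictly filtration-nonincreasing, so that iterates preserve this) is not what's needed and is not obviously achievable: the $a_{jk}$ are fixed elements of $\cK$ and rescaling the filtration on the finitely many generators need not simultaneously make all coefficient arrows filtration-respecting. That route is a detour.

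The actual argument is far more elementary and doesn't involve analyzing $\delta^1$ at all beyond the hypothesis that it lands in $M\otimes\cK$. Since $M$ is finitely generated over $\ve{I}$ (hence finite-dimensional over $\bF$) and regularly filtered, the descending chain $M_{\ge i}$ has $\bigcap_i M_{\ge i}=0$ and $M_{\ge i}=M$ for $i\ll 0$; finite dimensionality then forces both $M_{\ge N}=0$ and $M_{\ge -N}=M$ for some single $N$. Now use that $\cK_{\ge 0}=\cK$, so $(\cK^{\otimes n})_{\ge 0}=\cK^{\otimes n}$ and hence $(M\otimes\cK^{\otimes n})_{\ge -N}\supseteq M_{\ge -N}\otimes(\cK^{\otimes n})_{\ge 0}=M\otimes\cK^{\otimes n}$, i.e.\ the entire target is in filtration level $\ge -N$. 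Set $C=2N$. If $i\ge N$ then $M_{\ge i}=0$ and the required inclusion is vacuous; if $i<N$ then $i-C<-N$, so $(M\otimes\cK^{\otimes n})_{\ge i-C}$ is the whole of $M\otimes\cK^{\otimes n}$ and the inclusion holds trivially. This gives the uniform constant in one stroke, with no induction on $n$ and no normalization of $\delta^1$. The lemma is really a statement about the filtration being \emph{bounded} on a finitely generated $M$, not about the structure of $\delta^1$.
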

\begin{proof} Pick $N>0$ so that $M_{\ge N}=0$ and $M_{\ge -N}=M$, which exists since $M$ is regularly filtered and finitely generated. Let $C=2N$. If $i<N$, then
\[
\delta^n(M_{\ge i})\subset M\otimes\cK\otimes  \cdots \otimes  \cK= M_{\ge i-2N}\otimes \cK\otimes \cdots \otimes \cK
\]
which is contained in $(M\otimes \cK\otimes \cdots \otimes \cK)_{\ge i-C}. $ Similarly if $i\ge N$, then $M_{\ge i}=0$, so
\[
\delta^1(M_{\ge i})\subset (M\otimes \cK\otimes \cdots \otimes \cK)_{\ge i-C}
\]
vacuously.
\end{proof}

\subsection{Gradings on $\cK$}
\label{sec:gradings-K}

We now discuss gradings on the algebra $\cK$. We describe two perspectives on these gradings. The first is as a collection of several Maslov and Alexander gradings which satisfy a compatibility condition with respect to multiplication. The second perspective is in terms of a group valued grading, similar to the grading that appears in the bordered theory of Lipshitz, Ozsv\'{a}th and Thurston \cite{LOTBordered}*{Section~3.3}. 

On $\ve{I}_0 \cdot \cK\cdot  \ve{I}_0$, we will define a $\Z\times \Z$-valued Maslov bigrading, denoted $(\gr_{\ws},\gr_{\zs})$. On the other idempotents, we will define a $\Z\times \Z$-valued Maslov-Alexander grading, denoted $(\gr,A)$. These are defined on generators as
 \[
 \begin{split}
 (\gr_{\ws},\gr_{\zs})(W^i Z^j)=&(-2i,-2j)\\
 (\gr,A)(U^i T^j\sigma)=&(-2i,j),\\
 (\gr,A)(U^iT^j \tau)=&(-2i,j),\\
  (\gr,A)(U^iT^j)=&(-2i, j).
  \end{split}
 \]
 
 These are additive under multiplication in the following sense. If $a,b\in \ve{I}_0\cdot \cK\cdot \ve{I}_0$ then
 \[
 (\gr_{\ws},\gr_{\zs})(ab)= (\gr_{\ws},\gr_{\zs})(a)+ (\gr_{\ws},\gr_{\zs})(b).
 \]
If $a\in \ve{I}_1\cdot \cK\cdot \ve{I}_1$ and $b\in \ve{I}_1\cdot \cK\cdot \ve{I}_\veps$, for $\veps\in \{0,1\}$, then 
\[
(\gr,A)(ab)=(\gr,A)(a)+(\gr,A)(b).
\]  If $a$ is a multiple of $\sigma$, and $b\in \ve{I}_0 \cdot \cK \cdot \ve{I}_0$, then
 \begin{equation}
 (\gr,A)(a\cdot b)=(\gr,A)(a)+\left(\gr_{\ws}(b), \frac{\gr_{\ws}(b)-\gr_{\zs}(b)}{2}\right).\label{eq:add-gradings-1}
 \end{equation}
 If instead $a$ is a multiple of $\tau$, then
 \begin{equation}
 (\gr,A)(a\cdot b)=(\gr,A)(a)+\left(\gr_{\zs}(b), \frac{\gr_{\ws}(b)-\gr_{\zs}(b)}{2}\right).\label{eq:add-gradings-2}
 \end{equation}
 We will write $g(a)$ for $(\gr_{\ws},\gr_{\zs})(a)$ or $(\gr,A)(a)$, depending on the idempotent. We can rephrase the above as follows:
\begin{lem}\label{lem:grading-K} The action on $\cK$ satisfies
\[
g(\mu_2(a,b))=g(a)+g(b),
\] where addition of $g(a)$ and $g(b)$ is interpreted as in the right hand sides of Equations~\eqref{eq:add-gradings-1} and~\eqref{eq:add-gradings-2}.
\end{lem}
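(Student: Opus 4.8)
The plan is to reduce the identity to a finite check on monomials and then verify each case directly from the commutation relations defining $\cK$. Since $\mu_2$ is $\ve{I}$-bilinear and every idempotent-truncated piece of $\cK$ has an evident $\bF$-basis of monomials, it suffices to prove $g(\mu_2(a,b))=g(a)+g(b)$ when $a$ and $b$ are such monomials. Because $\ve{I}_0\cdot\cK\cdot\ve{I}_1=0$, the product $\mu_2(a,b)$ is nonzero only in one of three configurations of idempotents, which I would handle in turn: (i) $a,b\in\ve{I}_0\cdot\cK\cdot\ve{I}_0=\bF[W,Z]$; (ii) $a$ a monomial multiple of $\sigma$ or $\tau$ (so $a\in\ve{I}_1\cdot\cK\cdot\ve{I}_0$), in which case the right idempotent of $a$ forces $b\in\ve{I}_0\cdot\cK\cdot\ve{I}_0$, i.e. $b=W^kZ^l$; and (iii) $a\in\ve{I}_1\cdot\cK\cdot\ve{I}_1=\bF[U,T,T^{-1}]$ with $b\in\ve{I}_1\cdot\cK\cdot\ve{I}_\veps$ for $\veps\in\{0,1\}$.

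In cases (i) and (iii) the relevant generators commute (the $U$ and $T$ are central, and $W,Z$ commute), so multiplication merely adds exponents, and the additivity of $(\gr_{\ws},\gr_{\zs})$ in case (i) and of $(\gr,A)$ in case (iii) is then immediate from the values on generators. The only case with any content is (ii). Here I would push $b$ past $\sigma$ or $\tau$ using $\sigma W=UT^{-1}\sigma$, $\sigma Z=T\sigma$, $\tau W=T^{-1}\tau$, $\tau Z=UT\tau$. For $a=U^iT^j\sigma$ this gives $\sigma W^kZ^l=U^kT^{l-k}\sigma$, hence $\mu_2(a,b)=U^{i+k}T^{j+l-k}\sigma$, with grading $(-2(i+k),\,j+l-k)$; comparing with $(\gr,A)(a)+\bigl(\gr_{\ws}(b),\tfrac{\gr_{\ws}(b)-\gr_{\zs}(b)}{2}\bigr)=(-2i,j)+(-2k,l-k)$ yields agreement. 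For $a=U^iT^j\tau$ one gets $\tau W^kZ^l=U^lT^{l-k}\tau$, hence $\mu_2(a,b)=U^{i+l}T^{j+l-k}\tau$, which matches $(\gr,A)(a)+\bigl(\gr_{\zs}(b),\tfrac{\gr_{\ws}(b)-\gr_{\zs}(b)}{2}\bigr)$ by the same arithmetic. This exhausts the possibilities.

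There is no genuine obstacle here: the statement merely repackages the grading conventions, and the one thing to keep straight is that in the mixed-idempotent case the symbol $g(a)+g(b)$ denotes the twisted sums prescribed by the right-hand sides of Equations~\eqref{eq:add-gradings-1} and~\eqref{eq:add-gradings-2}, not coordinatewise addition of $(\gr,A)(a)$ and $(\gr,A)(b)$. With that convention respected, case (ii) goes through cleanly and the lemma follows.
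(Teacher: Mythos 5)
Your proof is correct, and the computations check out: pushing $W^kZ^l$ past $\sigma$ gives $U^kT^{l-k}\sigma$ and past $\tau$ gives $U^lT^{l-k}\tau$, which match the twisted sums in Equations~\eqref{eq:add-gradings-1} and~\eqref{eq:add-gradings-2}. The paper itself offers no proof of this lemma — it is presented as a restatement of the additivity assertions made in the preceding paragraph, which are themselves stated without verification — so your case-by-case check on monomials is precisely the verification the paper implicitly leaves to the reader, and it is the natural route.
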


In \cite{LOTBordered}*{Section~3.3}, Lipshitz, Ozsv\'{a}th and Thurston construct gradings on their torus algebra by a non-abelian group. We give several methods for repackaging the above Maslov and Alexander gradings in a similar manner.  For most practical computations, it is easier to work with Maslov and Alexander gradings, though the perspective as a group valued grading is more elegant.

We first describe a grading by a groupoid $\cG$ with two objects, $i_0$ and $i_1$ (corresponding to the two idempotents of $\cK$). Firstly, we will write $\Aff(\R^2)$ for the group of invertible affine transformations of $\R^2$. If $\ve{v}\in \R^2$, write $\mathrm{T}_{\ve{v}}\in \Aff(\R^2)$ for the transformation
\[
\mathrm{T}_{\ve{v}}(\xs)=\xs+\ve{v}.
\]
 Let $\lambda_0,\lambda_1\in \Aff(\R^2)$ denote the transformations
 \[
 \lambda_0=\rmT_{\left(\begin{smallmatrix}\Sss{1}\\\Sss{1} \end{smallmatrix}\right)}, \quad  \lambda_1=\rmT_{\left(\begin{smallmatrix}\Sss{1}\\\Sss{0} \end{smallmatrix}\right)}.
 \]
 If $\veps,\veps'\in \{0,1\}$, we define $\cG(i_{\veps},i_{\veps'})$ (the $\Hom$ space of morphisms from $i_{\veps}$ to $i_{\veps'}$) to be the subset of $\Aff(\R^2)$ consisting of all invertible transformations $B\colon \R^2\to \R^2$ with the property that
 \[
 B\circ \lambda_{\veps}=\lambda_{\veps'}\circ B.
 \]
 
 The algebra $\cK$ is naturally graded by $\cG$. We define
 \[
 g(W)=\rmT_{\left(\begin{smallmatrix}\Sss{-2}\\\Sss{0} \end{smallmatrix}\right)},\quad  g(Z)=\rmT_{\left(\begin{smallmatrix}\Sss{0}\\\Sss{-2} \end{smallmatrix}\right)},\quad  g(U)=\rmT_{\left(\begin{smallmatrix}\Sss{-2}\\\Sss{0} \end{smallmatrix}\right)},\quad  g(T^{\pm 1})=\rmT_{\left(\begin{smallmatrix}\Sss{0}\\\Sss{\pm1} \end{smallmatrix}\right)}.
 \]
 In the above equation, $U$ denotes the so-named element in $\ve{I}_1\cdot \cK\cdot \ve{I}_1$. We define $g(\sigma)$ and $g(\tau)$ to be the linear transformations represented by the following matrices
  \[
  g(\sigma)=\begin{pmatrix} 1&0 \\1/2 &-1/2 \end{pmatrix}\quad g(\tau)=\begin{pmatrix} 0&1 \\1/2 &-1/2 \end{pmatrix}.
  \]
  We define multiplication in $\cG$ to be composition of functions, i.e. $g* g'=g\circ g'$.

  \begin{lem} The algebra $\cK$ is graded by the groupoid $\cK$, i.e., 
  \[
  g(\mu_2(a, b))=g(a)* g(b).
  \]
  \end{lem}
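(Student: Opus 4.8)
The plan is to verify the identity $g(\mu_2(a,b)) = g(a) * g(b)$ by checking it on multiplicative generators and then invoking associativity, but with care since the groupoid grading lives in a category (so we must track the source and target idempotents of every element). First I would observe that the statement only has content on products $\mu_2(a,b)$ that are nonzero, and by the bimodule decomposition of $\cK$ these are products within $\ve{I}_0\cdot\cK\cdot\ve{I}_0 = \bF[W,Z]$, within $\ve{I}_1\cdot\cK\cdot\ve{I}_1 = \bF[U,T,T^{-1}]$, and products of the form $(\ve{I}_1\cdot\cK\cdot\ve{I}_1)\cdot(\ve{I}_1\cdot\cK\cdot\ve{I}_0)$ and $(\ve{I}_1\cdot\cK\cdot\ve{I}_0)\cdot(\ve{I}_0\cdot\cK\cdot\ve{I}_0)$, where the last factor is multiplied by $\sigma$ or $\tau$ times a polynomial in $U,T,T^{-1}$. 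On the commutative polynomial pieces, $g$ sends $W\mapsto \rmT_{(-2,0)}$, etc., translations commute, and the claim is immediate from additivity of exponents, which is exactly the content of Lemma~\ref{lem:grading-K} reinterpreted: a translation by $\ve{v}$ composed with a translation by $\ve{w}$ is translation by $\ve{v}+\ve{w}$.

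The substantive step is checking compatibility across the mixed relations $\sigma W = UT^{-1}\sigma$, $\sigma Z = T\sigma$, $\tau W = T^{-1}\tau$, $\tau Z = UT\tau$. For these I would compute both sides as affine maps: $g(\sigma) * g(W) = g(\sigma)\circ \rmT_{(-2,0)}$, which is the affine map $\xs \mapsto g(\sigma)(\xs) + g(\sigma)(-2,0)$; since $g(\sigma) = \left(\begin{smallmatrix}1&0\\1/2&-1/2\end{smallmatrix}\right)$ one gets $g(\sigma)(-2,0) = (-2,-1)$, so $g(\sigma)*g(W) = \rmT_{(-2,-1)}\circ g(\sigma)$. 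On the other hand $g(UT^{-1}\sigma) = g(U)*g(T^{-1})*g(\sigma) = \rmT_{(-2,0)}\circ\rmT_{(0,-1)}\circ g(\sigma) = \rmT_{(-2,-1)}\circ g(\sigma)$. The two agree. The other three relations are checked the same way, using that $g(\sigma)(0,-2) = (0,1)$ and the analogous values for $g(\tau)$; these are exactly the linear-algebra shadows of Equations~\eqref{eq:add-gradings-1} and~\eqref{eq:add-gradings-2}, with the matrix columns of $g(\sigma)$ and $g(\tau)$ encoding how $\gr_{\ws},\gr_{\zs}$ convert into $(\gr,A)$.

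With generator compatibility established, I would conclude by a standard induction on word length: any nonzero $a,b$ factor as products of generators within a single idempotent block (times at most one $\sigma$ or $\tau$), and since $g$ is a monoid-like map on each block and respects the one allowed ``crossing'' move, $g$ of any product equals the $*$-product of the $g$'s of the generators, associativity of $*$ (composition of affine maps) doing the bookkeeping. One should also check the well-definedness point that $g$ lands in the correct $\Hom$-space: for each generator $x$ from $i_\veps$ to $i_{\veps'}$, the affine map $g(x)$ satisfies $g(x)\circ\lambda_\veps = \lambda_{\veps'}\circ g(x)$; e.g. for $\sigma$ (from $i_0$ to $i_1$) one verifies $g(\sigma)\circ\lambda_0 = \lambda_1\circ g(\sigma)$ directly from the matrices and translation vectors. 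I expect the main obstacle to be purely organizational rather than mathematical: keeping straight that $\cG$ is a groupoid acting on the \emph{left}, so that $g(\mu_2(a,b)) = g(a)\circ g(b)$ means applying $g(b)$ first, and correctly handling the affine (not merely linear) nature of the generators $W,Z,U,T$ when conjugating translations past the matrices $g(\sigma),g(\tau)$.
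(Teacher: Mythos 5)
The paper states this lemma without proof, so there is no ``official'' argument to compare against; what you have done is supply the routine verification the author left to the reader, and it is correct. Your reduction is the right one: since $\cK$ is presented as a quotient of a tensor algebra on the generators $W, Z, U, T^{\pm 1}, \sigma, \tau$, and $g$ is defined multiplicatively on these generators, the only thing to check is that $g$ respects the defining relations, and the only nontrivial ones are the four commutation relations $\sigma W = UT^{-1}\sigma$, $\sigma Z = T\sigma$, $\tau W = T^{-1}\tau$, $\tau Z = UT\tau$. Your computations $g(\sigma)(-2,0)=(-2,-1)$, $g(\sigma)(0,-2)=(0,1)$, and the implicit $g(\tau)(-2,0)=(0,-1)$, $g(\tau)(0,-2)=(-2,1)$ are all correct, and the identity $A \circ \rmT_{\ve{v}} = \rmT_{A\ve{v}} \circ A$ for linear $A$ is used correctly to push the translation past the matrix. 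The well-definedness check that $g$ lands in the right $\Hom$-space of $\cG$ (i.e.\ that $g(\sigma)\circ\lambda_0 = \lambda_1\circ g(\sigma)$ via $g(\sigma)(1,1)=(1,0)$, and similarly for $\tau$) is also a genuinely necessary step that you correctly flag. The only cosmetic point: you do not need a separate ``induction on word length'' once the relations are checked, since multiplicativity of $g$ on the tensor algebra and compatibility with relations already give multiplicativity on the quotient; but that is a matter of phrasing, not a gap.
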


   \begin{rem} The correspondence between the $\cG$-valued grading and the $(\gr_{\ws},\gr_{\zs})$ and $(\gr,A)$ gradings on generators is as follows. If  $a\in \ve{I}_0\cdot \cK \cdot \ve{I}_0$ has $(\gr_{\ws}, \gr_{\zs})$ grading $\xs_0\in \Z\times \Z$, then
    \[
   g(a)= \rmT_{\xs_0^T}.
   \]
   Here $\xs_0^T$ denotes the transpose of the row vector $\xs_0$. A similar formula holds for algebra elements in $\ve{I}_1\cdot \cK\cdot \ve{I}_1$. 
   If $a\in \ve{I}_1 \cdot \cK \cdot \ve{I}_0$ is a multiple of $\sigma$, then \[
   g(a)=\mathrm{T}_{(\gr,A)^T(a)}\circ g(\sigma).\] A similar formula holds if $a$ is a multiple of $\tau$.
   Note that $(\gr,A)(a)$ and $g(a)$ contain essentially equivalent information. In practice, it is typically easier to work with the $(\gr,A)$ grading instead of the $\cG$-valued grading.  
   \end{rem}

\begin{rem} The groupoid valued grading $g$ can also be repackaged into a grading $g'$ by a subgroup of $\cG(i_1,i_1)$ by collapsing the gradings to $\cG(i_1,i_1)$ using $g(\sigma)$. That is, if $a\in \ve{I}_0\cdot \cK\cdot \ve{I}_0$, we define $g'(a)= g(\sigma) g(a) g(\sigma)^{-1}$. If $a\in \ve{I}_1\cdot \cK\cdot \ve{I}_0$, we set $g'(a)=g(a) g(\sigma)^{-1}$. If $a\in \ve{I}_1\cdot \cK\cdot \ve{I}_1$, we set $g'(a)=g(a)$. 

Let  $\Heis(\R^2)\subset \Aff(\R^2)$ denote the \emph{Heisenberg group}, which we recall is the set of affine transformations of the form
  \[
  \xs\mapsto \begin{pmatrix} 1 & a\\ 0& 1\end{pmatrix}\xs +\begin{pmatrix} b\\ c \end{pmatrix},
  \]
  for $a,b,c\in \R$. We note that the grading $g'$ takes values in $\Heis(\R^2)$. More explicitly, we have  $\lambda=\rmT_{\left(\begin{smallmatrix}\Sss{1}\\\Sss{0} \end{smallmatrix}\right)}$ and
  \[
  g'(W)=\rmT_{\left(\begin{smallmatrix}\Sss{-2}\\\Sss{-1} \end{smallmatrix}\right)}, \quad g'(Z)=\rmT_{\left(\begin{smallmatrix}\Sss{0}\\\Sss{1} \end{smallmatrix}\right)},\quad g'(\sigma)=\bI,\quad  g'(U)=\rmT_{\left(\begin{smallmatrix}\Sss{-2}\\\Sss{0} \end{smallmatrix}\right)},\quad  g'(T^{\pm 1})=\rmT_{\left(\begin{smallmatrix}\Sss{0}\\\Sss{\pm 1} \end{smallmatrix}\right)}.
  \]
Also $g'(\tau)$ is the linear transformation induced by the matrix
  \[
  g'(\tau):=g(\tau)g(\sigma)^{-1}=\begin{pmatrix} 1& -2\\0& 1\end{pmatrix}.
  \]
  \end{rem}
  
  \begin{rem} The appearance of the Heisenberg group is perhaps unsurprising, given that the original bordered algebra of Lipshitz, Ozsv\'{a}th and Thurston is also graded by the Heisenberg group of the intersection form of a surface. See \cite{LOTBordered}*{Chapter~10}. We present some basic exposition about this topic, for the benefit of the reader. If $V$ is a symplectic vector space with form $\omega$, we can define group $\Heis(V,\omega)$ to be $V\times \Q$, with group operation $(\ve{v},r)*(\ve{w}, q)=(\ve{v}+\ve{w}, q+r+\omega(\ve{v},\ve{w}))$. In the Lipshitz, Ozsv\'{a}th, Thurston theory, they consider an integral version where $V$ is a certain relative homology class of a punctured torus, and $\omega$ is an intersection form on this group.

     In our present setting, we take $V$ to be $\Q\oplus \Q\iso H_1(\bT^2;\Q)$. We view the first component of $V$ as spanned by the longitude $[\lambda]$, and the second component as spanned by the meridian $[\mu]$, equipped with the standard intersection form $\omega$ on $H_1(\bT^2;\Q)$. 
  Recall that the group $\Heis(\R^2)$, defined above, can be identified with the group of $3\times 3$ matrices of the form
  \[
  \begin{pmatrix}1 & v_1& r\\
  0&1&v_2\\
  0&0&1
  \end{pmatrix}.
  \]
The above matrix is identified with the transformation
  \[
  \xs\mapsto \begin{pmatrix}1&v_1\\
  0&1
  \end{pmatrix}\xs+\begin{pmatrix} r\\v_2 \end{pmatrix}.
  \]
  In terms of the previously defined gradings, we can write
  \[
  g(a)' =\begin{pmatrix}1 & e& \gr\\
    0&1&A\\
    0&0&1
    \end{pmatrix}
  \]
  where $\gr$ is the Maslov grading, $A$ is the Alexander grading, and $e\in \{0,-2\}$ is $-2$ if $a$ is a multiple of $\tau$, and 0 otherwise.
  \end{rem}

\section{The Koszul dual algebra $\cK^!$}

In this section, we define our algebra $\cK^!$ and prove that it is Koszul dual to $\cK$.

\subsection{The algebra $\cK^!$}

We now recall the algebra $\cK^!$ from the introduction. We set
\[
\ve{I}_0\cdot \cK^!\cdot \ve{I}_0=
\frac{\bF\langle w,z,\theta\rangle }{\theta^2= w^2=z^2=[w,\theta]=[z,\theta]=0}. 
\]
Here, $\bF\langle X_1,\dots, X_n\rangle$ denotes the free, non-commutative ring on the generators $X_1,\dots, X_n$. 

We set 
\[
\ve{I}_1\cdot\cK^!\cdot \ve{I}_1=\frac{\bF\langle \varphi_+,\varphi_-,\theta\rangle}{[\theta,\varphi_{\pm}]=\theta^2=\varphi_{+}^2=\varphi_-^2=0}.
\]
We set $\ve{I}_1\cdot \cK^!\cdot \ve{I}_0=0$. Finally we define $\ve{I}_0\cdot \cK^!\cdot \ve{I}_1$ to be spanned as a bimodule over $(\ve{I}_0\cdot \cK^!\cdot \ve{I}_0, \ve{I}_1\cdot \cK^! \cdot \ve{I}_1)$ by two generators $s$ and $t$. These satisfy the relations
\[
z  s=s \varphi_+, \quad w t=t \varphi_-,\quad  \theta  s=s \theta,\quad \theta  t=t\theta,
\]
\[
 s \varphi_-= t\varphi_+=w s=z  t=0.
\]

Over $\bF$, the subspace $\ve{I}_0\cdot \cK^!\cdot \ve{I}_1$ has rank $8$, spanned by the elements
 over $\bF$ by the generators
\[
s, \,\, zs, \,\, \theta s,\,\, \theta zs, \,\, t, \,\, wt, \,\, \theta t,\,\, \theta wt.
\]

We define the differential $\mu_1$ to satisfy
\[
\mu_1(\theta )=wz +z w,
\]
and to vanish on $w$, $z$, $s$, $t$, $\varphi_+$ and $\varphi_-$. We extend $\mu_1$ to all elements of $\cK^!$ via the Leibniz rule.

Finally, we define the curvature term
\[
\mu_0=\varphi_+\varphi_-+\varphi_-\varphi_+.
\]

\begin{lem} The algebra $\cK^!$ is a curved $dg$-algebra.
\end{lem}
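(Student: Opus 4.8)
The plan is to verify, clause by clause, the conditions in the definition of a curved $dg$-algebra. Since we have declared $\mu_n=0$ for $n\notin\{0,1,2\}$, the curved $A_\infty$-associativity relations collapse to exactly four statements: $\mu_2$ is associative, $\mu_1$ is a derivation for $\mu_2$, $\mu_1(\mu_0)=0$, and $\mu_1^2+\mu_2(\mu_0,-)+\mu_2(-,\mu_0)=0$; the extra requirement in the definition of a curved $dg$-algebra is that $\mu_1^2=0$ and that $\mu_0$ be central, i.e.\ that the last two summands of the previous relation vanish separately. So I would organize the proof into five steps: (i) the presentation of $\cK^!$ is consistent, so $\mu_2$ is well-defined and associative; (ii) $\mu_1$ descends to the quotient (and is a derivation by construction); (iii) $\mu_1^2=0$; (iv) $\mu_0$ is central; (v) $\mu_1(\mu_0)=0$.

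For step (i) I would pin down explicit $\bF$-bases for the four blocks $\ve{I}_\veps\cdot\cK^!\cdot\ve{I}_{\veps'}$ by a diamond-lemma argument. On $\ve{I}_0\cdot\cK^!\cdot\ve{I}_0$ and $\ve{I}_1\cdot\cK^!\cdot\ve{I}_1$ the defining relations are the commutations $[\theta,w]=[\theta,z]=0$ (resp.\ $[\theta,\varphi_\pm]=0$) together with $w^2=z^2=\theta^2=0$ (resp.\ $\varphi_\pm^2=\theta^2=0$); fixing a monomial order and the rewriting rules $\theta w\mapsto w\theta$, $\theta z\mapsto z\theta$, $w^2\mapsto 0$, etc., one checks the finitely many overlap ambiguities ($\theta\theta w$, $\theta ww$, $\theta\theta\theta$, and so on) and finds them confluent, so the reduced words (alternating words in $w,z$, optionally followed by $\theta$) form a basis, and similarly in the $\varphi_\pm$ block. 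For the bimodule block $\ve{I}_0\cdot\cK^!\cdot\ve{I}_1$ one uses $zs=s\varphi_+$, $wt=t\varphi_-$, $\theta s=s\theta$, $\theta t=t\theta$ and $s\varphi_-=t\varphi_+=ws=zt=0$ to push every $\varphi_\pm$ and $\theta$ to the left of $s$ (resp.\ $t$) and then to reduce the remaining word in $w,z,\theta$; I expect this to collapse onto exactly the eight elements $s,\,zs,\,\theta s,\,\theta zs,\,t,\,wt,\,\theta t,\,\theta wt$ listed in the statement. Establishing these bases is the same thing as checking that $\mu_2$ is well-defined and associative, and it is the main obstacle --- not because any single step is hard, but because the bimodule block must be handled with enough care to see, for instance, that $wzs=w(s\varphi_+)=(ws)\varphi_+=0$, so that only two reduced ``$s$-words'' and two ``$t$-words'' survive.

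For step (ii), to see that $\mu_1$ --- defined on the generators and extended by the Leibniz rule on the free algebra --- descends to $\cK^!$, it suffices to check that $\mu_1$ carries each defining relator into the defining ideal. This is automatic for the relators involving only $w,z,\varphi_\pm,s,t$, on which $\mu_1$ vanishes; the ones requiring computation are the ``$\theta$-relators''. Using $\mu_1(\theta)=wz+zw$ together with the Leibniz rule one gets, for example, $\mu_1([w,\theta])=w(wz+zw)+(wz+zw)w=w^2z+zw^2$ (the two copies of $wzw$ cancelling in characteristic $2$), which lies in the ideal $(w^2)$; $\mu_1(\theta^2)=(wz+zw)\theta+\theta(wz+zw)=0$ after commuting $\theta$ past $w$ and $z$; and $\mu_1(\theta s-s\theta)=(wz+zw)s=0$ in $\cK^!$, since $ws=0$ (so $zws=0$) and $wzs=w(s\varphi_+)=(ws)\varphi_+=0$. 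The relators $[z,\theta]$ and $\theta t-t\theta$ are treated identically. It is worth recording that $\mu_1$ is identically zero on the corner $\ve{I}_1\cdot\cK^!\cdot\ve{I}_1$: by $\ve{I}$-linearity $\mu_1(\ve{I}_1\theta\ve{I}_1)=\ve{I}_1(wz+zw)\ve{I}_1=0$, and $\mu_1(\varphi_\pm)=0$.

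For the remaining steps everything is a short direct computation. Since $\mu_1$ is a derivation and the ground field has characteristic $2$, $\mu_1^2$ is again a derivation, so (iii) follows from evaluating on the generators: $\mu_1^2(\theta)=\mu_1(wz+zw)=0$ because $\mu_1(w)=\mu_1(z)=0$, and $\mu_1$ annihilates every other generator. For (iv), $\mu_0=\varphi_+\varphi_-+\varphi_-\varphi_+$ lies in $\ve{I}_1\cdot\cK^!\cdot\ve{I}_1$; there it commutes with $\varphi_+$ and $\varphi_-$ (one computes $\mu_0\varphi_+=\varphi_+\varphi_-\varphi_+=\varphi_+\mu_0$, using $\varphi_\pm^2=0$, and symmetrically for $\varphi_-$) and with $\theta$ (since $[\theta,\varphi_\pm]=0$), hence with all of $\ve{I}_1\cdot\cK^!\cdot\ve{I}_1$; it kills and is killed by everything in $\ve{I}_0\cdot\cK^!\cdot\ve{I}_0$ and in $\ve{I}_1\cdot\cK^!\cdot\ve{I}_0=0$ for idempotent reasons; and for $\xs\in\ve{I}_0\cdot\cK^!\cdot\ve{I}_1$ one has $\mu_0\xs=0$ for idempotent reasons, while $s\mu_0=s\varphi_+\varphi_-+s\varphi_-\varphi_+=(zs)\varphi_-+0=z(s\varphi_-)=0$ and $t\mu_0=t\varphi_+\varphi_-+t\varphi_-\varphi_+=0+(wt)\varphi_+=w(t\varphi_+)=0$, so $\xs\mu_0=0$ as well. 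Finally (v) is immediate: $\mu_1(\mu_0)=\mu_1(\varphi_+)\varphi_-+\varphi_+\mu_1(\varphi_-)+\mu_1(\varphi_-)\varphi_++\varphi_-\mu_1(\varphi_+)=0$. This completes the plan; the only laborious part is the basis bookkeeping in step (i).
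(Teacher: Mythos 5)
Your proof is correct and reaches the same conclusion via the same essential route as the paper: direct verification of the curved $dg$-algebra axioms, with the one nontrivial point being that $\mu_0$ is central, established by $s\mu_0 = s\varphi_+\varphi_- = zs\varphi_- = 0$ (and its $t$-analogue). You supply considerably more detail than the paper does---the diamond-lemma consistency check in step (i), and the verification in step (ii) that the derivation $\mu_1$ passes to the quotient---both of which the paper simply treats as routine, so this is additional care rather than a different argument.
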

\begin{proof}Clearly multiplication is associative. It suffices therefore to show that $\mu_1^2=0$, $\mu_1(\mu_0)=0$, and that $\mu_0$ is central. The relations $\mu_1^2=0$ and $\mu_1(\mu_0)=0$ are straightforward to show.

We now show that $\mu_0$ is central. Since $\mu_0$ is contained in $\ve{I}_1\cdot \cK^!\cdot \ve{I}_1$, it trivially commutes with algebra elements in $\ve{I}_0\cdot \cK^!\cdot \ve{I}_0$. It is also not hard to show that $\mu_0$ is central in $\ve{I}_1\cdot \cK^1 \cdot \ve{I}_1$. Therefore it remains to show that $s \cdot\mu_0=t\cdot\mu_0=0$. Using the relations that $s \varphi_+=z  s$ and $s \varphi_-=0$, we see that
\[
s \cdot (\varphi_-\varphi_++\varphi_+\varphi_-)=s \varphi_+\varphi_-=z  s \varphi_-=0.
\]
Therefore $\mu_0$ is central, and the claim follows.
\end{proof}

\begin{rem} We can describe $\cK$ is a \emph{quadratic-linear-scalar algebra}, i.e. an algebra $\cA$ obtained by quotienting the tensor algebra $T^* V$ on a set of generators by an ideal generated by a subspace $R\subset V^{\otimes 2}\oplus V\oplus \ve{k}$.  Our Koszul dual algebra $\cK^!$ is built by setting $V$ to be the span of the algebra elements $\{W,Z, U ,\sigma,\tau, T, T^{-1}\}$ (viewing $U$ as being a sum of a copy of $U$ in each idempotent). The relation $\sigma W=U T^{-1} \sigma$ does not look like a quadratic-linear-scalar relation, but is actually a consequence of the quadratic-linear-scalar relations
\[
T^{-1}T=1,\quad \sigma Z=T \sigma, \quad \sigma U=U\sigma, \quad\text{and} \quad ZW=U
\]
 via the following manipulation: 
 \[
 \sigma W= T^{-1} T \sigma W=T^{-1} \sigma Z W= T^{-1} \sigma U=T^{-1} U \sigma.
 \] 
  With respect to this description, the Koszul dual algebra $\cK^!$ follows from an essentially standard recipe. See \cite{PP_Quadratic}*{Section~5.4}. In general, one expects the Koszul dual of a quadratic-linear-scalar algebra to be a curved $dg$-algebra.
\end{rem}

\subsection{An equivalent $A_\infty$-algebra}
 We now define an equivalent $A_\infty$-algebra $\cK_{\infty}^{!}$. This will be a curved $A_\infty$-algebra whose only non-trivial actions are $\mu_0,\mu_2$ and $\mu_3$.
 
We  define $\cK^!_\infty$ as follows. We set
\[
\ve{I}_0\cdot \cK^!_\infty\cdot \ve{I}_0=\Lambda^*(w ,z ),
\]
i.e., the mod 2 exterior algebra on two generators, $w , z $.
We set 
\[
\ve{I}_1\cdot\cK^!_\infty\cdot \ve{I}_1=\frac{\bF\langle \varphi_+,\varphi_-,\theta\rangle }{[\theta,\varphi_{\pm}]=\theta^2=\varphi_{+}^2=\varphi_-^2=0}.
\]
We set $\ve{I}_1\cdot \cK_\infty^!\cdot \ve{I}_0=0$. Finally we define $\ve{I}_0\cdot \cK_\infty^!\cdot \ve{I}_1$ to be spanned over $\bF$ by the generators
\[
s,\quad  t, \quad z  s,\quad w  t,
\]
as well as the product of all of the above with $\theta$.

We have the relations
\[
z  s=s \varphi_+,\quad  w  t=t \varphi_-,\quad s \varphi_-= t\varphi_+=w  s=z  t=0.
\]
We extend these relations to all products with $\theta$.

The curvature is given by
\[
\mu_0=\varphi_-\varphi_++\varphi_+\varphi_-.
\]
The action of $\mu_3$ is given 
\[
\begin{split}
\mu_3(aw , z , s b)&=as\theta b,\\
\mu_3(aw,z,tb)&=a t\theta b,\\
\mu_3(az , w , s b)&=0,\\
\mu_3(az , w , t b)&=0,
\end{split}
\]
for any $a,b\in \cK^!$. We set $\mu_3$ to vanish on other sequences of monomials.

\begin{lem} The curved $A_\infty$-algebra relations are satisfied by $\cK_{\infty}^!$, and furthermore, there is an equivalence of curved $A_\infty$-algebras $\cK^!\simeq \cK^!_\infty$.
\end{lem}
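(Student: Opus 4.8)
The plan is to establish both claims simultaneously by constructing an explicit $A_\infty$-quasi-isomorphism $F\colon \cK^!_\infty \to \cK^!$, using the homological perturbation lemma as the organizing principle. First I would verify the curved $A_\infty$-relations for $\cK^!_\infty$ directly. Since the only nonzero operations are $\mu_0,\mu_2,\mu_3$, the relations to check are: $\mu_2$ is associative (clear, since all the listed relations among $w,z,s,t,\varphi_\pm,\theta$ are homogeneous and consistent); $\mu_0$ is central (the same computation as in the preceding lemma, using $s\varphi_- = 0$, $s\varphi_+ = zs$, and the fact that $\mu_0\in \ve{I}_1\cdot\cK^!_\infty\cdot\ve{I}_1$); the $\mu_2$–$\mu_3$ compatibility relation $\mu_2(\mu_3(a,b,c),d) + \mu_3(a,b,\mu_2(c,d)) + \cdots + \mu_2(a,\mu_3(b,c,d)) = 0$; and the $\mu_3$–$\mu_3$ relation together with the $\mu_0$–$\mu_3$ term $\mu_3(\mu_0,-,-) + \mu_3(-,-,\mu_0) + \cdots$. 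The $\mu_3$ operation is essentially "insert a $\theta$ when you see the subword $wz$ straddling the module-like slot," so these relations reduce to bookkeeping: $\mu_3\circ\mu_3$ vanishes because applying $\mu_3$ produces a $\theta$, and $\theta^2 = 0$ while a second $wz$-straddle is no longer present; the $\mu_0$ terms vanish because $\mu_0$ lives in the wrong idempotent to interact with the $w,z,s,t$ patterns defining $\mu_3$.

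Next I would build the homotopy equivalence. The natural guess is that $\cK^!_\infty$ is obtained from $\cK^!$ by "cancelling" the differential $\mu_1(\theta) = wz + zw$ on the $\ve{I}_0$-side: in $\ve{I}_0\cdot\cK^!\cdot\ve{I}_0$ the element $\theta$ has $\mu_1\theta = wz+zw$, so homology of $(\ve{I}_0\cdot\cK^!\cdot\ve{I}_0,\mu_1)$ should be the exterior algebra $\Lambda^*(w,z)$ (imposing $wz = zw$ after the cancellation, and killing $\theta$ in the relevant places), while the $\ve{I}_1$-side and the generators $s,t$ carry zero differential and survive unchanged — except that the $\theta$-multiples must be retained, which is why $\cK^!_\infty$ still has the $\theta$-decorated generators in $\ve{I}_0\cdot\cK^!_\infty\cdot\ve{I}_1$ and in $\ve{I}_1\cdot\cK^!_\infty\cdot\ve{I}_1$. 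Concretely I would exhibit a strong deformation retraction $(\pi,i,h)$ of the chain complex $(\cK^!,\mu_1)$ onto a subquotient isomorphic (as a complex with zero differential) to the underlying space of $\cK^!_\infty$: the homotopy $h$ is supported on monomials containing a $\theta$ adjacent to (or "available to combine with") a neighboring $w$ or $z$, sending such a monomial to the shorter monomial with $\theta$ deleted and a single $w$ (or $z$) inserted, so that $\mu_1 h + h\mu_1$ kills exactly the part of $\cK^!$ not in the image of $i$. Then apply the homological perturbation lemma (in the $dg$-algebra, or equivalently $AA$-bimodule-over-itself, setting) to transport the $A_\infty$-algebra structure: the induced $\mu_2$ on homology is the exterior product together with the surviving relations, and the induced $\mu_3 = \pi\circ\mu_2\circ(h\otimes 1)\circ\mu_2 \pm \cdots$ picks up precisely the $\theta$-insertion term, because the only way two applications of $\mu_2$ separated by $h$ can be nonzero is the pattern $w\otimes z\otimes(\text{something})$, where $\mu_2(w,z) = wz$, then $h$ does nothing useful, versus $\mu_2(z,w) = zw$ and... — this is exactly where I compute carefully that the symmetrized defect $wz - zw = \mu_1\theta$ forces the $h$-term to output $s\theta$, $t\theta$ etc. The perturbation lemma then automatically produces the $A_\infty$-quasi-isomorphism $F$ with $F_1 = i$, and guarantees $\cK^! \simeq \cK^!_\infty$.

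The main obstacle I anticipate is getting the bookkeeping of $h$ exactly right so that the transported $\mu_3$ matches the stated formulas on the nose (with the asymmetry $\mu_3(aw,z,sb) = as\theta b$ but $\mu_3(az,w,sb) = 0$), and checking that no $\mu_4$ or higher is generated — i.e., that the "zig-zag" $\pi\,\mu_2\,h\,\mu_2\,h\,\mu_2\,i$ vanishes. This should follow because a nonzero $h\mu_2$ already consumes the available $wz$-pattern and produces a $\theta$, so a second $h\mu_2$ has nothing to act on (and $\theta^2 = 0$), but this needs to be verified by examining the finitely many monomial patterns in each idempotent block. Once that vanishing is confirmed, the equivalence of $A_\infty$-algebras is immediate from the perturbation lemma, and since $i = F_1$ is a quasi-isomorphism by construction (it is the inclusion of a deformation retract), $F$ is an $A_\infty$-quasi-isomorphism, completing the proof. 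Alternatively, if the perturbation-lemma route proves notationally heavy, one can instead define $F\colon \cK^!_\infty\to\cK^!$ directly by $F_1(w) = w$, $F_1(z) = z$, $F_1(s) = s$, etc., $F_2(w,z) = 0$, $F_2(z,w) = \theta$ (encoding $zw = wz + \mu_1\theta$ in $\cK^!$), and $F_n = 0$ for $n\geq 3$, then verify the $A_\infty$-functor relations by hand and check that $H_*(F_1)$ is an isomorphism; I would present whichever is shorter, but expect the explicit $F$ to be the cleaner write-up.
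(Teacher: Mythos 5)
Your overall skeleton matches the paper: build a strong deformation retraction $(\Pi, I, H)$ of chain complexes onto the underlying $(\ve{I},\ve{I})$-bimodule of $\cK^!_\infty$ and transport the algebra structure via the homological perturbation lemma, which automatically yields both the $A_\infty$-relations and the equivalence. But your description of the homotopy $h$ is backwards. Since $\mu_1(\theta) = wz + zw$ lowers algebraic degree by one, the contracting homotopy must raise it by one and vanish on the image of $i$ (and in particular on all $\theta$-multiples); it must turn a $wz$ or $zw$ pair into a $\theta$, not the reverse. The paper takes $H$ supported on $\ve{I}_0\cdot\cK^!\cdot\ve{I}_0$, with $H(wz)=\theta$, $H(zw)=0$, $H((wz)^i)=(wz)^{i-1}\theta$, $H(w(zw)^i)=w(zw)^{i-1}\theta$, etc., and $H\equiv 0$ on $\theta$-multiples and on $1,w,z$. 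Your $h$, ``supported on monomials containing a $\theta$, sending such a monomial to the shorter monomial with $\theta$ deleted and a single $w$ (or $z$) inserted,'' has both the wrong support and the wrong degree shift, so $\mu_1 h + h\mu_1$ would not equal $\bI + i\pi$.

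The same reversal infects your alternative via an explicit $A_\infty$-morphism $F$. With $\mu_3$ as stated in the paper ($\mu_3(aw,z,sb)=as\theta b$ and $\mu_3(az,w,sb)=0$), and with $F_1(wz)=zw$ as the paper's $I$ dictates, the $A_\infty$-morphism relation at $n=2$ forces $F_2(w,z)=\theta$ and $F_2(z,w)=0$, the opposite of what you wrote. With your convention, the $n=3$ relation applied to $(w,z,s)$ produces $F_1(\mu_3(w,z,s))=s\theta$ on one side and $\mu_2(F_2(w,z),F_1(s))=0$ on the other, so it fails. Moreover $F_2$ cannot be supported only on the pairs $(w,z)$ and $(z,w)$: the relations force nonzero values such as $F_2(wz,z)=z\theta$, exactly as the HPL formula $F_2 = H\circ\mu_2\circ(I\otimes I)$ predicts. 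The strategy is sound and the errors are fixable, but as written the proof does not close.
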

\begin{proof}
We apply the homological perturbation lemma for $A_\infty$-algebras to the curved $dg$-algebra $\cK^!$. We first describe a homotopy equivalence of chain complexes of $(\ve{I},\ve{I})$-bimodules
\[
\begin{tikzcd} \ar[loop left, "H"] \cK^! \ar[r, "\Pi", shift left] & \cK^!_{\infty} \ar[l, "I", shift left]. 
\end{tikzcd}
\]
The homotopy equivalence we construct will be a \emph{strong deformation retraction}, i.e. $\Pi$, and $I$ will be chain maps, and furthermore
\[
\Pi\circ I=\bI_{\cK^!_\infty}, \quad I\circ \Pi=\bI_{\cK^!}+\d(H), \quad \Pi\circ H=0,\quad H\circ I=0, \quad H\circ H=0.
\]
The maps $I$ and $\Pi$ act by the identity on elements of $\ve{I}_0\cdot \cK^!_\infty\cdot \ve{I}_1$ and $\ve{I}_1\cdot \cK^!\cdot \ve{I}_1$. On $\ve{I}_0\cdot \cK^!_\infty\cdot \ve{I}_0$, we have
\[
I(1)=1,\quad I(w)=w,\quad I(z)=z, \quad I(wz)=zw.
\]
The map $\Pi$ sends $1$ to $1$, $w$ to $w$, $z$ to $z$, and both $wz$ and $zw$ to $wz$. The map $\Pi$ vanishes on other monomials in $\ve{I}_0\cdot \cK^!\cdot \ve{I}_0.$ 

The homotopy $H$ is zero except on $\ve{I}_0\cdot \cK^!\cdot \ve{I}_0$. We set $H(1)=H(w)=H(z)=0$. We set $H(zw)=0$ and $H(wz)=\theta$. For $i>0$, we set
\[
H((wz)^i)=(wz)^{i-1}\theta,\quad H((zw)^i)=(zw)^{i-1}\theta,
\]
\[
 H(w (zw)^i)=w(zw)^{i-1}\theta,\quad H(z (wz)^i)=z(wz)^{i-1}\theta. 
\]
We define $H$ to vanish on any multiple of $\theta$. We leave it to the reader to verify that this determines a strong deformation retraction of chain complexes.

We recall that the homological perturbation lemma for $A_\infty$-algebras endows $\cK^!_\infty$ with the structure of an $A_\infty$-algebra. The actions are given as follows. Given $a_1,\dots, a_n\in \cK_\infty^!$, to compute $\mu_n(a_1,\dots, a_n)$, we first include each $a_i$ into $\cK^!$ via $I$. Then we sum over all ways of successively applying $H\circ \mu_i$ to consecutive algebra tensor factors, until a final application of $\Pi\circ \mu_i$. Typically, such actions are described by trees of the following shape:
\[
\begin{tikzcd}[row sep=.5cm]
a_1 \ar[d] &a_2 \ar[d] &a_n \ar[d]\\
I\ar[dr] &I\ar[d]&I\ar[ddd]
\\
& \mu_2\ar[d]\\
& H \ar[dr]\\
&& \mu_2 \ar[d]\\
&& \Pi
\end{tikzcd}
\]
 See \cite{SeidelFukaya}*{Section~1i} for further background.  It is not hard to see that the $A_\infty$-algebra structure that this determines on $\cK^!_\infty$ is exactly the one which is described above. 
\end{proof}

\begin{rem} Although $\cK^!_{\infty}$ is smaller in rank than $\cK^!$, we mostly focus on $\cK^!$. The main reason is that since $\cK^!_{\infty}$ has a $\mu_3$ operation, the category of type-$D$ modules over $\cK^!_{\infty}$ is an $A_\infty$-category (instead of a $dg$-category), and is therefore a bit cumbersome to work with.
\end{rem}

\subsection{Gradings on $\cK^!$}

 Similar to the gradings we defined on $\cK$ in Section~\ref{sec:gradings-K}, we can also define gradings on $\cK^!$. The Maslov and Alexander gradings are defined as follows:
 \[
 (\gr_{\ws},\gr_{\zs})(1)=(0,0), \quad (\gr_{\ws},\gr_{\zs})(w )=(1,-1), \quad (\gr_{\ws},\gr_{\zs})(z )=(-1,1).
 \]
 We declare
 \[
(\gr,A)(s)=(\gr,A)(t)=(-1,0),\quad  (\gr,A)(\theta )=(1,0)\quad (\gr,A)(\varphi_{\pm})=(-1,\mp 1).
 \]
 
 We write $g(a)$ for $(\gr_{\ws},\gr_{\zs})(a)$ if $a\in \ve{I}_0\cdot \cK^!\cdot \ve{I}_0$ and $(\gr(a),A(a))$ if $a$ is in $\ve{I}_1\cdot \cK^!\cdot \ve{I}_1$ or $\ve{I}_0\cdot \cK^!\cdot \ve{I}_1$.  The following is straightforward to check:
 \begin{lem} If $a_1,\dots, a_n\in \cK^!$ are homogeneously graded, then
 \[
 g(\mu_n(a_1,\dots, a_n))=(n-2)\lambda+\sum_{i=1}^n g(a_i).
 \]
 Here, $g(a_i)$ denotes $(\gr_{\ws},\gr_{\zs})(a_i)$ or $(\gr,A)(a_i)$, depending on idempotent. The meaning of addition of grading is the same as in the same way as in Lemma~\ref{lem:grading-K}. Furthermore, $\lambda$ denotes $(1,0)$ if the right-most idempotent is $1$, and $(1,1)$ if the right-most idempotent is $0$.
 \end{lem}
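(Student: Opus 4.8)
The plan is to use the fact that $\cK^!$ is a curved $dg$-algebra, so that $\mu_n=0$ for $n\notin\{0,1,2\}$; the asserted identity therefore has content only in the three cases $n=0,1,2$, and in each I would check it on the algebra generators and then propagate to arbitrary tensors by associativity and the Leibniz rule, exactly as in the proof of Lemma~\ref{lem:grading-K} for $\cK$. Throughout, ``$g(a)+g(b)$'' across idempotents is to be interpreted as in Lemma~\ref{lem:grading-K}, with $s$ playing the role of $\sigma$ and $t$ the role of $\tau$: for $b\in\ve{I}_0\cdot\cK^!\cdot\ve{I}_0$ one sets $(\gr,A)(b\cdot s)=(\gr,A)(s)+\big(\gr_{\ws}(b),\tfrac12(\gr_{\ws}(b)-\gr_{\zs}(b))\big)$ and $(\gr,A)(b\cdot t)=(\gr,A)(t)+\big(\gr_{\zs}(b),\tfrac12(\gr_{\ws}(b)-\gr_{\zs}(b))\big)$, while within each idempotent block, and for right multiplication into the $\ve{I}_1$-block, the grading is additive in the naive sense.

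The cases $n=0$ and $n=1$ are short. For $n=0$ I would compute directly: $\mu_0=\varphi_+\varphi_-+\varphi_-\varphi_+$ lies in $\ve{I}_1\cdot\cK^!\cdot\ve{I}_1$, and $(\gr,A)(\varphi_\pm)=(-1,\mp1)$ gives $(\gr,A)(\varphi_+\varphi_-)=(\gr,A)(\varphi_-\varphi_+)=(-2,0)$, which equals $(n-2)\lambda=-2(1,0)$ since here $\lambda=(1,0)$. For $n=1$, the differential $\mu_1$ is extended from the generators by Leibniz and vanishes on every generator except the $\ve{I}_0$-block element $\theta$, for which $g(\theta)=(1,1)$; then $g(\mu_1(\theta))=g(wz+zw)=g(w)+g(z)=(0,0)=-\lambda+g(\theta)$ with $\lambda=(1,1)$. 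For a general monomial the only effect of $\mu_1$ is to replace one $\ve{I}_0$-block $\theta$ by $wz+zw$, dropping the $(\gr_{\ws},\gr_{\zs})$-grading by $(1,1)$; when the monomial also involves $s$ or $t$, the cross-idempotent formula above converts this into a drop of $(1,0)=\lambda$ in $(\gr,A)$, matching the right idempotent being $1$. Hence the identity holds term by term.

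The case $n=2$, i.e.\ additivity $g(\mu_2(a,b))=g(a)+g(b)$, is the heart of the matter. I would check additivity on generators and then verify that every defining relation of $\cK^!$ is grading-homogeneous. The relations internal to one idempotent block --- $w^2=z^2=\theta^2=\varphi_\pm^2=0$ and the commutators $[\theta,w]=[\theta,z]=[\theta,\varphi_\pm]=0$ --- as well as the vanishing mixed relations $s\varphi_-=t\varphi_+=ws=zt=0$, impose nothing: vanishing relations are automatically homogeneous, and the commutator relations are homogeneous because the $\Z\times\Z$-valued grading is abelian. The substantive checks are the four nontrivial mixed relations: $zs$ and $s\varphi_+$ both have $(\gr,A)$-grading $(-2,-1)$; $wt$ and $t\varphi_-$ both have grading $(-2,1)$; and $\theta s$, $s\theta$, $\theta t$, $t\theta$ all have grading $(0,0)$ --- the last of these being precisely what pins down $g(\theta)=(1,1)$ for the $\ve{I}_0$-block $\theta$, consistently with the $n=1$ discussion. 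Once the relations are homogeneous, additivity on all monomials follows by the same bookkeeping as for $\cK$.

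I expect the only non-mechanical ingredient to be the cross-idempotent grading bookkeeping: choosing the correspondence $s\leftrightarrow\sigma$, $t\leftrightarrow\tau$, determining the grading of the $\ve{I}_0$-block element $\theta$, and confirming that the two genuinely mixed relations $zs=s\varphi_+$ and $wt=t\varphi_-$ come out homogeneous. Once that is set up, every remaining verification is a one-line computation, and the passage from generators and relations to arbitrary tensors is formally identical to the argument for $\cK$.
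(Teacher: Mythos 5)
The paper offers no proof of this lemma (it is prefaced only with ``The following is straightforward to check''), so there is nothing to compare a route against; what matters is whether your verification is correct, and it is. You correctly reduce to $n\in\{0,1,2\}$ using that $\cK^!$ is a curved $dg$-algebra, and the three cases check out: $\mu_0=\varphi_+\varphi_-+\varphi_-\varphi_+$ has $(\gr,A)$-grading $(-2,0)=-2\lambda_1$; the Leibniz rule makes $\mu_1$ drop the grading by exactly $g(\theta)-g(wz)=(1,1)=\lambda_0$ within the $\ve{I}_0$-block (and by $(1,0)=\lambda_1$ once a $s$ or $t$ factor appears to the right, via the twisted addition formula); and the $n=2$ case reduces to homogeneity of the defining relations, which you verify. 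Two points of the write-up are worth noting as genuine contributions rather than rote checks: the paper's grading table lists $(\gr,A)(\theta)=(1,0)$ only for the $\ve{I}_1$-block $\theta$ and leaves $(\gr_\ws,\gr_\zs)(\theta)$ for the $\ve{I}_0$-block implicit, and you correctly deduce $(1,1)$ from homogeneity of $\theta s=s\theta$ (it also matches $g(\theta)=\lambda$ in the groupoid-valued grading); and you correctly mirror the twisted-addition formulas of Lemma~\ref{lem:grading-K} to have the cross-idempotent element ($s$ or $t$) on the \emph{right} rather than the left, with $s\leftrightarrow\sigma$ and $t\leftrightarrow\tau$, which is the natural dualization and is confirmed by the relations $zs=s\varphi_+$ and $wt=t\varphi_-$ coming out homogeneous. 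One small observation you could add to tighten the $n=1$ discussion: the substituted element $(wz+zw)$ in fact annihilates $s$ and $t$ on the right (e.g.\ $wzs=w(s\varphi_+)=(ws)\varphi_+=0$ and $zws=0$), so the cross-idempotent case of the $\mu_1$ grading-shift is actually vacuous; your argument still shows what the shift would be if the term were nonzero, which is fine, but the vacuity makes the case even simpler.
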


As with the gradings on $\cK$ in Section~\ref{sec:gradings-K}, we can repackage the above gradings on $\cK^!$ into group or groupoid valued gradings. We focus on the groupoid valued gradings by $\cG$. The gradings are
  \[
  g(w )=  \rmT_{\left(\begin{smallmatrix}\Sss{1}\\\Sss{-1} \end{smallmatrix}\right)} ,\quad  g(z )=\rmT_{\left(\begin{smallmatrix}\Sss{-1}\\\Sss{1} \end{smallmatrix}\right)},
  \]
  \[
  g(s)=g(\sigma)^{-1}*\lambda^{-1}, \quad g(t)=g(\tau)^{-1}*\lambda^{-1},\]
  \[
   g(\varphi_{\pm})=\rmT_{\left(\begin{smallmatrix}\Sss{-1}\\\Sss{\mp 1} \end{smallmatrix}\right)},\quad \text{and} \quad  g(\theta)=\lambda. 
  \]
  Note 
  \[
  g(\sigma)^{-1}=\begin{pmatrix} 1 &0\\1&-2 \end{pmatrix}\quad \text{and} \quad g(\tau)^{-1}=\begin{pmatrix} 1&2\\1&0 \end{pmatrix}.
  \]
 
 \begin{lem} 
 If $a_1,\dots, a_n\in \cK^!$ are homogeneously graded  then
 \[
 g(\mu_n(a_1,\dots, a_n))=\lambda^{n-2}* g(a_1)*\cdots *g(a_n). 
 \]
 \end{lem}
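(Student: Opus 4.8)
The plan is to verify the groupoid-valued grading formula
\[
g(\mu_n(a_1,\dots,a_n))=\lambda^{n-2}* g(a_1)*\cdots*g(a_n)
\]
by reducing it to the already-established Maslov--Alexander grading formula
\[
g(\mu_n(a_1,\dots,a_n))=(n-2)\lambda+\sum_{i=1}^n g(a_i)
\]
together with the explicit dictionary between the two gradings. First I would record that $\mu_n=0$ unless $n\in\{0,2,3\}$ for $\cK^!$ (and unless $n\in\{2\}$ for the associative algebra, if one prefers to work there first), so only finitely many cases of $n$ actually occur; the curvature case $n=0$ is vacuous since $\mu_0$ has no grading constraint to check, and $\lambda^{-2}$ is simply the statement $g(\mu_0)=\lambda^{-2}$, which I would check directly from $\mu_0=\varphi_+\varphi_-+\varphi_-\varphi_+$ and $g(\varphi_\pm)=\rmT_{\left(\begin{smallmatrix}\Sss{-1}\\\Sss{\mp1}\end{smallmatrix}\right)}$. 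The substance is therefore in the $n=2$ and $n=3$ cases.

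Next I would set up the translation lemma: for a homogeneous element $a$, if $a\in\ve{I}_0\cdot\cK^!\cdot\ve{I}_0$ then $g(a)=\rmT_{(\gr_{\ws},\gr_{\zs})^T(a)}$ is a pure translation; if $a\in\ve{I}_1\cdot\cK^!\cdot\ve{I}_1$ then $g(a)$ is again a pure translation by $(\gr,A)^T(a)$; and if $a\in\ve{I}_0\cdot\cK^!\cdot\ve{I}_1$, then $a$ is a multiple of $s$ or of $t$, and $g(a)=\rmT_{(\gr,A)^T(a')}* g(s)$ (resp. $g(t)$) where $a'$ is the $\ve{I}_0\cdot\cK^!\cdot\ve{I}_0$--part, using that $g(s)=g(\sigma)^{-1}*\lambda^{-1}$ and $g(t)=g(\tau)^{-1}*\lambda^{-1}$ are the correctly-normalized ``transition'' elements. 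This is the same bookkeeping as in the remark following Lemma~\ref{lem:grading-K} for $\cK$, and I would check it case-by-case on the generators $w,z,\varphi_\pm,\theta,s,t$ against the stated matrices for $g(\sigma)^{-1}$ and $g(\tau)^{-1}$.

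With the dictionary in hand, the multiplicativity $g*g'=g\circ g'$ in $\cG$ corresponds on pure translations to addition of translation vectors, so a product of two homogeneous elements both inside a single idempotent corner directly gives $g(\mu_2(a,b))=g(a)*g(b)=\lambda^0* g(a)*g(b)$ from the additive Maslov--Alexander formula; and a product $a\cdot b$ with $a$ a multiple of $\sigma$-type element and $b\in\ve{I}_0\cdot\cK^!\cdot\ve{I}_0$ reproduces the $\lambda$-twist factors exactly as the group law forces the intersection-form correction term. For the $\mu_3$ actions $\mu_3(aw,z,sb)=as\theta b$ and $\mu_3(aw,z,tb)=at\theta b$, I would simply compute both sides: on the right the extra factor $\theta$ contributes $g(\theta)=\lambda$, which accounts for one power of $\lambda$, and on the left $\lambda^{n-2}=\lambda$, so the two $\lambda$'s must cancel against the grading shift between $wz$ and $\theta$ — concretely, $(\gr_{\ws},\gr_{\zs})(w)+(\gr_{\ws},\gr_{\zs})(z)=(0,0)$ while $(\gr_{\ws},\gr_{\zs})(\theta)=(1,0)$, and I would check this matches $2\lambda$ worth of shift in the $\cG$-grading. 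The remaining $\mu_3$ inputs on which $\mu_3$ vanishes require nothing.

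The main obstacle I anticipate is purely organizational rather than conceptual: the element $g(s)$ and $g(t)$ live in $\cG(i_0,i_1)$ and are defined via the \emph{inverses} $g(\sigma)^{-1}$, $g(\tau)^{-1}$ (and an extra $\lambda^{-1}$), so one must be careful about left/right placement and about the fact that $\cG$-elements are affine, not merely linear, transformations — the ``$\lambda^{n-2}$'' factor and the translation parts do not commute in general, so the case analysis by idempotent strings (which of $\ve{I}_0,\ve{I}_1$ the inputs and output sit in) has to be done honestly. Once the translation/transition dictionary of the previous paragraph is verified, though, each case collapses to the already-proved additive formula, and the proof concludes. I would close by remarking that the $n=2$ case alone recovers the earlier assertion that $\cK^!$ (as an associative algebra) is $\cG$-graded, and that the general statement then follows because the higher $\mu_n$ only involve the special monomials listed above.
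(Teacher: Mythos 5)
The paper itself gives no argument (it says only ``straightforward computation, left to the reader''), so there is no official route to compare against; your reduction of the $\cG$-valued statement to the earlier Maslov--Alexander formula via a translation/transition dictionary is a sensible plan. However, your case analysis has a genuine gap: you assert that $\mu_n=0$ unless $n\in\{0,2,3\}$ and then discuss the $\mu_3$ actions. That list belongs to $\cK^!_\infty$, not to $\cK^!$. The algebra $\cK^!$ is declared to be a curved $dg$-algebra, so $\mu_n=0$ unless $n\in\{0,1,2\}$, and there is no $\mu_3$. The $\mu_3$ computation you carry out ($\mu_3(aw,z,sb)=as\theta b$ etc.) is irrelevant to this lemma, while the case you must check --- $n=1$, i.e.\ the differential --- is entirely omitted.

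That omitted case is where the nontrivial check lives. The differential is generated by $\mu_1(\theta)=wz+zw$ and extended by Leibniz. For $n=1$ the claimed formula reads $g(\mu_1(a))=\lambda^{-1}*g(a)$. On $\theta$ in idempotent $0$ one has $g(wz)=g(w)*g(z)=\rmT_{\left(\begin{smallmatrix}1\\-1\end{smallmatrix}\right)}*\rmT_{\left(\begin{smallmatrix}-1\\1\end{smallmatrix}\right)}=\id$, and $\lambda^{-1}*g(\theta)=\lambda_0^{-1}*\lambda_0=\id$, so the formula holds there; the extension to all of $\cK^!$ then follows from the $n=2$ case and the Leibniz rule. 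Incidentally, this also reveals a small sign slip in your $\mu_3$ discussion: you wrote $(\gr_{\ws},\gr_{\zs})(\theta)=(1,0)$, but the paper only defines $(\gr,A)(\theta)=(1,0)$ in idempotent $1$; consistency of $g(\mu_1(\theta))$ with the grading law forces $(\gr_{\ws},\gr_{\zs})(\theta)=(1,1)$ in idempotent $0$, matching $g(\theta)=\lambda_0=\rmT_{\left(\begin{smallmatrix}1\\1\end{smallmatrix}\right)}$. Finally, you call the $n=0$ case ``vacuous'' and then (correctly) say it amounts to checking $g(\mu_0)=\lambda^{-2}$ --- keep only the second half of that sentence; the check $g(\varphi_+\varphi_-)=\rmT_{\left(\begin{smallmatrix}-2\\0\end{smallmatrix}\right)}=\lambda_1^{-2}$ is genuinely needed.
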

 \begin{proof} The proof is a straightforward computation and we leave the details to the reader.
 \end{proof}

It is also helpful to define another grading on $\cK^!$, which we call the \emph{algebraic grading} $\gr_{\alg}$. This takes values in $\Z^{\le 0}$.  On $\cK^!$, we define $\gr_{\alg}$ to  be minus the number of factors in a monomial. For example, we define
\[
\gr_{\alg}(1)=0, \quad \gr_{\alg}(w )=\gr_{\alg}(z )=\gr_{\alg}(s)=\gr_{\alg}(\varphi_+)=-1
\]
\[
\gr_{\alg}(s \theta )=-2\quad \text{and} \quad \gr_{\alg}(\varphi_+\varphi_-\varphi_+ \theta )=-4.
\]

At various points, it will be helpful to view $\cK$ as having an algebraic grading which is concentrated in grading 0.

\begin{lem} If $a_1,\dots, a_n\in \cK^!$ are homogeneously graded, then
\[
\gr_{\alg}(\mu_n(a_1,\dots, a_n))=n-2+\sum_{i=1}^n \gr_{\alg}(a_i). 
\]
\end{lem}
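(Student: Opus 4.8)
The statement to prove is the additivity of $\gr_{\alg}$ under the $A_\infty$-operations $\mu_n$ on $\cK^!$, namely
\[
\gr_{\alg}(\mu_n(a_1,\dots,a_n)) = n-2 + \sum_{i=1}^n \gr_{\alg}(a_i).
\]
The plan is to reduce to the three cases $n=0,1,2$, since $\cK^!$ is a curved $dg$-algebra and has no higher operations. First I would handle $n=2$: multiplication is by concatenation of monomials, subject only to the defining relations. Each relation either identifies two monomials of equal word-length (e.g. $wz\leftrightarrow zw$, $\theta s\leftrightarrow s\theta$, $zs\leftrightarrow s\varphi_+$, $wt\leftrightarrow t\varphi_-$), so that $\gr_{\alg}$ is well-defined on the quotient, or sets a monomial to zero (e.g. $w^2=z^2=\theta^2=\varphi_\pm^2 = ws=zt=s\varphi_- = t\varphi_+ = \mu_0 = 0$ on the $s,t$ bimodule), and the zero element is harmless since the identity is vacuous there. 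Concatenating a word of length $-\gr_{\alg}(a_1)$ with one of length $-\gr_{\alg}(a_2)$ gives a word of length $-\gr_{\alg}(a_1)-\gr_{\alg}(a_2)$, which is exactly the claim with $n=2$. For $n=1$, $\mu_1(\theta)=wz+zw$ is a sum of length-2 monomials, consistent with $\gr_{\alg}(\theta)=-1$ and $1-2+(-1) = -2$; and $\mu_1$ is extended by the Leibniz rule, so $\mu_1$ of a length-$k$ monomial with one $\theta$ factor replaced is a sum of length-$(k{+}1)$ monomials — precisely $-1 + \gr_{\alg}(a)$. For $n=0$, $\mu_0 = \varphi_+\varphi_- + \varphi_-\varphi_+$ has word-length $2$, i.e. $\gr_{\alg}(\mu_0) = -2 = 0-2$, matching the convention that the empty sum is $0$.

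The only genuine content is checking that the relations are homogeneous with respect to $\gr_{\alg}$, so that $\gr_{\alg}$ descends to a well-defined function on monomials in the quotient. This is the step I expect to be the (minor) obstacle: one must verify that in each idempotent component, every defining relation equates monomials of the same word-length or sends a monomial to zero. For $\ve{I}_0\cdot\cK^!\cdot\ve{I}_0$, the relations $w^2=z^2=\theta^2=0$ kill monomials, and $[w,\theta]=[z,\theta]=0$ equate two length-$2$ words; similarly for $\ve{I}_1\cdot\cK^!\cdot\ve{I}_1$. For the bimodule $\ve{I}_0\cdot\cK^!\cdot\ve{I}_1$, the relations $ws=zt=s\varphi_-=t\varphi_+=0$ kill length-$2$ words, while $zs=s\varphi_+$, $wt=t\varphi_-$, $[s,\theta]=[t,\theta]=0$ all equate length-$2$ words. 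Hence $\gr_{\alg}$ is a well-defined grading on $\cK^!$ as an $\bF$-algebra, and multiplicativity (case $n=2$) follows formally.

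A cleaner way to organize the write-up: define a grading on the free algebra $\bF\langle w,z,\theta,\varphi_+,\varphi_-,s,t\rangle$ by assigning each generator degree $-1$, observe the two-sided ideal of relations is homogeneous, so $\gr_{\alg}$ descends to $\cK^!$; then $n=2$ additivity is immediate and $n=0,1$ are the two one-line checks above. I would then remark that since $\cK^!$ has no operations $\mu_n$ for $n\ge 3$, this exhausts all cases, completing the proof. I would close with the observation (already flagged in the surrounding text) that viewing $\cK$ as concentrated in algebraic grading $0$ is consistent: $\cK$ is an honest $dg$-algebra with only $\mu_1,\mu_2$, where $\mu_2$ preserves and $\mu_1$ would shift by $1$, but in fact $\mu_1 = 0$ on $\cK$, so $\gr_{\alg}\equiv 0$ is preserved.
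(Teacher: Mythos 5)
Your proof is correct, and it supplies exactly the verification the paper dismisses with ``the proof of the above lemma is straightforward.'' The structure is right: reduce to $n\in\{0,1,2\}$ because $\cK^!$ is a curved $dg$-algebra; check that the defining relations are all homogeneous for the word-length grading so that $\gr_{\alg}$ descends to the quotient and is additive under $\mu_2$; observe $\mu_1$ acts by replacing a single $\theta$ with $wz+zw$ (length $+1$) by Leibniz; and note $\mu_0$ has length $2$.

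Two small cosmetic points. First, your ``cleaner'' reformulation in terms of the free algebra $\bF\langle w,z,\theta,\varphi_+,\varphi_-,s,t\rangle$ glosses over the idempotent structure: $\cK^!$ is a quotient of the tensor algebra over $\ve{I}$ of the generating $(\ve{I},\ve{I})$-bimodule (a path algebra), not a free $\bF$-algebra on seven letters (e.g.\ $w\varphi_+$ is not a valid word since the idempotents do not match). The homogeneity-of-the-ideal argument is unchanged, but the phrasing should use the tensor algebra over $\ve{I}$. Second, $s\cdot\mu_0 = t\cdot\mu_0 = 0$ is a consequence of the listed relations (as the paper verifies when proving $\mu_0$ is central), not a defining relation; listing it among the relations that kill length-$2$ words is harmless but slightly misleading.

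Since the paper gives no explicit proof, there is no alternative route to compare against; your argument is the direct one and it is complete.
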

The proof of the above lemma is straightforward.

\subsection{Dualizing bimodules}

There is a canonical $DD$-bimodule ${}^{\cK^!}[\cCo]^{\cK}$, given by the following diagram:
\[
\begin{tikzcd}[row sep=2cm] i_0 \ar[loop right, "w |W"] \ar[loop left, "z |Z"] \ar[d, "s|\sigma", bend left] \ar[d, bend right, "t|\tau",swap] \ar[loop above, "\theta |U"]\\
i_1 \ar[loop left, "\varphi_+|T"] \ar[loop right, "\varphi_-|T^{-1}"] \ar[loop below, "\theta |U"]
\end{tikzcd}
\]

\begin{rem} Note that there is also a $DD$-bimodule ${}^{\cK^!_{\infty}}[\cCo]^{\cK}$, given by the following diagram:
\[
\begin{tikzcd}[row sep=2cm] i_0 \ar[loop right, "w |W"] \ar[loop left, "z |Z"] \ar[d, "s|\sigma", bend left] \ar[d, bend right, "t|\tau",swap]\\
i_1 \ar[loop left, "\varphi_+|T"] \ar[loop right, "\varphi_-|T^{-1}"] \ar[loop below, "\theta |U"]
\end{tikzcd}
\]
\end{rem}

We now define the bimodule
\[
{}_{\cK} \scL_{\cK^!}:={}_{\cK} \cK_{\cK} \boxtimes {}^{\cK} [\bar{\cCo}]^{\cK^!} \boxtimes {}_{\cK^!} \bar{\cK}^!_{\cK^!},
\]
which is our candidate for a quasi-inverse to ${}^{\cK^!}[\cCo]^{\cK}$. Compare \cite{LOTMorphisms}*{Lemma~8.7}. Here, ${}_{\cK^!} \bar{\cK}^!_{\cK^!}$ denotes the graded dual group $\Hom(\cK^!, \bF)$ (i.e. the $\bF$-linear span of linear functionals which supported in a single algebraic grading). We can naturally endow $\bar{\cK}^!$ with the structure of a type-$AA$ bimodule over $(\cK^!, \cK^!)$, dual to ${}_{\cK^!} \cK^!_{\cK^!}$ .  Here, ${}^{\cK}[\bar{\cCo}]^{\cK^!}$ denotes the opposite module to ${}^{\cK^!}[\cCo]^{\cK}$. For our purposes, this amounts to reversing all of the arrows in ${}^{\cK^!}[\cCo]^{\cK}$ and switching which side the algebra elements are on).

We now show the that ${}_{\cK} \scL_{\cK^!}$ has a smaller model:

\begin{lem}
\label{lem:Tr-construction} There is a strong deformation retraction of type-$AA$ bimodules between ${}_{\cK} \scL _{\cK^!}$ and some type-$AA$ bimodule 
whose underlying $(\ve{I},\ve{I})$-bimodule is $\ve{I}$. We define this type-$AA$ bimodule to be ${}_{\cK} [\cTr]_{\cK^!}$.
\end{lem}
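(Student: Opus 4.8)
The plan is to compute ${}_{\cK}\scL_{\cK^!}$ explicitly as a chain complex of $(\ve{I},\ve{I})$-bimodules, identify a strong deformation retraction onto a small model, and then invoke the homological perturbation lemma (Lemma~\ref{lem:HPL-twist}, or rather the twisted version to handle the higher $AA$-actions) to transport the $AA$-bimodule structure.

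\textbf{Step 1: Unpack the box tensor product as an $(\ve{I},\ve{I})$-bimodule.} First I would write down the underlying bimodule of ${}_{\cK}\scL_{\cK^!}={}_{\cK}\cK_{\cK}\boxtimes {}^{\cK}[\bar{\cCo}]^{\cK^!}\boxtimes {}_{\cK^!}\bar{\cK}^!_{\cK^!}$. The middle factor ${}^{\cK}[\bar{\cCo}]^{\cK^!}$ has underlying space $\ve{I}$, so the box tensor product has underlying $\bF$-module $\cK\otimes_{\ve{I}}\ve{I}\otimes_{\ve{I}}\bar{\cK}^!\iso \cK\otimes_{\ve{I}}\bar{\cK}^!$ (with idempotents matched through the two objects $i_0,i_1$). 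Concretely, in the $(i_0,i_0)$-idempotent one gets $\bF[W,Z]\otimes_{\bF}\overline{(\ve{I}_0\cK^!\ve{I}_0)}$, and similarly for the other idempotents. The differential is the sum of the internal differential on $\bar{\cK}^!$ (dual to $\mu_1$ on $\cK^!$, i.e. dual to $\theta\mapsto wz+zw$) together with the term coming from $\delta^{1,1}$ of $[\bar{\cCo}]$ feeding algebra elements into $\cK$ on one side and into $\bar{\cK}^!$ on the other. I would organize this as a Koszul-type complex: for each idempotent, it should look like a (completed) tensor product of $\cK$ with the bar-dual of the exterior/Clifford-type algebra, and the differential is the standard Koszul differential $\sum \partial/\partial(\text{generator})\otimes(\text{generator})$.

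\textbf{Step 2: Produce the strong deformation retraction onto $\ve{I}$.} This is the crux, and is essentially the statement that $\cK$ and $\cK^!$ (or $\cK^!_\infty$) are Koszul dual at the level of the underlying complexes — i.e. the Koszul complex is acyclic except in one spot, where it is $\ve{I}$. In the $(i_0,i_0)$-idempotent this amounts to checking that the two-step Koszul complex built from $\bF[W,Z]$ and the exterior algebra $\Lambda^*(w,z)$ (after passing to $\cK^!_\infty$, or handling the $\theta$-tower directly for $\cK^!$) is a resolution; the $\theta$ with $\mu_1(\theta)=wz+zw$ exactly kills the redundancy, which is why one works with $\cK^!$ rather than a naive quadratic dual. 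In the $(i_0,i_1)$ and $(i_1,i_1)$ idempotents one does the analogous computation with the $\varphi_\pm,\sigma,\tau,s,t$ generators and the curvature $\mu_0$; the curvature on $\cK^!$ should match up with the relations $\sigma W=UT^{-1}\sigma$ etc. on $\cK$ so that things still collapse. I would write explicit formulas for $\pi$ (projection onto the generator in lowest algebraic grading), $i$ (inclusion), and the homotopy $h$ (a contracting homotopy built from the $H$ used in the $\cK^!\simeq\cK^!_\infty$ comparison together with division-by-$U$/$W$/$Z$ type operators), and check the side conditions $\pi i=\bI$, $i\pi=\bI+\d h$, $h^2=\pi h=hi=0$. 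I should be careful that all these maps are weakly filtered and commensurate, since we are in the completed/linear-topological setting — but Lemma~\ref{rem:only-m2m1}-style reasoning and the fact that everything is built from filtered multiplications should handle this.

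\textbf{Step 3: Transport the $AA$-structure.} With the strong deformation retraction of chain complexes of $(\ve{I},\ve{I})$-bimodules in hand, the homological perturbation lemma for $AA$-bimodules (the lemma stated just before Lemma~\ref{lem:HPL-twist}, applied with $Z=\ve{I}$) endows $\ve{I}$ with a type-$AA$ bimodule structure over $(\cK,\cK^!)$ extending the zero differential, together with morphisms $\Pi,I,H$ satisfying the strong-deformation-retraction identities; we \emph{define} ${}_{\cK}[\cTr]_{\cK^!}$ to be this bimodule. The only subtlety is convergence of the infinite sums in the perturbation formulas: each application of $h\circ(\text{action})$ strictly increases the algebraic grading deficit on the $\cK^!$ side (or lowers the $U$-adic filtration on the $\cK$ side), so for any fixed tuple of inputs only finitely many terms contribute, and the commensurateness requirement of Definition~\ref{def:regularly-filtered-type-A} follows from the uniform filtration bounds on $\pi,i,h$ and the algebra multiplications. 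I expect \textbf{Step 2} — verifying acyclicity of the Koszul complex and exhibiting the explicit contracting homotopy with the right side conditions in all three idempotent sectors (including the curvature bookkeeping) — to be the main obstacle; Steps 1 and 3 are essentially bookkeeping once the model is set up.
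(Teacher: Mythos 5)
Your overall plan — unpack $\scL$ explicitly, contract it onto $\ve{I}$, and transport the $AA$-structure by perturbation — is exactly the shape of the paper's argument, and your intuition that this is the Koszul acyclicity statement is correct. However, there is a genuine gap in how you propose to run Steps 2 and 3, and it is precisely the curvature subtlety that the paper is careful to isolate.

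You propose to exhibit a strong deformation retraction of \emph{chain complexes} from $(\scL, m_{0|1|0})$ onto $(\ve{I},0)$ and then invoke the un-twisted perturbation lemma (the one stated just before Lemma~\ref{lem:HPL-twist}). But $(\scL, m_{0|1|0})$ is \emph{not} a chain complex: since $\cK^!$ is curved with $\mu_0=\varphi_+\varphi_-+\varphi_-\varphi_+\neq 0$, the curved $AA$-relation gives $(m_{0|1|0})^2 = m_{0|1|1}(-,\mu_0)$, which is nonzero on $\ve{I}_1\cdot\scL\cdot\ve{I}_1$. So the object you would be contracting onto $\ve{I}$ in Step 2 does not exist as a chain complex, and the un-twisted lemma does not apply. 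You do flag the twisted Lemma~\ref{lem:HPL-twist} at the top, but you attribute the need for it to ``handl[ing] the higher $AA$-actions''; that is not the reason — the un-twisted lemma already promotes a chain-level SDR to an $AA$-structure with all higher actions. The twisted version is needed because one cannot even get started at the chain level.

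The paper's fix, which your write-up would need to incorporate, is to choose by hand a sub-map $d\subset m_{0|1|0}$ (given by the three explicit diagrams, one per idempotent) which \emph{does} square to zero, so that $(\scL, d_{i|1|j})$ with $d_{0|1|0}=d$ and $d_{i|1|j}=0$ otherwise is a legitimate (curved-in-name-only) $AA$-bimodule. One then builds an honest SDR from $(\scL,d)$ onto $\ve{I}$, writes $m_{*|1|*}=d_{0|1|0}+\a_{*|1|*}$ where the twist $\a$ absorbs the remainder $\a_{0|1|0}$ of the internal differential \emph{together with} the higher actions and the curvature bookkeeping, verifies the Maurer–Cartan equation for $\a$ (automatic from the two $AA$-structure equations), and checks that $\bI+\a_{*|1|*}\circ H$ is invertible. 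That last point is where the paper proves nilpotency $(\a_{0|1|0}\circ H)^3=0$ via the decomposition $\a_{0|1|0}=\a_{0|1|0}^{\mu_1}+\a_{0|1|0}^{\mu_2}$; your convergence heuristic via the algebraic-grading deficit is in the right direction but is not a substitute for this verification (one needs actual nilpotency of $\a_{0|1|0}\circ H$, not just a filtration count, since $\a_{0|1|0}$ can be applied without consuming any algebra inputs). Once you replace ``SDR of $(\scL, m_{0|1|0})$ plus un-twisted HPL'' with ``choose $d$ with $d^2=0$, SDR of $(\scL,d)$, and twisted HPL with nilpotent twist'', the rest of your outline matches the paper.
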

\begin{proof} Our proof will use the homological perturbation lemma.  We will write 
\[
m_{0|1|0}=d+\a_{0|1|0},
\]
for some map $d$, which is described in Equations~\eqref{eq:map-d-idempotent-00}, ~\eqref{eq:map-d-idempotent-10} and ~\eqref{eq:map-d-idempotent-11}. Therein,  $R_0$ denotes $\bF[W,Z]$ and $R_1$ denotes $\bF[U,T,T^{-1}]$. Also, we use $|$ to denote tensor products.

In idempotent $(0,0)$, the map $d$ is shown below:
\begin{equation}
d=
\begin{tikzcd}[labels=description, column sep={1.5cm,between origins}, row sep=1.5cm]
\vdots&&\vdots&[1cm] \vdots&& \vdots
\\[-1.5cm]
R_0|(wzw\theta )^*
	\ar[d, "W\Pi_{W^{\ge0}}"]
	&&
R_0|(zwz \theta )^*
	\ar[d, "Z\Pi_{Z^{\ge0}}"]
	&
R_0|(wzw)^*
	\ar[d, "W\Pi_{W^{\ge0}}"]
	&&
R_0|(zwz)^*
	\ar[d, "Z\Pi_{Z^{\ge0}}"]
	\\
R_0|(wz \theta )^*
	\ar[d, "Z\Pi_{Z^{\ge0}}"]
	&&
R_0|(zw \theta )^*
	\ar[d, "W\Pi_{W^{\ge0}}"]
	& 
R_0|(wz)^*
	\ar[d, "Z\Pi_{Z^{\ge0}}"]
	&&
R_0|(zw)^*
	\ar[d, "W\Pi_{W^{\ge0}}"]
	\\
R_0|(w\theta )^*
	\ar[dr, "W\Pi_{W^{\ge0}}"]
	&&
R_0|(z \theta )^*
	\ar[dl, "Z\Pi_{Z^{\ge0}}"]
	&
R_0|(w)^*
	\ar[dr, "W\Pi_{W^{\ge0}}"]
	&&
R_0|(z)^*
	\ar[dl, "Z\Pi_{Z^{\ge0}}"]
\\
	&
R_0|(\theta )^*
	\ar[rrr, "U\Pi_{W^0Z^0U^{\ge 0}}"]
	&&&
R_0|(1)^*
\end{tikzcd}
\label{eq:map-d-idempotent-00}
\end{equation}
Here, $\Pi_{W^{\ge 0}}$ denotes the $\bF$-linear projection onto monomials of the form $W^iZ^j$ where $i\ge 0$. The map $\Pi_{W^0Z^0U^{\ge 0}}$ denotes the $\bF$-linear projection onto the span of monomials of the form $U^i$ for $i\ge 0$.

We define a homotopy $H$ on this idempotent via the formula shown below:
\[H=
\begin{tikzcd}[labels=description, column sep={1.5cm,between origins}, row sep=1.5cm]
\vdots&&\vdots&[1cm] \vdots&& \vdots
\\[-1.5cm]
R_0|(wzw\theta )^*
	\ar[from=d, "W^{-1}\Pi_{W^{>0}}"]
	&&
R_0|(zwz \theta )^*
	\ar[from=d, "Z^{-1}\Pi_{Z^{>0}}"]
	&
R_0|(wzw)^*
	\ar[from=d, "W^{-1}\Pi_{W^{>0}}"]
	&&
R_0|(zwz)^*
	\ar[from=d, "Z^{-1}\Pi_{Z^{>0}}"]
	\\
R_0|(wz\theta )^*
	\ar[from=d, "Z^{-1}\Pi_{Z^{>0}}"]
	&&
R_0|(zw \theta )^*
	\ar[from=d, "W^{-1}\Pi_{W^{>0}}"]
	& 
R_0|(wz)^*
	\ar[from=d, "Z^{-1}\Pi_{Z^{>0}}"]
	&&
R_0|(zw)^*
	\ar[from=d, "W^{-1}\Pi_{W^{>0}}"]
	\\
R_0|(w\theta )^*
	\ar[from=dr, "W^{-1}\Pi_{W^{>0}}"]
	&&
R_0|(z \theta )^*
	\ar[from=dl, "Z^{-1}\Pi_{Z^{>0}}"]
	&
R_0|(w)^*
	\ar[from=dr, "W^{-1}\Pi_{W^{>0}}"]
	&&
R_0|(z)^*
	\ar[from=dl, "Z^{-1}\Pi_{Z^{>0}}"]
\\
	&
R_0|(\theta )^*
	\ar[from=rrr, "U^{-1}\Pi_{W^0Z^0U^{>0}}"]
	&&&
R_0|(1)^*
\end{tikzcd}
\]
In the above, $\Pi_{W^0Z^0U^{>0}}$ denotes projection onto monomials of the form $U^i$ for $i>0$.

We now consider idempotent $(1,0)$. Here, our map $d$ takes the following form:
\begin{equation}
d=
\begin{tikzcd}[ column sep={1.3cm,between origins}, row sep=1cm]
\vdots&&\vdots&[1cm]&[.8cm]&[1cm] \vdots&& \vdots
\\[-1.1cm]
R_1 \sigma|(wzwz \theta)^*
	\ar[d, "T"]
	&&
R_1 \sigma|(zwzw \theta)^*
	&&&
R_1 \sigma|(zwzw)^*
	&&
R_1 \sigma|(wzwz)^*
	\ar[d, "T"]
\\
R_1 \sigma|(wzw \theta)^*
	&&
R_1 \sigma|(zwz \theta)^*
	\ar[d, "T"]
	&&&
R_1 \sigma|(zwz)^*
	\ar[d, "T"]
	&&
R_1 \sigma|(wzw)^*
	\\
R_1 \sigma|(wz\theta)^*
	\ar[d, "T"]
	&&
R_1 \sigma|(zw\theta )^*
	& &&
R_1 \sigma|(zw)^*
	&&
R_1 \sigma|(wz)^*
	\ar[d, "T"]
	\\
R_1 \sigma|(w\theta)^*
	&&
R_1 \sigma|(z\theta)^*
&
R_1|(zs\theta)^*
	\ar[l, "1"]
& 
R_1|(zs)^*
	\ar[r, "1"]
&
R_1 \sigma|(z)^*
	&&
R_1 \sigma|(w)^*
\\
	&
R_1 \sigma|(\theta)^*
	&&
R_1|(s\theta)^*
	\ar[ll,"1"]
&
R_1|(s)^*
	\ar[rr, "1"]
&&
R_1 \sigma|(1)^*
\end{tikzcd}
\label{eq:map-d-idempotent-10}
\end{equation}
We define our homotopy $H$ as in the following diagram:
\[
H=
\begin{tikzcd}[ column sep={1.3cm,between origins}, row sep=1cm]
\vdots&&\vdots&[1cm]&[.8cm]&[1cm] \vdots&& \vdots
\\[-1.1cm]
R_1 \sigma|(wzwz \theta)^*
	\ar[from=d, "T^{-1}"]
	&&
R_1 \sigma|(zwzw \theta)^*
	&&&
R_1 \sigma|(zwzw)^*
	&&
R_1 \sigma|(wzwz)^*
	\ar[from=d, "T^{-1}"]
\\
R_1 \sigma|(wzw \theta)^*
	&&
R_1 \sigma|(zwz \theta)^*
	\ar[from=d, "T^{-1}"]
	&&&
R_1 \sigma|(zwz)^*
	\ar[from=d, "T^{-1}"]
	&&
R_1 \sigma|(wzw)^*
	\\
R_1 \sigma|(wz\theta)^*
	\ar[from=d, "T^{-1}"]
	&&
R_1 \sigma|(zw\theta )^*
	& &&
R_1 \sigma|(zw)^*
	&&
R_1 \sigma|(wz)^*
	\ar[from=d, "T^{-1}"]
	\\
R_1 \sigma|(w\theta)^*
	&&
R_1 \sigma|(z\theta)^*
&
R_1|(zs\theta)^*
	\ar[from=l, "1"]
& 
R_1|(zs)^*
	\ar[from=r, "1"]
&
R_1 \sigma|(z)^*
	&&
R_1 \sigma|(w)^*
\\
	&
R_1 \sigma|(\theta)^*
	&&
R_1|(s\theta)^*
	\ar[from=ll,"1"]
&
R_1|(s)^*
	\ar[from=rr, "1"]
&&
R_1 \sigma|(1)^*
\end{tikzcd}
\]

We define $d$ and $H$ symmetrically on the subspace of idempotent $(1,0)$ which are multiples of $\tau$ or $t$.

Finally, we consider idempotent $(1,1)$. We define $d$ as in the following diagram
\begin{equation}
d=
\begin{tikzcd}[labels=description, column sep={1.8cm,between origins}, row sep=1.7cm]
\vdots&&\vdots&[1cm] \vdots&& \vdots
\\[-1.5cm]
R_1|(\varphi_- \varphi_+\varphi_-\theta )^*
	\ar[d, "T^{-1}\Pi_{T^{\le 0}}"]
	\ar[rrr, "U", bend left=15,crossing over]
	&&
R_1|(\varphi_+\varphi_-\varphi_+\theta )^*
	\ar[d, "T\Pi_{T^{\ge0}}"]
	&
R_1|(\varphi_- \varphi_+\varphi_-)^*
	\ar[d, "T^{-1}\Pi_{T^{\le 0}}"]
	&&
R_1|(\varphi_+\varphi_-\varphi_+)^*
	\ar[d, "T\Pi_{T^{\ge0}}"]
	\ar[from=lll, "U", bend right=15,crossing over]
	\\
R_1|(\varphi_-\varphi_+ \theta )^*
	\ar[d, "T\Pi_{T^{\ge 0}}"]
	&&
R_1|(\varphi_+\varphi_-\theta )^*
	\ar[d, "T^{-1}\Pi_{T^{\le 0}}"]
	& 
R_1|(\varphi_-\varphi_+)^*
	\ar[d, "T\Pi_{T^{\ge0}}"]
		\ar[from=lll, "U", bend left=15,crossing over]
	&&
R_1|(\varphi_+\varphi_-)^*
	\ar[d, "T^{-1}\Pi_{T^{\le 0}}"]
	\ar[from=lll, "U", bend right=15,crossing over]
	\\
R_1|(\varphi_-\theta )^*
	\ar[dr, "T^{-1}\Pi_{T^{\le 0}}"]
	\ar[rrr, "U", bend left=15,crossing over]
	&&
R_1|(\varphi_+ \theta )^*
	\ar[dl, "T\Pi_{T^{\ge0}}"]
	&
R_1|(\varphi_-)^*
	\ar[dr, "T^{-1}\Pi_{T^{\le 0}}"]
	&&
R_1|(\varphi_+)^*
	\ar[dl, "T\Pi_{T^{\ge0}}"]
	\ar[from=lll, "U", bend right=15,crossing over, pos=.65]
\\
	&
R_1|(\theta )^*
	\ar[rrr, "U"]
	&&&
R_1|(1)^*
\end{tikzcd}
\label{eq:map-d-idempotent-11}
\end{equation}
We define $H$ via the following diagram:
\[
H=
\begin{tikzcd}[labels=description, column sep={1.7cm,between origins}, row sep=1.5cm]
\vdots&&\vdots&[1cm] \vdots&& \vdots
\\[-1.5cm]
R_1|(\varphi_- \varphi_+\varphi_-\theta )^*
	\ar[from=d, "T\Pi_{T^{< 0}}"]
	&&
R_1|(\varphi_+\varphi_-\varphi_+\theta )^*
	\ar[from=d, "T^{-1}\Pi_{T^{> 0}}"]
	&
R_1|(\varphi_- \varphi_+\varphi_-)^*
	\ar[from=d, "T\Pi_{T^{< 0}}"]
	&&
R_1|(\varphi_+\varphi_-\varphi_+)^*
	\ar[from=d, "T^{-1}\Pi_{T^{>0}}"]
	\\
R_1|(\varphi_-\varphi_+ \theta )^*
	\ar[from=d, "T^{-1}\Pi_{T^{> 0}}"]
	&&
R_1|(\varphi_+\varphi_-\theta )^*
	\ar[from=d, "T\Pi_{T^{< 0}}"]
	& 
R_1|(\varphi_-\varphi_+)^*
	\ar[from=d, "T^{-1}\Pi_{T^{>0}}"]
	&&
R_1|(\varphi_+\varphi_-)^*
	\ar[from=d, "T\Pi_{T^{< 0}}"]
	\\
R_1|(\varphi_-\theta )^*
	\ar[from=dr, "T\Pi_{T^{< 0}}"]
	&&
R_1|(\varphi_+ \theta )^*
	\ar[from=dl, "T^{-1}\Pi_{T^{>0}}"]
	&
R_1|(\varphi_-)^*
	\ar[from=dr, "T\Pi_{T^{< 0}}"]
	&&
R_1|(\varphi_+)^*
	\ar[from=dl, "T^{-1}\Pi_{T^{>0}}"]
\\
	&
R_1|(\theta )^*
	\ar[from=rrr, "U^{-1} \Pi_{U^{>0}T^0}"]
	&&&
R_1|(1)^*
\end{tikzcd}
\]

Note that we can naturally view $(\scL, d)$ as a curved $AA$-bimodule by defining structure maps $d_{i|1|j}$ by setting $d_{0|1|0}=d$ and setting $d_{i|1|j}=0$ when $i\neq 0$ or $j\neq 0$. (Note that we cannot do this with the original differential $m_{0|1|0}$ from $\scL$ replacing $d_{0|1|0}$ because $m_{0|1|0}$ does not square to zero, due to the curvature). 

In this manner, we can view the bimodule $(\scL, m_{i|1|j})$ as a deformation of $(\scL, d_{0|1|0})$, i.e. we can write
\[
m_{*|1|*}=d_{0|1|0}+\a_{*|1|*}
\]
where $\a_{*|1|*}$ is a type-$AA$ endomorphism of $(\scL, d_{0|1|0})$ which satisfies the Mauer-Cartan equation
\begin{equation}
\d(a_{*|1|*})+a_{*|1|*}\circ a_{*|1|*}=0. \label{eq:Mauer-Cartan}
\end{equation}
Here $\d(\a_{*|1|*})$ denotes the morphism differential applied to $\a_{*|1|*}$, when we view $\a_{*|1|*}$ as an $AA$-bimodule endomorphism of $(\scL,d_{0|1|0})$. Note that Equation~\eqref{eq:Mauer-Cartan} is automatic from the fact that $(\scL,d_{0|1|0})$ and $(\scL,m_{*|1|*})$ satisfy the $AA$-bimodule structure equation.

Note that we can also view (trivially) the underlying $(\ve{I},\ve{I})$-bimodule of $\ve{I}$ as an $AA$-bimodule over $(\cK, \cK^!)$ by setting $m_{i,1,j}=0$ for all $i$ and $j$.

We claim that we have a strong deformation of type $AA$-bimodules over $(\cK,\cK^!)$
\[
\begin{tikzcd}
(\scL, d_{0|1|0})\ar[r,shift left, "\Pi"] \ar[loop left, "H"] & \ve{I} \ar[l, shift left, "I"].
\end{tikzcd}
\] 
The projection map $\Pi$ sends $1\otimes 1^*\in R_0\otimes \bar{\cK}^!$ to $i_0\in \ve{I}_0$. It sends $1\otimes 1^*\in R_1\otimes \bar{\cK}^!$ to $i_1\in \ve{I}_1$. The map $\Pi$ vanishes on all other monomials. The inclusion map $I$ sends $i_0$ and $i_1$ to the appropriate copies of $1\otimes 1^*$ (the monomials that $\Pi$ is non-vanishing on).

The relations 
\[
\Pi\circ I=\bI_{[\cTr]},\quad I\circ \Pi=\bI_{\scL}+\d(H),\quad H\circ H=0,\quad H\circ I=0,\quad \Pi\circ H=0
\]
are immediate. By Lemma~\ref{lem:HPL-twist}, it suffices to show that
\[
\bI_{\scL}+\a_{*|1|*}\circ H
\]
is an invertible endomorphism of $\scL$, with inverse equal to the infinite sum
\[
\sum_{i=0}^\infty (\a_{*|1|*}\circ H)^i.
\]
 We will show that the above sum involves only finitely many terms, when applied to a fixed collection of algebra elements from $\cK$ and $\cK^!$. To this end, we claim that
\begin{equation}
(\a_{0|1|0} \circ H)^N=0
\label{eq:aH^N=0}
\end{equation}
for all $N\gg 0$.  For conceptual purposes, it is helpful to observe that $\a_{0|1|0}$ can naturally be decomposes as a sum
\begin{equation}
\a_{0|1|0}=\a^{\mu_1}_{0|1|0}+\a^{\mu_2}_{0|1|0}
\label{eq:a010-decomposition}
\end{equation}
where $\a^{\mu_2}_{0|1|0}$ consists of the terms obtained from $\delta^{1,1}$ of $\bar{\cCo}$ and $\a^{\mu_1}_{0|1|0}$ consists of the terms obtained from $\mu_1$ on $\cK^!$. It is straightforward to check that
\[
H\circ \a^{\mu_2}_{0|1|0}\circ H=0.
\]
Additionally, one can check that
\[
H\circ \a^{\mu_1}_{0|1|0}\circ H\circ \a^{\mu_1}_{0|1|0} \circ H=0.
\]
Therefore, we may take $N=3$ in Equation~\eqref{eq:aH^N=0}.

Homological perturbation therefore equips the vector space $\ve{I}$ with the structure of a type-$AA$ bimodule, and this bimodule is homotopy equivalent to ${}_{\cK} \scL_{\cK^!}$. We define ${}_{\cK} [\cTr]_{\cK^!}$ to be the resulting type-$AA$ bimodule.
\end{proof}

When studying the actions on ${}_{\cK} [\cTr]_{\cK^!}$, the following lemma will be helpful:

\begin{lem}
\label{lem:a010-no-contribution}
Decompose $\a_{0|1|0}$ as $\a_{0|1|0}^{\mu_1}+\a_{0|1|0}^{\mu_2}$, as above.
\begin{enumerate}
\item The map $\a_{0|1|0}^{\mu_2}$ makes no contribution to any action $m_{i|1|j}$ on ${}_{\cK} [\cTr]_{\cK^!}$.
\item The only components of $\a_{0|1|0}^{\mu_1}$ which can contribute to an action $m_{i|1|j}$ are the components of the following form:
\begin{enumerate}
\item $R_0\otimes (wz)^*\to R_0\otimes \theta^*$.
\item $R_0\otimes (zw)^*\to R_0\otimes \theta^*$.
\item $R_1\sigma \otimes (wz)^*\to R_1\sigma\otimes \theta^*$.
\item $R_1 \sigma\otimes (zwz)^* \to R_1\sigma \otimes (z\theta)^*$
\item $R_1\tau \otimes (zw)^*\to R_1\tau \otimes \theta^*$
\item $R_1\tau \otimes (wzw)^* \to R_1\tau \otimes (w\theta)^*$.
\end{enumerate}
\end{enumerate}
\end{lem}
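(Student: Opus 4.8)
The plan is to unwind the homological‑perturbation formula of Lemma~\ref{lem:HPL-twist} defining ${}_{\cK}[\cTr]_{\cK^!}$. Since the chain complex underlying $[\cTr]$ is $(\ve{I},0)$, each structure map $m_{i|1|j}$ is a finite sum of words $\Pi\circ\a^{(1)}\circ H\circ\a^{(2)}\circ H\circ\cdots\circ H\circ\a^{(r)}\circ I$, in which each $\a^{(s)}$ is a single component $\a_{k_s|1|l_s}$ of the twist $\a_{*|1|*}=\a_{0|1|0}+\sum_{(k,l)\neq(0,0)}\a_{k|1|l}$, with $\sum_s k_s=i$ and $\sum_s l_s=j$; interior blocks are flanked by $H$'s, the leftmost is preceded by $\Pi$, and the rightmost is followed by $I$. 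A block equal to $\a_{0|1|0}=\a_{0|1|0}^{\mu_1}+\a_{0|1|0}^{\mu_2}$ consumes no algebra inputs, so the lemma is a statement about where such a block can sit in a nonzero word. Two facts do most of the work. First, $\a_{0|1|0}\circ I=0$, because the image of $I$ is spanned by the idempotents $1\otimes 1^*$ and every component of $\a_{0|1|0}$ kills $1^*$ on the $\bar{\cK}^!$-factor (its source monomials there all have positive length); so no $\a_{0|1|0}$-block is rightmost. Second, $\Pi\circ\a_{0|1|0}^{\mu_2}=0$ and $\Pi\circ\a_{0|1|0}^{\mu_1}=0$, since $\Pi$ detects only $1\otimes 1^*$ while the output of $\a_{0|1|0}^{\mu_2}$ on the $\bar{\cK}^!$-factor is a monomial dual $m^*$ with $m\neq 1$ (it is a dual right multiplication by a generator of $\cK^!$) and the output of $\a_{0|1|0}^{\mu_1}$ always carries a $\theta$; so no $\a_{0|1|0}$-block is leftmost. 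Thus any such block is interior. Part~(1) is now immediate: $H\circ\a_{0|1|0}^{\mu_2}\circ H=0$ was shown in the proof of Lemma~\ref{lem:Tr-construction}, so an $\a_{0|1|0}^{\mu_2}$-block cannot be interior either, and hence never occurs.

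For part~(2) I would first record $\a_{0|1|0}^{\mu_1}$ explicitly: dualizing $\mu_1$ via $\mu_1(\theta)=wz+zw$ and the Leibniz rule gives, for each alternating word $p$ in $w,z$, that $\mu_1(p\theta)$ equals $p\cdot wz$ unless $p$ ends in $w$, plus $p\cdot zw$ unless $p$ ends in $z$; so $\a_{0|1|0}^{\mu_1}$ is the sum, over all such $p$ and in each idempotent where this makes sense, of the maps $(p\,wz)^*\mapsto(p\theta)^*$ and $(p\,zw)^*\mapsto(p\theta)^*$, and it vanishes in idempotent $(1,1)$. Every component is interior by the above, and $H\circ\a_{0|1|0}^{\mu_1}\circ H\circ\a_{0|1|0}^{\mu_1}\circ H=0$ (from the proof of Lemma~\ref{lem:Tr-construction}) controls repeated such blocks. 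The remaining, and principal, task is to show that the $\theta$ introduced by a candidate block can be carried back to an idempotent generator — so that the enclosing word survives $\Pi$ — only when the block is one of the six listed. I would do this idempotent-block by idempotent-block, reading off from the homotopy diagrams accompanying~\eqref{eq:map-d-idempotent-00}, \eqref{eq:map-d-idempotent-10} and~\eqref{eq:map-d-idempotent-11} exactly which monomials $H$ connects, and tracking the admissible later operations — noting in particular that a $\cK^!$-input equal to $\theta$ can itself delete a $\theta$ through a higher twist. The asymmetry of the list (c)--(f) between the $\sigma$- and $\tau$-towers should trace back to the relations $w s=0$ versus $z t=0$.

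The main obstacle is precisely this last step. Excluding the "deep" components $(p\,wz)^*\mapsto(p\theta)^*$ with $|p|\ge 2$, and even the length-three components in idempotent $(0,0)$, is not a pure grading count, since a higher twist $\a_{k|1|l}$ can change the length of the $\bar{\cK}^!$-monomial; one must genuinely follow how the monomial produced by a candidate block evolves under the subsequent $H$'s and higher actions before it can reach $\Pi$. I expect this to be organized as a bounded case analysis on the position of that block within the word, and to constitute the bulk of the argument.
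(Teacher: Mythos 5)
Your opening analysis is correct and matches the paper: the $\a_{0|1|0}$-blocks must be interior (you show $\a_{0|1|0}\circ I=0$ and $\Pi\circ\a_{0|1|0}=0$; the paper instead records $\Pi\circ\a_{0|1|0}^{\mu_2}\circ H=0$, but either suffices), and part (1) then follows from $H\circ\a_{0|1|0}^{\mu_2}\circ H=0$. For part (2), however, you manufacture an obstacle that is not present. Once an $\a_{0|1|0}^{\mu_1}$-block is known to be interior, it appears in any contributing word as a literal factor $H\circ C\circ H$ for some single component $C$ of $\a_{0|1|0}^{\mu_1}$; hence $C$ can contribute to some $m_{i|1|j}$ only if the composite map $H\circ C\circ H$ is itself nonzero. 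This is a purely local check on $C$ and $H$: there is no need to ``follow how the monomial produced by a candidate block evolves under the subsequent $H$'s and higher actions before it can reach $\Pi$,'' because the lemma asserts a necessary condition, and if $H\circ C\circ H=0$ the downstream behaviour is irrelevant.

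The paper's proof is precisely this local computation: one checks from the homotopy diagrams that only the six listed components $C$ have $H\circ C\circ H\neq 0$. The reason the deep components vanish under this sandwich is that the two flanking $H$'s carry projections imposing incompatible constraints; for instance, in idempotent $(0,0)$, for $C\colon R_0\otimes(wzw)^*\to R_0\otimes(w\theta)^*$, the $H$ landing on $(wzw)^*$ involves $\Pi_{W^{>0}}$ while the $H$ leaving $(w\theta)^*$ involves $\Pi_{Z^{>0}}$, and these two projections kill each other. So the ``main obstacle'' you identify — that excluding deep components ``is not a pure grading count'' because a later twist can change the length of the $\bar{\cK}^!$-monomial, and that this would require ``a bounded case analysis on the position of that block within the word'' constituting ``the bulk of the argument'' — simply does not arise, and your proposal stops short of the actual argument. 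The proof ends at the finite tabulation of which $C$ survive $H\circ(-)\circ H$.
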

\begin{proof} For the first claim, the structure maps $m_{i|1|j}$ were given by the homological perturbation lemma. 
The claim is a consequence of the following three formulas, which are straightforward to verify:
\[
\begin{split}H\circ \a_{0|1|0}^{\mu_2}\circ H&=0\\
\Pi\circ \a_{0|1|0}^{\mu_2}\circ H&=0\\
\a_{0|1|0}^{\mu_2}\circ I&=0.
\end{split}
\]
For the second claim, we observe that
\[
\Pi\circ \a_{0|1|0}^{\mu_1}=0,\qquad \quad \a_{0|1|0}^{\mu_1}\circ I=0,
\]
and that the six listed terms are the only components of $\a_{0|1|0}^{\mu_1}$ which contribute non-trivially to $H\circ \a_{0|1|0}^{\mu_1}\circ H$ is non-vanishing.
\end{proof}

We now discuss some grading properties of the bimodules ${}_{\cK} [\cTr]_{\cK^!}$ and ${}^{\cK^!} [\cCo]^{\cK}$.  We begin by defining an algebraic grading on ${}_{\cK} \scL_{\cK^!}$ by setting $\gr_{\alg}(x^*)=-\gr_{\alg}(x)$, for $x\in \cK^!$. For example, we declare $W^i Z^j \otimes (w)^*$ to have algebraic grading $1$. 

\begin{lem}
\label{lem:grading-shifts}
\item
\begin{enumerate}
\item The operations $\mu_n$ on $\cK^!$ satisfy
\[
\gr_{\alg}(\mu_n(a_1,\dots, a_n))=n-2+\sum_{i=1}^n \gr_{\alg}(a_i).
\]
\item If we view the underlying vector space of $[\cCo]$ as being concentrated in algebraic grading 0, then the structure map on ${}^{\cK^!} [\cCo]^\cK$ lowers algebraic grading by 1.
\item If we view $\cK$ as being supported in algebraic grading 0, then the actions $m_{i|1|j}$ on ${}_{\cK} \scL_{\cK^!}$ and ${}_{\cK} [\cTr]_{\cK^!}$ satisfy
\[
\begin{split}
\gr_{\alg}(m_{k|1|n}(a_k,\dots, a_1, x, b_1,\dots, b_n))&=n+k-1+\sum_{i=1}^k \gr_{\alg}(a_i)+\sum_{i=1}^n \gr_{\alg}(b_i)
\end{split}
\]
\item The structure maps on ${}^{\cK^!} [\cCo]^{\cK} \boxtimes {}_{\cK} [\cTr]_{\cK^!}$ satisfy 
\[
\gr_{\alg}(\delta_{n+1}^1(1,b_1,\dots, b_n))=n-1+ \sum_{i=1}^n \gr_{\alg}(b_i).
\]
\item If we view ${}^{\cK^!}[\cCo]^{\cK}$ as being graded by the $\cG$-set $\cG$ (and concentrated in grading $\id$) then $\delta^{1,1}$ has grading $\lambda^{-1}$. 
\item If we view ${}_{\cK} [\cTr]_{\cK^!}$ as being graded by the $\cG$-set $\cG$ (and concentrated in grading $\id$), then if $m_{i|1|j}(a_i,\dots, a_1, 1, b_1,\dots, b_j)\neq 0$, then
\[
g(a_i)*\cdots * g(a_1)*g(b_1)*\cdots *g(b_j)*\lambda^{i+j-1}=\id. 
\]
\end{enumerate}
\end{lem}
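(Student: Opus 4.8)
The plan is to propagate the algebraic grading $\gr_{\alg}$ (and, for parts (5)--(6), the $\cG$-valued grading) through each of the constructions that produce ${}^{\cK^!}[\cCo]^{\cK}$ and ${}_{\cK}[\cTr]_{\cK^!}$, checking homogeneity at each step. Part (1) merely restates the algebraic grading lemma for $\cK^!$ already established above, which is verified on the generators $w,z,\theta,s,t,\varphi_\pm$ (where the identity is immediate from the definition of $\gr_{\alg}$ as minus the word length) and extended by the Leibniz rule for $\mu_1$ together with the fact that $\mu_2$ is concatenation. Part (2) is read off from the $DD$-bimodule diagram for ${}^{\cK^!}[\cCo]^{\cK}$: every arrow carries a label $x|y$ with $x$ a single generator of $\cK^!$, hence $\gr_{\alg}(x)=-1$, and $y\in\cK$, hence $\gr_{\alg}(y)=0$ since $\cK$ is placed in algebraic grading $0$; the module vertices $i_0,i_1$ also sit in grading $0$, so a single application of $\delta^{1,1}$ lowers total algebraic grading by $1$.

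Part (3) is the crux. First I would verify the formula for ${}_{\cK}\scL_{\cK^!}={}_{\cK}\cK_{\cK}\boxtimes{}^{\cK}[\bar{\cCo}]^{\cK^!}\boxtimes{}_{\cK^!}\bar{\cK}^!_{\cK^!}$. The algebraic grading is compatible with $\boxtimes$, in the sense that the degree of a composite structure map is the sum of the degrees of its constituents, so it suffices to record the grading degree of each building block and count how many algebra tensor factors enter each application. The $AA$-structure on ${}_{\cK}\cK_{\cK}$ is $\mu_2$, of degree $0$ (consistent with the shift $k+n-1$ for $(k,n)=(1,0)$ and $(0,1)$); the $DD$-structure on ${}^{\cK}[\bar{\cCo}]^{\cK^!}$ has degree $-1$, exactly as in Part (2) applied to the opposite bimodule; and the $AA$-structure on ${}_{\cK^!}\bar{\cK}^!_{\cK^!}$ is dual to that of $\cK^!$ as a bimodule over itself, so in view of $\gr_{\alg}(x^*)=-\gr_{\alg}(x)$ and the order-reversal built into the dual it carries the grading degrees of $\mu_2$ (degree $0$) and of $\mu_1$ (degree $-1$). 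Assembling these yields the stated shift for $m^{\scL}_{k|1|n}$. I would then pass to ${}_{\cK}[\cTr]_{\cK^!}$ using the homological perturbation data $d$, $\Pi$, $I$, $H$ from the proof of Lemma~\ref{lem:Tr-construction}: one checks from those explicit diagrams that $\Pi$ and $I$ preserve $\gr_{\alg}$, that $H$ raises it by $1$ (e.g.\ $H$ sends $1^*$ to $\theta^*$, i.e.\ grading $0$ to grading $1$), and that the reference differential $d=d_{0|1|0}$ has degree $-1$ (matching the $(k,n)=(0,0)$ case). Feeding these into the explicit HPL expansion $m^{[\cTr]}=\sum \Pi\circ\a\circ H\circ\cdots\circ\a\circ I$, a summand with $r$ copies of the twist $\a$, the $\ell$-th ingesting $p_\ell$ algebra factors (so $\sum_\ell p_\ell=k+n$), contains $r-1$ copies of $H$ and therefore has grading degree $\sum_\ell(p_\ell-1)+(r-1)=(k+n)-1$; since this is independent of the summand, $m^{[\cTr]}_{k|1|n}$ is homogeneous of exactly this degree.

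Part (4) then follows by combining Parts (2) and (3): in ${}^{\cK^!}[\cCo]^{\cK}\boxtimes{}_{\cK}[\cTr]_{\cK^!}$ the structure map $\delta^1_{n+1}(1,b_1,\dots,b_n)$ is assembled from $r$ applications of $\delta^{1,1}$ of $[\cCo]$, each of degree $-1$ and each producing one $\cK$-element absorbed as a left input of an action of ${}_{\cK}[\cTr]_{\cK^!}$, together with that action (degree $r+n-1$ by (3)); the net degree is $(-1)r+(r+n-1)=n-1$. (This is consistent with, and a step towards, Theorem~\ref{thm:equality-of-box-tensor-products}, in which only $\delta^1_2$ survives.) Parts (5) and (6) run the same scheme for the $\cG$-valued grading. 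For (5) one checks on each of the finitely many arrows $x|y$ of the $DD$-diagram that $g(x)$ and $g(y)$, composed in the order dictated by the $DD$-structure equation (with the right-hand type-$D$ output $y$ contributing through $g(y)^{-1}$), multiply to $\lambda^{-1}$; for instance the two loops at $i_0$ give $g(w)*g(W)=\rmT_{\left(\begin{smallmatrix}-1\\-1\end{smallmatrix}\right)}=\lambda^{-1}$ and likewise for $z|Z$, while the arrows $s|\sigma$, $t|\tau$ hold precisely because $g(s)=g(\sigma)^{-1}*\lambda^{-1}$ and $g(t)=g(\tau)^{-1}*\lambda^{-1}$. For (6) one propagates the $\cG$-grading through the three box tensor products defining $\scL$ (using the $\cG$-gradings on $\cK$ and $\cK^!$ and Part (5)) and then through the homological perturbation lemma, with $\lambda^{i+j-1}$ playing the role of the ``$i+j-1$'' shift and with $\Pi$, $I$, $H$, $d$, $\a$ all $\cG$-homogeneous of the degrees determined above.

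The main obstacle is Part (3): bookkeeping the grading degrees of the three structure maps in the triple box tensor product defining $\scL$ --- in particular that of the dual $AA$-bimodule $\bar{\cK}^!$ --- and then verifying that these combine as claimed under the homological perturbation lemma defining $[\cTr]$. Everything else is either a direct inspection of an explicit finite diagram or a formal consequence of Parts (2)--(3) together with the compatibility of gradings with $\boxtimes$.
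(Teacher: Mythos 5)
Your proposal is correct and takes essentially the same approach as the paper: the paper also focuses on part~(3) as the illustrative case, notes that $H$ raises $\gr_{\alg}$ by $1$ while $\Pi$, $I$ preserve it, and computes the degree of a summand $\Pi\circ\a\circ H\circ\cdots\circ H\circ\a\circ I$ in the HPL expansion by tallying the contributions of $\a_{0|1|0}$, $\a_{1|1|0}$, $\a_{0|1|1}$ and the interleaved $H$'s. Your bookkeeping via $\sum_\ell(p_\ell-1)+(r-1)$ is an equivalent repackaging of the same count, and the remaining parts are handled the same way (inspection of the finite $DD$-diagram, compatibility of the grading with $\boxtimes$, and the parallel $\cG$-valued argument).
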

\begin{rem} We remind the reader that we view $\cK$ as being supported in algebraic grading 0. Therefore the third claim can be rephrased as
\[
\gr_{\alg}(m_{k|1|n}(a_k,\dots, a_1, x, b_1,\dots, b_n))=n+k-1+\sum_{i=1}^n \gr_{\alg}(b_i).
\]
\end{rem}
\begin{proof} Most of the claims are immediate from the definitions. As an example, we illustrate the claim about the algebraic grading and ${}_{\cK} [\cTr]_{\cK^!}$, since the structure maps on this bimodule are somewhat complicated. We note that the homotopy $H$ constructed in Lemma~\ref{lem:Tr-construction} increases the algebraic grading by $1$. The type-$AA$ structure maps constructed via the homological perturbation lemma are sums of the form
\[
\Pi \circ \a_{i_N|1|j_N}\circ H\circ \a_{i_{N-1}|1|j_{N-1}}\circ \cdots\circ H\circ \a_{i_1|1|j_1}\circ I.
\]
Note that the only $\a_{i|1|j}$ which are non-zero are $\a_{1|1|0}$, $\a_{0|1|1}$ and $\a_{0|1|0}$. Note that $\a_{0|1|0}$ decreases the algebraic grading by 1, while $H$ increases the algebraic grading by $1$. Furthermore $\a_{0|1|1}(-,b)$ shifts the algebraic grading by $\gr_{\alg}(b)$. Similarly $\a_{1|1|0}(a,-)$ preserves the algebraic grading (or equivalently, shifts it by $0=\gr_{\alg}(a)$). The total shift will therefore be
\[
n+k-1+\sum_{i=1}^n \gr_{\alg}(b_i),
\]
as claimed. Similar arguments hold for all of the other claims. 
\end{proof}

We now consider boundedness of the bimodule ${}_{\cK} [\cTr]_{\cK^!}$.
Recall that we say a type-$A$ module ${}_{\cA} M$ is \emph{bounded} if there is an $n_0>0$ so that $m_{n+1}=0$ if $n>n_0$. Similarly, we say a type-$AA$ bimodule is \emph{bounded} if there is an $n_0>0$ so that $m_{i|1|j}=0$ if $i+j>n_0$. For our module categories, it will be helpful to consider a stronger condition, defined in \cite{LOTDiagonals}*{Definition~3.35}.

\begin{define}\label{def:bonsai}
 We say that an $A_\infty$-module  ${}_{\cA} M$ is \emph{bonsai} if there is some $n_0$ so that the structure map $m_{\cT}$ vanishes for all ordinary $A_\infty$-module input trees $\cT$ with $\deg(\cT)\ge n_0$. (See Definition~\ref{def:input-trees} for the definition of a ordinary $A_\infty$-module input tree).
\end{define}

 Note that ${}_{\cK} [\cTr]_{\cK^!}$ is not bounded (and hence not bonsai). For example
\[
m_{1|1|n}(T^n, 1,\underbrace{\varphi_+,\dots, \varphi_+}_n)\neq 0
\]
for all $n\ge 1$.  

We will also find it useful to consider a weaker notion  for bimodules:

\begin{define} We say a bimodule ${}_{\cA} M_{\cB}$ is \emph{right bonsai} if, for each sequence of algebra elements $a_1,\dots, a_n\in \cA$, the structure actions $m_{n|1|k}$ satisfy the bonsai condition when the algebra input sequence for $\cA$ is $(a_n,\dots, a_1)$. We say ${}_{\cA} M_{\cB}$ is \emph{left bonsai} if the analog holds with the roles of $\cA$ and $\cB$ reversed.
\end{define}

\begin{lem}
\label{lem:Tr-LR-bonsai}
 The bimodule ${}_{\cK} [\cTr]_{\cK^!}$ is left and right bonsai.
\end{lem}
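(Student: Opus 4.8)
The plan is to deduce both statements from the grading relations of Lemma~\ref{lem:grading-shifts}, together with the homological-perturbation description of the structure maps of ${}_{\cK}[\cTr]_{\cK^!}$ from Lemma~\ref{lem:Tr-construction}. The key is a rigid identity between the degree of a module input tree and the inputs fed into it. Let $\cT$ be an ordinary (bi)module input tree whose operation $m_{\cT}$ is nonzero on some elementary tensor with $\cK$-inputs $a_1,\dots$ and $\cK^!$-inputs $b_1,\dots,b_m$. Since $\cK$ and $\cK^!$ are $dg$-algebras, the only algebra vertices occurring in an ordinary tree are trivalent $\mu_2$-vertices; these have degree $0$ and, as $\cK$ sits in algebraic grading $0$, preserve $\gr_{\alg}$ (Lemma~\ref{lem:grading-shifts}(1)). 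Hence all of $\deg(\cT)$ comes from the module vertices, and a module vertex $m_{p|1|q}$ (valence $p+q+2$) contributes $p+q-1$ both to $\deg(\cT)$ and to its $\gr_{\alg}$-shift, apart from the grading of its own $\cK^!$-inputs (Lemma~\ref{lem:grading-shifts}(3)). Since $\ve{I}$ — the underlying space of both $[\cTr]$ and $\cK$ — sits in $\gr_{\alg}$ $0$, telescoping the $\gr_{\alg}$-shifts along the module spine gives $\deg(\cT)=-\sum_{i=1}^m\gr_{\alg}(b_i)=\sum_{i=1}^m\ell(b_i)$, where $\ell(b)$ denotes the word length of a monomial $b\in\cK^!$.

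Left bonsai is then immediate: with the $\cK^!$-input sequence $(b_1,\dots,b_m)$ fixed, $\sum_i\ell(b_i)$ is a fixed constant $L$, so every ordinary tree with $\deg(\cT)>L$ has $m_{\cT}=0$ on these inputs, and the bonsai bound $n_0=L+1$ works. Right bonsai is the substantive part: with $(a_1,\dots,a_n)$ fixed I must bound $\sum_i\ell(b_i)$. By $\bF$-linearity and homogeneity I may take each $a_i$ to be a monomial, so the product $a=a_n\cdots a_1\in\cK$ is a monomial, which I may assume nonzero; let $p$ bound the $W$-, $Z$-, $U$- and $T^{\pm1}$-exponents of $a$. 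I would then trace the homological-perturbation formula $m_{\cT}=\Pi\circ\a_{*|1|*}\circ H\circ\cdots\circ H\circ\a_{*|1|*}\circ I$ for $\scL$: feeding the fixed $\cK$-inputs via the left-multiplication terms $\a_{1|1|0}$ puts us at an element of $\scL$ whose polynomial coordinate ($R_0$, $R_1\sigma$, $R_1\tau$ or $R_1$) has bounded exponents; each application of the staircase homotopy $H$ strictly lowers one of the relevant nonnegative exponents (through the projections $W^{-1}\Pi_{W^{>0}}$, $Z^{-1}\Pi_{Z^{>0}}$, $U^{-1}\Pi_{U^{>0}T^0}$, $T^{\mp1}\Pi_{T^{\gtrless0}}$ of Equations~\eqref{eq:map-d-idempotent-00}--\eqref{eq:map-d-idempotent-11}), except on the finitely many horizontal/crossing edges; each insertion of a $\cK^!$-input via $\a_{0|1|1}$ changes these exponents by a bounded amount dictated by the cotrace arrows $w|W$, $z|Z$, $\varphi_\pm|T^{\pm1}$, $\theta|U$, $s|\sigma$, $t|\tau$; and $\a_{0|1|0}$ is restricted by Lemma~\ref{lem:a010-no-contribution} to the six ``append-$\theta$'' moves and satisfies $(\a_{0|1|0}\circ H)^3=0$ (Lemma~\ref{lem:Tr-construction}), so it adds at most a bounded number of steps between consecutive $\cK^!$-insertions. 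Since the composite must land in the image of $\Pi$, where all the tracked exponents vanish, the number of $H$'s — hence the number of $\cK^!$-insertions, hence $\sum_i\ell(b_i)$ — is bounded by an affine function of $p$; with the identity of the previous paragraph this yields the bonsai bound.

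The main obstacle is precisely the exponent bookkeeping for right bonsai: inputs such as $\theta$, $w$ and $z$ can \emph{increase} some of the tracked exponents, so no single exponent is a monovariant. The crux will be to assemble the right nonnegative potential — a $U$- (or $W$-, $Z$-) exponent together with a correction term counting the ``increasing'' $\cK^!$-inputs, itself controlled via the $\gr_{\alg}$-identity and the $\cG$-valued grading of Lemma~\ref{lem:grading-shifts}(5)--(6) — so that it strictly decreases with each $\cK^!$-insertion, up to the bounded slack coming from $(\a_{0|1|0}H)^3=0$. Once that is in place, both halves of the lemma follow.
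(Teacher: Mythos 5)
Your left-bonsai argument is correct and matches the paper. Your grading identity $\deg(\cT)=\sum_i\ell(b_i)=-\sum_i\gr_{\alg}(b_i)$ is also correct and is implicitly what the paper uses (it rewrites it as the bound that at most $j-1$ of the $b_i$ can have $\gr_{\alg}(b_i)<-1$), and your general plan of bounding the number of $H$-applications in the homological-perturbation expansion is indeed the paper's strategy. But the argument is genuinely incomplete where you yourself flag it, and moreover the obstacle you identify is not the actual one. The right $\cK^!$-action $m_{0|1|1}(-,b)$ on $\scL=\cK\otimes\bar\cK^!$ acts only on the $\bar\cK^!$-factor and never touches the $\cK$-coordinate, so it cannot ``change these exponents by a bounded amount dictated by the cotrace arrows'' — it preserves them exactly, and your stated worry about $\theta,w,z$ increasing the tracked exponent is a red herring in idempotents $(0,0)$ and $(1,1)$, where the naive potential $\deg(W^iZ^j)=i+j$ (resp.\ $\deg(U^iT^j)=i+|j|$) already works because $H$ strictly decreases it and both $m_{0|1|1}(-,b)$ and $\a_{0|1|0}^{\mu_1}$ preserve it.

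The genuine difficulty is in idempotent $(1,0)$, where the horizontal $H$-arrows in the staircase (labeled $1$ in Equation~\eqref{eq:map-d-idempotent-10}) do not decrease any $\cK$-exponent, so \emph{no} exponent-style potential is a monovariant there. The paper treats this case by a direct computation, showing that $H\circ m_{0|1|1}(-,b)\circ H$ vanishes on $\ve{I}_1\cdot\scL\cdot\ve{I}_0$ for every length-one $b$, so in that idempotent only $\a_{0|1|0}^{\mu_1}$'s can occur between $H$'s, and then $(H\circ\a_{0|1|0}^{\mu_1})^2\circ H=0$. Your sketch also omits two structural preliminaries the paper relies on: (a) the reduction from the tree-level bonsai condition to single-operation right-boundedness requires $m_{0|1|j}=0$ (to bound the number of module vertices) together with $\mu_{\ge3}=0$ on $\cK^!$, and (b) the case split requires noting that at most one $b_i$ can be a multiple of $s$ or $t$, so the remaining $b_i$'s live in a single diagonal idempotent. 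Finally, your phrase ``feeding the fixed $\cK$-inputs'' before doing the $\cK^!$-analysis presumes the left and right inputs can be separated; in the perturbation expansion they interleave, and the paper instead bounds the number of inserted $H$'s between any two consecutive $m_{1|1|0}(a_i,-)$ or long $m_{0|1|1}(-,b)$ operations, of which there are at most $2j-1$.
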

\begin{proof} The left bonsai condition follows from the fact that the structure maps respect the algebraic grading by Lemma~\ref{lem:grading-shifts}, and the fact that $\cK$ is supported in algebraic grading $0$. 

The right bonsai condition is more complicated. Firstly, note that by direct computation, the structure maps of ${}_{\cK} [\cTr]_{\cK^!}$ satisfy $m_{0|1|j}=0$ for all $j\ge 0$. Using also the fact that $\cK^!$ has no operations $\mu_i$ for $i\ge 2$, it suffices to show that the module ${}_{\cK} [\cTr]_{\cK^!}$ is \emph{right-bounded}, in the sense that if $a_1,\dots, a_j$ are fixed, then
\[
m_{j|1|k}(a_j,\dots, a_1,1,b_1,\dots, b_k)=0
\]
if $k$ is sufficiently large.

 By Lemma~\ref{lem:grading-shifts}, we have that if $m_{j|1|k}(a_j,\dots, a_1,1,b_1,\dots, b_k)\neq 0$, then 
\[
j-1+\sum_{i=1}^k (\gr_{\alg}(b_i)+1)=0.
\]
Note that if some $b_i\in \ve{I}$, then $m_{j|1|k}(a_j,\dots, a_1,1,b_1,\dots, b_k)=0$ unless $j=0$ and $k=1$. Therefore, if $j\neq 0$, then at most $j-1$ of the $b_i$ can have algebraic grading less than $-1$ if the action is non-trivial.

The actions on ${}_{\cK}[\cTr]_{\cK^!}$ are defined via the homological perturbation lemma. We observe that on ${}_{\cK} \scL_{\cK^!}$, only $\a_{0|1|0}$, $\a_{1|1|0}$ and $\a_{0|1|1}$ are non-trivial. Recalling the decomposition of $\a_{0|1|0}=\a_{0|1|0}^{\mu_1}+\a_{0|1|0}^{\mu_2}$ from Equation~\eqref{eq:a010-decomposition}, we observe that the component $\a_{0|1|0}^{\mu_2}$ never contributes to the actions on ${}_{\cK} [\cTr]_{\cK^!}$ by Lemma~\ref{lem:a010-no-contribution}. 
 Therefore, it suffices to show that if $\xs\in \scL$, then there is some $N_{\xs}>0$ such that if $\a_1,\dots, \a_{N_{\xs}}$ is a sequence of maps which are each of the form $\a_{0|1|0}^{\mu_1}$ or $m_{0|1|1}(-, b_i)$ where $b_i\in \cK^!$ has algebraic grading $-1$, then
\begin{equation}
\left(H\circ \a_{N_{\xs}}\circ H\circ \cdots \circ H\circ \a_1 \circ H\right)(\xs)=0.
\label{eq:HPL-sequence}
\end{equation}
Note that since at most one of $b_1,\dots, b_k$ can be a multiple of $s$ or $t$ if 
\[
m_{\cT}(a_1,\dots, a_j,1,b_1,\dots, b_k)\neq 0,
\]
 it suffices only to consider the case that all $b_i$ are in $\ve{I}_0 \cdot \cK^!\cdot \ve{I}_0$ or all are in $\ve{I}_1\cdot \cK^! \cdot \ve{I}_1$.  We may furthermore break the argument into cases, depending on the idempotent of the generator $\xs\in \scL$.

We consider first the case that $\xs\in \ve{I}_0\cdot \scL\cdot \ve{I}_0$. In this case, we write $\xs= \zs\otimes \ys^*$ and consider $\deg(\zs)$, defined by $\deg(W^iZ^j)=i+j$. Note that all components of $H$ decrease the quantity $\deg(\zs)$ in this idempotent. Furthermore, $\a_{0|1|0}^{\mu_1}$ and $m_{0|1|1}(-,a)$ always preserve $\deg(\zs)$. Therefore any sequence as in Equation~\eqref{eq:HPL-sequence} with generators and algebra elements in this idempotent and with more than $\deg(\zs)$ factors of $H$ will vanish, which proves the claim in this case.

Next, we consider the case that $\xs\in \ve{I}_1\cdot \scL\cdot \ve{I}_1$. In this case, the argument is similar. We write $\xs=\zs\otimes \ys^*$ and consider $\deg(\zs)$ defined by $\deg(U^i T^j)=i+|j|$. The map $\a_{0|1|0}^{\mu_1}$ vanishes on this idempotent, the homotopy $H$ decreases this quantity, and the actions $m_{0|1|1}(-,b_i)$ preserve this quantity, so a sequence in Equation~\eqref{eq:HPL-sequence} will be trivial when applied to $\xs$ if it has more than $\deg(\zs)$ factors of $H$.

Finally, we consider the case that $\xs\in \ve{I}_1\cdot \scL\cdot \ve{I}_0$. This argument is different than the others. We first observe that if $\xs\in \ve{I}_1\cdot \scL\cdot \ve{I}_0$, then
\[
(H \circ m_2(-,b)\circ H)(\xs)=0
\]
for any $b\in \{w,z,\theta\}$. Therefore, if
\[
(H\circ \a_{N} \circ H\cdots \circ \a_1\circ H)(\xs)\neq 0
\]
and all $\a_i$ are either $\a_{0|1|0}^{\mu_1}$ or $m_{0|1|1}(-,b_i)$ for $b_i$ with $\gr_{\alg}(b_i)=-1$, then $\a_i=\a_{0|1|0}^{\mu_1}$ for all $i$. However one computes easily that
\[
H\circ \a_{0|1|0}^{\mu_1}\circ H\circ \a_{0|1|0}^{\mu_1}\circ H=0,
\]
 which proves the claim.
 \end{proof}

Finally, we prove one additional result:

\begin{lem}
\label{lem:strictly-unital} The bimodule ${}_{\cK} [\cTr]_{\cK^!}$ is strictly unital.
\end{lem}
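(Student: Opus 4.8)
The plan is to read the structure maps of ${}_{\cK}[\cTr]_{\cK^!}$ off the homological perturbation description in Lemma~\ref{lem:Tr-construction} and check the three defining conditions of strict unality for a type-$AA$ bimodule. By Lemma~\ref{lem:HPL-twist}, ${}_{\cK}[\cTr]_{\cK^!}=\Tw_{\b}(\ve{I})$, so its actions are $m_{i|1|j}=\b_{i|1|j}$, where $\b$ is the sum of all composites $\Pi\circ\a\circ H\circ\cdots\circ H\circ\a\circ I$; such a composite is a tree in which the $\cK$- and $\cK^!$-inputs are distributed, in order on each side, among finitely many nodes $\a_{p|1|q}$, any two consecutive nodes are separated by exactly one copy of $H$, the first node applied is preceded by $I$, and the last is followed by $\Pi$. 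Since $m^{\scL}_{*|1|*}=d_{0|1|0}+\a_{*|1|*}$ and $d_{i|1|j}=0$ for $(i,j)\neq(0,0)$, every node with $(p,q)\neq(0,0)$ is $\a_{p|1|q}=m^{\scL}_{p|1|q}$.

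The first step is to record that ${}_{\cK}\scL_{\cK^!}$ is strictly unital as a type-$AA$ bimodule. Indeed, $\scL={}_{\cK}\cK_{\cK}\boxtimes{}^{\cK}[\bar{\cCo}]^{\cK^!}\boxtimes{}_{\cK^!}\bar{\cK}^!_{\cK^!}$; its left $\cK$-action comes from the strictly unital module ${}_{\cK}\cK$, its right $\cK^!$-action comes from $\bar{\cK}^!_{\cK^!}$, the graded dual of the strictly unital module ${}_{\cK^!}\cK^!$ and hence itself strictly unital, and $\boxtimes$ preserves strict unality on each side. Consequently $\a_{1|1|0}(1,-)=\id_{\scL}$, $\a_{0|1|1}(-,1)=\id_{\scL}$, and $m^{\scL}_{p|1|q}$ vanishes whenever $p+q\geq 2$ and one of its algebra inputs is a unit. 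Besides these facts, the only other inputs I will use are the strong-deformation-retraction relations $\Pi\circ I=\id_{\ve{I}}$, $H\circ I=0$, $\Pi\circ H=0$, $H\circ H=0$ from Lemma~\ref{lem:Tr-construction}.

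The verification is then short. The key remark is that in a tree with at least two nodes, if one of its nodes equals $\id_{\scL}$, then deleting that node leaves a composite that contains one of $H\circ I$, $\Pi\circ H$, $H\circ H$ (according as the deleted node was innermost, outermost, or interior), so the tree vanishes. For the unital relations: the only single-node tree contributing to $\b_{1|1|0}(1,i_{\veps})$ must have its node absorb the lone $\cK$-input, hence be $\a_{1|1|0}(1,-)=\id_{\scL}$, giving $\Pi\circ\id_{\scL}\circ I=\Pi\circ I=i_{\veps}$; every tree with two or more nodes has $\a_{1|1|0}(1,-)=\id_{\scL}$ among its nodes and so vanishes by the remark. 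Hence $m_{1|1|0}(1,i_{\veps})=i_{\veps}$, and $m_{0|1|1}(i_{\veps},1)=i_{\veps}$ follows symmetrically using $\a_{0|1|1}(-,1)$. For the degeneracy condition, suppose $i+j>1$ and some algebra input is a unit; that input lies in a unique node $\a_{p|1|q}$ with $(p,q)\neq(0,0)$. If $p+q\geq 2$, this node is $m^{\scL}_{p|1|q}$ with a unit among its inputs, hence already zero; if $p+q=1$, this node equals $\id_{\scL}$, and since $i+j>1$ the tree has a second node, so it vanishes by the remark. Therefore $\b_{i|1|j}=0$ whenever $i+j>1$ and a unit is inserted, which completes the proof.

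The step I expect to be the main obstacle is making the tree combinatorics of Lemma~\ref{lem:HPL-twist} precise enough to justify the key remark --- that each algebra input feeds exactly one node, that consecutive nodes are separated by a single copy of $H$, and that ``the unit's node has total valence one'' is exactly the case $p+q=1$ --- together with the verification that $\scL$ itself is strictly unital, which rests on the standard but convention-sensitive facts that the box tensor product preserves strict unality and that the graded dual of a strictly unital module over a $dg$-algebra is strictly unital. With those in place, the remaining checks are the ones sketched above.
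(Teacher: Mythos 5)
Your proof is correct and follows the same approach as the paper: the strict unitality conditions on $[\cTr]$ follow from the strict unitality of $\scL$ together with the vanishing of $H\circ I$, $\Pi\circ H$, and $H\circ H$. You flesh out the details somewhat more than the paper — in particular you explicitly verify that $\scL$ is strictly unital via the box-tensor decomposition, a fact the paper uses implicitly — but the argument is the same.
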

\begin{proof} The computation that $m_{1|1|0}(\xs,1)=m_{0|1|1}(1,\xs)=\xs$ follows from the fact that this relation holds on $\scL$, and the fact that $H\circ I$, $\Pi\circ H$ and $H\circ H$ all vanish. The second fact follows from similar logic.
\end{proof}

\subsection{Koszul duality}

We now prove that $\cK^!$ and $\cK$ are Koszul dual. This amounts to the following result:

\begin{thm} We have
\[
{}_{\cK}[\cTr]_{\cK^!}\boxtimes {}^{\cK^!} [\cCo]^{\cK}={}_{\cK} \bI^{\cK}, \quad \text{and} \quad {}^{\cK^!} [\cCo]^{\cK}\boxtimes {}_{\cK}[\cTr]_{\cK^!}={}^{\cK^!} \bI_{\cK^!}.
\]
\end{thm}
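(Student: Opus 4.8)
The plan is to compute both box tensor products explicitly and check that they equal the identity bimodules on the nose. Since ${}^{\cK^!}[\cCo]^{\cK}$ and ${}_{\cK}[\cTr]_{\cK^!}$ both have underlying $(\ve{I},\ve{I})$-bimodule $\ve{I}$, each box product has underlying bimodule $\ve{I}\otimes_{\ve{I}}\ve{I}=\ve{I}$, which is also the underlying bimodule of ${}_{\cK}\bI^{\cK}$ and ${}^{\cK^!}\bI_{\cK^!}$; and the box products are well defined because ${}^{\cK^!}[\cCo]^{\cK}$ is finitely generated over $\ve{I}$ while ${}_{\cK}[\cTr]_{\cK^!}$ is left bonsai (controlling the product contracted over $\cK$) and right bonsai (controlling the product contracted over $\cK^!$) by Lemma~\ref{lem:Tr-LR-bonsai}. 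So it remains to identify the structure maps.

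The crucial simplification is the algebraic grading of Lemma~\ref{lem:grading-shifts}. Each arrow of the diagram defining ${}^{\cK^!}[\cCo]^{\cK}$ carries a single generator of $\cK^!$ (algebraic grading $-1$) on the $\cK^!$-side and a single generator of $\cK$ (algebraic grading $0$) on the $\cK$-side, and ${}_{\cK}[\cTr]_{\cK^!}$ is concentrated in algebraic grading $0$ with $m_{0|1|j}^{[\cTr]}=0$ for all $j$. For ${}_{\cK}[\cTr]_{\cK^!}\boxtimes{}^{\cK^!}[\cCo]^{\cK}$, the $\cK^!$-inputs of every action of $[\cTr]$ are supplied by arrows of $[\cCo]$ and hence have algebraic grading $-1$, so Lemma~\ref{lem:grading-shifts}(3) forces the number of $\cK$-inputs to equal $1$; therefore $\delta^1_{n+1}=0$ for $n\neq 1$, and $\delta^1_2(a,i_\varepsilon)$ collapses to a sum, over directed paths of arrows of $[\cCo]$ issuing from $i_\varepsilon$, of terms $m_{1|1|k}^{[\cTr]}(a,i_\varepsilon,b_1,\dots,b_k)$ tensored with the word in $\cK$ read along the path. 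For ${}^{\cK^!}[\cCo]^{\cK}\boxtimes{}_{\cK}[\cTr]_{\cK^!}$ the same grading bookkeeping does not kill $\delta^1_{1|n}$ for $n\ge 2$ outright, but it pins the path length in terms of $n$ and $-\sum_i\gr_{\alg}(b_i)$, so each $\delta^1_{1|n}(i_\varepsilon,b_1,\dots,b_n)$ is again a finite, explicitly enumerable sum.

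Next I would evaluate the remaining actions $m_{1|1|k}^{[\cTr]}$ (and the actions feeding the second product) on generators of $\cK^!$ using the homological-perturbation description from Lemma~\ref{lem:Tr-construction}: these are sums of composites $\Pi\circ\alpha\circ H\circ\cdots\circ H\circ\alpha\circ I$ whose constituents are $\alpha_{1|1|0}(a,-)$, $\alpha_{0|1|1}(-,b)$, and $\alpha_{0|1|0}=\alpha_{0|1|0}^{\mu_1}+\alpha_{0|1|0}^{\mu_2}$. Here Lemma~\ref{lem:a010-no-contribution} does the heavy lifting: $\alpha^{\mu_2}_{0|1|0}$ never contributes, and $\alpha^{\mu_1}_{0|1|0}$ only contributes through the short explicit list given there. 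Reading off the staircase shapes of $d$ and $H$ in \eqref{eq:map-d-idempotent-00}, \eqref{eq:map-d-idempotent-10} and \eqref{eq:map-d-idempotent-11}, one finds that the generators coming from a path of arrows of $[\cCo]$ drive the composite up and down the staircase so that it telescopes. Together with strict unitality (Lemma~\ref{lem:strictly-unital}) and $m_{1|1|0}(1,-)=\id$, this should leave exactly one surviving term in each idempotent — the one producing $\delta^1_2(a,i_\varepsilon)=i_\varepsilon\otimes a$, respectively $\delta^1_2(i_\varepsilon,b)=b\otimes i_\varepsilon$ — with all longer-path contributions cancelling in pairs.

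I expect the main obstacle to be precisely this final step. Although the grading restriction and Lemma~\ref{lem:a010-no-contribution} make it a finite verification, it must be done separately in the idempotents $(0,0)$, $(1,0)$ and $(1,1)$ (the last two splitting further according to whether one works with $\sigma$/$s$ or $\tau$/$t$), keeping careful track of which chains of arrows of $[\cCo]$ pair non-trivially with the perturbation composites; and for ${}^{\cK^!}[\cCo]^{\cK}\boxtimes{}_{\cK}[\cTr]_{\cK^!}$ one must additionally check that the a priori possible higher structure maps $\delta^1_{1|n}$ with $n\ge 2$ are annihilated by cancellation rather than by grading. Once these checks are complete, both box products carry exactly the structure maps of the identity bimodules, which is the assertion.
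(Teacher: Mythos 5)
Your outline of the first equality, ${}_{\cK}[\cTr]_{\cK^!}\boxtimes {}^{\cK^!}[\cCo]^{\cK}={}_{\cK}\bI^{\cK}$, is essentially the paper's argument: the algebraic grading forces the output of $\delta_{j+1}^1$ into algebraic grading $j-1$, which must be $0$ since $\cK$ is concentrated in grading $0$, so only $\delta_2^1$ survives, and a direct check of $\delta_2^1$ on each generator finishes it. That part is fine.

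For the second equality you have a genuine gap. You observe, correctly, that the algebraic grading does not by itself kill $\delta^1_{1|n}$ for $n\ge 2$, and you then assert that the surplus terms will be ``annihilated by cancellation rather than by grading.'' That is not what happens, and you have not supplied a mechanism that would make your explicit-enumeration plan terminate: for fixed $n$ and fixed $(b_1,\dots,b_n)$ the sum is finite, but you must check all $n\ge 2$ and all sequences, which is infinitely many verifications with no pairing rule in sight. The paper instead introduces a \emph{second} filtration $\cF$ on the $\cK$-factor of $\scL$, roughly word-length in the generators $\{W,Z,U\}$ (so $\cF(W^iZ^j)=\max(i,j)$ in idempotent $0$, and $\cF(U^iT^j)=i+|j|$ in idempotent $1$). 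The key observations are that each arrow of $[\cCo]$ outputs a $\cK$-generator of $\cF$-level exactly $1$, while the homotopy $H$ used to build $[\cTr]$ strictly decreases $\cF$, so $m_{i|1|j}$ on $[\cTr]$ lowers $\cF$ by $i+j-1$. Combining these, $\delta_{j+1}^1(\ve{I}_\veps\otimes(R_\veps^!)^{\otimes j})$ lands in $\cF_{1-j}(\ve{I})$, which is trivial for $j>1$: the terms vanish individually, not by cancellation. This filtration argument is what disposes of idempotents $(0,0)$ and $(1,1)$; only the $(1,0)$ idempotent (inputs involving $s$ or $t$) is handled by the tree-by-tree enumeration you propose, precisely because, as the paper notes, no analogue of $\cF$ is available there. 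Without identifying $\cF$, your proof of the second identity is incomplete.
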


We will break the proof into several steps.

\begin{prop}
\label{prop:quasi-inverse-1}
The bimodule ${}_{\cK} [\cTr]_{\cK^!} \boxtimes {}^{\cK^!} [\cCo]^{\cK}$ is equal to ${}_{\cK} [\bI]^{\cK}$.
\end{prop}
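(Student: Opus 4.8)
The plan is to compute the type-$DA$ bimodule structure maps of $P:={}_{\cK}[\cTr]_{\cK^!}\boxtimes{}^{\cK^!}[\cCo]^{\cK}$ directly. Its underlying $(\ve I,\ve I)$-bimodule is $\ve I\otimes_{\ve I}\ve I=\ve I$, which already matches ${}_{\cK}[\bI]^{\cK}$, so the task is to identify the structure maps with those of the diagonal bimodule: $\delta^1_2(a,i_\veps)=i_{\veps'}\otimes a$ for $a\in\ve I_{\veps'}\cdot\cK\cdot\ve I_\veps$, and $\delta^1_{n+1}=0$ for $n\ne1$. Schematically, the box-tensor structure maps are obtained by iterating the $DD$-structure map $\delta^{1,1}$ of $[\cCo]$ to produce the $\cK^!$-inputs for the $AA$-actions $m_{n|1|j}$ of $[\cTr]$, while the paired $\cK$-labels are accumulated (i.e.\ multiplied in $\cK$) into the single $\cK$-output. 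Since each iteration of $\delta^{1,1}$ reads off one labelled arrow of the $[\cCo]$-diagram, the nonzero terms of $\delta^1_{n+1}(a_n,\dots,a_1,i_\veps)$ are indexed by finite directed paths $\gamma=(\gamma_1,\dots,\gamma_j)$ in that diagram starting at $i_\veps$; writing $c_k\in\{w,z,\theta,s,t,\varphi_+,\varphi_-\}$ and $d_k\in\{W,Z,WZ,\sigma,\tau,T,T^{-1}\}$ for the $\cK^!$- and $\cK$-labels of $\gamma_k$, the contribution of $\gamma$ is $m_{n|1|j}(a_n,\dots,a_1,i_\veps,c_1,\dots,c_j)\otimes(d_j\cdots d_1)$. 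Right bonsai-ness of $[\cTr]$ (Lemma~\ref{lem:Tr-LR-bonsai}) makes this sum finite for fixed $a_1,\dots,a_n$, so $P$ is a well-defined $DA$-bimodule.

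The first reduction is via the algebraic $\Z^{\le 0}$-grading: viewing $\cK$ in algebraic grading $0$, the generators $\ve I$ of $[\cTr]$ lie in grading $0$ and each label $c_k$ has $\gr_{\alg}(c_k)=-1$, so by Lemma~\ref{lem:grading-shifts} a nonzero value of $m_{n|1|j}(a_n,\dots,a_1,i_\veps,c_1,\dots,c_j)$ has algebraic grading $n+j-1+\sum_k\gr_{\alg}(c_k)=n-1$, which must vanish. Hence only $n=1$ survives; thus $\delta^1_1=0$ (alternatively, $m_{0|1|j}=0$, as in the proof of Lemma~\ref{lem:Tr-LR-bonsai}), $\delta^1_{n+1}=0$ for $n\ge 2$, and $P$ has only $\delta^1_2$. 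The second reduction is via the groupoid grading: combining the $\cG$-grading constraint of Lemma~\ref{lem:grading-shifts}(6) for $[\cTr]$ with the fact (Lemma~\ref{lem:grading-shifts}(5)) that $\delta^{1,1}$ of $[\cCo]$ has $\cG$-grading $\lambda^{-1}$, one computes that the $\cK$-output $d_j\cdots d_1$ of any surviving term of $\delta^1_2(a,i_\veps)$, for $a$ a homogeneous monomial, has $\cG$-grading $g(a)$. Since distinct monomials of $\cK$ carry distinct $\cG$-gradings, this output equals $a$, and the idempotents force the $[\cTr]$-factor into $\ve I_{\veps'}$. Therefore $\delta^1_2(a,i_\veps)=c_a\,(i_{\veps'}\otimes a)$ for some scalar $c_a\in\bF$.

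It then remains to show $c_a=1$ for every monomial $a$ of $\cK$. Strict unitality of $[\cTr]$ (Lemma~\ref{lem:strictly-unital}), hence of $P$, gives $\delta^1_2(i_\veps,i_\veps)=i_\veps\otimes i_\veps$, i.e.\ $c_{i_\veps}=1$; and the $DA$-bimodule structure relation, which holds automatically for the box product, applied with two left inputs and $\delta^1_3=0$, forces the multiplicativity $c_{ab}=c_ac_b$. Since every monomial of $\cK$ is a product of the generators $W,Z,U,\sigma,\tau,T,T^{-1}$, it suffices to prove $c_g=1$ for these seven. For each, I would enumerate the finitely many paths $\gamma$ in the $[\cCo]$-diagram whose $\cK$-label product equals $g$ and evaluate the resulting $m_{1|1|j}$-actions on $[\cTr]$ from its explicit homological-perturbation description (Lemma~\ref{lem:Tr-construction}, with Lemma~\ref{lem:a010-no-contribution} pruning the contributions of $\a_{0|1|0}$): for $g\in\{W,Z,T,T^{-1}\}$ only the length-one path occurs; for $g=U$ the relevant path is $(\theta\,|\,WZ)$ at $i_0$ and $(\theta\,|\,U)$ at $i_1$; and for $g=\sigma,\tau$ one must in addition verify that the longer paths (e.g.\ $(z|Z)^b(s|\sigma)(\varphi_-|T^{-1})^b$ for $g=\sigma$) contribute zero. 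In every case the path-sum should collapse to $i_{\veps'}$, giving $c_g=1$.

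The main obstacle is this last step. The actions $m_{1|1|j}$ of $[\cTr]$ are defined only implicitly, through homological perturbation applied to the large bimodule $\scL$, so the computation reduces to tracking the composites $\Pi\circ(\a\text{'s and }H\text{'s})\circ I$ and checking, idempotent by idempotent (the three cases $(0,0)$, $(1,0)$, $(1,1)$), that the relevant path-sums telescope to the identity; here the explicit form of the homotopy $H$, of the surviving components of $\a^{\mu_1}_{0|1|0}$, and the vanishing statements of Lemma~\ref{lem:a010-no-contribution} do the work. The two grading arguments above are precisely what reduce an a priori infinite computation to these seven finite ones, and the remainder is careful bookkeeping with the diagrams for $[\cCo]$ and $[\cTr]$.
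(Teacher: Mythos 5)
Your proof is correct and follows essentially the same route as the paper's: the algebraic grading kills $\delta^1_{j+1}$ for $j\neq 1$, and then $\delta^1_2$ on the seven algebra generators (computed directly via the homological perturbation description of $[\cTr]$ and Lemma~\ref{lem:a010-no-contribution}) combined with the $DA$-structure relations determines everything. The extra intermediate step you add — using the $\cG$-grading of Lemma~\ref{lem:grading-shifts} to pin down $\delta^1_2(a,i_\veps)$ as a scalar multiple of $i_{\veps'}\otimes a$ before checking the scalar — is a pleasant refinement not spelled out in the paper, but the underlying argument is the same.
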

\begin{proof}
We make the following claims:
 \begin{enumerate}
\item $\delta_2^1(W,i_0)=i_0\otimes W$, $\delta_{2}^1(Z,i_0)=i_0\otimes Z$, $\delta_2^1(\sigma, i_0)=i_1\otimes \sigma$, $\delta_2^1(\tau, i_0)=i_1\otimes \tau$, $\delta_2^1(T^{\pm 1},i_1)=i_1\otimes T^{\pm 1}$ and $\delta_2^1(U,i_1)=i_1\otimes U$.
\item $\delta_{j+1}^1=0$ for $j\neq 1$.
\end{enumerate}
Together, the main result follows from the above two claims and the $DA$-bimodule structure relations.

 The first claim is a direct computation, which we leave to the reader, as it follows immediately from the recipe from the homological perturbation lemma.
 
 For the second claim, we use the algebraic grading. The grading shifts from Lemma~\ref{lem:grading-shifts} imply that the algebraic grading shift of $\delta_{j+1}^1$ is $j-1$. Since $\cK$ is concentrated in algebraic grading 0, this implies that $\delta_{j+1}^1$  can only be non-zero if $j=1$, which completes the proof.
 \end{proof}

We will prove the following:

\begin{prop}\label{prop:quasi-inverse-2} The bimodules ${}^{\cK^!} [\cCo]^{\cK}\boxtimes {}_{\cK} [\cTr]_{\cK^!}$ and $ {}^{\cK^!}\bI_{\cK^!}$ are equal. 
\end{prop}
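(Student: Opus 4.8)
\emph{Proof proposal.} The plan is to follow the strategy of the proof of Proposition~\ref{prop:quasi-inverse-1}, adjusted for the fact that $\cK^!$ is not concentrated in a single algebraic grading, so more is needed to annihilate the higher structure maps. Write $N:={}^{\cK^!}[\cCo]^{\cK}\boxtimes {}_{\cK}[\cTr]_{\cK^!}$. Since ${}_{\cK}[\cTr]_{\cK^!}$ is left and right bonsai (Lemma~\ref{lem:Tr-LR-bonsai}), the box tensor is well-defined, and $N$ is a type-$DA$ bimodule over $(\cK^!,\cK^!)$ whose underlying $(\ve{I},\ve{I})$-bimodule is $\ve{I}\otimes_{\ve{I}}\ve{I}=\ve{I}$, as is that of ${}^{\cK^!}\bI_{\cK^!}$. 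The structure maps $\delta^1_{n+1}$ of $N$ are built by feeding the $\cK$-outputs of $\delta^{1,1}$ of $[\cCo]$ into the $\cK$-actions of $[\cTr]$; since each term of $\delta^{1,1}$ of $[\cCo]$ pairs a single generator of $\cK^!$ with a single generator of $\cK$ (namely $w|W$, $z|Z$, $s|\sigma$, $t|\tau$, $\theta|U$, and $\varphi_{\pm}|T^{\pm 1}$), the type-$D$ output of $\delta^1_{n+1}$ is always a single algebra generator of $\cK^!$, and the $\cK$-element fed to $[\cTr]$ is the corresponding generator of $\cK$.

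First I would compute the low-order maps directly from the homological perturbation recipe for ${}_{\cK}[\cTr]_{\cK^!}$ (Lemma~\ref{lem:Tr-construction}). Using Lemmas~\ref{lem:a010-no-contribution} and~\ref{lem:strictly-unital} to keep this manageable, one checks that $m_{1|1|0}(a,i_\veps)=0$ on $[\cTr]$ for $a\in\{W,Z,\sigma,\tau,U,T^{\pm 1}\}$, so that $\delta^1_1=0$, and that the $w|W$, $z|Z$, $s|\sigma$, $t|\tau$, $\theta|U$, $\varphi_{\pm}|T^{\pm 1}$ contributions assemble into $\delta^1_2(i_\veps,a)=a\otimes i_{\veps'}$ for every algebra generator $a$ of $\cK^!$. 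Thus $N$ agrees with ${}^{\cK^!}\bI_{\cK^!}$ through the binary level.

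It then remains to prove $\delta^1_{n+1}=0$ for $n\ge 2$, for which I would combine three ingredients. First, the algebraic grading (Lemma~\ref{lem:grading-shifts}): since $\gr_{\alg}\big(\delta^1_{n+1}(1,b_1,\dots,b_n)\big)=n-1+\sum_i \gr_{\alg}(b_i)$ while the output is a single $\cK^!$-generator (algebraic grading $-1$), every nonzero term of $\delta^1_{n+1}$ has all of its inputs $b_i$ equal to algebra generators of $\cK^!$. Second, fixing the output generator $c$ fixes the $\cK$-element $a$ fed to $[\cTr]$, and by the right-bounded property of ${}_{\cK}[\cTr]_{\cK^!}$ established in the proof of Lemma~\ref{lem:Tr-LR-bonsai} only finitely many input lengths $n$ can contribute for a fixed single $\cK$-input; the groupoid-valued grading (Lemma~\ref{lem:grading-shifts}) trims the list of admissible input strings further, leaving only a handful of candidates, each of which is eliminated by a direct computation of the relevant action $m_{1|1|n}$ on ${}_{\cK}[\cTr]_{\cK^!}$. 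Third --- and more cleanly for the candidates whose output is $\theta$ --- one may instead argue inductively: substituting $\delta^1_1=0$, the binary computation, and the inductive hypothesis $\delta^1_{k+1}=0$ for $2\le k<n$ into the $DA$-structure relation for the $(n+1)$-ary part (and using that the identity bimodule satisfies its own structure relations) forces $\delta^1_{n+1}$ to be a cocycle for the differential induced by $\mu_1$; since $\mu_1(\theta)=wz+zw\neq 0$ while $\delta^1_{n+1}$ vanishes on the non-generator inputs $wz$ and $zw$, this rules out every term whose output is $\theta$.

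The main obstacle is this last step: unlike in Proposition~\ref{prop:quasi-inverse-1}, where the single algebraic grading on $\cK$ immediately annihilated all higher maps, here the gradings only cut the problem down to a finite verification, and one must control the higher $\cK^!$-actions on ${}_{\cK}[\cTr]_{\cK^!}$ precisely enough to carry it out. This is exactly the bookkeeping that the algebraic grading, the groupoid grading, Lemma~\ref{lem:a010-no-contribution}, and the bonsai/boundedness properties of $[\cTr]$ were set up to make routine.
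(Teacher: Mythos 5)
There is a genuine gap. Your argument hinges on the claim that "the type-$D$ output of $\delta^1_{n+1}$ is always a single algebra generator of $\cK^!$, and the $\cK$-element fed to $[\cTr]$ is the corresponding generator of $\cK$." This rests on a misreading of the box tensor product: $\delta^1_{n+1}$ of ${}^{\cK^!}[\cCo]^{\cK}\boxtimes{}_{\cK}[\cTr]_{\cK^!}$ is computed by iterating $\delta^{1,1}$ of $[\cCo]$ an arbitrary number $k\ge 0$ of times, multiplying the resulting $k$ generators of $\cK^!$ together (the map $\Pi$ in the paper's diagram), and feeding the $k$ generators of $\cK$ into $m_{k|1|n}$ of $[\cTr]$. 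So the $\cK^!$-output is a product $c_1\cdots c_k$ (e.g.\ already $\delta_2^1(i_0, wz)=wz\otimes i_0$ has output $wz$, algebraic grading $-2$), and the $\cK$-input to $[\cTr]$ is a length-$k$ sequence, not a single element. This is visible in the paper's own direct computation: the output $s\theta\varphi_+$ is treated via its six factorizations into $\cK^!$-generators, each giving a sequence $(a_3,a_2,a_1)$ of length $3$ in $\cK$. Consequently the grading step as you wrote it (forcing $\gr_{\alg}=-1$ on the output, hence all $b_i$ generators), and the claim that the output generator fixes a single $\cK$-element, both fail.

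There is a kernel of a correct idea here: if one \emph{restricts} to generator inputs $b_i$, the algebraic grading formula does force the output to lie in grading $-1$ and hence forces $k=1$; and by strict unitality and the $DA$-structure relations, verifying $\delta^1_{n+1}=0$ on generator inputs would suffice. But that is not what you argue, the "direct computation" eliminating the remaining candidates is only asserted, and the inductive $\theta$-step ("$\delta^1_{n+1}$ vanishes on the non-generator inputs $wz$ and $zw$") itself presupposes the conclusion you are trying to establish. The paper avoids all of this by a different mechanism: on the diagonal idempotents it introduces a "number-of-generators" filtration $\cF$ on $\cK$ (not on $\cK^!$), shows from the homological perturbation description that $m_{i|1|j}$ on $[\cTr]$ drops $\cF$ by $i+j-1$ for \emph{any} $i$, and deduces $\delta^1_{j+1}=0$ for $j>1$ for arbitrary (not just generator) inputs; for the mixed idempotent it explicitly acknowledges that no such filtration is available and carries out a finite enumeration of factorizations and their homological-perturbation trees (Figure~\ref{fig:s-z-theta-term}). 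You would need to either adopt the filtration idea or actually perform and exhibit the finite generator-input verification, correcting the identification of which $\cK$-sequences get fed to $[\cTr]$.
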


Our proof of Proposition~\ref{prop:quasi-inverse-2} is slightly complicated since grading considerations alone do not force the two bimodules to be equal, and we have to do more computations. We will break the argument into cases, depending on the idempotents of the algebra inputs.

\begin{lem} 
\label{lem:quasi-inverse-idempotent-00}
The bimodule ${}^{\cK^!} [\cCo]^{\cK}\boxtimes {}_{\cK} [\cTr]_{\cK^!}$ coincides with ${}^{\cK^!} \bI_{\cK^!}$ when restricted to algebra elements in $\ve{I}_0\cdot \cK^!\cdot \ve{I}_0$.
\end{lem}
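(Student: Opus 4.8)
The plan is to unfold the box tensor product explicitly and match it, term by term, with ${}^{\cK^!}\bI_{\cK^!}$. Write $T={}^{\cK^!}[\cCo]^{\cK}\boxtimes{}_{\cK}[\cTr]_{\cK^!}$; its underlying $(\ve{I},\ve{I})$-bimodule is $\ve{I}$, and for $b_1,\dots,b_n\in\ve{I}_0\cdot\cK^!\cdot\ve{I}_0$ the structure map is a sum, finite by Lemma~\ref{lem:Tr-LR-bonsai},
\[
\delta^1_{1+n}(i_0;b_1,\dots,b_n)=\sum_{k\ge0}\ \sum_{u}\ u\otimes m_{k|1|n}(\bar u_k,\dots,\bar u_1,i_0,b_1,\dots,b_n),
\]
where $u=u_1\cdots u_k$ ranges over the length-$k$ words arising from iterating $\delta^{1,1}$ of $[\cCo]$ at $i_0$ and $\bar u$ is the mirror word on the $\cK$-side. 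The first step is to discard the $s|\sigma$ and $t|\tau$ arrows of $[\cCo]$: each such arrow would feed a $\sigma$ or $\tau$, which lies in $\ve{I}_1\cdot\cK\cdot\ve{I}_0$, among the left $\cK$-inputs of an action $m_{k|1|n}(\dots,i_0,b_1,\dots,b_n)$, and since $\ve{I}_0\cdot\cK\cdot\ve{I}_1=0$ and the underlying space of ${}_{\cK}[\cTr]_{\cK^!}$ has no component in idempotent $(1,0)$, that action vanishes. Hence $u$ runs over words in $\{w,z,\theta\}$, $\bar u$ over the corresponding words in $\{W,Z,U\}$, and the whole computation takes place inside idempotent $(0,0)$ of $\scL$.

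Next I would evaluate the actions $m_{k|1|n}(\bar u_k,\dots,\bar u_1,i_0,b_1,\dots,b_n)$ from the homological perturbation formula of Lemma~\ref{lem:Tr-construction}, using Lemma~\ref{lem:a010-no-contribution} to cut down the contributing composites: on idempotent $(0,0)$ they are composites $\Pi\circ(\cdots)\circ I$ built only from $\a_{1|1|0}(W,-)$, $\a_{1|1|0}(Z,-)$, $\a_{0|1|1}(-,w)$, $\a_{0|1|1}(-,z)$, $\a_{0|1|1}(-,\theta)$, the two components $(wz)^*\to\theta^*$ and $(zw)^*\to\theta^*$ of $\a_{0|1|0}^{\mu_1}$, and the homotopy $H$ of Equation~\eqref{eq:map-d-idempotent-00}. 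For $n=1$ I expect the calculation to show that $m_{k|1|1}(\bar u_k,\dots,\bar u_1,i_0,b)$ equals $i_0$ precisely when the word $u$ reads off $b$ — the $U$'s coming from $\theta$-letters of $b$ being absorbed by the $U\,\Pi_{W^0Z^0U^{\ge0}}$ component of the map $d$ together with the two $\a_{0|1|0}^{\mu_1}$ pieces — and equals $0$ otherwise, so that $\delta_2^1(i_0;b)=b\otimes i_0$. For $n=0$ the vanishing $\delta_1^1(i_0)=0$ is immediate: the algebraic grading of Lemma~\ref{lem:grading-shifts} forces $k=1$, and every composite entering $m_{1|1|0}(\bar u_1,i_0)$ kills $I(i_0)=1\otimes(1)^*$, since $H$, $\a_{0|1|0}^{\mu_1}$, and $\Pi\circ\a_{1|1|0}$ all vanish on the relevant elements. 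For $n\ge2$ the same grading identity forces $k=1-n+\sum_i\ell_i$ with $\ell_i=-\gr_{\alg}(b_i)\ge1$; I would then check that the finitely many contributing composites either vanish or cancel in pairs, the cancellation being exactly the image of the relation $\mu_1(\theta)=wz+zw$, so that $\delta^1_{1+n}(i_0;b_1,\dots,b_n)=0$.

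The main obstacle is the bookkeeping involving $\theta$: one must track how the $\theta|U$ loop of $[\cCo]$, the differential $\mu_1(\theta)=wz+zw$ on $\cK^!$, the components $(wz)^*\to\theta^*$ and $(zw)^*\to\theta^*$ of $\a_{0|1|0}^{\mu_1}$, and the $U\,\Pi$ component of $d$ in Equation~\eqref{eq:map-d-idempotent-00} interact, and then show that they reproduce each $\theta$-letter in $\delta_2^1$ with coefficient exactly one while annihilating every higher structure map. Everything else — the idempotent reduction, the vanishing of $\delta_1^1$, and the grading bound for $n\ge2$ — is formal. As a sanity check, after passing to the $A_\infty$-model $\cK^!_\infty$ the $\ve{I}_0\cdot(-)\cdot\ve{I}_0$ part of the picture is the classical Koszul duality between $\Lambda^*(w,z)$ and $\bF[W,Z]$, and the $\theta$-terms are precisely the price of working instead with the curved $dg$-model $\cK^!$.
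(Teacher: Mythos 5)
Your idempotent reduction (discarding the $s|\sigma$ and $t|\tau$ arrows), the treatment of $\delta_1^1=0$ via Lemma~\ref{lem:a010-no-contribution} and the algebraic grading, and the observation that $\delta_2^1(i_0,b)=b\otimes i_0$ requires a direct (but finite) verification are all in line with the paper, which also leaves the $\delta_2^1$ computation to the reader. The real content is the case $n\ge 2$, and here your proposal has a genuine gap.

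You propose to show $\delta_{n+1}^1(i_0;b_1,\dots,b_n)=0$ for $n\ge 2$ by checking that "the finitely many contributing composites either vanish or cancel in pairs, the cancellation being exactly the image of the relation $\mu_1(\theta)=wz+zw$," and you flag this bookkeeping as the main obstacle. But this is exactly the part of the argument that you never supply, and the framing of it as a cancellation is not quite right. The paper avoids cancellations altogether with a filtration argument that shows each contributing composite vanishes individually. Define a filtration $\cF$ on $R_0=\bF[W,Z]$ by $\cF(W^iZ^j)=\min(i,j)+|j-i|$, equivalently the minimal number of factors from $\{W,Z,U\}$ needed to write the monomial, and extend it tensorially to $\scL$ with $\bar{\cK}^!$ in level $0$. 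Then one checks that $H$ strictly drops $\cF$, that $\a_{1|1|0}(a_k,-)$ raises it by $\cF(a_k)$, that $\a_{0|1|0}$ raises it by at most $1$, and that $\a_{0|1|1}(-,b_k)$ preserves it. Counting the $H$'s and $\a$'s in any composite in the homological perturbation formula gives
\[
m_{i|1|j}\bigl(\cF_n(R_0^{\otimes i}\otimes\ve{I}\otimes (R_0^!)^{\otimes j})\bigr)\subset\cF_{n-i-j+1}(\ve{I}).
\]
In the box tensor product each left $\cK$-input comes from $\delta^{1,1}$ of $[\cCo]$, hence is a single letter $W$, $Z$, or $U$ with $\cF=1$, so $n=i$ and the image lies in $\cF_{1-j}=0$ once $j>1$. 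Your algebraic-grading constraint $k=1-n+\sum_i\ell_i$ is correct but only determines the length of the $\cK^!$-coefficient; it does not, by itself, force any vanishing, and the $\mu_1(\theta)$-relation is exactly the kind of structure the filtration is designed to sidestep. Without some replacement for this step — either the filtration or an honest enumeration of the composites — the proof is incomplete.
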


\begin{proof}
We make the following claims:
\begin{enumerate}
\item $\delta_2^1(i_0,w)=w\otimes i_0$, $\delta_2^1(i_0,z)=z\otimes i_0$, and $\delta_2^1(i_0, \theta)=\theta\otimes i_0$.
\item $\delta_{j+1}^1(i_0,a_1,\dots, a_j)=0$ if all $a_i\in \ve{I}_0\cdot \cK^!\cdot \ve{I}_0$ and $j\neq 1$.
\end{enumerate}
Note that the main claim easily follows from the above two claims using the $DA$-bimodule relations.

The first claim is a straightforward computation, which we leave to the reader. The second claim is more involved. 

 Throughout the proof, it is helpful to write
\[
R_0=\ve{I}_0\cdot \cK\cdot \ve{I}_0 \quad \text{and} \quad R_0^!=\ve{I}_0\cdot \cK^!\cdot \ve{I}_0.
\]

We first observe that $\delta_1^1=0$ by Lemma~\ref{lem:a010-no-contribution}.

We will now show that
\[
\delta_{j+1}^1(i_0,a_1,\dots, a_j)=0
\]
for $j>1$, when $a_k\in R_0^!$ for all $k$. We define a filtration $\cF_{i}$ on $ \ve{I}_0\cdot {}_{\cK}\scL_{\cK^!}\cdot \ve{I}_0$ as follows. We first define a filtration $\cF_i$ on $R_0$ by declaring $W^iZ^j\in R_0\subset \cK$ to have filtration  
\[
\cF(W^iZ^j)=\min(i,j)+|j-i|.
\]
We can alternatively define $\cF(a)$ of a monomial $a\in R_0$ to be the minimal $n$ so that $a=a_1\cdots a_n$ for some $a_i\in \{W,Z, U\}$. 
Note that 
\[
\cF(ab)\le \cF(a)+\cF(b)
\]
for $a,b\in R_0$.
  
 Write $\cF_i=\Span_{\bF}\{a\in R_0: \cF(a)\le i\}$. We extend this filtration to tensor products tensorially, i.e.
\[
\cF(a_1\otimes \cdots \otimes a_n)=\sum_{i=1}^n \cF(a_i).
\]
By viewing $\scL$ as a tensor product of $\cK$ and $\bar{ \cK}^!$ (with $\bar{\cK}^!$ concentrated in filtration level 0), we therefore can view the bimodule $\scL$ as being filtered by $\cF$ as well. We observe that our homotopy $H$ maps $\cF_{i}$ to $\cF_{i-1}$. Furthermore, we observe that the map $\a_{0|1|0}$ maps $\cF_i$ to $\cF_{i+1}$, and $\a_{0|1|1}$ and $\a_{1|1|0}$ send $\cF_{i}\otimes \cF_j$ to $\cF_{i+j}$. It follows from description of the actions in the homological perturbation lemma that the structure maps $m_{i|1|j}$ on ${}_{\cK} [\cTr]_{\cK^!}$ satisfy 
\begin{equation}
m_{i|1|j}\left(\cF_n(R_0^{\otimes i} \otimes \ve{I} \otimes (R_0^!)^{\otimes j})\right)\subset \cF_{n-i-j+1}(\ve{I})
\label{eq:filtration-Fn-mi1j}
\end{equation}
However this space is empty if $n-i-j+1<0$. Note that the differential of the tensor product ${}^{\cK^!} [\cCo]^{\cK}\boxtimes {}_{\cK} [\cTr]_{\cK^!}$, when restricted to generators and algebra elements in idempotent $(0,0)$, is obtained by summing over $i$ diagrams as shown below:
\[
\begin{tikzcd}& {\cCo}\ar[d]&\cTr\ar[dd]& (R_0^!)^{\otimes j}\ar[ddl,Rightarrow,bend left=10]\\
&\delta^{i,i}\ar[dl,Rightarrow,bend right=10] \ar[dr, Rightarrow, bend left=10]\ar[dd]&\, \\
\Pi \ar[d]&\,&m_{i|1|j} \ar[d]\\
\,&\,&\,
\end{tikzcd}
\]
Here, $\Pi$ means to iteratively apply $\mu_2$. Each algebra element $a_k\in \{a_1,\dots, a_i\}\subset R_0$ which is input into $m_{i|1|j}$ in the above diagram has $\cF(a_k)=1$. Therefore $\cF(a_1\otimes \cdots \otimes a_i)=i$. However, combining this with Equation~\eqref{eq:filtration-Fn-mi1j}, we see that 
\[
\delta_{j+1}^1\big(\ve{I}_0\otimes (R_0^!)^{\otimes j}\big)=\delta_{j+1}^1\big(\cF_{\le 0}(\ve{I}_0\otimes (R_0^!)^{\otimes j})\big)\subset \cF_{1-j}.
\]
If $j>1$, $\cF_{1-j}$ is zero by definition. Therefore $\delta_{j+1}^1=0$ if $j>1$ and all inputs are from $\ve{I}_0\cdot \cK^! \cdot \ve{I}_0$. This completes the proof. 
\end{proof}

Next, we move onto idempotent 1:

\begin{lem} 
\label{lem:quasi-inverse-idempotent-11}
The structure maps of the bimodule ${}^{\cK^!} [\cCo]^{\cK}\boxtimes {}_{\cK} [\cTr]_{\cK^!}$ coincide with those from ${}^{\cK^!} \bI_{\cK^!}$ when restricted to algebra elements in $\ve{I}_1\cdot \cK^!\cdot \ve{I}_1$.
\end{lem}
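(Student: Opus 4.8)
The plan is to follow the proof of Lemma~\ref{lem:quasi-inverse-idempotent-00} as closely as possible. First I would reduce to two claims: that $\delta_2^1(i_1,\varphi_+)=\varphi_+\otimes i_1$, $\delta_2^1(i_1,\varphi_-)=\varphi_-\otimes i_1$ and $\delta_2^1(i_1,\theta)=\theta\otimes i_1$; and that $\delta_{j+1}^1(i_1,a_1,\dots,a_j)=0$ whenever $j>1$ and all $a_m\in\ve{I}_1\cdot\cK^!\cdot\ve{I}_1$. Granting these, the coincidence with ${}^{\cK^!}\bI_{\cK^!}$ on idempotent $(1,1)$ inputs is immediate from the $DA$-bimodule structure relations, exactly as in Lemma~\ref{lem:quasi-inverse-idempotent-00}. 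The first claim is a routine unwinding of the box tensor product and the homological perturbation recipe for ${}_{\cK}[\cTr]_{\cK^!}$, using the loops $\varphi_+|T$, $\varphi_-|T^{-1}$ and $\theta|U$ at $i_1$ in ${}^{\cK^!}[\cCo]^{\cK}$, and I do not expect difficulty there.

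The content is the second claim. As was flagged before Lemma~\ref{lem:quasi-inverse-idempotent-00}, the group-valued and algebraic gradings alone do not force it here: in idempotent $(1,1)$ the generator $T$ is invertible and the monomials in $\varphi_+,\varphi_-,\theta$ occupy only a narrow band of gradings, so no grading bounds the number of algebra inputs. Instead I would run the filtration argument. Set $R_1=\ve{I}_1\cdot\cK\cdot\ve{I}_1=\bF[U,T,T^{-1}]$ and equip it with the submultiplicative filtration $\cF(U^aT^b)=a+|b|$, i.e. the minimal number of factors from $\{U,T,T^{-1}\}$ needed to write the monomial; extend $\cF$ tensorially with $\bar{\cK}^!$ in filtration level $0$, so that the idempotent $(1,1)$ part of ${}_{\cK}\scL_{\cK^!}$ is filtered. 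The three facts I need about the maps appearing in the homological perturbation lemma for ${}_{\cK}\scL_{\cK^!}$, restricted to idempotent $(1,1)$, are: the homotopy $H$ of Lemma~\ref{lem:Tr-construction} lowers $\cF$ by exactly $1$ on its support (each component multiplies the $\cK$-factor by $T$, $T^{-1}$ or $U^{-1}$ against a projection making this a decrease); the term $\a_{0|1|0}$ contributes nothing to any action in this idempotent, since by Lemma~\ref{lem:a010-no-contribution} all of its contributing components lie in idempotents $(0,0)$ and $(1,0)$; and $\a_{1|1|0}(a,-)$ raises $\cF$ by at most $\cF(a)$ while $\a_{0|1|1}(-,b)$ preserves $\cF$ (the latter being the same computation used in the idempotent $(1,1)$ case of the proof of Lemma~\ref{lem:Tr-LR-bonsai}).

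Given these, a structure action $m_{i|1|j}$ on ${}_{\cK}[\cTr]_{\cK^!}$ in idempotent $(1,1)$ is a sum of composites $\Pi\circ\a_{*|1|*}\circ H\circ\dots\circ H\circ\a_{*|1|*}\circ I$ with exactly $i$ of the $\a$-factors of type $\a_{1|1|0}$, $j$ of type $\a_{0|1|1}$, and hence exactly $i+j-1$ interior copies of $H$. Since $I$ lands in filtration level $0$ and $\Pi$ is supported there, the net filtration change along such a composite must be $0$; but by the three facts it is at most $\sum_{\ell=1}^{i}\cF(a_\ell)-(i+j-1)$, where $a_1,\dots,a_i$ are the $\cK$-inputs. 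In the box tensor product these $\cK$-inputs come from $\delta^{i,i}$ of ${}^{\cK^!}[\cCo]^{\cK}$ applied to $i_1$, so each $a_\ell\in\{T,T^{-1},U\}$ has $\cF(a_\ell)=1$, and the bound becomes $i-(i+j-1)=1-j$, which is negative for $j>1$. Hence $m_{i|1|j}(a_i,\dots,a_1,i_1,b_1,\dots,b_j)=0$ for all $i$ when $j>1$, so $\delta_{j+1}^1$ vanishes on idempotent $(1,1)$ inputs for $j>1$, which is the second claim.

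I expect the main obstacle to be the last of the three facts: because $\a_{0|1|1}$ is the right $\cK^!$-action on $\scL={}_{\cK}\cK_{\cK}\boxtimes{}^{\cK}[\bar{\cCo}]^{\cK^!}\boxtimes{}_{\cK^!}\bar{\cK}^!_{\cK^!}$, it can feed $\cK$-elements produced by $\delta^{1,1}$ of $[\bar{\cCo}]$ onto the $\cK$-factor, so its $\cF$-neutrality is not formal and requires the same unwinding as in the proof of Lemma~\ref{lem:Tr-LR-bonsai}. Once that is in hand, the remainder is a short counting argument parallel to Lemma~\ref{lem:quasi-inverse-idempotent-00}.
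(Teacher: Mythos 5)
Your proof is correct and follows essentially the same approach as the paper, which simply asserts that one should replicate the filtration argument of Lemma~\ref{lem:quasi-inverse-idempotent-00} using $\cF(U^iT^j)=i+|j|$ on $R_1$; you have correctly filled in exactly those details, including the simplification that $\a_{0|1|0}$ contributes nothing at all in idempotent $(1,1)$. (One small remark: the $\cF$-neutrality of $\a_{0|1|1}(-,b)$ is more immediate than you suggest, since this map is just dual right multiplication on the $\bar{\cK}^!$-factor of $\scL$ and never touches the $\cK$-factor on which $\cF$ is defined.)
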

\begin{proof}Write $R_1=\ve{I}_1\cdot \cK\cdot \ve{I}_1.$  The proof is essentially the same as the proof of Lemma~\ref{lem:quasi-inverse-idempotent-00}. 
One computes directly that
\begin{equation}
\delta_2^1(i_1, \varphi_{\pm})=\varphi_{\pm}\otimes i_1 \quad \text{and} \quad \delta_2^1(i_1,\theta)=\theta\otimes i_1.
\label{eq:delta-2-1-idemponte-1-1}
\end{equation}
Next, one shows that $\delta_{j+1}^1=0$ for $j\neq 1$. Using the $DA$-bimodule relations, the main claim follows easily.

To show that $\delta_{j+1}^1=0$ for $j\neq 1$, one argues essentially identically to the argument in Lemma~\ref{lem:quasi-inverse-idempotent-00}. The only difference is that the filtration $\cF$ on $R_1$ is defined by the formula
\[
\cF(U^iT^j)=i+|j|.
\]
From here, the argument is formally identical to the argument in Lemma~\ref{lem:quasi-inverse-idempotent-00}. 
\end{proof}

As a last step, we compute the structure maps of ${}^{\cK^!} [\cCo]^{\cK}\boxtimes {}_{\cK} [\cT_r]_{\cK^!}$ when one of the inputs is a multiple of $s$ or $t$:

\begin{lem} If $b_1\otimes \cdots \otimes b_n\in \ve{I}_0\cdot (\otimes^{n} \cK^!)\cdot \ve{I}_1$ is a non-zero tensor of monomials, then
\[
\delta_{n+1}^1(1, b_1,\dots, b_n)=\begin{cases}b_1 & \text{ if } n=1\\
0& \text{ if } n\neq 1. \end{cases}
\] 
\end{lem}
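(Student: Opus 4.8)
The plan is to compute $\delta^1_{n+1}(i_0,b_1,\dots,b_n)$ directly from the box-tensor formula for ${}^{\cK^!}[\cCo]^{\cK}\boxtimes{}_{\cK}[\cTr]_{\cK^!}$ and then exploit the special idempotent structure of $\cCo$. Since $\ve{I}_1\cdot\cK^!\cdot\ve{I}_0=0$, a nonzero monomial tensor running from $\ve{I}_0$ to $\ve{I}_1$ has a unique index $k$ with $b_k\in\ve{I}_0\cdot\cK^!\cdot\ve{I}_1$, while $b_1,\dots,b_{k-1}\in R_0^!:=\ve{I}_0\cdot\cK^!\cdot\ve{I}_0$ and $b_{k+1},\dots,b_n\in R_1^!:=\ve{I}_1\cdot\cK^!\cdot\ve{I}_1$. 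The monomials in $\ve{I}_0\cdot\cK^!\cdot\ve{I}_1$ split into multiples of $s$ (namely $s,zs,\theta s,\theta zs$) and multiples of $t$; the defining data of $\cCo$, $\cTr$ and $\scL$ are symmetric under the involution of $(\cK,\cK^!)$ interchanging $(W,Z)$, $(\sigma,\tau)$, $(T,T^{-1})$, $(w,z)$, $(s,t)$ and $(\varphi_+,\varphi_-)$, so it is enough to treat the case that $b_k$ is a multiple of $s$. The box-tensor structure map is a sum over $i\ge 0$ of a $\mu_2$-product of the left $\cK^!$-outputs of the $i$-fold iterate $\delta^{i,i}$ of $\cCo$, tensored with $m_{i|1|n}$ of $\cTr$ applied to the right $\cK$-outputs of $\delta^{i,i}$ together with $1$ and $b_1,\dots,b_n$. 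Every term of $\delta^{i,i}(i_0)$ ending at $i_1$ passes through exactly one $s|\sigma$ arrow, so its $\cK$-outputs form a monomial in the $(1,0)$-block $\ve{I}_1\cdot\cK\cdot\ve{I}_0$ of $\cK$ --- powers of $W,Z,U$ from the $i_0$-loops, then a single $\sigma$, then powers of $T^{\pm1},U$ from the $i_1$-loops --- and the whole computation takes place in the $(1,0)$-idempotent block of $\scL$, whose differential is the map in Equation~\eqref{eq:map-d-idempotent-10}.

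For $n=1$ the only relevant terms of $\delta^{i,i}$ of $\cCo$ are those built from the $s|\sigma$ arrow possibly preceded by $z|Z$ and $\theta|U$ loops. Composing with the matching action of $\cTr$ --- controlled by strict unitality (Lemma~\ref{lem:strictly-unital}) and by the arrows labelled ``$1$'' in Equation~\eqref{eq:map-d-idempotent-10}, which link $R_1\sigma|(\cdots)^*$ with $R_1|(\cdots s)^*$ --- a direct computation gives $\delta^1_2(i_0,b_1)=b_1\otimes i_1$, which is the asserted base case. The terms in which $\cCo$ emits a $\tau$ feed $\cTr$ an action mixing $\tau$- and $s$-data, which vanishes identically and contributes nothing.

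For $n\ge 2$ I would run the filtration argument of Lemmas~\ref{lem:quasi-inverse-idempotent-00} and~\ref{lem:quasi-inverse-idempotent-11} in this block: equip the $(1,0)$-block of $\scL$ with the filtration $\cF$ built from $\cF(W^iZ^j)=\min(i,j)+|i-j|$ on the $R_0$-part and $\cF(U^iT^j)=i+|j|$ on the $R_1$-part (with suitable values on the transitional monomials and on $\sigma$), extended tensorially with $\bar{\cK}^!$ placed in filtration $0$. Using Lemma~\ref{lem:a010-no-contribution}(2) to pin down which components of $\a^{\mu_1}_{0|1|0}$ survive, one checks that the homotopy $H$ of Lemma~\ref{lem:Tr-construction} strictly lowers $\cF$ except along the single pair of transitional arrows labelled ``$1$'', where it preserves $\cF$, and that $\a_{0|1|0}$, $\a_{1|1|0}$ and $\a_{0|1|1}$ are $\cF$-compatible just as before. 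Tracking $\cF$ through the homological perturbation description of the $m_{i|1|n}$ then shows that the single idempotent transition costs exactly one extra unit of filtration relative to Lemmas~\ref{lem:quasi-inverse-idempotent-00} and~\ref{lem:quasi-inverse-idempotent-11}, forcing $\delta^1_{n+1}(i_0,b_1,\dots,b_n)$ into a filtration level that is empty once $n\ge 2$ while still leaving room for the nonzero $n=1$ term.

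The main obstacle is precisely this bookkeeping for the transitional block: one has to verify that in every term of the perturbation sum contributing to $m_{i|1|n}$ at most one homotopy factor $H$ fails to strictly lower $\cF$, which rests on the exact shape of Equation~\eqref{eq:map-d-idempotent-10} and on Lemma~\ref{lem:a010-no-contribution}. Should the filtration bound prove just barely too weak to kill $n=2$ (or $n=3$) outright, those finitely many cases can be checked by hand from the explicit actions, or deduced from the pure-idempotent cases and the $n=1$ computation by expanding the $DA$-bimodule structure relation and using that the difference of $\delta^1$ from the structure map of $\bI$ vanishes in lower arity.
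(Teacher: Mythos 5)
Your overall setup is sound: you correctly identify that a nonzero tensor from $\ve{I}_0$ to $\ve{I}_1$ has exactly one transitional factor $b_k\in\ve{I}_0\cdot\cK^!\cdot\ve{I}_1$, that the $s$/$t$ symmetry cuts the casework in half, and that the computation lives in the $(1,0)$-block of $\scL$ governed by Equation~\eqref{eq:map-d-idempotent-10}. But the core of your plan for $n\ge 2$ is to adapt the filtration $\cF$ of Lemmas~\ref{lem:quasi-inverse-idempotent-00} and~\ref{lem:quasi-inverse-idempotent-11} to the mixed-idempotent block. This is precisely the step the paper flags as not readily available: the proof of this lemma opens by saying that ``it seems challenging to give an analog of the filtration $\cF$'' here, and then switches to a complete case-by-case enumeration instead. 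You have not actually constructed the filtration on the transitional monomials and on $\sigma$, nor verified your central claim that at most one factor of $H$ in the perturbation sum fails to strictly lower $\cF$; you defer this to ``bookkeeping'' that ``rests on the exact shape of Equation~\eqref{eq:map-d-idempotent-10}'' without carrying it out. Given that the authors could not make this work, your proposal has a genuine gap at exactly its load-bearing step.

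Your two fallbacks do not repair this. The first fallback --- ``check finitely many low-$n$ cases by hand'' --- presupposes the filtration argument handles all $n\gg 0$, which is the unverified part; if the filtration fails you are simply doing the full direct enumeration, i.e.\ the paper's proof, but without having done it. The second fallback --- deducing the vanishing from the pure-idempotent lemmas plus the $n=1$ case via the $DA$-bimodule structure relation --- does not go through: the structure relation is a quadratic identity on the structure maps and constrains $\delta^1_{n+1}$ only up to contributions that involve $\mu_1(\theta)=wz+zw$ and $\mu_0$, so it does not force vanishing of a higher structure map from vanishing of lower ones without additional input (indeed the relation is equally satisfied by nontrivial deformations). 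The $n=1$ base case is also only sketched; the paper's verification of, e.g., the $b=z\theta s$ coefficient in Figure~\ref{fig:s-z-theta-term} requires checking six factorizations of the output, and your proposal gestures at this without doing it. In short, the paper proves the lemma by an exhaustive, tree-based enumeration of contributions precisely because the filtration approach you propose was found unworkable; your proposal identifies the right obstacle but does not overcome it.
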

\begin{proof}
It seems challenging to give an analog of the filtration $\cF$ appearing in Lemmas~\ref{lem:quasi-inverse-idempotent-00} and ~\ref{lem:quasi-inverse-idempotent-11}. Therefore,  our proof is by direct computation.  We will organize the proof as follows. For each monomial $b\in \{s, z s, \theta s, z\theta s,t, wt, \theta t, w\theta t\}$, we will compute all of the possible sequences $b_1,\dots, b_n\in \cK^!$ such that
\[
\delta_{n+1}^1(1,b_1,\dots, b_n)=b.
\]  
Since the modules and homotopy $H$ are symmetric between $s$ and $t$, it suffices to consider only $b\in \{s,zs, \theta s, z\theta s\}$.

 We now describe how to do these computations practically. Firstly, we enumerate all ways of factoring $b$ into monomials which are output by the $DD$-bimodule ${}^{\cK^!} [\cCo]^{\cK}$. For each factorization of $b$, we obtain a corresponding sequence $a_1,\dots, a_k\in \cK$ of dual monomials. It suffices, for each of these sequences $a_1,\dots, a_k$, to find all sequences $b_1,\dots, b_n\in \cK^!$ such that $m_{k|1|n}(a_k,\dots, a_1,1,b_1,\dots, b_n)$ is non-zero. To find all such $b_1,\dots, b_n$, we draw a planar tree with one input and several outputs. The input is always $I(i_0)=1|1^*\in \ve{I}_0\cdot \scL\cdot \ve{I}_0$.  Each edge of the tree illustrates a possible application of $m_{1|1|0}(a_i,-)$, $m_{0|1|1}(-,b_i)$, $\a_{0|1|0}^{\mu_1}$ or $H$. We alternate between actions $m_{0|1|1}$, $m_{0|1|1}$ and $\a_{0|1|0}^{\mu_1}$ of $\scL$ and the homotopy $H$  as we move downward in the tree, as per the homological perturbation lemma. At each vertex, the downward pointing arrows enumerate all possible non-vanishing actions on the element of $\scL$ at a given vertex. The outputs of the tree (the lowest vertices) are labeled with $0$ or $1|1^*$. We label a vertex of minimal height by $1|1^*$ if and only if the corresponding path from the input contributes non-trivially to $m_{k|1|n}$.
 
 We begin by considering $b=s$. This has a unique factorization $b=s$, so $a_1=\sigma$, and we are interested in computing sequences $b_1,\dots, b_n$ such that $m_{1|1|n}(\sigma,1,b_1,\dots, b_n)$ is non-trivial. The tree in this case is quite simple, namely it is
 \[
  \begin{tikzcd}[row sep=.4cm]
 1|1^* 
 	\ar[d, "\sigma\cdot"]
 \\
 \sigma|1^*
	 \ar[d, "H"]
	 \\
 1|s^* 
 	\ar[d, "\cdot s"]
 	\\
 1|1^*.
 \end{tikzcd}
 \]
 In the above diagram, we write $a\cdot$ for $m_{1|1|0}(a,-)$ and we write $\cdot b$ for $m_{0|1|1}(-,b)$.
 
 We now consider $b=\theta s$. There are two factorizations of $b$, namely $b=\theta\cdot s$ and $b=s \cdot \theta$. This gives corresponding sequences $(U,\sigma)$ and $(\sigma,U)$ for $(a_2,a_1)$.
 The corresponding trees are as follows:
 \[
(\sigma,U):
 \begin{tikzcd}
 & 1|1^* \ar[d, "U\cdot"]\\
 & U|1^*\ar[d, "H"]&\\
 &1|\theta^*\ar[dl, "\cdot \theta"] \ar[d, "\sigma\cdot "] &\\
 1|1^*\ar[d, "H"]& \sigma|\theta^* \ar[d, "H"] \\
 0& 1|(s\theta)^* \ar[dl, "\cdot s"] \ar[d, "\cdot\theta"] \ar[dr, "\cdot s\theta"]\\
 1|\theta^* \ar[d, "H"] & 1|s^* \ar[d, "H"]&1|1^*\\
 0&0
 \end{tikzcd}
 (U,\sigma):
 \begin{tikzcd}& 1|1^*\ar[d, "\sigma\cdot"]\\
 & \sigma|1^* \ar[d, "H"]\\
 & 1|s^*
 \ar[dl, "\cdot s"] \ar[dr, "U\cdot "]
 \\
 1|1^* \ar[d, "H"]&& U|s^* \ar[d, "H"]\\
 0&&0
 \end{tikzcd}
 \]
 In the above diagram, the left tree encodes the only sequence $b_1,\dots, b_n$ for which  $m_{2|1|n}(\sigma,U,1,b_1,\dots, b_n)$ is non-zero. The only such sequence has $n=1$ and $a_1=s\theta$. The right tree shows that $m_{2|1|n}(U,\sigma,1,b_1,\dots, b_n)$ is always zero. This establishes, in particular, that $\delta_2^1(1,s\theta)=s\theta\otimes 1$.
 
 We now consider the case that $b=s\varphi_+$. There are two factorizations of $b$, namely $b=s\cdot \varphi_+$ and $b=z \cdot s$. These give corresponding sequences $(a_2,a_1)$ which are $(\sigma, Z)$ and $(T, \sigma)$. The corresponding trees are shown below:
 \[
 (\sigma,Z): 
 \begin{tikzcd} &1|1^* \ar[d, "Z\cdot"]\\
 & Z|1^* \ar[d, "H"] \\
 & 1|z^* \ar[dl, "\cdot z"] \ar[d, "\sigma\cdot"]\\
 1|1^* \ar[d, "H"] & \sigma|z^*  \ar[d, "H"]\\
 0&1|(zs)^* \ar[dl, "\cdot zs"] \ar[d, "\cdot z"] \ar[dr, "\cdot s"] \\
 1|1^*& 1|s^* \ar[d, "H"] & (1|\varphi^+)^* \ar[d,"H"]\\
 & 0&0 
 \end{tikzcd}
 (T,\sigma):
 \begin{tikzcd}
 1|1^* \ar[d, "\cdot \sigma"]\\
 \sigma|1^* \ar[d, "H"]\\
 1|s^*\ar[d, "T\cdot"] \ar[dr, "\cdot s"]\\
 T\otimes s^* \ar[d, "H"]& 1|1^* \ar[d, "H"]\\
 0&0
 \end{tikzcd}
 \]
 
 Finally, we consider the case that $b=s\theta \varphi_+$. In this case, there are six factorizations. The corresponding possibilities for $(a_3,a_2,a_1)$ are
 \[
 (U,T,\sigma),\,  (T,U,\sigma),\, (\sigma,Z,U),\, (T,\sigma,U),\, (U,\sigma,Z),\, (\sigma,U,Z).
 \]
The corresponding trees are described in Figure~\ref{fig:s-z-theta-term}. We see that only the sequence $(\sigma,Z,U)$ admits and sequence $b_1,\dots, b_n\in \cK^!$ such that $m_{3|1|n}(\sigma,Z,U,1,b_1,\dots, b_n)\neq 0$. Furthermore, there is only one such sequence $b_1,\dots, b_n$ with this property. This sequence has $n=1$ and $b_1=z s\theta$. In particular
 \[
 m_{3|1|1}(\sigma,Z,U,1,z s\theta)=i_1.
 \]
 This shows that $\delta_2^1(i_0,zs\theta)=zs\theta\otimes i_1$, and that there are no other sequences $b_1,\dots, b_n$ with $\delta_{n+1}^1(i_0,b_1,\dots, b_n)=zs\theta$. This completes the proof.
\end{proof}

\begin{figure}[h]
\begin{adjustbox}{scale=.75}
$(U,T,\sigma):$
\begin{tikzcd}[column sep=.2cm,row sep=.5cm]
1|1^* \ar[d, "\sigma\cdot "]
\\
\sigma|1^* \ar[d, "H"]
\\
1|s^* \ar[d, "\cdot s"] \ar[dr, "T\cdot"]
\\
1|1^* \ar[d, "H"] & T|s^* \ar[d, "H"]\\
0& 0
\end{tikzcd}
\hspace{.5cm}
$(T,U,\sigma):$
\begin{tikzcd}[column sep=.2cm,row sep=.5cm]
1|1^* 
\ar[d, "\sigma\cdot"]
\\
\sigma|1^*
	\ar[d, "H"]
\\
1|s^*
\ar[d, "\cdot s"] \ar[dr, "U\cdot"]
\\
1|1^* \ar[d, "H"] & U|s^* \ar[d, "H"]\\
0 & 0
\end{tikzcd}
\hspace{.5cm}
 $(\sigma,U,Z)$:
 \begin{tikzcd}[column sep=.3cm,row sep=.5cm]
 1|1^* \ar[d, "Z\cdot"]\\
 Z|1^* \ar[d, "H"]\\
 1|z^* \ar[d, "\cdot z"] \ar[dr, "U\cdot"]\\
 1|1^* \ar[d, "H"] & U|z^* \ar[d, "H"]\\
 0& 0
 \end{tikzcd}
\end{adjustbox}

\begin{adjustbox}{scale=.75}
$(T,\sigma,U):$ \begin{tikzcd}[column sep=.3cm,row sep=.5cm]
1|1^*
\ar[d, "U\cdot"]
\\
U|1^*
\ar[d, "H"]
\\
1|\theta^*
\ar[d, "\cdot \theta"] \ar[dr, "\sigma \cdot "]
\\
1|1^* \ar[d, "H"] & \sigma|\theta^* \ar[d, "H"]\\
0& 1|(s\theta)^* \ar[dl ," \cdot s",swap] \ar[d, "\cdot \theta",swap] \ar[dr, "\cdot s \theta",swap] \ar[drr, "T\cdot"] \\
1|\theta^* \ar[d,"H"] & 1|s^*\ar[d,"H"] & 1|1^* \ar[d,"H"]& T|(s\theta)^*\ar[d,"H"]\\
0&0&0&0
\end{tikzcd}
\hspace{.5cm}
$ (U,\sigma,Z):$
\begin{tikzcd}[column sep=.3cm,row sep=.5cm]
1|1^*
	\ar[d, "Z\cdot"]
\\
Z|1^*
	\ar[d, "H"]
\\
1|z^*
	\ar[d, "\cdot z"] \ar[dr, "\sigma\cdot"]
\\
1|1^* \ar[d, "H"] & \sigma|Z^* \ar[d,"H"]
\\
0 &1|(zs)^* \ar[dl, "\cdot z",swap] \ar[d, "\cdot s"] \ar[dr, "U\cdot",swap] \ar[drr, "\cdot zs"]\\
1|s^*\ar[d,"H"]& 1|\varphi_+^*\ar[d,"H"] & U|(zs)^*\ar[d,"H"] & 1|1^* \ar[d,"H"]\\
0&0&0&0
\end{tikzcd}
\end{adjustbox}

\vspace{.25cm}

\begin{adjustbox}{scale=.75}
\hspace{.5cm}$(\sigma, Z,U):$ \begin{tikzcd}[column sep={2.2cm,between origins}, row sep=.5cm]
1|1^* \ar[d, "U\cdot"]&&&&&&
\\
U|1^* \ar[d, "H"]
\\
1|\theta^* \ar[d, "\cdot \theta"] \ar[dr, "Z\cdot"]\\
1|1^* \ar[d, "H"] & Z|\theta^* \ar[d, "H"]\\
0& 1|(z\theta)^* \ar[dl, "\cdot z",swap] \ar[d, "\cdot \theta"] \ar[dr, "\sigma\cdot"]\\
1|\theta^* \ar[d, "H"] &1|z^* \ar[d, "H"]& \sigma|(z\theta)^* \ar[d,"H"]\\
0&0& 1|(zs\theta)^* \ar[dll, bend right=5, "\cdot z",swap] \ar[dl, "\cdot s",swap] \ar[d, "\cdot \theta",swap] \ar[dr, "\cdot\theta z",swap] \ar[drr, "\cdot s \theta",swap, pos=.6] \ar[drrr, bend left=5, "\cdot zs", pos=.75,swap] \ar[drrrr, bend left =10, "\cdot zs\theta"]
\\[.5cm]
1|(s\theta)^* \ar[d, "H"] & 1|(\theta  \varphi_+)^*\ar[d, "H"] & 1|(zs)^*\ar[d, "H"] & 1|s^*\ar[d, "H"] & 1|\varphi_+^*\ar[d, "H"] & 1|\theta^*\ar[d, "H"] & 1|1^*\\
0&0&0&0&0&0&
\end{tikzcd}
\end{adjustbox}
\caption{The six configurations which contribute to the $zs \theta$ coefficient of the differential of ${}^{\cK^!} [\cCo]^{\cK}\boxtimes {}_{\cK} [\cTr]_{\cK^!}$.}
\label{fig:s-z-theta-term}
\end{figure}

\section{Module categories}

In this section, we consider Koszul duality at the level of modules.  We define categories of modules ${}_{\cK} \MOD_{(U),\frb}$ and ${}^{\cK^!} \MOD_{(U),\frb}$, and we prove that tensoring with the bimodules ${}_{\cK}[\cTr]_{\cK^!}$ and ${}^{\cK^!}[\cCo]^{\cK}$ give functors which form an equivalence of categories between these categories. Additionally, we consider various categories of $DD$ and $DA$-bimodules.

\subsection{Bonsai modules}

 Recall from Definition~\ref{def:bonsai} that a type-$A$ module ${}_{\cA}M$ is called \emph{bonsai} if there is an $n_0>0$ so that the structure maps $m_{\cT}$ vanish for any ordinary (i.e. no valence 2 vertices) $A_\infty$-modules structure tree $\cT$ with $\deg(\cT)>n_0$.

 We now describe a dual notion. Unlike the notion of bonsai, our dual notion makes use of the algebraic grading on the algebra $\cK^!$, and therefore does not make sense for an arbitrary algebra.

\begin{define}
\label{def:cobonsai} We say that a type-$D$ module ${}^{\cK^!} M$ is \emph{cobonsai} if there is an $n_0>0$ so that each summand $b_1\otimes \cdots \otimes b_n \otimes \ys$ of $\delta^n(\xs)$, where $b_i\in \cK^!$ is a monomial, has the property that either some $b_i\in \ve{I}$, or
\[
-\sum_{i=1}^n (\gr_{\alg}(b_i)+1)<n_0.
\]
\end{define}

We can give a more basis independent description of the above definition, as follows. The algebra $\cK^!$ has a natural augmentation $\veps\colon \cK^!\to \ve{I}$. This map sends an algebra element in negative algebraic grading to 0, and it sends $1\in \cK^!$ to $1\in \ve{I}$. We write $\cK_+=\ker(\veps)$, and we write $\delta_+^1$ for the composition  
\[
\delta_+^1:=\left((\bI_{\cK^!}+\veps)\otimes \bI_M\right)\circ \delta^1.
\]
That is, $\delta_+^1$ is the composition of $\delta^1$ with the projection map from $\cK^!$ onto $\cK_+^!$. If we give $(\cK_+^!)^{\otimes n} \otimes M$ an algebraic grading by declaring
\[
\gr_{\alg}(b_1\otimes \cdots \otimes b_n\otimes \xs)=\sum_{i=1}^n (\gr_{\alg}(b_i)+1),
\]
then Definition~\ref{def:cobonsai} is equivalent to requiring there to be an $n_0>0$ so that $\delta_+^n$ has image in algebraic gradings above $-n_0$ for all $n$.

\begin{lem} \label{lem:bonsai-preserved}
\item
\begin{enumerate}
\item If ${}_{\cK} M$ is a bounded type-$A$ module, then ${}^{\cK^!} [\cCo]^{\cK}\boxtimes {}_{\cK} M$ is well-defined.  If ${}^{\cK^!} N$ is a type-$D$ module, then ${}_{\cK} [\cTr]_{\cK^!} \boxtimes {}^{\cK^!} N$ is well-defined. 
\item If ${}_{\cK} M$ is bonsai, then ${}^{\cK^!} [\cCo]^{\cK}\boxtimes {}_{\cK} M$ is cobonsai.
\item  If ${}^{\cK^!} N$ is cobonsai, then ${}_{\cK} [\cTr]_{\cK^!} \boxtimes {}^{\cK^!} N$ is bonsai.
\end{enumerate}
\end{lem}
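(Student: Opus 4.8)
The plan is to treat the three parts of Lemma~\ref{lem:bonsai-preserved} as essentially bookkeeping with the algebraic grading $\gr_{\alg}$, using the grading-shift formulas of Lemma~\ref{lem:grading-shifts} together with the boundedness facts about $[\cCo]$ and $[\cTr]$ already established (Lemma~\ref{lem:Tr-LR-bonsai}, Lemma~\ref{lem:strictly-unital}). First, for well-definedness of the box tensor products in part (1): for ${}^{\cK^!}[\cCo]^{\cK}\boxtimes {}_{\cK} M$, the structure map $\delta^1$ on the tensor product is $\sum_n (\delta^{1,1})^{\otimes?}$-style composite involving $m_{n+1}$ of $M$; since $M$ is bounded, only finitely many $m_{n+1}$ are nonzero, and $[\cCo]$ has underlying space $\ve{I}$ so each application of $\delta^{1,1}$ produces exactly one $\cK^!$-output and one $\cK$-input to be fed into $M$, so the sum defining $\delta^1$ on the tensor product is finite. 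For ${}_{\cK}[\cTr]_{\cK^!}\boxtimes {}^{\cK^!} N$, the relevant sum ranges over applications of the $m_{i|1|j}$ of $[\cTr]$ with the $\cK^!$-outputs of $\delta^n_N$; here I would invoke that $[\cTr]$ is left bonsai (Lemma~\ref{lem:Tr-LR-bonsai}) — the $\cK$-side is the output side and $\cK$ sits in algebraic grading $0$, so for each fixed sequence of $\cK$-inputs only finitely many actions fire — no, more carefully: $[\cTr]$ is \emph{right bonsai}, meaning for a fixed sequence of $\cK$-inputs the $\cK^!$-side satisfies the bonsai bound, which is exactly what lets us feed in the (a priori unbounded) $\cK^!$-outputs of $N$ and still get a finite sum on each generator. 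I would spell this out.

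For part (2): suppose ${}_{\cK} M$ is bonsai with bound $n_0$, and consider a summand $b_1\otimes\cdots\otimes b_n\otimes\ys$ of $\delta^n(\xs)$ in ${}^{\cK^!}[\cCo]^{\cK}\boxtimes {}_{\cK} M$. By the definition of $\boxtimes$, this summand is produced by a tree: some number of applications of $\delta^{1,1}$ of $[\cCo]$ interleaved with the module actions $m_{k+1}$ of $M$, and the $b_i$ are exactly the $\cK^!$-outputs of $[\cCo]$. By Lemma~\ref{lem:grading-shifts}(2), each application of $\delta^{1,1}$ lowers algebraic grading by $1$; equivalently each $\cK^!$-output $b_i$ dual to a $\cK$-element contributes $\gr_{\alg}(b_i)+1 \le 0$, with equality exactly when $b_i$ is $w^*,z^*,\varphi_\pm^*$-type of length one — actually I should phrase this via the grading-shift identity directly. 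The total algebraic-grading shift along the tree equals $\sum_{i}(\gr_{\alg}(b_i)+1)$ plus contributions from the $M$-actions (which carry algebraic grading $0$ since we put $M$ and $\cK$ in grading $0$), and this total shift is controlled by the number of $M$-actions $m_{k+1}$ applied, hence by $\deg$ of the underlying module tree. Since $M$ is bonsai, only trees of degree $<n_0$ contribute, which bounds $-\sum_i(\gr_{\alg}(b_i)+1)$ by a constant depending only on $n_0$; this is precisely the cobonsai condition. I would also dispose of the case where some $b_i\in\ve{I}$ separately (it is allowed in Definition~\ref{def:cobonsai} and corresponds to the strictly-unital outputs).

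For part (3): suppose ${}^{\cK^!}N$ is cobonsai with bound $n_0$. The module actions on ${}_{\cK}[\cTr]_{\cK^!}\boxtimes{}^{\cK^!}N$ are of the form $m_{k|1|j}$ of $[\cTr]$ applied to $\cK^!$-outputs coming from iterates $\delta^j_N$. By Lemma~\ref{lem:grading-shifts}(3), $m_{k|1|j}$ on $[\cTr]$ shifts algebraic grading by $j+k-1+\sum\gr_{\alg}(b_i)$ where the $b_i$ are the $\cK^!$-inputs (the $\cK$-inputs contribute $0$); since the target $\ve{I}$ has algebraic grading $0$, any nonzero action forces $\sum_i(\gr_{\alg}(b_i)+1) = 1-k \le 0$, so $-\sum_i(\gr_{\alg}(b_i)+1)\ge 0$, and combined with cobonsai of $N$ (which bounds $-\sum_i(\gr_{\alg}(b_i)+1)$ above by $n_0$ whenever no $b_i\in\ve{I}$) this pins down $k$ within a bounded range; then, for a bounded number of $\cK$-outputs, right-boundedness of $[\cTr]$ (established inside the proof of Lemma~\ref{lem:Tr-LR-bonsai}) bounds the number $j$ of $\cK^!$-inputs, hence bounds $\deg$ of the resulting module tree. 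The main obstacle will be part (3): the interaction between the algebraic-grading bound (which controls the $\cK^!$-input count weighted by grading) and the bonsai degree (which is about the combinatorial structure tree) is not completely immediate, because a single tree vertex of high valence can absorb many unit or grading-$(-1)$ inputs; I expect to need to carefully re-run the right-bounded argument from Lemma~\ref{lem:Tr-LR-bonsai} in the presence of the $\delta^n_N$ iterates, noting that at most one $\cK^!$-input per action can be a multiple of $s$ or $t$ and that inputs in $\ve{I}$ force trivial actions, exactly as in that proof. The other two parts are routine once the grading shifts are lined up.
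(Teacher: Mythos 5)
Parts (1) and (2) of your sketch are on the right track and line up with the paper's argument. For part (1), the two cases are handled essentially as the paper does: boundedness of $M$ gives finiteness of the differential on $[\cCo]\boxtimes M$, and right-bonsai-ness of $[\cTr]$ (from Lemma~\ref{lem:Tr-LR-bonsai}) gives finiteness on $[\cTr]\boxtimes N$ for each fixed sequence of $\cK$-inputs. For part (2), although your wording is rougher, the computation is the paper's: each monomial $b_i$ in a summand of $\delta^n_+(\xs)$ corresponds to a module vertex of an $A_\infty$-module tree $\cT$ for $M$ of valence $-\gr_{\alg}(b_i)+2$, so $\deg(\cT)=\sum_i(-\gr_{\alg}(b_i)-1)$, and bonsai-ness of $M$ becomes cobonsai-ness of the tensor product.

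Part (3) is where there is a genuine gap, and the direction you flag for fixing it is not the right one. The bonsai condition for ${}_{\cK}[\cTr]_{\cK^!}\boxtimes{}^{\cK^!}N$ requires $m_{\cT}=0$ whenever $\deg(\cT)\ge n_0$, where $\cT$ is an arbitrary ordinary $A_\infty$-module input tree — in particular $\cT$ may have many module vertices. Your grading computation is carried out for a single $m_{k|1|j}$ of $[\cTr]$ and yields $k-1=-\sum(\gr_{\alg}(b_i)+1)<n_0$, which only bounds the degree of a single-vertex tree. Since $\deg(\cT)=\sum_i(k_i-1)$ and the number of module vertices is a priori unbounded, bounding each $k_i$ individually does not bound $\deg(\cT)$. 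Moreover, the follow-up step you propose — bounding $j$ via the right-boundedness of $[\cTr]$ and saying this ``bounds $\deg$ of the resulting module tree'' — is a red herring: the $\cK^!$-inputs $b_1,\dots,b_j$ come from $\delta^j_N$ and are not edges of $\cT$, so $\deg(\cT)$ does not depend on $j$ at all, and re-running Lemma~\ref{lem:Tr-LR-bonsai} is unnecessary.

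The correct step, which is the paper's, is to apply the algebraic-grading shift to the \emph{entire} composite $m_{\cT'}$ of $[\cTr]$-actions, where $\cT'$ is the right expansion of $\cT$ obtained by attaching one edge per $\cK^!$-input. Since each new edge attaches to a vertex of valence $\ge 3$, $\deg(\cT')=\deg(\cT)+n$ where $n$ is the total number of $\cK^!$-inputs. The grading-shift identity for the full composite (the multi-vertex generalization of Lemma~\ref{lem:grading-shifts}(3), which follows by additivity) then gives $\deg(\cT')+\sum_i\gr_{\alg}(b_i)=0$, i.e.\ $\deg(\cT)=\sum_i(-\gr_{\alg}(b_i)-1)$, and cobonsai-ness of $N$ bounds this sum (after using strict unitality of $[\cTr]$ to discard terms with some $b_i\in\ve{I}$). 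This bounds $\deg(\cT)$ for all trees in one stroke, with no need to enumerate vertices or bound $j$.
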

\begin{proof}  We begin with the first claim. If ${}_{\cK} M$ is bounded, then ${}^{\cK^!} [\cCo]^{\cK} \boxtimes {}_{\cK} M$ is well-defined because the differential on the tensor product is a finite sum. Tensor products of the form ${}_{\cK} [\cTr]_{\cK^!}\boxtimes {}^{\cK^!} N$ are always well-defined since ${}_{\cK} [\cTr]_{\cK^!}$ is right bonsai by Lemma~\ref{lem:Tr-LR-bonsai}.

We now suppose that ${}_{\cK} M$ is bonsai. We will show that ${}^{\cK^!} [\cCo]^{\cK}\boxtimes {}_{\cK} M$ is cobonsai.  We consider the iterated structure map $\delta^n_+$ on the tensor product.
We can view $\delta^n_+$ as summing over certain planar graphs with two inputs (one for $\cTr$ and one for $M$). Examples are shown in Figure~\ref{fig:trees-delta^n}. 
 \begin{figure}[h]
\[
\begin{tikzcd}[row sep=.2cm]
 &&& {[\cCo]} \ar[d]& M \ar[dddd]\\
&&& \delta^{1,1} \ar[dddddddlll, bend right=20]\ar[d] \ar[dddr,bend left=20] &\\
&&& \delta^{1,1}\ar[ddddddlll, bend right=18]\ar[d] \ar[ddr,bend left=18]&\\
&&& \delta^{1,1}\ar[dddddlll, bend right=16]\ar[dd]\ar[dr, bend left=16] &\\
&&& &m_4 \ar[d]\\
&&&\bI \ar[d] \ar[dddll, "1", bend right=16]& m_1 \ar[dd]\\
&&& \delta^{1,1} \ar[dd] \ar[ddl, bend right=16]\ar[dr,bend left=16]&\\
&&&& m_2 \ar[d]\\
\Pi&\,&\,&\,&\,
\end{tikzcd}
\begin{tikzcd}[row sep=.2cm]
 && {[\cCo]} \ar[d]& M \ar[dddd]\\
&& \delta^{1,1} \ar[dddddddll, bend right=20]\ar[d] \ar[dddr,bend left=20] &\\
&& \delta^{1,1}\ar[ddddddll, bend right=18]\ar[d] \ar[ddr,bend left=18]&\\
&& \delta^{1,1}\ar[dddddll, bend right=16]\ar[dd]\ar[dr, bend left=16] &\\
&& &m_4 \ar[ddd]\\
&&\delta^{1,1} \ar[d] \ar[ddr, bend left=16] \ar[dddl, bend right=16]&\\
&& \delta^{1,1} \ar[dd] \ar[ddl, bend right=16]\ar[dr,bend left=16]&\\
&&& m_3 \ar[d]\\
\Pi&\Pi&\,&\,
\end{tikzcd}
\]
\caption{Two diagrams which encode summands of $\delta^n$ of ${}^{\cK^!} [\cCo]^{\cK} \boxtimes {}_{\cK} M$. The left tree is irrelevant to the cobonsai-ness of the tensor product, since it vanishes when we compose with $(\bI+\veps)\colon \cK^!\to \cK_+^!$. The right tree is relevant to the cobonsai-ness of the tensor product.}
\label{fig:trees-delta^n}
\end{figure}

If $b_1\otimes \cdots \otimes b_n\otimes \ys$ is a summand of $\delta^n_+(\xs)$ such that each $b_i$ is a monomial, then there is a corresponding ordinary $A_\infty$-module operation tree $\cT$ for $M$. (In Figure~\ref{fig:trees-delta^n}, $\cT$ would be obtained by taking the module input path starting at $M$ and going downward, together with any edge connected to a vertex along this path). There is one module vertex of $\cT$ for each $b_i$, and this vertex has valence $-\gr_{\alg}(b_i)+2$. Therefore,
\[
\deg(\cT)=\sum_{i=1}^n (-\gr_{\alg}(b_i)-1).
\]
If there is an $n_0$ so that $m_{\cT}=0$ if $\deg(\cT)>n_0$, then the above equation implies that the component of $\delta^n$ weighted by $b_1\otimes \cdots \otimes b_n$ will also vanish if $\sum_{i=1}^n (-\gr_{\alg}(b_i)-1)>n_0$, so ${}^{\cK^!} [\cCo]^{\cK}\boxtimes {}_{\cK} M$ is cobonsai.

We now suppose that ${}^{\cK^!} N$ is cobonsai. Let $\cT$ be a left $A_\infty$-module operation tree. Since $\cK$ is an associative algebra, for the purposes of showing bonsai-ness we may assume that the only vertices of valence greater than 3 occur along the path from the module input to the root. We consider the action $m_{\cT}$ on the tensor product ${}_{\cK} [\cTr]_{\cK^!} \boxtimes {}^{\cK^!} N$. For each $n\ge 0$, we take the iterated structure map $\delta^n$ of ${}^{\cK^!} N$, and sum all ways of splitting the algebra inputs into the module vertices of $m_{\cT}$. We will write $\cT'$ for a right expansion of $\cT$, i.e. a tree obtained by adding $n$ edges to the right side of the module vertices of $\cT$, such that these edges are each connected to a module vertex. See Figure~\ref{fig:5}.

We note that since each new edge is attached to a vertex of valence at least $3$, we have
  \begin{equation} 
\deg(\cT')=\deg(\cT)+n. \label{eq:degree-tree-bonsai}
\end{equation}

\begin{figure}[h]
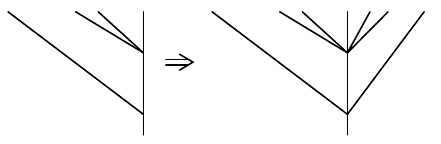
\caption{A left module input tree $\cT$ (left), and a right expansion $\cT'$ of $\cT$ (right)}
\label{fig:5}
\end{figure}

Since ${}_{\cK} [\cTr]_{\cK^!}$ is strictly unital, summands of $\delta^{1}$ of ${}^{\cK^!} N$ which are weighted by elements of $\ve{I}\otimes N$ make no contribution to $m_{\cT}$ (such differentials only contribute to $m_1$, which is never a factor of $m_{\cT}$). Since the structure maps of ${}_{\cK} [\cTr]_{\cK^!}$ are algebraically graded by Lemma~\ref{lem:grading-shifts}, we conclude that if the structure map is non-zero, then
\[
\deg(\cT')+\sum_{i=1}^n \gr_{\alg}(b_i)=0. 
\]
We can rewrite this as 
\[
\deg(\cT)+n+\sum_{i=1}^n \gr_{\alg}(b_i)=0
\]
or
\[
\deg(\cT)=\sum_{i=1}^n (-\gr_{\alg}(b_i)-1).
\]
Using the cobonsai-ness of ${}^{\cK^!} N$, the above equation implies that ${}_{\cK} [\cTr]_{\cK^!}\boxtimes {}^{\cK^!} N$ is also bonsai. 
\end{proof}

\subsection{Regularly $U$-adic modules}

Completions play an important role in the theory in \cite{ZemBordered}, so we discuss how they interact with Koszul duality in this section. We recall that in Section~\ref{sec:regularly-U-adic-modules}, we defined the notion of a regularly $U$-adic type-$A$ module ${}_{\cK} M$, which we recall is a type-$A$ module such that all of the operations $m_{\cT_+}$ are commensurate (Definition~\ref{def:commensurate}), where $\cT_+$ ranges over all extended $A_\infty$-module input trees. (Recall that ``extended'' means that valence 1 and 2 vertices are allowed). In this section, we describe a dual notion for type-$D$ modules over $\cK^!$.

We recall that if $a\in \cK$ is a monomial, we write $\wt_U(a)$ to be the maximum $i$ such that $a=U^i a'$ for some $a'\in \cK$. We set $\wt_U(0)=\infty$. Note that $\cK_{\ge i}$ is the $\bF$-span of monomials $a$ with $\wt_U(a)\ge i$.

The algebra $\cK^!$ has a dual weight function, as follows. If $b\in \cK^!$ is a monomial, we write $\wt_\theta^!(b)\in \{0,1\}$ for the number of $\theta$ factors in $b$. If $\ve{b}=b_1\otimes \cdots \otimes b_n$ is a sequence of monomials in $\cK^!$, we write
\[
\wt_{\theta}^!(\ve{b})=\sum_{i=1}^n \wt_\theta^!(b_i).
\]
We view $\wt_{\theta}^!(0)=-\infty$. 

We view $\cK^!$ as being a regularly $\Z$-filtered space where the filtration level of an algebra element $a$ is given by $-\wt_{\theta}^!(a)$. Then $\cK^!_{\ge -1}=\cK^!,$ $\cK^!_{\ge 0}$ is the span of monomials which are not multiples of $\theta$, and $\cK^!_{\ge 1}$ is 0.

\begin{define} A \emph{regularly $U$-adic type-$D$ module} ${}^{\cK^!} M$ consists of a regularly filtered $\ve{I}$-module $M$ equipped with a weakly filtered map $\delta^1\colon M\to \cK^!\otimes M$. Furthermore, the iterates
\[
\delta^n\colon M\to \cK^!\otimes \cdots\otimes \cK^!\otimes M
\]
  are commensurate.
\end{define}

\begin{rem}
  Informally, the above definition says that in the iterates $\delta^n$, if many $\theta$ terms appear amongst the $\cK^!$ factors, then the filtration level in $M$ must be high.
\end{rem}

\begin{lem}
\label{lem:inequality-U-weights} Suppose that $a_n,\dots, a_1\in \cK$ and $b_1,\dots, b_k\in \cK^!$ are monomials such that the structure map on ${}_{\cK} [\cTr]_{\cK^!}$ satisfies
\[
m_{n|1|k}(a_n,\dots, a_1, 1, b_1,\dots, b_k)\neq 0.
\]
Then
\[
\sum_{i=1}^n \wt_U(a_i)\le \sum_{i=1}^k \wt^!_\theta(b_i).
\]
\end{lem}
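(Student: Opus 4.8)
The plan is to prove the inequality by tracking a single numerical quantity through the homological perturbation construction of ${}_{\cK}[\cTr]_{\cK^!}$, mirroring the strategy already used for the algebraic grading in Lemma~\ref{lem:grading-shifts} and for the filtrations $\cF$ in Lemmas~\ref{lem:quasi-inverse-idempotent-00} and~\ref{lem:quasi-inverse-idempotent-11}. Recall that ${}_{\cK}[\cTr]_{\cK^!}$ arises by applying the homological perturbation lemma to $(\scL, d_{0|1|0})$ with twist $\a_{*|1|*}$, so each structure map $m_{n|1|k}$ is a sum of terms of the form $\Pi\circ \a_{i_N|1|j_N}\circ H\circ \cdots\circ H\circ \a_{i_1|1|j_1}\circ I$, where each $\a_{i_\ell|1|j_\ell}$ is $\a_{1|1|0}$, $\a_{0|1|1}$, or $\a_{0|1|0}$, and by Lemma~\ref{lem:a010-no-contribution} only $\a_{0|1|0}^{\mu_1}$ contributes among the $\a_{0|1|0}$ pieces.

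First I would introduce a $U$-weight function on $\scL$ itself: for a monomial $\xs = r\otimes \ys^*$ in one of the idempotents of $\scL$ (with $r$ a monomial in $R_0=\bF[W,Z]$ or $R_1=\bF[U,T,T^{-1}]$, possibly with a $\sigma$ or $\tau$ attached, and $\ys$ a monomial in $\cK^!$), set $W_U(\xs) := \wt_U(r) + \wt_\theta^!(\ys)$, where $\wt_U$ of a monomial in $R_0$ or in $R_1\sigma$, $R_1\tau$ counts the power of $U$ (zero in $R_0$, since there is no $U$ there). The claim to establish by inspection of the explicit diagrams~\eqref{eq:map-d-idempotent-00}, \eqref{eq:map-d-idempotent-10}, \eqref{eq:map-d-idempotent-11} is: (i) $H$ is nonincreasing in $W_U$ — indeed in idempotents $(0,0)$ and $(1,1)$ the components of $H$ are multiplication by $W^{-1},Z^{-1},U^{-1}$ composed with a projection that lowers the $\theta$-count by exactly one on the monomials where it is nonzero (e.g.\ $(wz)^* \mapsto (wz\theta \text{-preimages})$, reading $H$ against $d$), and in idempotent $(1,0)$ the components are multiplication by $T^{-1}$ or the identity maps relating $s$-decorated and $\sigma$-decorated pieces, none of which raise $W_U$; (ii) $\a_{0|1|0}^{\mu_1}$ is nonincreasing in $W_U$ — by Lemma~\ref{lem:a010-no-contribution} the only contributing components send $(\bullet)^*$ to $(\theta\bullet)^*$ or similar, which raise $\wt_\theta^!$ of the $\ys$-factor by one but, reading against the dual convention, actually lower $W_U$ (this sign/direction point is exactly what needs to be checked carefully against the definition $\gr_{\alg}(x^*) = -\gr_{\alg}(x)$); (iii) the action $m_{1|1|0}(a,-) = \a_{1|1|0}(a,-)$ of an algebra element $a\in\cK$ increases $W_U$ by exactly $\wt_U(a)$, since it multiplies the $R_\veps$-factor on the left by (a monomial representing) $a$ and $\wt_U(\mu_2(a,r)) = \wt_U(a)+\wt_U(r)$ by the relations in $\cK$; (iv) the action $m_{0|1|1}(-,b) = \a_{0|1|1}(-,b)$ of $b\in\cK^!$ changes $W_U$ by at most $\wt_\theta^!(b)$ (typically by exactly that amount, from the $\delta^{1,1}$ of $\bar{\cCo}$: feeding in $w$ or $z$ or $s$ or $t$ or $\varphi_\pm$ does not change the $\theta$-count, while feeding in $\theta$ raises it by one, again modulo the dualization convention); and (v) $I$ and $\Pi$ preserve $W_U$ (they identify $1\otimes 1^*$ with an idempotent, which has $W_U = 0$).

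Granting (i)--(v), the proof is a one-line telescoping estimate. Start with $I(1)$, which has $W_U = 0$. Moving down the composition $\Pi\circ \a_{i_N|1|j_N}\circ H\circ \cdots \circ H\circ \a_{i_1|1|j_1}\circ I$, every $H$ and every $\a_{0|1|0}^{\mu_1}$ weakly decreases $W_U$; each $m_{1|1|0}(a_i,-)$ increases it by $\wt_U(a_i)$; each $m_{0|1|1}(-,b_j)$ increases it by at most $\wt_\theta^!(b_j)$; and $\Pi$ lands us on $i_\veps$ with $W_U = 0$. Hence
\[
0 \;=\; W_U(i_\veps) \;\le\; 0 + \sum_{i=1}^n \wt_U(a_i) - \sum_{j=1}^{k} \wt_\theta^!(b_j) \cdot(-1)\text{?}
\]
-- here I must be careful with orientation of the inequality. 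Since the $a_i$ contributions are the only ones that can \emph{raise} $W_U$ and the $b_j$ contributions raise it by at most $\wt_\theta^!(b_j)$ while $H$ and $\a^{\mu_1}_{0|1|0}$ only lower it, and the net change from $W_U=0$ back to $W_U=0$ must be nonpositive after discarding the $H$/$\a^{\mu_1}$ decreases, one gets $\sum_i \wt_U(a_i) \le \sum_j \wt_\theta^!(b_j)$, exactly as claimed. The conclusion then holds for a single contributing term of $m_{n|1|k}$, hence for the whole sum, since a sum of monomials is nonzero only if at least one term is nonzero.

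The main obstacle I anticipate is purely bookkeeping: pinning down the exact direction in which each component of $H$, $\a_{0|1|0}^{\mu_1}$, and $\a_{0|1|1}$ shifts $\wt_\theta^!$ of the $\cK^!$-dual factor, given the convention $\gr_{\alg}(x^*) = -\gr_{\alg}(x)$ and the fact that $H$ in idempotents $(0,0)$ and $(1,1)$ is defined as the \emph{partial inverse} to $d$ (so it moves \emph{against} the arrows in the diagrams~\eqref{eq:map-d-idempotent-00} and~\eqref{eq:map-d-idempotent-11}). Once those signs are fixed, properties (i)--(v) are immediate from the explicit formulas, and there is no further subtlety. It may streamline the write-up to observe that $\wt_\theta^!$ of the $\cK^!$-factor is essentially the algebraic grading already used in Lemma~\ref{lem:grading-shifts} restricted to the $\theta$-part, so that some of the needed monotonicity can be quoted rather than re-derived; but since $W_U$ combines the $\theta$-count with a genuinely new $U$-count on the $R_\veps$-factor, the algebra relations $\sigma W = UT^{-1}\sigma$, $\sigma Z = T\sigma$, $\tau W = T^{-1}\tau$, $\tau Z = UT\tau$ must be invoked directly to see that $\wt_U$ is additive (more precisely, nondecreasing) under the left multiplication appearing in $m_{1|1|0}$.
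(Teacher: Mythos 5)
Your overall strategy --- tracking a single additive quantity $W_U(\xs\otimes\ys^*)=\wt_U(\xs)+\wt_\theta^!(\ys)$ through the perturbation lemma --- is a genuinely different approach from the paper's. The paper's proof tracks only $\wt_U$ of the $\cK$-factor and then separately bounds the number of applications of the component $H_{\theta,1}$ of the homotopy (the part mapping the non-$\theta$-labeled summand $\scL_1$ of $\scL$ into the $\theta$-labeled summand $\scL_\theta$), using that $H_{\theta,1}$ is the only piece that lowers $\wt_U$ and that between consecutive $H_{\theta,1}$-applications some $m_{0|1|1}(-,b_j)$ with $\wt_\theta^!(b_j)=1$ must occur. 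Your idea can be made to work, but as written claims (i), (ii), and (iv) all state the wrong direction of change of $W_U$, and your final sentence contradicts itself (it simultaneously asserts that only the $a_i$ can raise $W_U$ and that the $b_j$ raise it), so the argument does not close.

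The concrete sign errors: the right action of $b\in\cK^!$ on $\ys^*\in\bar{\cK}^!$ strips $b$ off the left of $\ys$ (visible in the paper's trees, e.g.\ $(s\theta)^*\cdot\theta=s^*$), so $m_{0|1|1}(-,b)$ \emph{lowers} $\wt_\theta^!(\ys)$, and hence $W_U$, by exactly $\wt_\theta^!(b)$ --- the opposite of your (iv). Dually, $\a_{0|1|0}^{\mu_1}$ is the adjoint of $\mu_1$, and its only contributing components listed in Lemma~\ref{lem:a010-no-contribution}, such as $(wz)^*\mapsto\theta^*$, \emph{raise} $\wt_\theta^!(\ys)$ by $1$, hence raise $W_U$ by $1$ --- not lower it, contrary to your (ii). For (i), $H_{\theta,1}$ adds a $\theta$ to the dual factor and (where it can) removes a $U$ from the $\cK$-factor, while $H_{\theta,\theta}$ and $H_{1,1}$ preserve both quantities, so $H$ is \emph{nondecreasing} in $W_U$, not nonincreasing. (And in (iii) the left $\cK$-action raises $W_U$ by \emph{at least}, not exactly, $\wt_U(a)$: e.g.\ $\sigma W = U T^{-1}\sigma$ picks up an extra $U$.) Once these signs are fixed the telescoping does close: since $W_U$ equals $0$ at both $I(1)=1\otimes 1^*$ and at the output of $\Pi$, and all $H$- and $\a^{\mu_1}_{0|1|0}$-contributions are $\ge 0$, one gets
\[
0 \;=\; \sum \Delta W_U \;\ge\; \sum_{i=1}^n\wt_U(a_i)\;-\;\sum_{j=1}^k\wt_\theta^!(b_j),
\]
which is the desired inequality. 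So the underlying idea is sound and arguably streamlines the paper's two-step bound, but as written the directions of all four estimates are reversed and must be corrected before the conclusion follows.
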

\begin{proof}
The structure maps on ${}_{\cK} [\cTr]_{\cK^!}$ are defined in Lemma~\ref{lem:Tr-construction} by applying the homological perturbation lemma to the bimodule ${}_{\cK} \scL_{\cK^!}$. As an $(\ve{I},\ve{I})$-module, $\scL$ decomposes as a direct sum $\scL_{\theta}\oplus \scL_1$. Here, $\scL_{\theta}$ denotes the span of generators $\xs\otimes \ys^*$ such that $\ys$ is a monomial and is a multiple of $\theta$. Here $\scL_1$ denotes the span of generators $\xs\otimes \ys^*$ such that $\ys$ is a monomial and is not a multiple of $\theta$.  The homotopy $H$ is constructed in Lemma~\ref{lem:Tr-construction} and decomposes as in the diagram below:
\[
H=\begin{tikzcd}
 \ar[loop left, "H_{\theta,\theta}"] \scL_\theta & \scL_{1} \ar[l, "H_{\theta,1}"] \ar[loop right, "H_{1,1}"] 
 \end{tikzcd}
\]
Furthermore, we observe that $H_{\theta,\theta}$ and $H_{1,1}$ preserve the $\wt_U$-weight of the $\cK$-factor of $\scL$, while $H_{\theta,1}$ decreases the $\wt_U$-weight by $1$.

We consider a sequence of generators
\[
1|1^*=\xs_1|\ys_1^*, \xs_2|\ys_2^*,\dots, \xs_m|\ys_m^*=1|1^*
\]
of $\scL$, which contribute to the action $m_{n|1|k}(a_n,\dots, a_1,1,b_1,\dots, b_k)$ of ${}_{\cK} [\cTr]_{\cK^!}$. More precisely, we assume that each 
$\xs_{2i}|\ys_{2i}^*$ is obtained from $\xs_{2i-1}|\ys_{2i-1}^*$ by applying either $m_{1|1|0}(a_j,-)$ or $m_{0|1|1}(-,b_j)$ or $a_{0|1|0}^{\mu_1}$. Each $\xs_{2i+1}|\ys_{2i+1}^*$ is obtained from $\xs_{2i}|\ys_{2i}^*$ by applying $H$. 
Note that $\a_{0|1|0}^{\mu_2}$ never contributes to the homological perturbation lemma by Lemma~\ref{lem:a010-no-contribution}.

Write $N_{\theta,1}$ for the number of times $H_{\theta,1}$ is applied. We consider the sequence $\wt_1,\dots, \wt_m$ where $\wt_i=\wt_U(\xs_i)$.  The sequence decreases by at most 1 at each step. A downward step by $1$ occurs if and only if $H_{\theta,1}$ is applied. Between each application of $H_{\theta,1}$, there must be an application of $m_{0|1|1}(-,b_i)$ for some $b_i$ with $\wt_{\theta}^!(b_i)=1$. Furthermore, between the final $H_{\theta,1}$ and the last element in the sequence, there must also be some $m_{0|1|1}(-,b_i)$ with $\wt_{\theta}^!(b_i)=1$.  Therefore,
\[
N_{\theta,1}\le \wt_{\theta}(b_1)+\cdots+\wt_{\theta}(b_k).
\]

 On the other hand, the sequence $\wt_i$ only increases after an application of $m_{1|1|0}(a_i,-)$, and the weight increases by at least $\wt_{U}(a_i)$. Since each drop in the weight sequence is by 1, the number of drops must be equal to sum of the increases in the weight sequence, which is at least $\wt_U(a_1)+\cdots +\wt_U(a_n)$. Therefore,
 \[
 \sum_{i=1}^n \wt_U(a_i)\le N_{\theta,1}\le  \sum_{i=1}^k \wt_\theta^!(b_i)
 \]
 completing the proof.
 \end{proof}

We obtain the following corollary of Lemma~\ref{lem:inequality-U-weights}:
\begin{cor}
\label{cor:filtered-Trace-map}
If we put $[\cTr]$ in filtration level 0, then the structure maps on $[\cTr]$ satisfy
\[
m_{\cT_+}\left(\cK\otimes \cdots \otimes \cK\otimes [\cTr]\otimes \cK^!\otimes \cdots \otimes \cK^!\right)_{\ge i}\subset [\cTr]_{\ge i}
\]
for any extended $A_\infty$-bimodule input tree $\cT_+$. 
\end{cor}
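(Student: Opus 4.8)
The plan is to deduce this from Lemma~\ref{lem:inequality-U-weights} after unwinding the combinatorics of the tree $\cT_+$. First I would reduce to monomials: since $\cK$, $\cK^!$ and $[\cTr]=\ve{I}$ all carry filtration-compatible monomial $\bF$-bases, the subspace $\big(\cK^{\otimes n}\otimes[\cTr]\otimes(\cK^!)^{\otimes k}\big)_{\ge i}$ is spanned by elementary tensors $T = a_n\otimes\cdots\otimes a_1\otimes 1\otimes b_1\otimes\cdots\otimes b_k$ of monomials with $\sum_p\wt_U(a_p)-\sum_q\wt_\theta^!(b_q)\ge i$, and, since every structure map $m_{i|1|j}$ of $[\cTr]$ lands in $\ve{I}$, we have $m_{\cT_+}(T)\in\ve{I}$, which sits in filtration level $0$. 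Thus the corollary follows once we show: if $m_{\cT_+}(T)\neq 0$ then $\sum_p\wt_U(a_p)\le\sum_q\wt_\theta^!(b_q)$, i.e.\ $T$ has filtration level $\le 0$. Granting this, a monomial tensor in filtration $\ge i$ with nonzero image forces $i\le 0$, so its image lies in $\ve{I}=[\cTr]_{\ge i}$, while monomial tensors in filtration $\ge 1$ are killed.

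To prove that weight inequality, I would decompose $m_{\cT_+}$ along the module-input path of $\cT_+$ (Definition~\ref{def:input-trees}): it runs through module vertices $v_1,\dots,v_N$, each carrying an operation $m_{i_\ell|1|j_\ell}$ of $[\cTr]$, with $\cK$-subtrees (built from $\mu_\bullet$ of $\cK$) hanging off the left feeding the $\cK$-inputs $A^{(\ell)}$ of $v_\ell$, and $\cK^!$-subtrees hanging off the right feeding the $\cK^!$-inputs $B^{(\ell)}$; valence-$2$ module vertices would carry $m_{0|1|0}$, which vanishes on $[\cTr]$ (computed in the proof of Lemma~\ref{lem:Tr-LR-bonsai}), so we may assume there are none. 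Planarity puts the leaf sets of these subtrees into consecutive blocks partitioning $\{a_1,\dots,a_n\}$ and $\{b_1,\dots,b_k\}$. Because $\cK$ is associative, $\mu_m=0$ for $m\neq 2$ (so $\mu_0=\mu_1=0$); hence a nonzero $\cK$-subtree is a pure iterated product of its leaf block, and a short inspection of the relations $\sigma W=UT^{-1}\sigma$, $\tau Z=UT\tau$, etc.\ gives $\wt_U(\mu_2(a,a'))\ge\wt_U(a)+\wt_U(a')$ on monomials, so $\sum_\ell\sum_{a\in A^{(\ell)}}\wt_U(a)\ge\sum_p\wt_U(a_p)$. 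On the $\cK^!$ side, since $\theta^2=0$ makes every nonzero monomial have $\wt_\theta^!\in\{0,1\}$, one checks (working monomial-summand-wise through $\mu_1$ and $\mu_0$) that $\wt_\theta^!$ is additive under $\mu_2$ when nonzero, strictly decreases under $\mu_1$, and $\wt_\theta^!(\mu_0)=0$; so $\wt_\theta^!$ is subadditive along each $\cK^!$-subtree and $\sum_\ell\sum_{b\in B^{(\ell)}}\wt_\theta^!(b)\le\sum_q\wt_\theta^!(b_q)$.

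Finally I would apply Lemma~\ref{lem:inequality-U-weights} at each $v_\ell$: its module input is the output $i_\varepsilon\in\ve{I}$ of $v_{\ell-1}$ (or $1$ at $v_1$), and $m_{i_\ell|1|j_\ell}(A^{(\ell)},i_\varepsilon,B^{(\ell)})\neq 0$ forces $m_{i_\ell|1|j_\ell}(A^{(\ell)},1,B^{(\ell)})\neq 0$ (it is the idempotent-$\varepsilon$ component of the latter), so Lemma~\ref{lem:inequality-U-weights} yields $\sum_{a\in A^{(\ell)}}\wt_U(a)\le\sum_{b\in B^{(\ell)}}\wt_\theta^!(b)$. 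Summing over $\ell$ and chaining with the two bounds above gives
\[
\sum_p\wt_U(a_p)\ \le\ \sum_\ell\sum_{a\in A^{(\ell)}}\wt_U(a)\ \le\ \sum_\ell\sum_{b\in B^{(\ell)}}\wt_\theta^!(b)\ \le\ \sum_q\wt_\theta^!(b_q),
\]
which is exactly the inequality needed. I expect the only real work to be this tree bookkeeping (especially checking that $\mu_0,\mu_1$ insertions on the $\cK^!$ side cannot spoil the $\wt_\theta^!$ count); the per-operation bound for $m_{i|1|j}$ is already Lemma~\ref{lem:inequality-U-weights}, and the (super/sub)additivity of $\wt_U,\wt_\theta^!$ under multiplication is immediate from the defining relations of $\cK$ and $\cK^!$.
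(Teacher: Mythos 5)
Your proof is correct and follows the same route as the paper's: both arguments reduce the general extended tree to iterated applications of Lemma~\ref{lem:inequality-U-weights} along the module path, after disposing of the non-module subtrees using that $\mu_2$ on $\cK$ and $\cK^!$ is filtered, that $\mu_1$ on $\cK^!$ does not increase $\wt_\theta^!$, that $\wt_\theta^!(\mu_0)=0$, and that $m_{0|1|0}=0$ on $[\cTr]$. You simply spell out the bookkeeping (superadditivity of $\wt_U$, subadditivity of $\wt_\theta^!$, the idempotent observation at each module vertex) that the paper compresses into two sentences.
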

\begin{proof} Lemma~\ref{lem:inequality-U-weights} proves the case when $\cT_+$ has a single interior vertex. For the general case, we argue as follow. We observe that multiplication on $\cK$ satisfies
\[
\mu_2(\cK_{\ge i}\otimes \cK_{\ge j})\subset \cK_{\ge i+j}.
\]
Multiplication on $\cK^!$ satisfies the same equation. On $\cK^!$, we additionally have
\[
\mu_1(\cK^!_{\ge i})\subset \cK^!_{\ge i}\quad \text{and} \quad \mu_0\in \cK_{\ge 0}. 
\]
Finally, we note that the differential $m_{0|1|0}$ on ${}_{\cK}[\cTr]_{\cK^!}$ vanishes. 

 Therefore, it is sufficient to show the claim when $\cT_+$ is a tree whose interior vertices all occur along the path from the module input to the root, and such that there are no valence 2 vertices. If $\cT_+$ is such a bimodule operation tree with $N$ interior vertices, and $m_{\cT_+}(a_n,\dots, a_1, 1, b_1,\dots, b_k)\neq 0$, then the inequality $\sum_{i=1}^n \wt_U(a_i)\le \sum_{i=1}^k \wt_\theta^!(b_i)$ is obtained by applying Lemma~\ref{lem:inequality-U-weights} $N$ times. 
\end{proof}

\begin{lem}
\label{lem:strongly-U-adic-A->D} Suppose that ${}_{\cK} M$ is a bounded type-$A$ module (so that the tensor product ${}^{\cK^!} [\cCo]^{\cK} \boxtimes {}_{\cK} M$ is well-defined).  If ${}_{\cK} M$ is regularly $U$-adic, then ${}^{\cK^!} [\cCo]^{\cK} \boxtimes {}_{\cK} M$ is regularly $U$-adic. 
\end{lem}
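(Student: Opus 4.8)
The plan is to trace how the $U$-adic filtration on $\cK$ and the dual $\wt_\theta^!$-filtration on $\cK^!$ interact under the box tensor product; the argument is parallel to Corollary~\ref{cor:filtered-Trace-map}, with the roles of the two algebras interchanged. Write ${}^{\cK^!}M'={}^{\cK^!}[\cCo]^{\cK}\boxtimes{}_{\cK}M$. Since its underlying $\ve{I}$-module is $M$, it is automatically regularly $\Z$-filtered, so the content of the lemma is the existence of a single constant $C$ with
\[
\delta^n\big(M'_{\ge p}\big)\subseteq\big(\cK^!\otimes\cdots\otimes\cK^!\otimes M'\big)_{\ge p-C}
\]
for all $n$ and all $p$; the case $n=1$ says $\delta^1$ is weakly filtered, and the uniformity in $n$ is exactly commensurateness.

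The one local fact I would use about $[\cCo]$ is that, reading off its defining diagram, each nonzero component of $\delta^{1,1}$ pairs a generator $c\in\cK^!$ with a generator $a\in\cK$ according to $w|W$, $z|Z$, $\theta|U$, $s|\sigma$, $t|\tau$, $\varphi_+|T$, $\varphi_-|T^{-1}$, $\theta|U$, and that for every such pair $\wt_\theta^!(c)=\wt_U(a)$. Equivalently, placing each $\cK$-generator in ``co-filtration'' $+\wt_U$ and recalling that a monomial of $\cK^!$ lies in filtration level $-\wt_\theta^!$, the map $\delta^{1,1}$ exactly preserves the combined filtration, the $[\cCo]$-factor being kept in level $0$; in particular a $\theta$ can be output on the $\cK^!$-side only via the pairing $\theta|U$, which simultaneously outputs a $U$ on the $\cK$-side.

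Next I would unwind a summand of $\delta^n$ on $M'$. By the construction of the box tensor product (cf.\ Figure~\ref{fig:trees-delta^n}), a summand $b_1\otimes\cdots\otimes b_n\otimes y$ of $\delta^n(x)$ arises from finitely many applications of $\delta^{1,1}$ of $[\cCo]$ — producing paired generators $c_1',\dots,c_N'\in\cK^!$ and $a_1',\dots,a_N'\in\cK$ — where each $b_i$ is the product in $\cK^!$ of one consecutive block of the $c_\ell'$, and the $a_\ell'$ are fed, in some order and along some extended $A_\infty$-module input tree $\cT_+$ (possibly after multiplications in $\cK$ and $m_1$'s of $M$), into the operations of ${}_{\cK}M$, so that $y=m_{\cT_+}(\dots,a_\ell',\dots,x)$. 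Two observations then finish the estimate. First, $\wt_\theta^!$ is subadditive under products in $\cK^!$ and every $\theta$ in a $b_i$ comes from some factor $c_\ell'=\theta$, so $\sum_i\wt_\theta^!(b_i)\le\sum_\ell\wt_\theta^!(c_\ell')=\sum_\ell\wt_U(a_\ell')$. Second, the $U$-adic filtration is multiplicative on $\cK$, so $\bigotimes_\ell a_\ell'$ lies in filtration $\ge\sum_\ell\wt_U(a_\ell')$, and since ${}_{\cK}M$ is regularly $U$-adic the operations $m_{\cT_+}$ are commensurate with one constant $C$; hence $x\in M_{\ge p}$ forces $y\in M_{\ge p+\sum_\ell\wt_U(a_\ell')-C}$. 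Combining these, the summand lies in filtration level $-\sum_i\wt_\theta^!(b_i)+\big(p+\sum_\ell\wt_U(a_\ell')-C\big)\ge p-C$, which is the desired bound; summing over summands gives $\delta^n(M'_{\ge p})\subseteq(\cK^!\otimes\cdots\otimes\cK^!\otimes M')_{\ge p-C}$.

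The only genuine difficulty is bookkeeping: organizing the box-tensor summands so that the $\cK^!$/$\cK$ pairing coming from $[\cCo]$ is transparent, and making sure the loss constant $C$ is independent of $n$ — which is precisely where the commensurateness (rather than mere weak-filteredness) of the operations of ${}_{\cK}M$ is used. The potential subtleties — a $\mu_2$-product of $c_\ell'$'s vanishing because $\theta^2=0$, or $\mu_1$ on $\cK^!$ replacing a $\theta$ by $wz+zw$ — only ever decrease $\sum_i\wt_\theta^!(b_i)$, and hence cannot violate the inequality.
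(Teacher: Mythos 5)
Your argument is correct and matches the paper's proof in substance: both hinge on the observation that every $\delta^{1,1}$-arrow of $[\cCo]$ pairs a $\cK^!$-generator and a $\cK$-generator with matching $\theta$- and $U$-weights (the only $\theta$-weighted arrow being $\theta|U$), so the iterated $\delta^{j,j}$ preserves the combined filtration, and then the single commensurability constant $C$ of $m_{\cT_+}$ on ${}_{\cK}M$ gives the uniform loss. Your bookkeeping is a bit more explicit about the individual generators $c_\ell'$, $a_\ell'$ and their products, but it is the same decomposition and the same two inequalities as in the paper.
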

\begin{proof} Let $C$ be the commensurability constant for ${}_{\cK} M$. The iterated structure map $\delta^n$ of the box tensor product can be described as a sum of maps of the form
\[
(\Pi_{\cT_+}\otimes  \bI_M)\circ (\bI_{\otimes^j \cK^!} \otimes \bI_{[\cCo]} \otimes  m_{\cT_+})\circ (\delta^{j,j}\otimes \bI_M),
\]
for $j\ge 0$ and various extended input trees $\cT_+$. Here, $\Pi_{\cT_+}$ means to multiply certain consecutive tensors of $\otimes^j \cK^!$ (corresponding to vertices of $\cT_+$ along the module path of valence 4 or more), and also add a tensor factor of $1$ for each valence 2 vertex along the module path of $\cT_+$. Compare Figure~\ref{fig:trees-delta^n}.

We observe that
\[
\delta^{n,n}( [\cCo]_{\ge 0})\subset (\cK^!\otimes \cdots \otimes \cK^!\otimes [\cCo]\otimes \cK\otimes \cdots \otimes \cK)_{\ge 0}. 
\]
This is because the only summands of the differential $\delta^{1,1}$ which are weighted by $\theta$ are also weighted by $U$. Similarly, $\Pi_{\cT_+}$ preserves the $\wt_\theta^!$-weight of a tensor of monomials (or maps them to zero). Therefore
\[
\Big((\Pi_{\cT_+}\otimes  \bI_M)\circ (\bI_{\otimes^j \cK^!} \otimes \bI_{[\cCo]} \otimes  m_{\cT_+})\circ (\delta^{j,j}\otimes \bI_M)\Big)\Big( ([\cCo]\otimes M)_{\ge i}\Big)\subset ([\cCo]\otimes M)_{\ge i-C}.
\]
\end{proof}

\begin{lem}
\label{lem:strongly-U-adic-D->A}
 If ${}^{\cK^!} M$ is a regularly $U$-adic type-$D$ structure, then the tensor product ${}_{\cK} [\cTr]_{\cK^!} \boxtimes {}^{\cK^!} M$ is regularly $U$-adic. 
\end{lem}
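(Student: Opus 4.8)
The plan is to show that the structure operations $m_{\cT_+}$ of ${}_{\cK}[\cTr]_{\cK^!}\boxtimes{}^{\cK^!}M$ are commensurate --- indeed, commensurate with the \emph{same} constant $C$ that witnesses commensurateness of the iterated maps $\delta^n$ of ${}^{\cK^!}M$. I would first clear away the routine points: the tensor product is well defined because ${}_{\cK}[\cTr]_{\cK^!}$ is right bonsai (Lemma~\ref{lem:Tr-LR-bonsai}), as in Lemma~\ref{lem:bonsai-preserved}(1); its underlying $\ve{I}$-module is $[\cTr]\otimes_{\ve{I}}M=M$, carrying the same filtration as $M$ since $[\cTr]$ is placed in filtration level $0$, so it is again regularly $\Z$-filtered; and the $A_\infty$-module relations hold automatically. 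It then remains only to bound the filtration shift of every $m_{\cT_+}$, uniformly in the extended input tree $\cT_+$.

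Fix an extended left $A_\infty$-module input tree $\cT_+$ with $\cK$-inputs $a_p,\dots,a_1$. Exactly as in the proof of Lemma~\ref{lem:bonsai-preserved}(3) (compare Figure~\ref{fig:5}), the operation $m_{\cT_+}$ on the tensor product is a sum of terms
\[
m_{\cT_+'}\!\left(a_p,\dots,a_1,1,b_1,\dots,b_K\right)\otimes y',
\]
where $\cT_+'$ is the extended $A_\infty$-bimodule input tree for $[\cTr]$ obtained by grafting $K$ leftward $\cK^!$-legs onto the module vertices of $\cT_+$, and $b_1\otimes\cdots\otimes b_K\otimes y'$ ranges over the monomial summands of $\delta^K(y)$, over all $K\ge 0$. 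Now take a monomial input $a_p\otimes\cdots\otimes a_1\otimes(1\otimes y)$ of filtration level $\ge i$, i.e. $\sum_j\wt_U(a_j)+\nu\ge i$ where $\nu$ is the largest integer with $y\in M_{\ge\nu}$, and chase filtrations through a nonzero term. Since $[\cTr]_{\ge 1}=0$, Corollary~\ref{cor:filtered-Trace-map} forces the input tensor $a_p\otimes\cdots\otimes a_1\otimes 1\otimes b_1\otimes\cdots\otimes b_K$ to have filtration level $\le 0$, that is $\sum_j\wt_U(a_j)\le\sum_m\wt_\theta^!(b_m)$; and regular $U$-adicity of ${}^{\cK^!}M$ applied to $\delta^K$ gives $-\sum_m\wt_\theta^!(b_m)+\nu'\ge\nu-C$, where $\nu'$ is the largest integer with $y'\in M_{\ge\nu'}$. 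Combining these,
\[
\nu'\ \ge\ \nu-C+\sum_m\wt_\theta^!(b_m)\ \ge\ \nu-C+\sum_j\wt_U(a_j)\ \ge\ i-C ,
\]
so $m_{\cT_+}\big((\cK^{\otimes p}\otimes[\cTr]\otimes M)_{\ge i}\big)\subset([\cTr]\otimes M)_{\ge i-C}$ for every $\cT_+$; hence the operations $m_{\cT_+}$ are commensurate, and ${}_{\cK}[\cTr]_{\cK^!}\boxtimes{}^{\cK^!}M$ is regularly $U$-adic.

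The step I expect to be the crux is precisely the three-term combination in the last display. A naive argument of the form ``$\delta^K$ is weakly filtered with constant $C$ and the trace actions preserve the filtration, hence so does the composite'' does not work: the $\cK$-inputs $a_j$ carry $\wt_U$-weight that contributes positively to the filtration level of the input but is consumed by the trace actions, so it seems to be lost. The resolution is that, by Corollary~\ref{cor:filtered-Trace-map} (i.e. Lemma~\ref{lem:inequality-U-weights} applied along a tree), large total $\wt_U$-weight among the $\cK$-inputs of $[\cTr]$ cannot occur unless at least as many $\theta$'s appear among the $\cK^!$-outputs produced by $\delta$ on $M$, and those $\theta$'s are exactly what the regular $U$-adicity hypothesis on ${}^{\cK^!}M$ uses to force $y'$ deep into the filtration of $M$; so the bookkeeping must route through the trace weight inequality rather than through filtered-ness of the individual maps. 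The only other point needing a word of care --- but entirely routine --- is that one argues with monomial summands instead of filtration-homogeneous elements, which is legitimate because every filtration appearing here is spanned by monomials.
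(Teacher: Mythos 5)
Your proof is correct and follows essentially the same route as the paper: decompose $m_{\cT_+}$ on the tensor product as the trace action applied after an iterate $\delta^K$ of $M$, track filtrations via the commensurability constant $C$ of $M$ together with Corollary~\ref{cor:filtered-Trace-map}, and close the loop using that $[\cTr]$ is concentrated in filtration level $0$. One remark: the ``naive argument'' you dismiss is in fact what the paper does and is sound --- once ``filtered'' is read correctly for the trace (output landing in $[\cTr]_{\ge i}$, which vanishes for $i>0$), the conclusion $[\cTr]_{\ge j+s}\otimes M_{\ge t}\subset ([\cTr]\otimes M)_{\ge j+k-C}$ drops out directly from the tensor filtration; your explicit monomial bookkeeping and the inequality $\sum_j\wt_U(a_j)\le\sum_m\wt_\theta^!(b_m)$ are an equivalent unpacking of that step, not a repair of it.
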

\begin{proof} By definition, if $\cT_+$ is an extended module operation tree, then the operation on  $m_{\cT_+}$ on ${}_{\cK} [\cTr]_{\cK^!} \boxtimes {}^{\cK^!} M$ can be described as a sum of certain compositions obtained by first applying iterates $\delta^i$ of the structure map on ${}^{\cK^!} M$, followed by the bimodule operation $m_{\cT}$ on $[\cTr]$, for some other (ordinary) $AA$-bimodule operation tree $\cT$.

Let $C$ be the commensurability constant of ${}^{\cK^!} M$. We observe that $\bI\otimes \delta^i$ maps
\[
(\cK\otimes \cdots \otimes \cK)_{\ge j} \otimes[\cTr]\otimes  M_{\ge k}
\]
into
\[
\sum_{s+t=k-C}(\cK\otimes \cdots \otimes \cK)_{\ge j} \otimes [\cTr]\otimes (\cK^!\otimes \cdots \otimes \cK^!)_{\ge s}\otimes M_{\ge t}.
\]
By Corollary~\ref{cor:filtered-Trace-map}, the structure map $m_{\cT}$ from $[\cTr]$ maps the above to
\[
\sum_{s+t=k-C} [\cTr]_{\ge j+s}\otimes M_{\ge t}\subset \left([\cTr]\otimes M \right)_{\ge j+k-C}.
\]
(Note that since $[\cTr]$ is concentrated in filtration level 0,  the above can be viewed instead as $[\cTr]\otimes M_{\ge j+k-C}$).
\end{proof}

\subsection{Categories} We  will write ${}^{\cK^!} \MOD_{(U),\frb}$ for the category of cobonsai, regularly $U$-adic type-$D$ modules over $\cK^!$. We write ${}_{\cK} \MOD_{(U),\frb}$ for the category of bonsai, regularly $U$-adic type-$A$ modules over $\cK$ which are also \emph{strictly unital}.

 We now define the notions of regularly $U$-adic and cobonsai morphisms in our categories:

\begin{define}
\item
\begin{enumerate}
\item If ${}^{\cK^!} M, {}^{\cK^!} N$ are type-$D$ modules, we say a map $f^1\colon M\to \cK^!\otimes N$ is \emph{regularly $U$-adic} if it is weakly filtered (equipping $\cK^!$ with the filtration supported in filtration levels $-1$ and $0$). We say a map is \emph{bonsai} if the algebraic gradings of the $\cK^!$-factors appearing as coefficients in $f^1$ are bounded from below.
\item  If ${}_{\cK} M, {}_{\cK} N$ are type-$A$ modules, then an $A_\infty$-morphism $f_*=(f_{j+1})_{j\ge 0}$ is called \emph{regularly $U$-adic} if the $f_{j+1}$ are all weakly filtered, and furthermore the family of maps is commensurate. We say the morphism $f_*$ is \emph{bonsai} if $f_{j+1}=0$ for all $j\gg 0$. 
\end{enumerate}
\end{define}

We define the morphisms in ${}_{\cK} \MOD_{(U),\frb}$ to be the set of strictly unital, regularly $U$-adic and bonsai morphisms, in the above sense. We define the morphisms in ${}^{\cK^!} \MOD_{(U),\frb}$ similarly.

\begin{lem} If $f_*$ is a morphism in ${}_{\cK} \MOD_{(U),\frb}$, then $\d(f_*)$ is also a morphism in this category. If $\d(f_*)=0$, then $\Cone(f_*)$ is an object in ${}_{\cK} \MOD_{(U),\frb}$ if and only if $f_*$ is a morphism in ${}_{\cK} \MOD_{(U),\frb}$. The same holds for morphisms in  ${}^{\cK^!} \MOD_{(U),\frb}$. 
\end{lem}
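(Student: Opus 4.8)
### Proof proposal

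The plan is to verify the three assertions in turn, in each case unwinding the definitions and citing the closure properties of ``weakly filtered / commensurate'' and ``algebraically bounded'' maps that are already implicit in the setup. The overarching point is that all three constructions (taking the morphism differential, forming a cone, and the ``only if'' direction) are built from the structure maps of $M$ and $N$ together with $f_*$ by the same kind of tree-summed composition, so they inherit the relevant boundedness and filtration properties from the hypotheses.

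\textbf{Step 1: $\d(f_*)$ is again a morphism.} Recall $\d(f_*)$ is defined by the three-line formula from Section~2.1, which expresses $\d(f_*)_{n+1}$ as a sum of compositions of the $m_{i+1}$ of $M$ and $N$, the $\mu_i$ of $\cK$, and the $f_{j+1}$. Since $M$ and $N$ lie in ${}_{\cK}\MOD_{(U),\frb}$, their structure maps are weakly filtered and the associated families $m_{\cT_+}$ are commensurate; since $f_*$ is a morphism in the category, the $f_{j+1}$ are commensurate as well. Each summand of $\d(f_*)_{n+1}$ is therefore a composition of commensurate families of weakly filtered maps with a single application of $f_*$, so by the same argument used in Lemma~\ref{rem:only-m2m1} and the multiplicativity of the $U$-adic filtration on $\cK$, the family $\{\d(f_*)_{n+1}\}$ is commensurate, i.e. regularly $U$-adic. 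Strict unitality of $\d(f_*)$ follows because plugging a $1$ into any $m_{i+1}$, $\mu_i$ (with $i\ne 2$) or $f_{j+1}$ (with more than one algebra input) kills the corresponding term, and the $i=2$ terms telescope. For the bonsai property: $f_*$ bonsai means $f_{j+1}=0$ for $j\gg 0$, and $M$, $N$ bonsai means the $m_{\cT}$ vanish for trees of large degree; a tree computing $\d(f_*)_{n+1}$ has degree bounded below by a linear function of $n$, so it vanishes once $n$ is large. Hence $\d(f_*)$ is a morphism in the category.

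\textbf{Step 2: the cone, assuming $\d(f_*)=0$.} When $\d(f_*)=0$, $\Cone(f_*)$ is the usual mapping-cone type-$A$ module with underlying $\ve{I}$-module $M\oplus N$ and structure maps assembled from $m^M$, $m^N$ and $f_*$ in upper-triangular form. Give $M\oplus N$ the filtration which is the direct sum of the (shifted, by a constant) filtrations on $M$ and $N$; this is again a regular $\Z$-filtered $\ve{I}$-module. The structure maps of the cone are exactly the structure maps of $M$ and $N$ together with a single off-diagonal block given by $f_*$. If $f_*$ is a morphism in the category (regularly $U$-adic, bonsai, strictly unital) then combining its commensurability constant with those of $M$ and $N$ produces a single constant witnessing that all $m_{\cT_+}$ of the cone are commensurate, and the bonsai bounds likewise combine; strict unitality is immediate from strict unitality of the pieces. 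Conversely, if $\Cone(f_*)$ is an object of the category, then restricting attention to trees whose module input enters the $N$-summand and which use the off-diagonal block exactly once recovers the $f_{j+1}$ as (a summand of) structure maps of the cone, so the commensurability and bonsai bounds for the cone force the corresponding bounds on $f_*$; strict unitality of $f_*$ is inherited the same way. This gives the ``if and only if''.

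\textbf{Step 3: the dual statement for ${}^{\cK^!}\MOD_{(U),\frb}$.} The argument is formally the same, with ``weakly filtered + commensurate'' now referring to the filtration on $\cK^!$ supported in filtration levels $-1$ and $0$ (via $\wt_\theta^!$), and ``bonsai'' replaced by ``cobonsai'', i.e. by a lower bound on the algebraic gradings of the $\cK^!$-coefficients. The type-$D$ morphism differential $\d(f^1)$ is a sum of compositions of $\delta^1$ of $M$ and $N$ with one $f^1$ and the algebra operations $\mu_n$ of $\cK^!$; since $\mu_1$ preserves the filtration, $\mu_0\in\cK^!_{\ge 0}$, and $\mu_n$ shifts the algebraic grading by $n-2$ (Lemma~\ref{lem:grading-shifts}), the weak-filtration and cobonsai bounds propagate exactly as before. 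The cone of a cycle $f^1$ has underlying module $M\oplus N$ with $\delta^1$ upper-triangular with off-diagonal block $f^1$, and the same ``combine the constants'' / ``restrict to off-diagonal trees'' dichotomy yields the equivalence.

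\textbf{Expected main obstacle.} Nothing here is deep; the only thing that needs genuine care is Step~1's verification that $\d(f_*)$ stays within the category when $f_*$ itself is not a cycle --- one must check that the \emph{three} separate sums in the morphism-differential formula each remain commensurate and bonsai, using that the middle sum involves the $m^N$ acting after $f_*$ and the first involves $f_*$ acting after $m^M$. This is a bookkeeping exercise with the various commensurability constants rather than a conceptual difficulty, and it is essentially the same bookkeeping that already appears in the proofs of Lemmas~\ref{lem:Tr-LR-bonsai} and~\ref{lem:bonsai-preserved}.
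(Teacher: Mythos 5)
The paper explicitly omits the proof of this lemma (``The proof of the above lemma is straightforward, so we omit it''), so there is no argument of the author's to compare against; your proposal supplies exactly the routine unwinding of the definitions that the author had in mind, and the three steps---propagating the commensurability, bonsai/cobonsai, and strict-unitality bounds through the morphism-differential formula, through the block-triangular cone, and dually for $\cK^!$---are correct.
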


The proof of the above lemma is straightforward, so we omit it.

\subsection{Equivalences of categories}
\label{sec:equivalence-of-categories}

The categories ${}_{\cK} \mathsf{Mod}_{(U),\frb}$ and ${}^{\cK^!}\mathsf{Mod}_{(U),\frb}$ are $dg$-categories, but it will be helpful to view them as $A_\infty$-categories. Our main result is the following:

\begin{prop}\label{prop:equivalence-of-categories-D} There is an equivalence of $A_\infty$-categories:
\[
{}_{\cK} \MOD_{(U),\frb}\simeq {}^{\cK^!} \MOD_{(U),\frb}.
\]
\end{prop}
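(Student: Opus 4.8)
The plan is to construct the equivalence via the two box-tensor functors
\[
\Phi:={}^{\cK^!}[\cCo]^{\cK}\boxtimes(-)\colon {}_{\cK}\MOD_{(U),\frb}\to {}^{\cK^!}\MOD_{(U),\frb},
\qquad
\Psi:={}_{\cK}[\cTr]_{\cK^!}\boxtimes(-)\colon {}^{\cK^!}\MOD_{(U),\frb}\to {}_{\cK}\MOD_{(U),\frb},
\]
and to prove they are mutually quasi-inverse $A_\infty$-functors. First I would check that $\Phi$ and $\Psi$ are well-defined on objects and morphisms. That $\Phi$ sends a bonsai, regularly $U$-adic strictly unital type-$A$ module to a cobonsai, regularly $U$-adic type-$D$ module is exactly Lemma~\ref{lem:bonsai-preserved}(2) together with Lemma~\ref{lem:strongly-U-adic-A->D}; conversely for $\Psi$ one invokes Lemma~\ref{lem:bonsai-preserved}(3), Lemma~\ref{lem:strongly-U-adic-D->A}, and the strict unitality of $[\cTr]$ (Lemma~\ref{lem:strictly-unital}) to land in the strictly unital subcategory. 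For morphisms, I would note that $\boxtimes$ with a fixed bimodule is a $dg$-functor on the morphism complexes, and that the same boundedness/commensurability bookkeeping (using the algebraic grading estimates of Lemma~\ref{lem:grading-shifts} and the weight inequality of Lemma~\ref{lem:inequality-U-weights}/Corollary~\ref{cor:filtered-Trace-map}) shows it preserves the bonsai and regularly $U$-adic conditions on morphisms, so $\Phi$ and $\Psi$ are genuine $A_\infty$-functors between these categories.

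Next I would establish that $\Psi\circ\Phi$ and $\Phi\circ\Psi$ are naturally isomorphic to the identity functors. By associativity of $\boxtimes$, on an object ${}_{\cK}M$ we have $\Psi(\Phi(M))={}_{\cK}[\cTr]_{\cK^!}\boxtimes{}^{\cK^!}[\cCo]^{\cK}\boxtimes{}_{\cK}M$, and by Theorem~\ref{thm:equality-of-box-tensor-products} (Proposition~\ref{prop:quasi-inverse-1}) the first two factors box-tensor to the identity $DA$-bimodule ${}_{\cK}\bI^{\cK}$, whence $\Psi\circ\Phi(M)$ is literally equal to ${}_{\cK}\bI^{\cK}\boxtimes{}_{\cK}M={}_{\cK}M$. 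Similarly $\Phi\circ\Psi(N)={}^{\cK^!}[\cCo]^{\cK}\boxtimes{}_{\cK}[\cTr]_{\cK^!}\boxtimes{}^{\cK^!}N={}^{\cK^!}\bI_{\cK^!}\boxtimes{}^{\cK^!}N={}^{\cK^!}N$ using Proposition~\ref{prop:quasi-inverse-2}. The point here is that these are genuine equalities of bimodules, not just homotopy equivalences, so the associativity of $\boxtimes$ upgrades them to equalities $\Psi\circ\Phi=\id$ and $\Phi\circ\Psi=\id$ of functors on the nose — there is no natural-transformation data to track. The one subtlety is that associativity of the triple box tensor product requires an appropriate boundedness hypothesis so that all the infinite sums are well-defined and can be reassociated; I would verify this using the right-bonsai property of $[\cTr]$ (Lemma~\ref{lem:Tr-LR-bonsai}) and the cobonsai/bounded hypotheses, as in Lemma~\ref{lem:bonsai-preserved}(1).

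The main obstacle, and the place where real care is needed, is precisely this last point: making sense of the associativity of $\boxtimes$ for the relevant triple products and checking that the identifications of Theorem~\ref{thm:equality-of-box-tensor-products} can be tensored with an arbitrary module in our category without convergence problems. Concretely, one must confirm that for ${}_{\cK}M$ bonsai and regularly $U$-adic, all the intermediate tensor products $({}^{\cK^!}[\cCo]^{\cK}\boxtimes{}_{\cK}M)$, $({}_{\cK}[\cTr]_{\cK^!}\boxtimes{}^{\cK^!}[\cCo]^{\cK})$, etc., are defined and that the two ways of parenthesizing agree term-by-term; this is a diagram-chase over the planar trees appearing in Figures~\ref{fig:trees-delta^n} and~\ref{fig:5}, using that $[\cTr]$ is left and right bonsai and strictly unital so that the relevant sums are finite after fixing the inputs. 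Once associativity is in hand, the equivalence is formal. I would close by remarking that the same argument, tensoring on the appropriate sides, yields the analogous equivalences for $DA$- and $DD$-bimodules over $(\cK,\cK)$ and $(\cK^!,\cK)$ promised after the theorem statement.
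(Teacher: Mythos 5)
There is a genuine gap: your claim that $\Psi\circ\Phi(M)$ is ``literally equal'' to ${}_{\cK}\bI^{\cK}\boxtimes {}_{\cK}M$ relies on reassociating the triple box tensor
\[
{}_{\cK}[\cTr]_{\cK^!}\boxtimes\bigl({}^{\cK^!}[\cCo]^{\cK}\boxtimes {}_{\cK}M\bigr)\stackrel{?}{=}\bigl({}_{\cK}[\cTr]_{\cK^!}\boxtimes{}^{\cK^!}[\cCo]^{\cK}\bigr)\boxtimes {}_{\cK}M,
\]
and this reassociation is \emph{false} as an equality. This is the sandwich configuration $A\boxtimes DD\boxtimes A$, where the $DD$-bimodule ${}^{\cK^!}[\cCo]^{\cK}$ is enclosed on both sides; in LOT's framework (\cite{LOTBimodules}*{Lemma~2.3.14(3)}) strict associativity in that configuration requires the middle $DD$-bimodule to be \emph{separated}, i.e. $\delta^{1,1}=\delta^{1L}+\delta^{1R}$ with each summand outputting a non-idempotent algebra element on only one side. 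But every term of $\delta^{1,1}$ of $[\cCo]$ is of the form $w|W$, $\theta|U$, etc., outputting nontrivial elements of both $\cK^!$ and $\cK$ simultaneously, so $[\cCo]$ is not separated. The paper marks this explicitly in Equation~\eqref{eq:homotopy-associativity}, where the relation is $\simeq$, not $=$. Your proposed fix --- a ``diagram chase ... showing the two ways of parenthesizing agree term-by-term'' --- cannot succeed because they do not agree term-by-term; the discrepancy is a genuine homotopy, and producing the prenatural transformation (and its homotopy) between $\Psi\circ\Phi$ and $\bI$ is the nontrivial content of the proof.

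The paper's actual argument for this direction is indirect: it replaces $[\cCo]$ with the homotopy-equivalent $DD$-bimodule $[\cCo]\boxtimes\scL\boxtimes[\cCo]$, observes that this one \emph{is} separated (so the relevant triple products are strictly associative by LOT's lemma), and then concludes $\cF_{[\cTr]}\circ\cF_{[\cCo]}\simeq\cF_{[\cTr]}\circ\cF_{[\cCo]\boxtimes\scL\boxtimes[\cCo]}=\cF_{[\cTr]\boxtimes[\cCo]\boxtimes\scL\boxtimes[\cCo]}\simeq\cF_{\bI}$. Your treatment of the other composite is fine: $\cF_{[\cCo]}\circ\cF_{[\cTr]}=\bI$ holds on the nose because ${}^{\cK^!}[\cCo]^{\cK}\boxtimes{}_{\cK}[\cTr]_{\cK^!}\boxtimes {}^{\cK^!}X$ is the $DD\boxtimes AA\boxtimes D$ configuration with the $AA$-bimodule in the middle, which is strictly associative without a separatedness hypothesis. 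The preservation-of-conditions part of your sketch (bonsai, cobonsai, regularly $U$-adic, strict unitality) matches the paper. But as written your proof asserts $\Psi\circ\Phi=\bI$ rather than $\Psi\circ\Phi\simeq\bI$, and the route you suggest for closing the gap would fail.
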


 We now give a brief sketch of the idea. Subsequently, we will give more details. Tensoring with ${}_{\cK}[\cTr]_{\cK^!}$ and ${}^{\cK^!} [\cCo]^{\cK}$ defines $A_\infty$-functors between the above two categories. Lemmas~\ref{lem:bonsai-preserved}, ~\ref{lem:strongly-U-adic-A->D} and ~\ref{lem:strongly-U-adic-D->A} show that these functors preserve the bonsai/cobonsai and regular $U$-adic properties. Lemma~\ref{lem:strictly-unital} implies that if ${}^{\cK^!} M$ is a type-$D$ module, then ${}_{\cK} [\cTr]_{\cK^!}\boxtimes {}^{\cK^!} M$ is strictly unital. From Propositions~~\ref{prop:quasi-inverse-1} and ~\ref{prop:quasi-inverse-2} we know that ${}^{\cK^!} [\cCo]^{\cK}$ and ${}_{\cK} [\cTr]_{\cK^!}$ are quasi-inverses which essentially immediately gives suitable versions of natural transformations between the composite functors and the identity functors.

We will now spell out a few more details in the above sketch, for the benefit of the reader. We first recall some basic categorical notions. We refer the reader to \cite{SeidelFukaya}*{Chapter~1} for a more detailed background.

\begin{define}
\item
\begin{enumerate}
\item An $A_\infty$-category $\cA$ is a set of objects $\cO(\cA)$,  together with vector spaces of morphisms  $\Hom(X,Y)$ (over $\bF=\Z/2$), and composition maps
\[
\mu_{n}\colon \Hom(X_{n-1},X_n)\otimes \cdots \otimes \Hom(X_0,X_1)\to \Hom(X_0,X_n),
\]
ranging over $n\ge 1$, which satisfy the $A_\infty$-associativity relations:
\[
0=\sum_{i=1}^n \sum_{j=1}^{n-i+1} \mu(a_d,\dots,a_{i+j},\mu_{j}(a_{i+j-1},\dots, a_{i}),a_{i-1},\dots a_1)
\]
\item If $\cA,\cB$ are $A_\infty$-categories, then an $A_\infty$-functor $\cF\colon \cA\to \cB$ consists of a map on objects $\cF\colon \cO(\cA)\to \cO(\cB)$, together with a collection of maps on morphisms
\[
\cF_n\colon \Hom(X_{n-1},X_{n})\otimes \cdots \otimes \Hom(X_0,X_1)\to \Hom(\cF(X_0),\cF(X_n))
\]
which satisfy the $A_\infty$-functor relation:
\[
\sum_{n} \sum_{i_1,\dots, i_n} \mu_n^{\cB}(\cF_{i_n}(a_d,\dots, a_{d-i_n+1}),\dots, \cF_{i_1}(a_{i_1},\dots, a_1)) 
\]
\[
=\sum_{k,j} \cF_{d-j+1}(a_d,\dots, a_{k+j-1},\mu_j^{\cA}(a_{k+j},\dots, a_{k+1}),a_{k}, \dots, a_1)
\]
\item If $\cF,\cG\colon \cA\to \cB$ are $A_\infty$-functors, then we can consider a vector space of \emph{prenatural transformations} from $\cF$ to $\cG$. These consist of collections of maps
\[
\phi_{n}\colon \Hom(X_{n-1},X_{n})\otimes \cdots \otimes \Hom(X_0,X_1)\to \Hom(\cF(X_0),\cG(X_n))
\]
ranging over $n\ge 0$. Here, we view $\phi_0$ as giving an element of $\Hom(\cF(X),\cG(X))$ for each $X\in \cO(\cA)$. The space of prenatural transformations is a chain complex. In fact, these are morphisms in the category of $A_\infty$-functors. The category of $A_\infty$-functors from $\cA$ to $\cB$ is itself an $A_\infty$-category. If $\phi\colon \cF\to \cG$ is a natural transformation, the differential is given (slightly informally) by
\[
\begin{split}
&\mu_1(\phi)(a_d,\dots, a_1)\\
=&\sum \mu_*^{\cB}(\cF_{*}(\cdots),\dots, \cF_{*}(\cdots),\phi_{*}(\cdots),\cF_{*}(\cdots),\dots,\cF_{*}(\cdots))\\
+&\sum \phi_{*}(\cdots, \mu_*^{\cA}(\cdots), \cdots).
\end{split}
\]
In the above, each $\cdots$ indicates a consecutive subcollection of $a_d,\dots, a_1$, and the sum is over possible numbers of $\cF_*(\cdots)$ terms.
Similarly, if
\[
\begin{tikzcd}
 \cF^0\ar[r, "\phi^1"] &\cF^1\ar[r]& \cdots \ar[r, "\phi^n"]& \cF^n
\end{tikzcd}
\]
is a diagram of prenatural transformations with $n\ge 2$, then $\mu_n(\phi^n,\dots, \phi^1)$ is given by the sum
\[
\begin{split}
&\mu_n(\phi^n,\dots, \phi^1)(a_d,\dots, a_1)
\\
=&\sum \mu_*^{\cB}(\cF^n(\cdots),\dots, \cF^n(\cdots), \phi^n(\cdots), \cF^{n-1}(\cdots),\dots,\\
& \quad\cF^1(\cdots), \phi^1(\cdots),\cF^0(\cdots),\dots, \cF^0(\cdots)).
\end{split}
\]
See \cite{SeidelFukaya}*{pg. 10} for more details.
\item A \emph{homotopy equivalence} between two functors $\cF,\cG\colon \cA\to \cB$ consists of a diagram of prenatural transformations
\[
\begin{tikzcd} 
\ar[loop left, "H"] \cF\ar[r, shift left, "\eta"] & \cG \ar[loop right, "J"] \ar[l, shift left, "\xi"]
\end{tikzcd}
\]
where $\mu_1(\eta)=0$, $\mu_1(\xi)=0$, $\mu_2(\xi, \eta)=\bI+\mu_1(H)$ and $\mu_2(\eta,\xi)=\bI+\mu_1(J)$.
\item We say two $A_\infty$-categories $\cA$ and $\cB$ are \emph{equivalent} if there are $A_\infty$-functors $\cF\colon \cA\to \cB$ and $\cG \colon \cB\to \cA$ as well as homotopy equivalences of functors
\[
\cG\circ \cF\simeq \bI_{\cA}\quad\text{and} \quad \cF\circ \cG\simeq \bI_{\cB}.
\]
\end{enumerate}
\end{define}

\begin{proof}[Proof of Proposition~\ref{prop:equivalence-of-categories-D}] The proof is a rather straightforward consequence of Theorem~\ref{thm:equality-of-box-tensor-products}, but we will spell out the argument. If $M$ is a bimodule, we will write $\cF_M$ for the functor induced by tensoring $M$. We have two functors:

\[
\cF_{[\cCo]}\colon {}_{\cK} \MOD_{(U),\frb}\to {}^{\cK^!} \MOD_{(U),\frb},\quad \text{and} \quad \cF_{[\cTr]}\colon {}^{\cK^!} \MOD_{(U),\frb}\to {}_{\cK} \MOD_{(U),\frb},
\]
 given by tensoring with ${}^{\cK^!} [\cCo]^{\cK}$ and ${}_{\cK} [\cTr]_{\cK^!}$. Noting that
\[
{}^{\cK^!} [\cCo]^{\cK} \boxtimes \left( {}_{\cK} [\cTr]_{\cK^!} \boxtimes {}^{\cK^!} X\right)=\left({}^{\cK^!} [\cCo]^{\cK} \boxtimes  {}_{\cK} [\cTr]_{\cK^!} \right)\boxtimes {}^{\cK^!} X={}^{\cK^!} X.
\]
for any type-$D$ module ${}^{\cK^!} X$, we conclude that $\cF_{[\cCo]}\circ \cF_{[\cTr]}=\bI$.

 On the other hand $\cF_{[\cTr]}\circ \cF_{[\cCo]}$, is more complicated since we only have a homotopy equivalence
\begin{equation}
 {}_{\cK} [\cTr]_{\cK^!}\boxtimes \left(  {}^{\cK^!} [\cCo]^{\cK} \boxtimes {}_{\cK} Y\right)\simeq \left({}_{\cK} [\cTr]_{\cK^!} \boxtimes {}^{\cK^!} [\cCo]^{\cK}   \right)\boxtimes {}_{\cK} Y={}_{\cK} Y
 \label{eq:homotopy-associativity}
\end{equation}
for a ${}_{\cK} Y\in {}_{\cK}\MOD_{(U),\frb}$.

To construct a homotopy equivalence of functors between $\cF_{[\cTr]}\circ \cF_{[\cCo]}$ and $\bI$, we give a slightly indirect argument. (A direct argument using diagonals, in the spirit of \cite{LOTDiagonals}, is possible, but a bit of a detour). Firstly, note that if ${}_{\cK}M^{\cK}$ and ${}_{\cK}N^{\cK}$ are homotopy equivalent $DA$-bimodules, then there is also a homotopy equivalence of functors $\cF_M\simeq \cF_N$. We use the homotopy equivalence 
\[
{}^{\cK^!}[\cCo]^{\cK}\simeq {}^{\cK^!} [\cCo]^{\cK} \boxtimes {}_{\cK} \scL_{\cK^!} \boxtimes {}^{\cK^!} [\cCo]^{\cK},
\]
to see
\[
\cF_{[\cTr]}\circ \cF_{[\cCo]}\simeq \cF_{[\cTr]}\circ \cF_{[\cCo]\boxtimes \scL\boxtimes [\cCo]}.
\]
On the other hand, ${}^{\cK^!}[\cCo]^{\cK}\boxtimes {}_{\cK}\scL_{\cK^!}\boxtimes {}^{\cK^!}[\cCo]^{\cK}$ is a \emph{separated type-$DD$ structure} \cite{LOTBimodules}*{Definition~2.2.57}. Recall that a separated type-$DD$ structure ${}^{\cA} M^{\cB}$ is one where $\delta^{1,1}=\delta^{1L}+\delta^{1R}$ where $\delta^{1L}\colon M\to \cA \otimes M\otimes \ve{j}$ and $\delta^{1R}\colon M\to \ve{k}\otimes M\otimes \cB$. (Here, $\cA$ and $\cB$ denote algebras over $\ve{k}$ and $\ve{j}$, respectively). It follows from \cite{LOTBimodules}*{Lemma~2.3.14 (3)}, that if $P_{\cA}$ and ${}_{\cB} Q$ are strictly unital type-$A$ modules, and ${}^{\cA} M^{\cB}$ is separated, then there is a canonical isomorphism
\[
(P_{\cA} \boxtimes {}^{\cA} M^{\cB})\boxtimes {}_{\cB} Q\iso P_{\cA}\boxtimes ({}^{\cA} M^{\cB}\boxtimes {}_{\cB} Q),
\]
and similarly for box tensor products of strictly unital morphisms. (Note that Lipshitz, Ozsv\'{a}th and Thurston focus on uncurved $dg$-algebras, though the same proof works for curved $dg$-algebras). In particular, box tensor products involving ${}^{\cK^!}([\cCo]\boxtimes \scL\boxtimes [\cCo])^{\cK}$ are strictly associative, so 
\[
{}_{\cK} [\cTr]_{\cK^!} \boxtimes \left({}^{\cK^!}([\cCo]\boxtimes \scL\boxtimes [\cCo])^{\cK}\boxtimes {}_{\cK} M\right) =\left({}_{\cK} [\cTr]_{\cK^!} \boxtimes{}^{\cK^!}([\cCo]\boxtimes \scL\boxtimes [\cCo])^{\cK} \right)\boxtimes {}_{\cK} M. 
\]
A similar manipulation holds for morphisms,  so we conclude that
\[
\cF_{[\cTr]}\circ \cF_{[\cCo] \boxtimes \scL\boxtimes [\cCo]}=\cF_{[\cTr]\boxtimes [\cCo]\boxtimes \scL\boxtimes [\cCo]}\simeq \cF_{\bI}=\bI,
\]
completing the proof.
\end{proof}

\subsection{$DA$ and $DD$-bimodules}

In this section, we extend our results about Koszul duality to categories of $DD$ and $DA$ bimodules over $(\cK^!,\cK)$ and $(\cK,\cK)$, respectively.

Firstly, we describe how to generalize the bonsai condition to $DA$ and $DD$-bimodules. If ${}_{\cK} M^\cK$ is a type-$DA$ module and $\cT$ is an ordinary (i.e. no valence 1 or 2 vertices) $A_\infty$-module structure tree with $k$ algebra inputs and $n$ interior vertices along the path from the module input to the root, then we will write 
\[
\delta_{\cT}^n\colon \underbrace{\cK\otimes \cdots \otimes \cK}_{k}\otimes M\to M\otimes \underbrace{\cK\otimes \cdots \otimes \cK}_n
\]
for the corresponding composition of the structure maps of $M$ and the algebra multiplication on the $\cK$ inputs (according to $\cT$). We do not multiply any of the outputs of any $\delta_{i+1}^1$. Instead they form the algebra factors in the codomain of $\delta_{\cT}^n$.

\begin{define}
\item 
\begin{enumerate}
\item We say a type-$DA$ bimodule ${}_{\cK} M^{\cK}$ is \emph{bonsai} if there is an $n_0$ so that for all ordinary (i.e. no valence 2 vertices) $A_\infty$-module input trees $\cT$ with $\deg(\cT)>n_0$, the structure map $\delta_{\cT}^n$ vanishes, where $n$ is the number of interior vertices along the module path of $\cT$.
\item We say that a type-$DD$ bimodule ${}^{\cK^!} M^{\cK}$ is cobonsai if there is an $n_0$ so that any summand $b_1\otimes \cdots \otimes b_n\otimes \xs\otimes a_n\otimes\cdots \otimes a_1$ appearing in the iterate $\delta^{n,n}$ has the property that
\[
\sum_{i=1}^n \max(-\gr_{\alg}(b_i)-1,0)<n_0.
\]
\end{enumerate}
\end{define}

We now describe how to generalize the regularly $U$-adic condition:

\begin{define}
\item 
\begin{enumerate}
\item  A \emph{type-$DD$ module} ${}^{\cK^!} M^{\cK}$ is called \emph{regularly $U$-adic} if $M$ is a regular $\Z$-filtered $(\ve{I},\ve{I})$-bimodule , equipped with a weakly filtered map $\delta^{1,1}\colon M\to \cK^!\otimes^! M\otimes^! \cK$ which satisfies the type-$DD$ structure equation, and such that all of the iterates
\[
\delta^{n,n}\colon X\to  \cK^!\otimes \cdots \otimes \cK^!\otimes X\otimes \cK\otimes \cdots \otimes \cK
\]
are commensurate.
\item
We say that a type-$DA$-module ${}_{\cK} M^{\cK}$ is \emph{regularly $U$-adic} if $M$ is a regularly $\Z$-filtered left $\ve{I}$-module such that the family of maps
\[
\delta_{\cT_+}^n\colon \underbrace{\cK\otimes \cdots \otimes \cK}_k\otimes X\to X\otimes \underbrace{\cK\otimes \cdots \otimes \cK}_n,
\]
ranging over all generalized $A_\infty$-module structure trees $\cT_+$, is commensurate. (Recall that ``generalized'' means that we allow valence 1 and 2 vertices).
\end{enumerate}
\end{define}

Essentially the same arguments as Lemmas~\ref{lem:strongly-U-adic-A->D} and~\ref{lem:strongly-U-adic-D->A} show that tensoring with ${}_{\cK} [\cTr]_{\cK^!}$ sends a regularly $U$-adic type-$DD$ module ${}^{\cK^!} M^{\cK}$ to a regularly $U$-adic $DA$ bimodule ${}_{\cK} M^{\cK}$, and similarly when we tensor with ${}^{\cK^!} [\cCo]^{\cK}$. Similarly, bonsai and cobonsai modules are switched by tensoring with these bimodules. 

We write ${}^{\cK^!} \MOD_{(U),\frb}^{\cK}$ for the category of regularly $U$-adic, cobonsai modules, and we write ${}_{\cK} \MOD_{(U), \frb}^{\cK}$ for the category of strictly unital, regularly $U$-adic, bonsai $DA$-bimodules. A similar argument to Proposition~\ref{prop:equivalence-of-categories-D} shows the following:

\begin{prop} There is an equivalence of $A_\infty$-categories
\[
{}_{\cK} \MOD_{(U),\frb}^{\cK}\simeq {}^{\cK^!} \MOD_{(U),\frb}^{\cK}.
\]
\end{prop}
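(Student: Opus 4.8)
The plan is to run the same argument as the proof of Proposition~\ref{prop:equivalence-of-categories-D}, but now in the setting of $(\cK,\cK)$-bimodules, using tensoring with ${}_{\cK}[\cTr]_{\cK^!}$ and ${}^{\cK^!}[\cCo]^{\cK}$ on the appropriate side and leaving the other $\cK$-action untouched. Concretely, given a type-$DA$ bimodule ${}_{\cK} M^{\cK}$ in ${}_{\cK}\MOD_{(U),\frb}^{\cK}$ we form ${}^{\cK^!}[\cCo]^{\cK}\boxtimes {}_{\cK} M^{\cK}$, which is a type-$DD$ bimodule over $(\cK^!,\cK)$; conversely, given a type-$DD$ bimodule ${}^{\cK^!} N^{\cK}$ we form ${}_{\cK}[\cTr]_{\cK^!}\boxtimes {}^{\cK^!} N^{\cK}$, a type-$DA$ bimodule over $(\cK,\cK)$. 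First I would check that these operations are well-defined and land in the stated categories: well-definedness of the tensor products follows because ${}_{\cK}[\cTr]_{\cK^!}$ is right bonsai (Lemma~\ref{lem:Tr-LR-bonsai}) and ${}^{\cK^!}[\cCo]^{\cK}$ has a one-step structure map, while preservation of the bonsai/cobonsai and regular $U$-adic conditions follows from the bimodule analogues of Lemmas~\ref{lem:bonsai-preserved}, \ref{lem:strongly-U-adic-A->D} and~\ref{lem:strongly-U-adic-D->A} (as the excerpt already notes, ``essentially the same arguments'' apply, since the extra $\cK$-action is a spectator). Strict unitality of ${}_{\cK}[\cTr]_{\cK^!}\boxtimes {}^{\cK^!} N^{\cK}$ on the left $\cK$-action follows from Lemma~\ref{lem:strictly-unital}.

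Next I would promote these operations to $A_\infty$-functors $\cF_{[\cCo]}$ and $\cF_{[\cTr]}$ between the two bimodule categories and establish that they are mutually quasi-inverse, exactly as in Proposition~\ref{prop:equivalence-of-categories-D}. One direction is strict: using Theorem~\ref{thm:equality-of-box-tensor-products}, namely ${}^{\cK^!}[\cCo]^{\cK}\boxtimes {}_{\cK}[\cTr]_{\cK^!}={}^{\cK^!}\bI_{\cK^!}$, and the associativity of $\boxtimes$ when one factor has a one-step type-$D$ operation, we get $\cF_{[\cCo]}\circ \cF_{[\cTr]}=\bI$. For the other composite $\cF_{[\cTr]}\circ \cF_{[\cCo]}$ I would repeat the indirect argument from the excerpt: use the homotopy equivalence ${}^{\cK^!}[\cCo]^{\cK}\simeq {}^{\cK^!}[\cCo]^{\cK}\boxtimes {}_{\cK}\scL_{\cK^!}\boxtimes {}^{\cK^!}[\cCo]^{\cK}$ to replace $[\cCo]$ by $[\cCo]\boxtimes \scL\boxtimes [\cCo]$, observe that the latter is a separated type-$DD$ structure in the sense of \cite{LOTBimodules}*{Definition~2.2.57} (the $\cK^!$-side output and the $\cK$-side output come from disjoint pieces of $\delta^{1,1}$, and the new spectator $\cK$-action does not interfere with separatedness), invoke \cite{LOTBimodules}*{Lemma~2.3.14~(3)} to make the relevant triple box tensor products strictly associative, and conclude $\cF_{[\cTr]}\circ \cF_{[\cCo]}\simeq \cF_{[\cTr]\boxtimes[\cCo]\boxtimes\scL\boxtimes[\cCo]}=\cF_{\bI}=\bI$. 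These homotopy equivalences of functors give the desired equivalence of $A_\infty$-categories.

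The step I expect to be the main obstacle is verifying carefully that the bookkeeping in the bonsai/cobonsai and regularly $U$-adic preservation lemmas genuinely goes through with the extra $\cK$-action present. In Lemma~\ref{lem:bonsai-preserved} the key was that the trees involved are $A_\infty$-\emph{module} operation trees read along the module path; for $DA$ and $DD$ bimodules one has an additional family of outgoing $\cK$-tensors along that path, and one must confirm that the algebraic-grading accounting (Lemma~\ref{lem:grading-shifts}, together with $\cK$ being supported in algebraic grading $0$) still forces the degree bound, and that the definition of cobonsai for $DD$-bimodules — which uses $\sum_i \max(-\gr_{\alg}(b_i)-1,0)$, ignoring idempotent inputs on the $\cK^!$-side — is exactly the condition matched by the $M$-side vertex valences. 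Similarly for Lemma~\ref{lem:strongly-U-adic-D->A} one must re-examine the filtration inequality of Corollary~\ref{cor:filtered-Trace-map} to see that the untouched $\cK$-side outputs sit in filtration level $0$ and contribute nothing. Once these routine-but-fiddly checks are in place, the rest of the argument is a formal transcription of the module-level proof, so I would state those verifications briefly and refer to Lemmas~\ref{lem:bonsai-preserved}--\ref{lem:strongly-U-adic-D->A} and Proposition~\ref{prop:equivalence-of-categories-D} for the details that carry over verbatim.
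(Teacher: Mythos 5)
Your proposal is correct and follows essentially the same approach the paper intends, namely transcribing the proof of Proposition~\ref{prop:equivalence-of-categories-D} to the bimodule setting with the second $\cK$-action as a spectator, after noting (as the paper does just before the statement) that tensoring with ${}_{\cK}[\cTr]_{\cK^!}$ and ${}^{\cK^!}[\cCo]^{\cK}$ preserves the bonsai/cobonsai, regularly $U$-adic, and strictly unital conditions by the same arguments as Lemmas~\ref{lem:bonsai-preserved}, \ref{lem:strongly-U-adic-A->D} and~\ref{lem:strongly-U-adic-D->A}. Your identification of the separated-$DD$ associativity step as the place where the spectator action must be checked is also the right point of care.
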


It does not seem to be the case that if ${}_{\cK} M^{\cK}$ is a $DA$-bimodule with $\delta_2^1$ and $\delta_1^1$ continuous and $\delta_{j+1}^1=0$ for $j\not\in \{0,1\}$, then ${}_{\cK} M^{\cK}$ must be regularly $U$-adic (compare Remark~\ref{rem:only-m2m1}). The following lemma, although essentially trivial, is useful:

\begin{lem}
\label{lem:delta11,delta21}  Suppose that  ${}_{\cK} M^{\cK}$ is a type-$DA$
bimodule, all of whose structure maps are filtered (not just weakly filtered). Then ${}_{\cK} M^{\cK}$ is regularly $U$-adic.
\end{lem}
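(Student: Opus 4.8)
The plan is to unwind the definition of ``regularly $U$-adic'' for type-$DA$ bimodules and show that the stronger ``filtered'' hypothesis forces the commensurability condition trivially. Recall that ${}_{\cK} M^{\cK}$ is regularly $U$-adic precisely when the family of maps $\delta_{\cT_+}^n$, ranging over all generalized $A_\infty$-module structure trees $\cT_+$, is commensurate; that is, there must be a single constant $C$ so that each $\delta_{\cT_+}^n$ shifts the $\Z$-filtration by at most $C$. So the whole content is to produce such a uniform $C$, and the claim is that $C = 0$ works.

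First I would recall the relevant filtration conventions: $\cK$ carries the $U$-adic filtration, with $\mu_2(\cK_{\ge i}\otimes \cK_{\ge j})\subset \cK_{\ge i+j}$ and $\mu_1$, $\mu_0$ landing in nonnegative filtration level (indeed $\cK$ has no differential or curvature, so this is vacuous for $\cK$), and the tensor-product filtration $(M\otimes N)_{\ge n} = \sum_{i+j=n} M_{\ge i}\otimes N_{\ge j}$. The hypothesis is that \emph{every} structure map $\delta_{k+1}^1\colon \cK^{\otimes k}\otimes M\to M\otimes \cK$ is filtered, meaning $\delta_{k+1}^1\big((\cK^{\otimes k}\otimes M)_{\ge i}\big)\subset (M\otimes \cK)_{\ge i}$ (shift $0$), not merely weakly filtered.

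Next I would observe that $\delta_{\cT_+}^n$ is, by definition, a composition built from: (i) copies of the algebra multiplication $\mu_2$ on $\cK$ inputs (filtered, shift $0$), (ii) copies of $\mu_1$ or $\mu_0$ associated to valence $1$ or $2$ interior vertices among the $\cK$-inputs (filtered, shift $0$, in fact zero here), and (iii) copies of the module structure maps $\delta_{i+1}^1$ along the module path of $\cT_+$ (filtered by hypothesis, shift $0$). Since a composition of filtered maps is filtered with shift equal to the sum of the individual shifts, each $\delta_{\cT_+}^n$ is filtered with shift $0$. The one point requiring a word of care is that the output algebra factors of the various $\delta_{i+1}^1$ are \emph{not} multiplied together but instead recorded as separate tensor factors in the codomain of $\delta_{\cT_+}^n$; this is harmless because the filtration on the codomain $X\otimes \cK\otimes\cdots\otimes \cK$ is the tensor-product filtration, and a filtered map into $X\otimes\cK$ followed by placing further filtered outputs in additional tensor slots still preserves total filtration level. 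Hence the constant $C = 0$ works uniformly over all $\cT_+$, which is exactly commensurability, so ${}_{\cK} M^{\cK}$ is regularly $U$-adic.

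I do not anticipate a serious obstacle; the lemma is essentially bookkeeping. The only mild subtlety — and the step I would be most careful about — is making sure the bookkeeping handles generalized trees $\cT_+$ with arbitrarily many valence-$1$ and valence-$2$ vertices (so that a priori the composition has unboundedly many factors), and confirming that since \emph{each} factor has filtration shift exactly $0$ (not some positive $\epsilon$), the sum stays $0$ regardless of how many factors appear. That is precisely why the ``filtered'' (as opposed to merely ``weakly filtered'') hypothesis is needed, and I would state it explicitly. I would then conclude with the one-line remark that this is why the lemma fails in general for weakly filtered $\delta_2^1,\delta_1^1$: there the per-factor shift is only bounded by some $C'>0$, and a tree with $N$ module vertices could produce a shift as large as $NC'$, which is not uniformly bounded.
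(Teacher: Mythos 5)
Your proof is correct and takes the same route as the paper: the paper's proof is a single sentence observing that compositions of filtered maps (the $\delta_{j+1}^1$, the multiplication $\mu_2$ on $\cK$, and — in your elaboration — the trivial $\mu_1 = \mu_0 = 0$ of $\cK$) remain filtered, hence the family $\{\delta_{\cT_+}^n\}$ is commensurate with constant $C=0$. Your additional remarks — that filtered (shift exactly $0$) rather than merely weakly filtered is what controls unboundedly long compositions, and that the outputs of the $\delta_{i+1}^1$ being kept as separate tensor factors is harmless for the tensor-product filtration — are correct bookkeeping the paper leaves implicit.
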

\begin{proof} Since each $\delta_{j+1}^1$ is filtered, and the multiplication map $\mu_2$ on $\cK$ is filtered, it follows that the composite maps $\delta_{\cT_+}^n$ are also filtered, and hence trivially commensurate.
\end{proof}

\begin{rem} The definitions of bonsai and regularly $U$-adic $DA$-bimodules feature different structure trees. For the bonsai condition, we consider ordinary $A_\infty$-module structure trees (no valence 2-vertices), and for the regular $U$-adic condition, we consider generalized $A_\infty$-module structure trees (valence 1 and 2 vertices allowed). Roughly speaking, the effect is that $\delta_1^1$ does not contribute to the bonsai condition, but does contribute to the regular $U$-adic condition. The reason for these choices is to ensure that box tensor products preserve both of these properties, and also to ensure that they are satisfied in the basic examples we have in mind. See Lemmas~\ref{lem:box-tensor-product-bonsai} and ~\ref{lem:CZZ-U-adic-cobonsai}. 
\end{rem}

\begin{rem}
\label{rem:bordered-3-manifolds-bimodules}
\begin{enumerate}
\item  The link surgery type-$DA$ bimodules of two component links are naturally presented as regularly $U$-adic and bonsai bimodules, as we now describe. If $L\subset Y$ is a 2-component link (equipped with a longitudinal framing), then ${}_{\cK} \cX(Y,L)^{\cK}$ is defined by taking the type-$D$ module $\cX(Y,L)^{\cK\otimes \cK}$ and box tensoring with the type-$A$ module ${}_{\cK\otimes \cK} [\bI^{\Supset}]$. (Here, we tensor only of the algebra components from each module together, yielding a $DA$-bimodule over $(\cK,\cK)$).  See \cite{ZemBordered}*{Section~8.4} for the definition of ${}_{\cK\otimes \cK} [\bI^{\Supset}]$. Write ${}_{\cK}\cY(Y,L)^{\cK}$ for the resulting tensor product.  This bimodule has only $\delta_1^1$ and $\delta_2^1$ non-trivial, and is therefore bonsai. The map $\delta_2^1$ is filtered. If we put $\cX(Y,L)^{\cK\otimes \cK}$ in filtration level 0 (see Lemma~\ref{lem:fg-weakly-filtered}), then the map $\delta_1^1$ on the tensor product will also be filtered. By Lemma~\ref{lem:delta11,delta21}, the $DA$-bimodule ${}_{\cK} \cY(Y,L)^{\cK}$ is regularly $U$-adic.
\item The small models for 2-component L-space links described in \cite{CZZSatellites}, recalled in Section~\ref{sec:CZZ} bonsai and regularly $U$-adic. See Lemma~\ref{lem:CZZ-U-adic-cobonsai}.
\item One could attempt to build a theory where morphisms and bimodules are all required to be filtered (not just weakly filtered). Such a theory seems insufficient for our purposes. For example, the transformer bimodule ${}_{\cK} \cT^{\cK}$ in Section~\ref{sec:transformer} does not have this property, but is regularly $U$-adic with commensurability constant $C=1$.
\item We cannot omit the strictly unital condition from the categories ${}_{\cK} \MOD_{(U),\frb}$ and ${}_{\cK} \MOD_{(U),\frb}^{\cK}$. For example, consider the type-$A$ module ${}_{\cK} M$ which has a single generator $\xs_0$, concentrated in idempotent 0, and which has $m_{j}=0$ for all $j$. This is not strictly unital. However ${}^{\cK^!} [\cCo]^{\cK}\boxtimes {}_{\cK} M$ has a single generator with vanishing $\delta^1$, and ${}_{\cK} [\cTr]_{\cK^!} \boxtimes ({}^{\cK^!} [\cCo]^{\cK}\boxtimes {}_{\cK} M)$ has a single generator $\xs_0$, but has $m_{2}(1,\xs_0)=\xs_0$ and has $m_2(W^iZ^j,\xs_0)=0$ whenever $i+j>0$. This tensor product is not homotopy equivalent to ${}_{\cK} M$. 
\end{enumerate}
\end{rem}

Finally, we note that the bonsai and regularly $U$-adic conditions are closed under box tensor products, in the following sense:
\begin{lem}
\label{lem:box-tensor-product-bonsai}
 If ${}_{\cK} M^{\cK}$, ${}_{\cK} N^{\cK}$ are both regularly $U$-adic and ${}_{\cK}N^{\cK}$ is bounded, then  ${}_{\cK} M^{\cK}\boxtimes {}_{\cK} N^{\cK}$ is as regularly $U$-adic. If both modules are bonsai, then the box tensor product is as well. A similar statement holds for a pair of bimodules ${}^{\cK^!} M^{\cK}$ and ${}_{\cK} N^{\cK}$.
\end{lem}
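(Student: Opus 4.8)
The plan is to reduce everything to the two separate ``transfer of structure'' conditions --- preservation of regular $U$-adicity and preservation of the bonsai property --- and handle each in turn by the same bookkeeping that was used for the single-bimodule statements (Lemmas~\ref{lem:strongly-U-adic-A->D}, \ref{lem:strongly-U-adic-D->A} and \ref{lem:bonsai-preserved}). The key observation is that the iterated structure map $\delta_{\cT_+}^n$ of a box tensor product ${}_{\cK} M^{\cK}\boxtimes {}_{\cK} N^{\cK}$ is, for each ordinary (resp. extended) $A_\infty$-module input tree, a finite sum of compositions built from the iterates $\delta_{\cT'}^\ast$ of $M$, the iterates $\delta_{\cT''}^\ast$ of $N$, and the multiplication on $\cK$, with the outputs of $M$ feeding into the algebra inputs of $N$. (Boundedness of ${}_{\cK} N^{\cK}$ guarantees the sum defining the tensor product is finite, so the box tensor product is well-defined as a $DA$-bimodule.) First I would make this schematic decomposition precise and note the degree bookkeeping: the $\cK$-outputs of $M$ which are consumed by $N$ pass through $\delta^\ast$ of $N$, and the number of interior vertices and the degree add up in the expected way, exactly as in Equation~\eqref{eq:degree-tree-bonsai}.

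For the regularly $U$-adic statement, let $C_M$ and $C_N$ be the commensurability constants of $M$ and $N$ respectively, and put both underlying modules in filtration level $0$ on the box tensor product. The map $\delta_{\cT'}^\ast$ of $M$ sends $(\cK^{\otimes k}\otimes M)_{\ge i}$ into $(M\otimes \cK^{\otimes \ast})_{\ge i-C_M}$; since multiplication on $\cK$ is filtered ($\mu_2(\cK_{\ge i}\otimes \cK_{\ge j})\subset \cK_{\ge i+j}$), the $\cK$-outputs retain their weight, and then $\delta_{\cT''}^\ast$ of $N$ costs at most another $C_N$. Composing, $\delta_{\cT_+}^n$ on the tensor product is weakly filtered with constant $C_M+C_N$, uniformly over all trees $\cT_+$; this is precisely commensurability of the family, so ${}_{\cK} M^{\cK}\boxtimes {}_{\cK} N^{\cK}$ is regularly $U$-adic. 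The only subtlety is to make sure one accounts for the ``$1$''-insertions coming from valence-$2$ vertices of $\cT_+$, but these are harmless since inserting a unit does not change filtration level, exactly as in the proof of Lemma~\ref{lem:strongly-U-adic-D->A}.

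For the bonsai statement, suppose $m_{\cT}^M$ vanishes whenever $\deg(\cT)>n_M$ and $m_{\cT}^N$ vanishes whenever $\deg(\cT)>n_N$ (more precisely the corresponding $\delta_{\cT}$; the definition of bonsai for $DA$-bimodules is in terms of ordinary trees). Given an ordinary tree $\cT$ for the tensor product with $\delta_{\cT}$-contribution nonzero, it decomposes as a tree $\cT_M$ for $M$ whose output algebra elements are grouped and fed into a tree $\cT_N$ for $N$; the contribution vanishes unless $\deg(\cT_M)\le n_M$ and $\deg(\cT_N)\le n_N$. Since $\deg(\cT)\le \deg(\cT_M)+\deg(\cT_N)$ (each interior vertex of $\cT$ is an interior vertex of $\cT_M$ or of $\cT_N$, and grouping outputs of $M$ into a single algebra input of $N$ only decreases total degree), we get $\deg(\cT)\le n_M+n_N$, so $n_0=n_M+n_N$ witnesses the bonsai property. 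Finally, for the mixed case of a type-$DD$ bimodule ${}^{\cK^!} M^{\cK}$ and a $DA$-bimodule ${}_{\cK} N^{\cK}$, the argument is identical, using the $\wt_\theta^!$-weight on $\cK^!$ and the algebraic grading in place of $\wt_U$ and the $U$-adic filtration, exactly as in the derivations of Lemmas~\ref{lem:strongly-U-adic-A->D} and~\ref{lem:bonsai-preserved}; the $DD$ side contributes no filtration cost on the $\cK^!$-outputs since $\delta^{1,1}$ and its iterates are weakly filtered by hypothesis.

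The main obstacle I anticipate is purely organizational rather than mathematical: setting up the bookkeeping for how an input tree of the box tensor product decomposes into a pair of trees for $M$ and $N$, while correctly tracking which outputs of $M$ are multiplied together before being consumed by $N$, and verifying that both the filtration-level count (for the $U$-adic claim) and the degree count (for the bonsai claim) behave subadditively under this decomposition. Since the individual-bimodule cases are already established and the combination is formally the same estimate applied twice in sequence, I would keep the write-up brief and refer back to the proofs of Lemmas~\ref{lem:strongly-U-adic-A->D}, \ref{lem:strongly-U-adic-D->A} and~\ref{lem:bonsai-preserved} for the details.
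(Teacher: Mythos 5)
Your treatment of the regularly $U$-adic claim is in line with what the paper does (the paper calls this part ``straightforward'' and devotes its proof almost entirely to the bonsai claim), but the bonsai part of your proposal has a genuine gap. When an ordinary structure tree $\cT$ for ${}_{\cK} M^{\cK}\boxtimes {}_{\cK} N^{\cK}$ is decomposed into a tree feeding $M$-outputs into $N$-inputs, the tree for $M$ --- call it $\cT^L_+$ --- is a \emph{generalized} tree: it typically carries valence-$2$ vertices coming from applications of $\delta_1^1$ of $M$ occurring between the higher $\delta^1_{k+1}$ applications. Each such valence-$2$ vertex contributes $-1$ to $\deg(\cT^L_+)$. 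The bonsai hypothesis on $M$, however, only asserts vanishing of $\delta^k_{\cT}$ for \emph{ordinary} trees $\cT$ of high degree; it says nothing directly about generalized trees. So your claim ``the contribution vanishes unless $\deg(\cT_M)\le n_M$'' does not follow from bonsai-ness, and the bound $n_0=n_M+n_N$ is unjustified (and, by working through the degree arithmetic with, say, $n_M=n_N=2$, appears to actually be too small).

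The paper gets around this precisely by \emph{not} applying bonsai-ness to $\cT^L_+$ directly: it cuts $\cT^L_+$ along its $V_2$ valence-$2$ vertices into $V_2+1$ ordinary trees, observes $\deg(\cT^L_+) = -V_2+\sum_i\deg(\cT^L_i)$, bounds $V_2 < n_0$ via the fact that each valence-$2$ vertex of $\cT^L_+$ must pair with a vertex of $\cT^R$ of valence $>3$ (otherwise $\cT$ itself would have a valence-$2$ vertex, contradicting ordinariness), and then an averaging argument over the pieces yields that at least one of $\deg(\cT^L_i)$ exceeds $n_0$ once $\deg(\cT)>n_0(n_0+1)$. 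You should incorporate this cutting-and-counting step; without it the proof of the bonsai half does not go through. (You should also note that the paper uses the equality $\deg(\cT^L_+)+\deg(\cT^R)=\deg(\cT)$, not the inequality you state; your reasoning that ``grouping outputs of $M$ into a single algebra input of $N$ only decreases total degree'' reflects a different picture of the box tensor --- the $\cK$-outputs of $M$ are not multiplied before being fed to $N$, as noted in the definition of $\delta^n_\cT$ for $DA$-bimodules.)
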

\begin{proof} We focus on the statement about $DA$-bimodules, since the statement about $DD$-modules is similar.

 The statement about the tensor product being regularly $U$-adic is straightforward, so we consider the statement about being bonsai. Suppose that $\cT$ is an ordinary $A_\infty$-module structure tree. Since $\cK$ is an associative algebra, we may assume that all interior vertices occur along the path from the module input to the root.
 
 Let $n_0$ be maximum of the bonsai constants of ${}_{\cK} M^{\cK}$ and ${}_{\cK} N^{\cK}$, so that the structure maps vanish for ordinary $A_\infty$-module trees $\cT$ with $\deg(\cT)\ge n_0$. Suppose that $\cT$ is an $A_\infty$-module structure tree with $\deg(\cT)>n_0(n_0+1)$. The structure map $\delta_{\cT}^n$ on the tensor product is constructed by taking certain pairs of structure trees $(\cT_+^L, \cT^R)$, where $\cT_+^L$ is a generalized $A_\infty$-module structure tree, and $\cT^R$ is an ordinary $A_\infty$-module structure tree, and composing the maps $\delta_{\cT_+^L}^{k}$ of $M$ with the map $\delta_{\cT^R}^{n}$ of $N$. We observe that
 \[
 \deg(\cT^L_+)+\deg(\cT^R)=\deg(\cT). 
 \]
 Here $k$ is the number of module vertices of $\cT_+^L$, and $n$ is the number of module vertices of $\cT^R$.

  If $\deg(\cT^R)\ge n_0$, then the structure map for $\cT$ on the tensor product vanishes by bonsai-ness of ${}_{\cK}N^{\cK}$. Therefore, assume $\deg(\cT^R)<n_0$.   This implies that the total number of valence 2 vertices on $\cT^L_+$ is less than $n_0$, since each valence 2 vertex of $\cT^L_+$ has to connect to a vertex of $\cT^R_+$ which has valence greater than 3 (otherwise such a configuration would correspond to a valence 2 vertex on $\cT$, which is excluded because $\cT$ is an ordinary $A_\infty$-module structure tree). Let $V_2$ denote the number of valence 2 vertices along the module path of $\cT^L_+$. As mentioned above,
  \[
  V_2<n_0.
  \]

   We can cut $\cT^L_+$ along these valence 2 vertices to obtain $V_2+1$ ordinary $A_\infty$-module trees $\cT^L_{0},\dots, \cT^L_{V_2}$.  
  
  We observe that
  \[
  \deg(\cT^L_+)=-V_2+\sum_{i=0}^{V_2} \deg(\cT^L_i).
  \]
  We take the inequality
  \[
n_0(n_0+1)<  \deg(\cT)=\deg(\cT_+^L)+\deg(\cT^R)=-V_2+\deg(\cT^R)+\sum_{i=0}^{V_2} \deg(\cT^L_i),
  \]
  and rearrange to obtain
  \[
  \sum_{i=0}^{V_2} \deg(\cT_i^L)=n_0(n_0+1)+V_2-\deg(\cT^R).
  \]
  Since $\deg(\cT^R)<n_0$, we conclude
  \[
  \sum_{i=0}^{V_0} \deg(\cT_i^L)>n_0^2.
  \]
  Therefore
  \[
\max_{i\in \{0,\dots, V_2\}} \deg(\cT^L_i)\ge \frac{n_0^2}{V_2+1}\ge n_0
  \]
 Since $M$ is bonsai, we conclude that one of the $\delta_{\cT_i^L}^{k_i}$ is zero, and therefore so is $\delta_{\cT^L_+}^k$. Hence $\delta^n_{\cT}$ is as well. 
 \end{proof}

\section{Examples}
\label{sec:examples}

In this section, we study several examples. We firstly study several important $A_\infty$-algebra automorphisms of the surgery algebra, and also compute Koszul dual descriptions of these automorphisms. Later we study the bimodules of 2-component L-space links from the perspective of Koszul duality.

\subsection{The elliptic bimodule}

We recall that the \emph{elliptic symmetry} is the algebra morphism $E\colon \cK\to \cK$ given by
\[
E(W)=Z, \, E(Z)=W,\, E(\sigma)=\tau, \, E(\tau)=\sigma,\, E(U)=U,\, E(T^{\pm 1})=T^{\mp 1}.
\]
It is helpful to package the type-$DA$ bimodule ${}_{\cK} [E]^{\cK}$ as a type-$DD$ bimodule by tensoring with ${}^{\cK^!} [\cCo]^{\cK}$. 

The algebra $\cK^!$ also admits a symmetry, which we denote $\scE\colon \cK^!\to \cK^!$ given by
\[
\scE(w)=z,\quad \scE(w)=z, \quad \scE(\theta)=\theta, \quad  \scE(s)=t,\quad  \scE(t)=s, \quad  \scE(\varphi_{\pm})=\varphi_{\mp},
\]
extended multiplicatively.

One computes easily:
\begin{equation}
{}^{\cK^!} [\cCo]^{\cK}\boxtimes {}_{\cK}[E]^{\cK}= {}^{\cK^!} [\scE]_{\cK^!} \boxtimes {}^{\cK^!}[\cCo]^{\cK}=\begin{tikzcd}[row sep=2cm] i_0 \ar[loop above, "\theta|U"] \ar[loop right, "w|Z"] \ar[loop left, " z |W"] \ar[d, " s |\tau", bend left] \ar[d, bend right, " t |\sigma",swap]\\
i_1 \ar[loop left, "\varphi_+|T^{-1}"] \ar[loop right, "\varphi_-|T"] \ar[loop below, "\theta|U"]
\end{tikzcd}
\end{equation}

\subsection{The transformer  bimodule}
\label{sec:transformer}
It is also interesting to study the \emph{transformer bimodule} ${}_{\cK} \cT^{\cK}$, studied in \cite{ZemBordered}*{Section~14}.  We recall that this has underlying vector space $\ve{I}$, and has $\delta_1^1=0$, $\delta_2^1(a,1)=1\otimes a$ and 
\[
\delta_4^1(a,b,\tau,i_1)=i_0\otimes \d_U(a) T\d_T(b) \tau.
\]
All other $\delta_j^1$ vanish. We compute
\[
{}^{\cK^!} [\cCo]^{\cK}\boxtimes {}_{\cK}\cT ^{\cK}=
\begin{tikzcd}[row sep=2cm] i_0 \ar[loop right, "w|W"] \ar[loop left, " z |Z"] \ar[loop above, "\theta|U"] \ar[d, " s |\sigma", bend left] \ar[d, bend right, "\substack{ t |\tau+\\ ( t  \theta \varphi_-)|(T^{-1} \tau)}",swap]\\
i_1 \ar[loop left, "\varphi_+|T"] \ar[loop right, "\varphi_-|T^{-1}"] \ar[loop below, "\theta|U"]
\end{tikzcd}
\]

There is $DA$-bimodule ${}^{\cK^!} [\scT]_{\cK^!}$, as follows. The underlying vector space is $\ve{I}$. We set
\[
\delta_2^1(1,a)=a\otimes 1
\]
and
\[
\delta_3^1(i_0,  t , \varphi_-)= t  \theta \varphi_-\otimes i_1.
\]
We define $\delta_3^1$ to vanish on all other collections of monomials. We define $\delta_j^1=0$ for all other $j$. It is straightforward to verify that ${}^{\cK^!} [\scT]_{\cK^!}$ is a $DA$-bimodule. Furthermore:
\begin{equation}
{}^{\cK^!} [\scT]_{\cK^!}\boxtimes {}^{\cK^!} [\cCo]^{\cK}={}^{\cK^!} [\cCo]^{\cK}\boxtimes {}_{\cK} \cT^{\cK}.
\end{equation}

\subsection{2-component L-space links}

\label{sec:CZZ}

We now discuss the $DA$-bimodules for two component L-space links described in \cite{CZZSatellites} and how they fit into the constructions of the this paper. Recall that a rational homology 3-sphere $Y$ is called an \emph{L-space} if 
\[
\rank_{\Z/2} \left(\widehat{\HF}(Y)\right)= |H_1(Y)|.
\]
We recall the definition of an L-space link, due to Gorsky and N\'{e}methi \cite{GorskyNemethiAlgebraicLinks}.

\begin{define} 
  A link $L\subset S^3$ is called an \emph{L-space link} if $S^3_{\Lambda}(L)$ is an L-space for all integral framings $\Lambda\gg 0$.
\end{define}

 We will write $R_0=\bF[W,Z]$. Therein, we viewed $R_0$ as $\ve{I}_0\cdot \cK\cdot \ve{I}_0$ and studied the bimodules ${}_{\cK} \cX(L)^{R_0}$ for two component L-space links. 

Suppose $L\subset S^3$ is a 2-component L-space link. Write $L=K_1\cup K_2$. Write $\ell=\lk(K_1,K_2)$ for the linking number of the two components. Therein, we described a sequence of chain complexes $\cC_s$, indexed over $s\in \Z+\ell/2$. The complexes $\cC_s$ were free, finitely generated chain complexes over $R_0$, and therefore can be viewed as type-$D$ modules over $R_0$. Additionally, we wrote  $ \cS $ for the knot Floer complex of the component $K_2\subset S^3$, viewed as a type-$D$ module over $R_0$.

In \cite{CZZSatellites}, we constructed a type-$DA$ bimodule ${}_{\cK} \cX(L)^{R_0}$, as follows.  We define
\[
\ve{I}_0\cdot \cX(L):=\bigoplus_{s\in \Z+\ell/2} \cC_s\quad \text{and} \quad \ve{I}_1\cdot \cX(L):=\bF[\Z+\ell/2]\otimes  \cS.
\]
Here, $\bF[\Z+\ell/2]$ denotes the vector space generated by $T^s$ for $s\in \Z+\ell/2$. 

We additionally defined maps
\[
L_W\colon \cC_{s}\to \cC_{s-1},\,\, L_Z\colon \cC_s\to \cC_{s+1},\,\, L_\sigma\colon \cC_s\to  T^s\otimes \cS ,\,\, \text{and}\,\,L_\tau\colon \cC_s\to  T^s\otimes \cS .
\]
We view these as type-$D$ morphisms.

We write
\[
L_{T^{\pm 1}}\colon T^{s}\otimes \cS\to T^{s\pm 1} \cS
\]
for the canonical isomorphism (the identity map on $\cS$).

We defined several homotopies
\[
h_{W,Z}, h_{Z,W}\colon \cC_s\to \cC_s, \quad h_{\sigma,Z},h_{\tau,Z}\colon \cC_s\to T^{s+1}\otimes \cS\]
\[
h_{\sigma,W}, h_{\tau, W} \colon \cC_s\to  T^{s-1}\otimes\cS .
\]
The maps $h_{W,Z}$ and $h_{Z,W}$ satisfy
\[
\d (h_{W,Z})=L_W\circ L_Z+\id\otimes U, \quad \d (h_{Z,W})=L_Z\circ L_W+ \id\otimes U,
\]
where $\d$ denotes the morphism differential where we view each map as a morphism of type-$D$ modules, i.e., $\d(f)=\delta^1\circ f+f\circ \delta^1$. Similarly
\[
\d (h_{\sigma, Z})=L_{\sigma}\circ L_Z+L_T\circ L_\sigma,\quad \d(h_{\sigma,W})=L_{\tau}\circ L_W+(\id\otimes U)\circ L_{T^{-1}}\circ  L_\tau,
\]
and similarly for $h_{\tau,Z}$ and $h_{\tau,W}$.

\begin{rem} The complexes $\cC_s$ and $ \cS $, and the maps $L_W$, $L_Z$ (and so forth) were defined in \cite{CZZSatellites}*{Section~5} using the $H$-function for $L$. Their definition required $L$ to be a 2-component L-space link, but was purely algebraic.
\end{rem}

The $DA$-bimodule structure maps are defined as follows. We set $\delta_1^1$ to be the internal differentials of $\cC_s$ and $ \cS $. We define all structure maps to be $U$-equivariant. We define
\begin{equation}
\delta_2^1(Z^j,-)=\underbrace{(L_Z\circ \cdots \circ L_Z)}_j
\label{eq:delta-2-1-Z^j}
\end{equation}
and similarly for $\delta_2^1(W^j,-)$. We define 
\begin{equation}
\delta_3^1(W^i, Z^j,-)=\sum_{n=1}^{\min(i,j)} L_W^{i-n}\circ  h_{W,Z}\circ L_Z^{j-n}.
\label{eq:delta_3^1-W^i-Z^j}
\end{equation}
We define $\delta_2^1(Z^i, W^j,-)$ by swapping the roles of $W$ and $Z$ in the above formula.

In idempotent 1, we define $\delta_2^1(T^i,-)$ to send $T^s\otimes  \cS $ to $T^{s+i}\otimes  \cS $ via the canonical identification.

We define 
\begin{equation}
\delta_2^1(T^i \sigma,-)=L_T^i\circ L_\sigma, \label{eq:delta-2-1-sigma}
\end{equation}
and similarly for $\delta_2^1(T^i \tau,-)$. Finally, we set
\begin{equation}
\delta_3^1(T^i, T^j \sigma,-)=0,
\quad \delta_3^1(T^i \sigma, Z^j,-)=\sum_{n=1}^j L_T^{i+n-1}\circ h_{\sigma,Z}\circ L_Z^{j-n}
\label{eq:delta-3-1-sigma-1}
\end{equation}
and
\begin{equation}
\delta_3^1(T^i \sigma, W^j,-)=\sum_{n=1}^j U^{n-1}\cdot L_T^{i-n+1}  \circ h_{\sigma, W}\circ W^{j-n}.
\label{eq:delta-3-1-sigma-2}
\end{equation}
The $\delta_3^1$ actions involving $\tau$ are defined symmetrically.

There is are some choices made in the above construction (e.g. $L_W$, $L_Z$ and so forth are only defined up to chain homotopy), however in \cite{CZZSatellites}, the authors prove that the above definitions determine the $DA$-bimodule ${}_{\cK} \cX(L)^{R_0}$ which is well-defined up to homotopy equivalence.

As an example, we illustrate the type-$DA$ bimodule of the positively clasped Whitehead link below.  Here, $\cS$ is generated by a single generator, and we write $T^i$ for $T^i\otimes \cS$:
\[
\begin{tikzcd}[labels=description, column sep=1.4cm]
&&& \xs_0
	\ar[dl, bend left, "W|Z",pos=0.7]
	\ar[dr, "Z|Z", bend left]
	\ar[ddd,bend left=20, "\substack{\sigma|Z\\ +\tau|Z}",pos=.7]
	\ar[d,bend left =80, "{(W,Z)|Z}"]
&&
\\[.5cm]
\cdots
	\ar[r,bend left, "Z|U"]
& \xs_{-2}
	\ar[r, bend left, "Z|U"]
	\ar[l, bend left, "W|1"]
	\ar[dd,"\substack{\sigma|U^2\\ +\tau|1}"]
&
\xs_{-1}
	\ar[ur, bend left, "Z|W"]
	\ar[l, "W|1", bend left]
	\ar[dd, "\substack{\sigma|U \\ +\tau|1 }"]
&
\ys_0
	\ar[u,gray, "W"]
	\ar[d,gray, "Z"]
&
\xs_1
	\ar[r, bend left, "Z|1"]
	\ar[dl, "W|Z", bend left]
	\ar[dd, "\substack{\sigma|1 \\ +\tau|U}"]
&
\xs_2
	\ar[l, bend left, "W|U"]
	\ar[r,bend left, "Z|1"]
	\ar[dd, "\substack{\sigma|1\\+\tau|U^2}"]
&
\cdots
	\ar[l,bend left, "W|U"]
\\[.5cm]
&&&
\zs_0
	\ar[ul, "W|W", bend left]
	\ar[ur, "Z|W", bend left,crossing over,pos = 0.75]
	\ar[d,bend right, "\substack{\sigma|W\\ +\tau|W}"]
	\ar[u,bend left =70, "{(Z,W)|W}"]
&&
\\[1.5cm]
\cdots
	\ar[r, bend left, "T|1"]
&T^{-2}
	\ar[r, bend left, "T|1"]
	\ar[l, bend left, "T^{-1}|1"]
	\ar[loop below,looseness=20, "U|U"]
& T^{-1}
	\ar[r, bend left, "T|1"]
	\ar[l, bend left, "T^{-1}|1"]
	\ar[loop below,looseness=20, "U|U"]
&T^{0}
	\ar[r, "T|1", bend left]
	\ar[l, "T^{-1}|1", bend left]
	\ar[loop below,looseness=20, "U|U"]
&T^{1}
	\ar[r, bend left, "T|1"]
	\ar[l, bend left,"T^{-1}|1"]
	\ar[loop below,looseness=20, "U|U"]
&T^{2}
	\ar[r, bend left, "T|1"]
	\ar[l, bend left,"T^{-1}|1"]
	\ar[loop below,looseness=20, "U|U"]
&\cdots 
	\ar[l, bend left,"T^{-1}|1"]
\end{tikzcd}
\]
(See \cite{CZZSatellites}*{Figure~6.3} for its derivation from the $H$-function of $L$). In the above, each $\xs_i$ and $\ys_i$ denotes a generator of $\cC_s$. An arrow labeled by $a|b$ from $\xs$ to $\ys$ means that the $\ys$ component of $\delta_2^1(a,\xs)$ is $b$.

The  maps $L_W$, $L_Z,$ $h_{W,Z}$, $h_{Z,W}$, $h_{\sigma,Z}$, $h_{\tau,W}$, $L_\sigma$ and $L_\tau$ appear as components of the structure map $\delta^{1,1}$ on the box tensor product 
\[
{}^{\cK^{!}} [\cCo]^{\cK} \boxtimes {}_{\cK} \cX(L)^{R_0}.
\]

From another perspective, if we first pick maps $L_W$, $L_Z$, $h_{W,Z}$, $h_{Z,W}$, $L_\sigma$, $L_\tau$, $h_{\sigma,Z}$, and $h_{\tau,W}$, we may build a type-$DD$ structure ${}^{\cK^!} \cX(L)^{R_0}$ by weighting these maps by $w$, $z$, $zw$, $wz$, $s$, $t$, $zs$ and $tw$, respectively. We add self arrows weighted by $\theta|U$ to each generator. In idempotent 1, we also add right and left moving arrows weighted by $\varphi_+$ and $\varphi_-$ (moving a generator $T^i \otimes \xs$ to $T^{\pm i}\otimes \xs$).

\begin{rem}
\label{rem:h_sigma,W-actions-not-encoded} In our description from \cite{CZZSatellites}, we also considered the maps $h_{\sigma,W}$ and $h_{\tau,Z}$. However the corresponding elements of $\cK^!$ are $ws$ and $zt$, which are zero, so they do not feature in the description here. Note that these maps contain redundant information, since for a 2-component L-space link we can always take
\begin{equation}
h_{\sigma,W}=L_{T^{-1}} \circ h_{\sigma,Z}\circ L_W+L_{T^{-1}}\circ L_{\sigma}\circ  h_{Z,W}.
\label{eq:h-sigma-W-alternate} 
\end{equation}
Indeed, we see that the boundary of the above map is
\[
L_{T^{-1}}\circ L_{\sigma}\circ L_U+L_{T^{-1}} \circ L_T\circ L_\sigma\circ L_W
\]
which we can rewrite as
\[
L_{U T^{-1}}\circ L_\sigma+L_\sigma\circ L_W
\]
(using the fact that $L_U$ is central, $L_{T^{-1}}\circ L_U=L_{UT^{-1}}$ and $L_{T^{-1}}\circ L_T=\id$).
\end{rem}

For the Whitehead link, we obtain the following diagram (omitting self arrows which are weighted by $\theta|U$ which are present on each generator): 
\[
\begin{tikzcd}[labels=description, column sep=1.4cm]
&&& \xs_0
	\ar[dl, bend left, "w|Z",pos=0.7]
	\ar[dr, " z |Z", bend left]
	\ar[ddd,bend left=20, "\substack{s|Z\\ +t|Z}",pos=.7]
	\ar[d,bend left =80, "{zw|Z}"]
&&
\\[.5cm]
\cdots
	\ar[r,bend left, " z |U"]
& \xs_{-2}
	\ar[r, bend left, " z |U"]
	\ar[l, bend left, "w|1"]
	\ar[dd,"\substack{s |U^2\\ + t |1}"]
&
\xs_{-1}
	\ar[ur, bend left, " z |W"]
	\ar[l, "w|1", bend left]
	\ar[dd, "\substack{ s  |U \\ + t |1 }"]
&
\ys_0
	\ar[u,gray, "1|W"]
	\ar[d,gray, "1|Z"]
&
\xs_1
	\ar[r, bend left, " z |1"]
	\ar[dl, "w|Z", bend left]
	\ar[dd, "\substack{ s |1 \\ +  t | U}"]
&
\xs_2
	\ar[l, bend left, "w|U"]
	\ar[r,bend left, " z |1"]
	\ar[dd, "\substack{ s |1\\+ t |U^2}"]
&
\cdots
	\ar[l,bend left, "w|U"]
\\[.5cm]
&&&
\zs_0
	\ar[ul, "w|W", bend left]
	\ar[ur, " z |W", bend left,crossing over,pos = 0.75]
	\ar[d,bend right, "\substack{ s |W\\ + t |W}"]
	\ar[u,bend left =70, "{w z |W}"]
&&
\\[1.5cm]
\cdots
	\ar[r, bend left, "\varphi_+|1"]
&T^{-2}
	\ar[r, bend left, "\varphi_+|1"]
	\ar[l, bend left, "\varphi_-|1"]
& T^{-1}
	\ar[r, bend left, "\varphi_+|1"]
	\ar[l, bend left, "\varphi_-|1"]
&T^{0}
	\ar[r, "\varphi_+|1", bend left]
	\ar[l, "\varphi_-|1", bend left]
&T^{1}
	\ar[r, bend left, "\varphi_+|1"]
	\ar[l, bend left,"\varphi_-|1"]
&T^{2}
	\ar[r, bend left, "\varphi_+|1"]
	\ar[l, bend left,"\varphi_-|1"]
&\cdots  
	\ar[l, bend left,"\varphi_-|1"]
\end{tikzcd}
\]

\begin{rem}
In \cite{CZZSatellites}, we did not consider the notion of a regularly $U$-adic module. In the present article, we view ${}_\cK \cX(L)^{R_0}$ as a regularly $U$-adic $DA$-bimodule by putting each generator in filtration level 0.
\end{rem}

\begin{lem}
\label{lem:CZZ-U-adic-cobonsai}
 If $L$ is a 2-component L-space link, then ${}^{\cK^!} \cX(L)^{R_0}$ is regularly $U$-adic and cobonsai.
\end{lem}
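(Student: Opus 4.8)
The plan is to verify the two properties directly from the explicit description of the type-$DD$ bimodule ${}^{\cK^!}\cX(L)^{R_0}$ given just above the lemma statement, by exhibiting a commensurability constant and a cobonsai bound. First I would record the key structural facts about ${}^{\cK^!}\cX(L)^{R_0}$: its underlying $(\ve{I},\ve{I})$-bimodule is finitely generated over $R_0$ in each Alexander-grading-like summand, but more importantly, the structure map $\delta^{1,1}$ is a \emph{sum of finitely many types of arrows}, each weighted on the $\cK^!$ side by one of the monomials $w$, $z$, $zw$, $wz$, $s$, $t$, $zs$, $tw$, $\theta$, $\varphi_+$, $\varphi_-$ (of algebraic grading $\ge -2$), and weighted on the $\cK$ side by a monomial in $R_0$ or in $\ve{I}_1\cdot\cK\cdot\ve{I}_1$. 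Crucially, whenever a $\cK^!$-coefficient is $\theta$, $zs$, $tw$, $wz$, or $zw$ — i.e. whenever $\wt_\theta^!$ is $1$ on that factor (noting $wz,zw$ are length-two but here I mean the weighting relevant to $U$-adicity) — I would observe from the definitions (Equations~\eqref{eq:delta_3^1-W^i-Z^j} for $h_{W,Z}$, and the relations $\d(h_{W,Z})=L_W L_Z+\id\otimes U$, etc.) that the corresponding $\cK$-coefficient is a multiple of $U$ or records the homotopy built from $U$-divisible relations; in particular the $\theta|U$ self-arrows and the arrows weighted $wz$, $zw$, $zs$, $tw$ all carry $\cK$-coefficients lying in $\cK_{\ge 1}$ relative to the $U$-adic filtration, or more precisely contribute non-negatively when balanced against $\wt_\theta^!$.

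Next, for the \textbf{regularly $U$-adic} claim, I would put each generator of $\cX(L)$ in filtration level $0$ (as in the remark preceding the lemma) and show $\delta^{1,1}$, hence all iterates $\delta^{n,n}$, are weakly filtered with a single constant $C$. This follows because $\delta^{1,1}$ is a finite sum of arrows each of which is either weighted by a unit-filtration-level pair (e.g. $w|W$, $z|Z$, $\varphi_\pm|1$, $s|\sigma$, etc., all of which preserve filtration level since both sides sit in filtration level $0$ once $R_0$ and $\bF[\Z+\ell/2]$ are placed there) or is one of the $\theta$-weighted or $\theta$-containing arrows whose $\cK$-coefficient supplies at least one factor of $U$, hence \emph{raises} the $\cK^!$-filtration level (which is $-\wt_\theta^!$) by exactly as much as or more than the drop coming from the $U$-power. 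Formally I would check that $\delta^{1,1}$ sends $M_{\ge i}\to (\cK^!\otimes M\otimes \cK)_{\ge i-C}$ for $C=0$ (or a small explicit constant coming from the $h$-homotopies, which lower $U$-power by a bounded amount), and then invoke the fact that finite iterated composition of uniformly weakly filtered maps, together with $\mu_2$ on $\cK$ and $\cK^!$ being filtered and $\mu_1$ on $\cK^!$ preserving filtration (as in the proof of Corollary~\ref{cor:filtered-Trace-map}), gives commensurability of the $\delta^{n,n}$.

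For the \textbf{cobonsai} claim, I would use the algebraic grading. Each arrow of $\delta^{1,1}$ in ${}^{\cK^!}\cX(L)^{R_0}$ is weighted by a $\cK^!$-monomial $b$ with $\gr_{\alg}(b)\in\{-1,-2\}$ (the length-one monomials $w,z,s,t,\varphi_\pm,\theta$ have $\gr_{\alg}=-1$, and $wz,zw,zs,tw$ have $\gr_{\alg}=-2$). Hence in any summand $b_1\otimes\cdots\otimes b_n\otimes\xs\otimes a_n\otimes\cdots\otimes a_1$ of $\delta^{n,n}$, each $b_i$ satisfies $\max(-\gr_{\alg}(b_i)-1,0)\le 1$, so $\sum_i \max(-\gr_{\alg}(b_i)-1,0)\le n$. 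That is not yet bounded independent of $n$, so the real point is to bound $n$ itself among the $b_i$ with $\gr_{\alg}(b_i)=-2$: I would argue that the length-two monomials $wz,zw,zs,tw$ arise only from the homotopy arrows $h_{W,Z},h_{Z,W},h_{\sigma,Z},h_{\tau,W}$, each of which appears \emph{at most once} along any directed path in the diagram, because applying such an arrow moves between the three "rows" $\xs,\ys,\zs$ of each $\cC_s$ (or between idempotents) in a way that cannot be iterated — concretely, $\delta_{j+1}^1=0$ for $j\notin\{0,1\}$ in ${}_\cK\cX(L)^{R_0}$ (the bimodule has only $\delta_1^1,\delta_2^1,\delta_3^1$ nonzero per Equations~\eqref{eq:delta-2-1-Z^j}--\eqref{eq:delta-3-1-sigma-2}), so the $DD$-module's $\delta^{1,1}$ has a bounded number of $\gr_{\alg}=-2$ contributions in total along any path. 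Thus $\sum_i\max(-\gr_{\alg}(b_i)-1,0)$ is bounded by a constant $n_0$ depending only on the finite diagram, giving cobonsai-ness.

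The \textbf{main obstacle} is the cobonsai bound: one must genuinely use that ${}_\cK\cX(L)^{R_0}$ has vanishing $\delta_{j+1}^1$ for $j\ge 2$ (equivalently, that the $h$-homotopies cannot compose with each other nontrivially in the iterated structure map), to rule out arbitrarily long chains of $\gr_{\alg}=-2$ coefficients; the weakly-filtered estimate and the idempotent bookkeeping are routine once the arrow-by-arrow inspection is set up. I would structure the cobonsai argument as a short lemma about directed paths in the finite quiver underlying ${}^{\cK^!}\cX(L)^{R_0}$, counting occurrences of the four homotopy-type arrows.
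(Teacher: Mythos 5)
Your $U$-adicity argument lands on the same observation as the paper's — the $\theta|U$ self-arrows carry total filtration $0$, all generators sit at level $0$, so $\delta^{1,1}$ and its iterates map into non-negative filtration — but some of your intermediate claims are off. The $U$-adic filtration on $\cK^!$ is $-\wt_\theta^!$, which counts $\theta$-factors only: the monomials $wz$, $zw$, $zs$, $tw$ all have $\wt_\theta^!=0$, not $1$, and in the Whitehead example the arrow $\zs_0\xrightarrow{wz|W}\ys_0$ has $\cK$-coefficient $W\notin\cK_{\ge 1}$. These wrong intermediate claims are harmless here only because they were not needed: since $\theta$ is the sole monomial with nonzero filtration weight in $\cK^!$, the paper's one-sentence observation suffices.

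Your cobonsai argument, however, has a real gap. You assert that each homotopy arrow ($h_{W,Z}$, $h_{Z,W}$, $h_{\sigma,Z}$, $h_{\tau,W}$, weighted by $\gr_{\alg}=-2$ monomials) "appears at most once along any directed path", but the internal differential arrows (weighted by $1\in\ve{I}$) take the upper row of each staircase back down to the lower row, after which a homotopy arrow can be applied again. For instance, in the Whitehead example there is a directed cycle $\xs_0\xrightarrow{zw|Z}\ys_0\xrightarrow{1|Z}\zs_0\xrightarrow{wz|W}\ys_0\xrightarrow{1|W}\xs_0$, and it can be traversed arbitrarily many times; the $\delta_{j+1}^1$-vanishing you cite is a fact about the $DA$-bimodule and does not rule out such cycles in $\delta^{n,n}$ of the $DD$-bimodule. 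The device you are missing is the paper's: introduce an algebraic grading on the module itself, placing lower-staircase generators in grading $0$ and upper-staircase generators in grading $1$, so that $\delta^{1,1}$ respects the combined algebraic grading $\gr_{\alg}(b)+1+\gr_{\alg}(\ys)=\gr_{\alg}(\xs)$. Since the module grading is bounded between $0$ and $1$, any summand of $\delta^{n,n}_+$ (the iterate with $\ve{I}$-weighted factors projected away) has at most one $b_i$ with $\gr_{\alg}(b_i)=-2$, and that directly gives the cobonsai bound. Your quiver-path-counting strategy could be made to work only after one first removes the $\ve{I}$-weighted arrows (which is exactly what passing to $\delta^{n,n}_+$ does), and that step is the crux, not a routine bookkeeping detail.
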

\begin{proof}
The regularly $U$-adic condition follows from the fact that the only $\theta$-weighted differentials are weighted by $\theta|U$, which has total filtration level 0. Since we put all module generators in filtration level 0, we see that $\delta^{1,1}$ maps into nonnegative filtration level, and therefore so do its iterates.

For the cobonsai condition, we observe that there is a natural algebraic grading on the complex ${}^{\cK^!} \cX(L)^{R_0}$. Each of the $\cC_s$ and $\cS$ complexes is a staircase complex, and we put the generators in the lower part of the staircase in algebraic grading 0, and we put the generators in the upper part of the staircase in algebraic grading $1$. By construction, the differential $\delta^{1,1}$ preserves the algebraic grading. Furthermore, in any summand $b_1\otimes \cdots\otimes  b_n\otimes \ys\otimes a_n\otimes \cdots \otimes a_1$ of $\delta^{n,n}_+$, at most one of the $b_i$ can have algebraic grading less than $-1$, and such a $b_i$ must have algebraic grading $-2$. In particular, the cobonsai condition is satisfied.
\end{proof}

\begin{prop}
\label{prop:Koszul-duality-L-space-computation} If $L$ is a 2-component L-space link and ${}^{\cK^!} \cX(L)^{R_0}$ is the type-$DD$ bimodule described above, then the formulas for $\delta_1^1$, $\delta_2^1$ and $\delta_3^1$ from Equations~\eqref{eq:delta-2-1-Z^j}, \eqref{eq:delta_3^1-W^i-Z^j}, \eqref{eq:delta-2-1-sigma},  and~\eqref{eq:delta-3-1-sigma-1} coincide with the structure maps on ${}_{\cK} [\cTr]_{\cK^!} \boxtimes {}^{\cK^!} \cX(L)^{R_0}$. 
For $\delta_3^1(T^i \sigma, W^i U^j,-)$, the action is given as in Equation~\eqref{eq:delta-3-1-sigma-2}, but with $h_{\sigma,W}$ equal to $L_{T^{-1}}\circ h_{\sigma,Z}\circ L_W+L_{T^{-1}} \circ L_\sigma\circ h_{Z,W}$, as in Remark~\ref{rem:h_sigma,W-actions-not-encoded}.  
\end{prop}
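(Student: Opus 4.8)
The plan is to verify the proposition by a direct computation of the structure maps of the box tensor product ${}_{\cK}[\cTr]_{\cK^!}\boxtimes{}^{\cK^!}\cX(L)^{R_0}$, using the explicit form of $\delta^{1,1}$ on ${}^{\cK^!}\cX(L)^{R_0}$ together with the combinatorial description of the actions on $[\cTr]$. The abstract Koszul duality statement (Theorem~\ref{thm:equality-of-box-tensor-products}), combined with the reasoning of Proposition~\ref{prop:equivalence-of-categories-D}, already guarantees that this tensor product is homotopy equivalent to ${}_{\cK}\cX(L)^{R_0}$; the content of the present statement is to pin down the actual structure maps in the stated model (in particular, to see that the $h_{\sigma,W}$-action is forced to be a specific combination, not an arbitrary homotopy).

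First I would record that the tensor product lies in the right category: ${}^{\cK^!}\cX(L)^{R_0}$ is cobonsai and regularly $U$-adic by Lemma~\ref{lem:CZZ-U-adic-cobonsai}, and ${}_{\cK}[\cTr]_{\cK^!}$ is left and right bonsai and strictly unital by Lemmas~\ref{lem:Tr-LR-bonsai} and~\ref{lem:strictly-unital}, so ${}_{\cK}[\cTr]_{\cK^!}\boxtimes{}^{\cK^!}\cX(L)^{R_0}$ is a well-defined, regularly $U$-adic, bonsai $DA$-bimodule. Next I would unwind the box tensor product formula: for fixed algebra inputs $a_n,\dots,a_1\in\cK$ the action $\delta_{n+1}^1(a_n,\dots,a_1,\xs)$ is the sum, over all iterates $\delta^k$ of $\delta^{1,1}$ on the $\cK^!$-side of ${}^{\cK^!}\cX(L)^{R_0}$, of the terms $m_{n|1|k}(a_n,\dots,a_1,1,b_1,\dots,b_k)$ applied to the corresponding output (the surviving factor being the $\cX(L)$-component tensored with the $R_0$-output). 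Because $\delta^{1,1}$ on ${}^{\cK^!}\cX(L)^{R_0}$ produces only the monomials $w,z,wz,zw,s,t,zs,tw,\theta,\varphi_+,\varphi_-$ (weighted by $L_W,L_Z,h_{Z,W},h_{W,Z},L_\sigma,L_\tau,h_{\sigma,Z},h_{\tau,W}$, multiplication by $U$, and the $T^{\pm1}$ identifications, respectively), the admissible sequences $b_1,\dots,b_k$ are highly constrained, and by Lemmas~\ref{lem:a010-no-contribution}, \ref{lem:grading-shifts} and~\ref{lem:inequality-U-weights} only finitely many $[\cTr]$-actions are nonzero for each fixed $\cK$-input.

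The verification then splits by idempotent. In idempotent $0$ one checks that feeding a sequence of $j$ copies of the $w$-arrow into the relevant $[\cTr]$-action reproduces $\delta_2^1(W^j,-)=L_W^j$ of Equation~\eqref{eq:delta-2-1-Z^j} (and symmetrically for $Z$), and that feeding $i-n$ copies of the $w$-arrow, one $zw$-arrow, and $j-n$ copies of the $z$-arrow reproduces the sum $\delta_3^1(W^i,Z^j,-)=\sum_{n=1}^{\min(i,j)}L_W^{i-n}\circ h_{W,Z}\circ L_Z^{j-n}$ of Equation~\eqref{eq:delta_3^1-W^i-Z^j}, the range $1\le n\le\min(i,j)$ being forced by the combinatorics of the nonvanishing $[\cTr]$-actions. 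The passage to idempotent $1$ uses the $s$- and $t$-arrows followed by the $\varphi_\pm$-arrows, and reproduces $\delta_2^1(T^i\sigma,-)=L_T^i\circ L_\sigma$, $\delta_3^1(T^i,T^j\sigma,-)=0$, and $\delta_3^1(T^i\sigma,Z^j,-)=\sum_n L_T^{i+n-1}\circ h_{\sigma,Z}\circ L_Z^{j-n}$ of Equations~\eqref{eq:delta-2-1-sigma} and~\eqref{eq:delta-3-1-sigma-1}, with the $\tau$-actions following by the symmetry of the construction.

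The main obstacle is the last case, $\delta_3^1(T^i\sigma,W^iU^j,-)$. Since $ws=0$ in $\cK^!$ there is no $h_{\sigma,W}$-arrow in ${}^{\cK^!}\cX(L)^{R_0}$, so this action cannot come from a single $\delta^{1,1}$-arrow; instead one must show that the only sequences contributing are those running through the $zs$-arrow together with one $w$-arrow, or through the $s$-arrow together with one $wz$-arrow (possibly with intervening $U$-equivariant and $T^{\pm1}$ steps), which assemble exactly the map $L_{T^{-1}}\circ h_{\sigma,Z}\circ L_W + L_{T^{-1}}\circ L_\sigma\circ h_{Z,W}$ of Remark~\ref{rem:h_sigma,W-actions-not-encoded}, Equation~\eqref{eq:h-sigma-W-alternate}. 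Substituting this for $h_{\sigma,W}$ in Equation~\eqref{eq:delta-3-1-sigma-2} gives the claimed action, and the $\tau$-analogue ($zt=0$, with $h_{\tau,Z}$ reconstructed symmetrically) is handled the same way. The bookkeeping here — enumerating the relevant input trees for the higher $[\cTr]$-actions, checking that no further terms survive (neither $\delta_{n+1}^1$ with $n\ge3$ nor lower-order corrections), and confirming the combinatorial identity that produces the sum over $n$ — is the one genuinely laborious part; everything else is routine once the combinatorial description of ${}_{\cK}[\cTr]_{\cK^!}$ is in hand.
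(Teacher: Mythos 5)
Your proposal is correct and follows essentially the same route as the paper's proof: unwind the box tensor product, use the fact that $\delta^{1,1}$ on ${}^{\cK^!}\cX(L)^{R_0}$ outputs only the specific monomials $w,z,wz,zw,s,t,zs,tw,\theta,\varphi_\pm$, enumerate the admissible sequences $b_1,\dots,b_N$ for which the $[\cTr]$-actions are nonvanishing, and handle the last case by observing $ws=0$ in $\cK^!$ so that the $h_{\sigma,W}$-action must assemble from the $zs$-arrow composed with a $w$-arrow and the $s$-arrow composed with a $wz$-arrow, yielding exactly Equation~\eqref{eq:h-sigma-W-alternate}. The one organizational device the paper uses that you leave implicit is Corollary~\ref{cor:U-equivariant}: the paper first establishes $U$-equivariance of the tensor product's structure maps (via the mechanism by which $\theta$-terms in the $b$-sequence absorb $U$-powers of the $a$-inputs), and this lets it reduce the enumeration of contributing sequences to the essential ``$\theta$-free plus trailing $\theta$'s'' shape; when you fill in your enumeration (e.g. for $\delta_3^1(W^i,Z^j,-)$) you will need to account for the trailing $\theta$-arrows explicitly, which your sketch currently omits.
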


We first prove a lemma:

\begin{lem}
\item
\begin{enumerate}
\item Suppose that $a_1,\dots, a_k\in \cK$ and $b_1,\dots, b_n\in \cK^!$ are monomials such that 
\[
m_{k|1|n}(a_k,\dots, a_1,1,b_1,\dots, b_n)\neq 0
\]
and such that $b_i$ is a multiple of $\theta$ if and only if $b_i=\theta$ (e.g. $b_i$ cannot be $\theta wzwz$). Then there is some $i$ so that $b_1,\dots, b_i\neq \theta$, and $b_{i+1},\dots, b_n= \theta$. (The case that $i=0$ or $n$ are allowed).
\item Suppose that $b_1,\dots, b_n\in \cK^!$ are monomials so that $b_i$ is a multiple of $\theta$ if and only if $b_i=\theta$. Then
\[
m_{k|1|n}(a_k,\dots,a_i,\dots, a_1,1,b_1,\dots, b_n)=
m_{k|1|n}(a_k,\dots, Ua_i,\dots, a_1,1,b_1,\dots, b_n,\theta).
\]
\end{enumerate}
\end{lem}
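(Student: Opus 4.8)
The plan is to argue both parts from the homological‑perturbation description of the actions $m_{k|1|n}$ on ${}_{\cK}[\cTr]_{\cK^!}$ established in Lemma~\ref{lem:Tr-construction}: a non‑zero contribution to $m_{k|1|n}(a_k,\dots,a_1,1,b_1,\dots,b_n)$ is a composite $\Pi\circ\beta_N\circ H\circ\beta_{N-1}\circ\cdots\circ H\circ\beta_1\circ I$ in which each $\beta_\ell$ is one of $m_{1|1|0}(a,-)$, $m_{0|1|1}(-,b)$, or $\a_{0|1|0}^{\mu_1}$ (by Lemma~\ref{lem:a010-no-contribution}(1) the component $\a_{0|1|0}^{\mu_2}$ never contributes), with the $a$'s and $b$'s occurring in the prescribed order. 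First I would record how each ingredient, and $H$, interacts with the decomposition $\scL=\scL_\theta\oplus\scL_1$ and with $\wt_U$ of the $\cK$‑factor, exactly as in the proof of Lemma~\ref{lem:inequality-U-weights}: under the standing hypothesis that every $b_i$ is either $\theta$ or $\theta$‑free, the maps $m_{1|1|0}(a,-)$ and $m_{0|1|1}(-,b)$ with $b$ $\theta$‑free preserve the splitting; $\a_{0|1|0}^{\mu_1}$ (on its contributing components, listed in Lemma~\ref{lem:a010-no-contribution}(2)) and $H_{\theta,1}$ send $\scL_1\to\scL_\theta$; $m_{0|1|1}(-,\theta)$ is the \emph{only} relevant map sending $\scL_\theta\to\scL_1$; and $H$ otherwise preserves $\scL_\theta$ or $\scL_1$. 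Since $\wt_U$ starts and ends at $0$, is raised only by the $m_{1|1|0}(a,-)$‑steps, and is lowered (by $1$) only by $H_{\theta,1}$, one obtains the sharpening $\sum_j\wt_U(a_j)=\#\{H_{\theta,1}\text{-steps}\}$ together with $\#\{H_{\theta,1}\text{-steps}\}+\#\{\a_{0|1|0}^{\mu_1}\text{-steps}\}=\#\{i:b_i=\theta\}$.

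For part (1): the walk of a non‑zero term starts and ends in $\scL_1$ (the state $1\otimes 1^*$), and each $\cK^!$‑input equal to $\theta$ supplies one $\scL_\theta\to\scL_1$ step. I would argue by contradiction that a $\theta$‑input cannot be followed by a $\theta$‑free $\cK^!$‑input: after $m_{0|1|1}(-,\theta)$ processes such a $b_j$ the state lies in $\scL_1$ with a $\theta$‑free cofactor, and one must show — working separately in each idempotent sector of $\scL$ — that the alternation $\dots H\,\beta\,H\dots$, the fact that $\a_{0|1|0}^{\mu_1}$ acts only on the short cofactors $wz,zw,zwz,wzw$, the vanishing relations $H\circ\a_{0|1|0}^{\mu_1}\circ H\circ\a_{0|1|0}^{\mu_1}\circ H=0$ and $H\circ\a_{0|1|0}^{\mu_2}\circ H=0$, and — decisively — the $R_0$‑ and $R_1$‑coefficient projections built into $H$ leave no room to reinsert a $\theta$‑free input while still permitting termination at $1\otimes 1^*$ (as required by $\Pi$). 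Equivalently, once a $\theta$ has been divided out, returning to the unit cofactor forces every remaining $\cK^!$‑input to be $\theta$. This finite but delicate case analysis, ruling out all the ways a $\theta$‑free input could legally follow a $\theta$‑input, is the main obstacle; I would structure it along the lines of the idempotent‑by‑idempotent arguments in the proofs of Lemmas~\ref{lem:a010-no-contribution} and~\ref{lem:Tr-LR-bonsai} and of the case analysis summarized in Figure~\ref{fig:s-z-theta-term}.

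For part (2): I would match homological‑perturbation terms for the two sides by a bijection. Given a non‑zero term for the left side, replace the step $m_{1|1|0}(a_i,-)$ by $m_{1|1|0}(Ua_i,-)=m_{1|1|0}(U,-)\circ m_{1|1|0}(a_i,-)$; since $U$ is central in $\cK$ (with $U=WZ$ in the $\ve{I}_0$‑sector) and $\scL$ carries a strict left $\cK$‑action, the extra factor $U$ passes unchanged through all subsequent steps — left multiplication by $U=ZW$ moves the $\cK$‑coefficient strictly into the interior of the positive cone, so none of the projections in $H$ can clip it — and the term now reaches $U\otimes 1^*$ at the point where the original reached $1\otimes 1^*$. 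Appending the two steps $H$ (which, via the component $(1)^*\mapsto(\theta)^*$ with coefficient $U^{-1}$ appearing in each idempotent sector — the inverse of the $\theta|U$ arrow of $[\cCo]$ — sends $U\otimes 1^*$ to $1\otimes\theta^*$) and then $m_{0|1|1}(-,\theta)$ (sending $1\otimes\theta^*$ back to $1\otimes 1^*$) processes the appended input $\theta$ and leaves the rest untouched. Conversely, by part (1) the trailing $\theta$ of any non‑zero term for the right side is processed by an $m_{0|1|1}(-,\theta)$ occurring at the end of the composite; stripping it together with the preceding $H$ and removing one factor of $U$ from $a_i$ inverts the construction. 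Checking that these assignments are mutually inverse and exhaust the non‑zero terms in each idempotent then yields the equality.
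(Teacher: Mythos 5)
Your part~(2) argument is essentially the paper's: the actions $m_{0|1|1}$, $m_{1|1|0}$, $\a_{0|1|0}^{\mu_1}$ on $\scL$ are $U$-equivariant, $H$ is $U$-equivariant except at $1\otimes 1^*$, and that element --- being annihilated by $H$ and not in its image --- appears only at the ends of a contributing sequence; the bijection you describe (multiply the chosen $a_i$-step by $U$ and append $m_{0|1|1}(-,\theta)\circ H$, respectively strip these from the end) is exactly what the paper does.

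Part~(1) is where there is a real gap. You set up the $\scL=\scL_\theta\oplus\scL_1$ bookkeeping correctly, but the step that would actually prove the claim --- ruling out a $\theta$-free $\cK^!$-input after a $\theta$-input --- is not carried out; you call it ``the main obstacle'' yourself and defer it to an unspecified ``finite but delicate case analysis.'' Moreover the state you focus on, the one \emph{immediately after} $m_{0|1|1}(-,\theta)$, lies in $\scL_1$, which is exactly the condition that would \emph{permit} a subsequent $\theta$-free input; by itself it sets up no contradiction. The paper's proof works because it makes one sharper observation: the composite $H\circ m_{0|1|1}(-,\theta)\circ H$ (with the homotopies on \emph{both} sides) vanishes unless applied to $U^j\otimes 1^*$, and in particular has image in $\scL_\theta$. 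Since $H$, $m_{1|1|0}(a,-)$, $m_{0|1|1}(-,b)$ for $\theta$-free $b$, and $\a_{0|1|0}^{\mu_1}$ all carry $\scL_\theta$ into $\scL_\theta$ (possibly to $0$), any composite in which some $\theta$-processing is not the final step lands in $\scL_\theta$ before $\Pi$, where $\Pi$ vanishes; and a gap inside the trailing block of $\theta$'s is ruled out because $\Pi\circ(m_{0|1|1}(-,\theta)\circ H)^N$ is nonzero only on $U^N\otimes 1^*\in\scL_1$, again incompatible with the $\scL_\theta$-containment. This identification of $U^j\otimes 1^*$ is the missing idea; without it, or an actually executed case analysis, your part~(1) is not a proof.

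A smaller point: your asserted ``sharpening'' $\sum_j\wt_U(a_j)=\#\{H_{\theta,1}\text{-steps}\}$ is not justified. Left multiplication by $a_i$ can raise $\wt_U$ of the $\cK$-coefficient by strictly more than $\wt_U(a_i)$ (for example $\wt_U(W\cdot Z)=1$ while $\wt_U(W)+\wt_U(Z)=0$), which is precisely why Lemma~\ref{lem:inequality-U-weights} is stated as an inequality. You do not use this equality in the rest of the argument, so it is not load-bearing, but it should not be asserted.
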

\begin{proof} We begin with the first claim. We recall the notation from Lemma~\ref{lem:inequality-U-weights} that we decompose the bimodule $\scL$ and the homotopy $H$ as in the following diagram
\[
H=\begin{tikzcd}
 \ar[loop left, "H_{\theta,\theta}"] \scL_\theta & \scL_{1} \ar[l, "H_{\theta,1}"] \ar[loop right, "H_{1,1}"] 
 \end{tikzcd}
\]
Here, $\scL_\theta$ denotes the span of elementary tensors $\xs\otimes \ys^*$ where $\ys$ is a multiple of $\theta$, and $\scL_{1}$ is the span of such elementary tensors where $\ys$ is not a multiple of $\theta$.

 Suppose that $\xs\otimes \ys^*\in \scL$. We observe that 
\begin{equation}
(H\circ m_{0|1|1}(-,\theta)\circ H)(\xs\otimes \ys^*)=0\label{eq:HthetaH=0}
\end{equation}
unless $\xs\otimes \ys^*$ is $U^i\otimes 1^*$ (in either idempotent). 

We constructed the structure maps on $[\cTr]$ using the homological perturbation lemma. Consider a sequence of applications of $m_{0|1|1}(-,b_i)$, $m_{1|1|0|}(a_i,-)$, $\a_{0|1|0}^{\mu_1}$ and $H$ contributing to the homological perturbation lemma. Equation~\eqref{eq:HthetaH=0} implies that if any application of $\theta$ is not at the end of the sequence, it must be applied to $U^i\otimes 1^*$. 

Note that $\theta$ cannot be the first action applied, because $m_{0|1|1}(-,\theta)\circ I=0$. 

We now claim that if a $\theta$ is an input, (and there are no inputs of the form $b\theta$ for a non-trivial $b$), then the last input must be $\theta$. To see this, consider a sequence of the form
\[
\Pi\circ \a_i \circ H\circ  \cdots \circ \a_1\circ H\circ m_{0|1|1}(-,\theta)\circ H
\]
where $\a_1,\dots, \a_j$ is a sequence of $m_{0|1|1}(-,b_i)$, $m_{1|1|0}(a_i,-)$ and $\a_{0|1|0}^{\mu_1}$ terms (where no $b_i$ is $\theta$).
Observe that $H\circ m_{0|1|1}(-,\theta)\circ H$ has image in $\scL_\theta$.
Furthermore, $H\circ m_{0|1|1}(-,\theta)\circ H$ has image in $\scL_\theta$, each $\a_i$  preserves $\scL_\theta$ and $H$ preserves the subspace $\scL_{\theta}$. However this implies that $\a_i\circ H\circ \cdots \circ \a_1 \circ H\circ m_{0|1|1}(-,\theta)\circ H$ has image in $\scL_\theta$, on which $\Pi$ vanishes. Therefore if a $\theta$ appears as an input, the last input must be $\theta$.

Next, we claim that if $\theta$ appears as an input, then all of the final inputs must be $\theta$. To see this, we consider the possibility of a gap in the $\theta$-inputs, i.e. a composition of the form
\[
\Pi\circ (m_{0|1|1}(-,\theta)\circ  H)^N\circ  \a_i \circ H\circ  \cdots  \circ \a_1\circ H\circ m_{0|1|1}(-,\theta)\circ H
\]
for some $N>0$. This will only be non-zero if the output of $\a_i \circ H\circ  \cdots  \circ \a_1\circ H\circ m_{0|1|1}(-,\theta)\circ H$ is $U^N\otimes 1^*$. Using similar logic to the above argument, this can only happen if $i$ is zero, since otherwise $\a_i \circ H\circ  \cdots \circ H \circ \a_1\circ H\circ m_{0|1|1}(-,\theta)\circ H$ will have image in $\scL_\theta$. 
This concludes the proof of the first claim.

We move on to the second claim. We observe that the actions $m_{0|1|1}$, $m_{1|1|0}$ and $\a_{0|1|0}^{\mu_1}$ on $\scL$ are $U$-equivariant. The homotopy $H$ is $U$-equivariant, except when applied to the elements labeled $1|1^*$ (in both idempotents). If the element $1|1^*$ appears while applying homological perturbation, then it must be the last element which appears (or the first), since $H$ vanishes on it and it is not in the image of $H$. Therefore, if we have a sequence of maps $\a_{n+k}\circ H\circ \cdots \circ H\circ \a_1$ which contributes to $m_{k|1|n}(a_k,\dots, a_1,1,b_1,\dots, b_n)$ (where each $\a_j$ is some $m_{1|1|0}(a_i,-)$, $m_{0|1|1}(-,b_i)$ or $\a_{0|1|0}^{\mu_1}$, compatible with their given ordering as inputs to the structure map of $[\cTr]$), then we obtain a corresponding sequence of maps for 
\[
m_{k|1|n+1}(a_k,\dots, Ua_i,\dots, a_1,1,b_1,\dots, b_n,\theta)
\]
by multiplying the $\a_j$ corresponding to $a_i$  by $U$, and then appending $m_{0|1|1}(-,\theta)\circ H$ to the end. Similarly, removing $m_{0|1|1}(-,\theta)\circ H$ from the end and lowering the power of $U$ sends a sequence for $(a_k,\dots, Ua_i,\dots, a_1,1,b_1,\dots, b_n,\theta)$ to a sequence of maps for $(a_k,\dots, a_i,\dots,a_1,1,b_1,\dots, b_n)$.
\end{proof}

As an immediate consequence, we obtain the following:

\begin{cor}
\label{cor:U-equivariant} Suppose that ${}^{\cK^!} X^{R_0}$ is a type-$DD$ module such that each generator has a $\theta|U$ weighted self-arrow, and there are no other differentials which are weighted by a multiple of $\theta$. Then the structure maps on ${}_{\cK}[\cTr]_{\cK^!}\boxtimes {}^{\cK^!} X^{R_0}$ are $U$-equivariant, in the sense that
\[
\delta_{n+1}^1(a_1,\dots,Ua_i,\dots,  a_n,\xs)=(\id\otimes U)\circ \delta_{n+1}^1(a_1,\dots,a_i,\dots a_n, \xs).
\]
 The same holds for $DD$-bimodules of the form ${}^{\cK^!} X^{\cK}$. 
\end{cor}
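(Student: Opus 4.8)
The plan is to write down the $DA$-bimodule structure maps of the box tensor product explicitly and match terms, the matching being driven by the two claims of the preceding lemma together with Lemma~\ref{lem:inequality-U-weights}. Recall first that ${}_{\cK}[\cTr]_{\cK^!}\boxtimes{}^{\cK^!}X^{R_0}$ is a well-defined $DA$-bimodule since ${}_{\cK}[\cTr]_{\cK^!}$ is right bonsai (Lemma~\ref{lem:Tr-LR-bonsai}), and that its structure map is obtained by applying the iterates $\delta^{j,j}$ of ${}^{\cK^!}X^{R_0}$, feeding the resulting $\cK^!$-outputs together with $a_n,\dots,a_1$ into the actions $m_{n|1|j}$ of ${}_{\cK}[\cTr]_{\cK^!}$, and multiplying the emitted $R_0$-coefficients:
\[
\delta_{n+1}^1(a_n,\dots,a_1,\xs)=\sum_{j\ge 0}\ \sum m_{n|1|j}(a_n,\dots,a_1,1,b_1,\dots,b_j)\otimes (r_1\cdots r_j),
\]
the inner sum over summands $b_1\otimes\cdots\otimes b_j\otimes\ys\otimes r_1\otimes\cdots\otimes r_j$ of $\delta^{j,j}(\xs)$, with $r_k$ emitted at the same step as $b_k$. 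The hypothesis on ${}^{\cK^!}X^{R_0}$ says exactly that the only summands of $\delta^{1,1}$ with $\cK^!$-coefficient a multiple of $\theta$ are the self-arrows $\theta|U$; hence in every iterate each $b_k$ is a monomial which is a multiple of $\theta$ iff $b_k=\theta$, and $b_k=\theta$ forces the $k$-th step to be a self-arrow, leaving $\ys$ unchanged and contributing $r_k=U$. In particular the hypotheses of the preceding lemma hold for every sequence $(b_1,\dots,b_j)$ that occurs.

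Next I would pin down which summands of $\delta^{j,j}(\xs)$ actually contribute. By part~(1) of the preceding lemma, $m_{n|1|j}(a_n,\dots,a_1,1,b_1,\dots,b_j)$ vanishes unless the indices with $b_k=\theta$ form a terminal block; since $b_k=\theta$ precisely for self-arrow steps, a surviving summand therefore decomposes as a $\theta$-free summand $b_1\otimes\cdots\otimes b_p\otimes\ys\otimes r$ of $\delta^{p,p}(\xs)$ (the "real" part), followed by $q:=j-p$ self-arrow steps at $\ys$, which append $\theta^{\otimes q}$ to the $\cK^!$-inputs and multiply the accumulated $R_0$-coefficient by $U^{q}$. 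Thus
\[
\delta_{n+1}^1(a_n,\dots,a_1,\xs)=\sum_{\ys}\ \sum_{p\ge 0}\ \sum_{q\ge 0}\ \sum_{(\ve{b},r)} m_{n|1|p+q}(a_n,\dots,a_1,1,b_1,\dots,b_p,\theta^{\otimes q})\otimes U^{q}r,
\]
where $(\ve{b},r)$ ranges over $\theta$-free summands $b_1\otimes\cdots\otimes b_p\otimes\ys\otimes r$ of $\delta^{p,p}(\xs)$.

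Finally I would substitute a $U$-multiplied input. Applying Lemma~\ref{lem:inequality-U-weights} to the sequence $(a_n,\dots,Ua_i,\dots,a_1)$ shows that a nonzero summand of $\delta_{n+1}^1(a_n,\dots,Ua_i,\dots,a_1,\xs)$ must have $\sum_k \wt^!_\theta(b_k)\ge \wt_U(Ua_i)\ge 1$, i.e.\ at least one self-arrow step, so only the $q\ge 1$ terms of the displayed sum survive. For such a term, part~(2) of the preceding lemma gives
\[
m_{n|1|p+q}(a_n,\dots,Ua_i,\dots,a_1,1,b_1,\dots,b_p,\theta^{\otimes q})=m_{n|1|p+q-1}(a_n,\dots,a_i,\dots,a_1,1,b_1,\dots,b_p,\theta^{\otimes(q-1)}),
\]
while the $R_0$-coefficient is $U^{q}r=U\cdot U^{q-1}r$. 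Reindexing by $q':=q-1\ge 0$ and comparing with the displayed formula for $\delta_{n+1}^1(a_n,\dots,a_i,\dots,a_1,\xs)$, these are exactly $\id\otimes U$ applied to its terms, and the correspondence is a bijection because the $\theta$-free part $(\ve{b},r)$ is untouched. This yields the claimed identity. The case of $DD$-bimodules ${}^{\cK^!}X^{\cK}$ is the same argument, using that the element $U$ is central in $\cK$, so that the self-arrow contributions accumulate at the end of the $\cK$-output and $\id\otimes U$ is right multiplication by $U$.

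The step requiring the most care is the middle one: verifying, via part~(1) of the preceding lemma, that every contributing summand of an iterate $\delta^{j,j}(\xs)$ has its self-arrow steps bunched at the very end, and keeping straight that part~(2) is applied exactly once to each term under the reindexing $q\mapsto q-1$, rather than iteratively. Both of these are bookkeeping rather than substance — the preceding lemma and Lemma~\ref{lem:inequality-U-weights} do the real work — which is why the corollary is genuinely an immediate consequence.
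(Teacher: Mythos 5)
Your proof is correct. The paper itself gives no argument here — it simply labels the corollary ``an immediate consequence'' of the preceding lemma — so your write-up supplies the bookkeeping that the paper leaves implicit, and it does so following the intended route: unpack the box-tensor-product structure map, invoke the hypothesis on $X$ to guarantee each $b_k$ is $\theta$ iff it is a multiple of $\theta$, use part~(1) of the preceding lemma to deduce that the $\theta$'s (i.e.\ the self-arrow steps) form a terminal block, use Lemma~\ref{lem:inequality-U-weights} to kill the $q=0$ term on the $Ua_i$ side, and finally apply part~(2) exactly once to shift the $U$ onto the outgoing coefficient. One small caveat worth noting explicitly (which you appear to handle correctly): part~(2) of the preceding lemma is stated in the paper with $m_{k|1|n}$ on both sides, but the right-hand side has $n+1$ right-algebra inputs and should read $m_{k|1|n+1}$; your version writes $m_{n|1|p+q}$ versus $m_{n|1|p+q-1}$, which is the correct interpretation. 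The reindexing $q'=q-1$ gives a genuine bijection of contributing terms precisely because the $q=0$ term on the left vanishes, and your use of $\wt_U(Ua_i)\ge 1$ together with the inequality lemma establishes exactly that.
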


\begin{proof}[Proof of Proposition~\ref{prop:Koszul-duality-L-space-computation}]
By Corollary~\ref{cor:U-equivariant}, the structure maps on ${}_{\cK} [\cTr]^{\cK^!}\boxtimes {}^{\cK^!} \cX(L)^{R_0}$ are $U$-equivariant.

We now consider the stated formulas. Consider first $\delta_2^1(Z^i,-)$. For this claim, it suffices to observe that the only sequence $b_1,\dots, b_n\in \cK^!$ for which the structure map on ${}_{\cK} [\cTr]_{\cK^!}$ satisfies $m_{2|1|n}(Z^i, 1, b_1,\dots, b_n)\neq 0$ is the sequence 
\[
(b_1,\dots, b_n)=(\underbrace{z,\dots, z}_i).
\]
This follows from the description of $m_{i|1|j}$ given in Lemma~\ref{lem:Tr-construction} using the homological perturbation lemma. The above implies that
\[
\delta_2^1(Z^i,-)=\underbrace{L_Z\circ \cdots \circ L_Z}_i.
\]
The same idea holds for $\delta_2^1(W^i,-)$.

 It is straightforward to see that there are no sequences with $m_{2|1|N}(Z^i , Z^j ,1,b_1,\dots, b_N)\neq 0$, so $\delta_3^1(Z^i, Z^j,-)=0$.

 Next, we consider $\delta_3^1(W^i , Z^j, -)$. We observe by straightforward analysis that the only sequence $b_1,\dots, b_N$ for which $m_{2|1|N}(W^i , Z^j ,1,b_1,\dots, b_N)\neq 0$ and all $b_k\in \{w,z,wz,zw,\theta\}$ are the sequences
\[
(b_1,\dots, b_N)=(\underbrace{z,\dots, z}_{j-k}, zw,\underbrace{w,\dots, w}_{i-k}, \underbrace{\theta,\dots, \theta}_{k-1})
\]
for $0< k\le  \min(i,j)$. This corresponds exactly to the action of $\delta_3^1$ given in Equation~\eqref{eq:delta_3^1-W^i-Z^j}. 

The correspondence between the remaining actions in Equations~\eqref{eq:delta_3^1-W^i-Z^j} and ~\eqref{eq:delta-3-1-sigma-1} follows from a straightforward extension of the above ideas, and we leave the details to the reader.

We now consider the actions $\delta_3^1(T^i \sigma, W^j,-)$. We observe firstly that $\delta_3^1(\sigma,W,-)$ is $L_{T^{-1}} \circ h_{\sigma,Z}\circ L_W+L_{T^{-1}}\circ L_{\sigma}\circ  h_{Z,W}$, since the only $b_1,\dots, b_n\in \cK^!$ for which $m_{2|1|n}(\sigma,W,1,b_1,\dots, b_n)\neq 0$ are $(wz,s,\varphi_-)$ and $(w,wz, \varphi_-)$. 

More generally, one computes easily that the only sequences $b_1,\dots, b_N\in \cK^!$ for which $m_{2|1|N}(T^i\sigma, W^j,1,b_1,\dots, b_N)\neq 1$ are the sequences
\[
\begin{split}
(b_1,\dots, b_N)=&(\underbrace{w,\dots, w}_{j-k-1},w,zs,\underbrace{\varphi_{\sgn(i-k-1)},\dots, \varphi_{\sgn(i-k-1)}}_{|i-k-1|}, \underbrace{\theta,\dots, \theta}_k )\\
(b_1,\dots, b_N)=&(\underbrace{w,\dots, w}_{j-k-1},wz,s,\underbrace{\varphi_{\sgn(i-k-1)},\dots, \varphi_{\sgn(i-k-1)}}_{|i-k-1|}, \underbrace{\theta,\dots, \theta}_k).
\end{split}
\]
for $k\in \{0,\dots, j-1\}$. Unpacking, this gives the stated formula for $\delta_3^1(T^i\sigma, W^j,-)$ in terms of $\delta_3^1(\sigma,W,-)$. 
\end{proof}


\bibliographystyle{custom}
\def\MR#1{}
\bibliography{biblio}

\end{document}